\newtheorem{thm}{Theorem}
\newtheorem{prop}[thm]{Proposition}
\newtheorem{lem}[thm]{Lemma}
\newtheorem{cor}[thm]{Corollary}
\newtheorem{rem}[thm]{Remark}
\newtheorem{df}[thm]{Definition}
\newtheorem{ex}[thm]{Example}
\renewcommand{\epsilon}{\varepsilon}
\renewcommand{\phi}{\varphi}
\newcommand{\BB}{\mathbb}
\newcommand{\g}{\mathfrak}
\newcommand{\separate}{\vskip5pt}
\newcommand{\supp}{\operatorname{supp}}
\newcommand{\im}{\operatorname{Im}}
\newcommand{\re}{\operatorname{Re}}
\newcommand{\tr}{\operatorname{Tr}}
\newcommand{\End}{\operatorname{End}}
\newcommand{\HC}{\operatorname{HC}}
\begin{document}

\LARGE


\title{Introduction to Representations of Real Semisimple Lie Groups}
\author{Matvei Libine}
\maketitle

\begin{abstract}
These are lecture notes for a one semester introductory course I gave at
Indiana University. The goal was to make this exposition as clear and
elementary as possible.
A particular emphasis is given on examples involving $SU(1,1)$.
These notes are in part based on lectures given by my graduate advisor
Wilfried Schmid at Harvard University and PQR2003 Euroschool in Brussels
\cite{SchL} as well as other sources, such as
\cite{HT, Kn1, Kn2, Vi, Wal1, War}.
The text is formated for convenient viewing from iPad and other tablets.
\end{abstract}

\tableofcontents

\newpage

\section{Introduction}

Let $G$ be a real semisimple or reductive Lie group
(intuitively this means that its Lie algebra $\mathfrak{g}$ is semisimple
or reductive). \\
Examples: $SL(n,\BB R)$, $O(p,q)$, $SU(p,q)$, \dots are real semisimple
Lie groups and $GL(n,\BB R)$, $U(p,q)$, \dots are reductive real Lie groups. \\
$\hat G \: =$ set of isomorphism classes of irreducible unitary representations.
Thus, for each $i \in \hat G$, we get an irreducible unitary representation
$(\pi_i, V_i)$.

\separate

\noindent
\underline{$G$ -- compact}: $\dim V_i < \infty$,
unitarity is automatic (every irreducible representation is unitary),
$\pi_i: G \times V_i \to V_i$ is real  analytic, so get a representation of
$\mathfrak g$ on $V_i$.

\separate

\noindent
\underline{Non-compact}: Typically $\dim V_i = \infty$,
there are many interesting non-unitary representations,
analyticity fails miserably. To get around this problem and construct
representation of $\mathfrak g$, look at the space of ${\cal C}^{\infty}$ or
${\cal C}^{\omega}$ vectors; solution: Harish-Chandra modules.

\separate

\noindent
\underline{$G$ -- compact}: We have Peter-Weyl Theorem:
$$
L^2(G) \simeq \widehat{\bigoplus}_{i \in \hat G} V_i \otimes V_i^*
\simeq \widehat{\bigoplus}_{i \in \hat G} \End(V_i)
$$
$G \times G$ equivariant isomorphism of vector spaces,
the hat denotes the Hilbert space direct sum.
We also have a Fourier Inversion Theorem: Define
$$
{\cal C}^{\infty}(G) \ni \quad f \mapsto
\hat f(i) = \int_G f(g) \cdot \pi_i(g) \, dg \quad \in \End(V_i),
$$
then
$$
f(e)= \sum_{i \in \hat G} \tr \hat f(i) \cdot \dim V_i.
$$
Compare this with Fourier series:
$$
f(x) = \sum_{n \in \BB Z} \hat f(n) \cdot e^{inx}, \qquad
f(0) = \sum_{n \in \BB Z} \hat f(n),
$$
$$
\hat f(n) = \frac1{2\pi} \int_{-\pi}^{\pi} f(x) \cdot e^{-inx} \,dx, \qquad
f \in {\cal C}^{\infty}(\BB R / 2\pi \BB Z).
$$

\separate

\noindent
\underline{Non-compact}: Not every irreducible unitary representation occurs
in $L^2(G)$. In fact, it is still an open problem to describe $\hat G$ for
an arbitrary real semisimple Lie group.
We have an abstract Fourier Inversion Theorem:
$$
L^2(G) \simeq \int_{i \in \hat G} V_i \hat\otimes V_i^* \,d\mu(i)
\simeq \int_{i \in \hat G} \End_{HS}(V_i) \,d\mu(i),
$$
where $\mu(i)$ denotes a measure on $\hat G$, called Plancherel measure.
For compact $G$, $\mu(i)=\dim V_i$ and $\supp(\mu) =\hat G$.
For non-compact $G$, $\supp(\mu) \subsetneq \hat G$.
$$
f(e) = \int_{\hat G} \tr \hat f(i) \,d\mu(i), \qquad
f \in {\cal C}^{\infty}_c(G).
$$

\separate

\noindent
\underline{$G$ -- compact}: We have Borel-Weil-Bott Theorem for \linebreak
$H^i(G_{\BB C}/B_{\BB C}, {\cal L}_{\lambda})$, irreducible representations are
uniquely determined by their characters,Weyl and Kirillov's character formulas.

\separate

\noindent
\underline{Non-compact}: Na\"ive notion of character fails.
Harish-Chandra: get {\em character} as a conjugation-invariant distribution
on $G$, does determine representation up to equivalence.
Regularity theorem: characters are real-analytic functions with singularities.
No totally explicit formula for characters.
Kashiwara-Schmid \cite{KSch} constructed representations of $G$ in
$H^i(G_{\BB C}/B_{\BB C}, {\cal F})$, where ${\cal F}$ is a $G$-equivariant sheaf
on $G_{\BB C}/B_{\BB C}$.
This can be regarded as an analogue of Borel-Weil-Bott theorem.
Then Schmid-Vilonen \cite{SchV} gave analogues of Weil and Kirillov's character
formulas.

\separate

\noindent
\underline{$G$ -- compact}: If $H \subset G$ is a closed subgroup,
$$
L^2(G/H) \hookrightarrow L^2(G).
$$

\separate

\noindent
\underline{Non-compact}: If $H \subset G$ is a closed non-compact subgroup,
$$
L^2(G/H) \not\hookrightarrow L^2(G).
$$
For example, if $G=SL(2,\BB R)$, $\Gamma \subset G$ a discrete subgroup,
even then $L^2(G/\Gamma)$ is not known in general.

\separate

\noindent
\underline{Strategy for understanding representations}:
In order to understand representations of compact groups $K$, we restricted
them to a maximal torus $T$, and the representation theory for $T$ is trivial.
Equivalently, in order to understand finite-dimensional representations of a
semisimple Lie algebra $\mathfrak k$, we restrict them to a Cartan subalgebra
$\mathfrak h$. Now, to understand representations of a real semisimple
Lie group $G$ we restrict them to a maximal compact subgroup $K$,
and the representation theory for $K$ is assumed to be understood.
(A particular choice of a maximal compact subgroup $K \subset G$
does not matter, just like a particular choice of a maximal torus $T \subset K$
or a Cartan subalgebra $\mathfrak h \subset \mathfrak k$ is not essential.)

\section{Some Geometry and Examples of Representations}

\subsection{Complex Projective Line}

The complex projective line, usually denoted by $\BB CP^1$ or $\BB P^1(\BB C)$,
is $\BB C^2 \setminus \{0\}$ modulo equivalence, where
$$
\begin{pmatrix} z_1 \\ z_2 \end{pmatrix} \sim
\begin{pmatrix} \lambda z_1 \\ \lambda z_2 \end{pmatrix},
\qquad \forall \lambda \in \BB C^{\times}.
$$
Note that
$$
\begin{pmatrix} z_1 \\ z_2 \end{pmatrix} \sim
\begin{pmatrix} z_1/z_2 \\ 1 \end{pmatrix} \: \text{ if $z_2 \ne 0$}, \quad
\begin{pmatrix} z_1 \\ z_2 \end{pmatrix} \sim
\begin{pmatrix} 1 \\ z_2/z_1 \end{pmatrix} \: \text{ if $z_2 \ne 0$}.
$$
We identify points
$\bigl(\begin{smallmatrix} z \\ 1 \end{smallmatrix}\bigr) \in \BB CP^1$
with $z \in \BB C$, then the complement of $\BB C$ in $\BB CP^1$ is
$\bigl(\begin{smallmatrix} 1 \\ 0 \end{smallmatrix}\bigr)$,
and we think of this point as $\infty$. Thus
$$
\BB CP^1 = \BB C \cup \{\infty\} \approx S^2
\quad \text{(as ${\cal C}^{\infty}$ manifolds).}
$$

The group $SL(2,\BB C)$ acts on $\BB CP^1$ by fractional linear transformations:
$$
\begin{pmatrix} a & b \\ c & d \end{pmatrix}
\begin{pmatrix} z_1 \\ z_2 \end{pmatrix}
= \begin{pmatrix} az_1 + bz_2 \\ cz_1 + dz_2 \end{pmatrix},
$$
$$
\begin{pmatrix} a & b \\ c & d \end{pmatrix}
\begin{pmatrix} z \\ 1 \end{pmatrix}
= \begin{pmatrix} az+b \\ cz+d \end{pmatrix}
\sim \begin{pmatrix} \frac{az+b}{cz+d} \\ 1 \end{pmatrix}.
$$
The map $z \mapsto \frac{az+b}{cz+d}$ from $\BB C \cup \{\infty\}$ to itself
is also called a M\"obius or fractional linear transformation.
Note that the center $\{\pm Id\} \subset SL(2,\BB C)$ acts on $\BB CP^1$
trivially, so we actually have an action on $\BB CP^1$ of
$$
PGL(2,\BB C) = GL(2,\BB C)/\{\text{center}\} = SL(2,\BB C)/\{\pm Id\}.
$$

Recall that an {\em automorphism} of $\BB CP^1$ or any complex manifold $M$ is
a complex analytic map $M \to M$ which has an inverse that is also complex
analytic.

\begin{thm}
All automorphisms of $\BB CP^1$ are of the form
$$
z \mapsto \frac{az+b}{cz+d}, \qquad
\begin{pmatrix} a & b \\ c & d \end{pmatrix} \in SL(2,\BB C).
$$
Moreover, two matrices $M_1, M_2 \in SL(2,\BB C)$ produce the same
automorphism on $\BB CP^1$ if and only if $M_2 = \pm M_1$.
\end{thm}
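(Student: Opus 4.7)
The plan is to prove the two halves separately, handling the easier uniqueness-up-to-sign assertion first and then tackling the structural classification. For the ``if and only if'' clause, if $M_1$ and $M_2$ induce the same transformation of $\BB CP^1$, then $M := M_1 M_2^{-1} \in SL(2,\BB C)$ fixes every point of $\BB CP^1$; in particular it fixes the images of $\begin{pmatrix}1\\0\end{pmatrix}$, $\begin{pmatrix}0\\1\end{pmatrix}$, and $\begin{pmatrix}1\\1\end{pmatrix}$, which forces $M$ to preserve each of the lines spanned by these vectors. A quick linear-algebra check then gives $M = \lambda I$ for some $\lambda \in \BB C^{\times}$, and the determinant condition $\det M = 1$ narrows this to $\lambda = \pm 1$.

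For the classification of automorphisms, my strategy is to reduce to a fixed-point normalization and then use complex analysis on $\BB C$. Given an automorphism $\phi \colon \BB CP^1 \to \BB CP^1$, let $w = \phi^{-1}(\infty) \in \BB CP^1$. I will choose a fractional linear transformation $\psi$ sending $w$ to $\infty$ (e.g.\ $\psi(z) = 1/(z-w)$ if $w \in \BB C$, or $\psi = \text{id}$ if $w = \infty$), so that the composition $f := \phi \circ \psi^{-1}$ is an automorphism of $\BB CP^1$ fixing $\infty$. Restricted to $\BB C$, this $f$ is a biholomorphism $\BB C \to \BB C$, and it suffices to show any such $f$ has the form $z \mapsto az + b$ with $a \neq 0$; the original $\phi = f \circ \psi$ will then be visibly a composition of fractional linear transformations, hence fractional linear.

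The main obstacle, and the only analytically substantive step, is showing that an entire injective function $f \colon \BB C \to \BB C$ is affine. I would argue as follows: classify the singularity of $f$ at $\infty$ into three cases. A removable singularity at $\infty$ would make $f$ bounded and hence constant by Liouville, contradicting injectivity. An essential singularity at $\infty$ is ruled out by Casorati--Weierstrass together with the open mapping theorem: the image $f(\{|z| < 1\})$ contains an open set $U$, while $f$ restricted to a neighborhood of $\infty$ has dense image, so it meets $U$, contradicting the injectivity of $f$ on $\BB C$. Therefore $\infty$ is a pole, so $f$ is a polynomial, and injectivity forces $\deg f = 1$.

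Finally, combining the pieces: $f(z) = az + b$ for some $a \in \BB C^\times$, $b \in \BB C$, so $\phi = f \circ \psi$ is a composition of fractional linear transformations, which one checks directly is again fractional linear (this corresponds to matrix multiplication in $GL(2,\BB C)$). Rescaling the matrix by $1/\sqrt{ad-bc}$ puts it in $SL(2,\BB C)$, completing the proof. The two-to-one assertion from the first paragraph then shows that the matrix is determined up to sign.
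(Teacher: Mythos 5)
The paper states this theorem without proof, so there is no argument of the author's to compare yours against. Your proof is correct and is the standard one: the uniqueness-up-to-sign claim via the three fixed points $[1:0]$, $[0:1]$, $[1:1]$ forcing $M=\lambda I$ with $\lambda^2=1$ is fine, and the reduction to an injective entire function followed by the trichotomy at $\infty$ (removable ruled out by Liouville, essential ruled out by Casorati--Weierstrass against the open mapping theorem, pole forcing a polynomial, injectivity forcing degree one) is complete and accurate. The only step you might spell out in a final write-up is why a pole at $\infty$ makes $f$ a polynomial, but that is routine.
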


Next we turn our attention to the upper half-plane
$$
\BB H = \{ z \in \BB C ;\: \im z >0 \}.
$$
Note that $SL(2,\BB R)$ preserves $\BB H$:
If $a,b,c,d \in \BB R$ and $\im z>0$, then
\begin{multline*}
\im \biggl( \frac{az+b}{cz+d} \biggr)
= \frac1{2i} \biggl( \frac{az+b}{cz+d} - \frac{a\bar z+b}{c\bar z+d} \biggr) \\
= \frac1{2i} \frac{(az+b)(c\bar z+d)-(a\bar z+b)(cz+d)}{|cz+d|^2} \\
= \frac1{2i} \frac{(ad-bc)(z-\bar z)}{|cz+d|^2}
= \frac{\im z}{|cz+d|^2} \quad>0.
\end{multline*}
Thus the group $SL(2,\BB R)$ acts on $\BB H$ by automorphisms.

\begin{thm}
All automorphisms of the upper half-plane $\BB H$ are of the form
$$
z \mapsto \frac{az+b}{cz+d}, \qquad
\begin{pmatrix} a & b \\ c & d \end{pmatrix} \in SL(2,\BB R).
$$
Moreover, two matrices $M_1, M_2 \in SL(2,\BB R)$ produce the same
automorphism on $\BB H$ if and only if $M_2 = \pm M_1$.
\end{thm}

The group $PGL(2,\BB R)=SL(2,\BB R)/\{\pm Id\}$ is also the automorphism group
of the lower half-plane $\bar{\BB H} = \{ z \in \BB C ;\: \im z <0 \}$
and preserves $\BB R \cup \{\infty\}$.

Note that the Cayley transform
$$
z \mapsto \frac{z-i}{z+i},
$$
which is the fractional linear transformation associated to the matrix
$C = \bigl(\begin{smallmatrix} 1 & -i \\ 1 & i \end{smallmatrix}\bigr)$,
takes $\BB H$ to the unit disk
$$
\BB D = \{ z \in \BB C ;\: |z|<1 \}.
$$
Of course, the group of automorphisms of $\BB D$ is obtained from
the group of automorphisms of $\BB H$ by conjugating by $C$:
$$
C \cdot PGL(2,\BB R) \cdot C^{-1} \quad \subset \quad PGL(2,\BB C).
$$

\begin{lem}
We have $C \cdot SL(2,\BB R) \cdot C^{-1} = SU(1,1)$, where
$$
SU(1,1) = \biggl\{ \begin{pmatrix} a & b \\ \bar b & \bar a \end{pmatrix}
\in SL(2,\BB C);\: a,b \in \BB C,\: |a|^2-|b|^2=1 \biggr\}.
$$
In particular, the groups $SL(2,\BB R)$ and $SU(1,1)$ are isomorphic.
\end{lem}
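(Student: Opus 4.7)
The plan is to prove the set-theoretic equality $C \cdot SL(2,\BB R) \cdot C^{-1} = SU(1,1)$ by verifying both inclusions; the asserted group isomorphism then falls out for free, since $h \mapsto ChC^{-1}$ is automatically a group isomorphism of $SL(2,\BB C)$ (it preserves determinants because $C \in GL(2,\BB C)$), so once the image of $SL(2,\BB R)$ is identified with $SU(1,1)$ we have a group isomorphism between them.

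First I would compute $C^{-1}$ explicitly, obtaining $C^{-1} = \frac{1}{2}\bigl(\begin{smallmatrix} 1 & 1 \\ i & -i \end{smallmatrix}\bigr)$ using $\det C = 2i$. Next, for the inclusion $C \cdot SL(2,\BB R) \cdot C^{-1} \subseteq SU(1,1)$, I would take a generic $g = \bigl(\begin{smallmatrix} a & b \\ c & d \end{smallmatrix}\bigr)$ with $a,b,c,d \in \BB R$ and $ad-bc=1$, and multiply out $CgC^{-1}$. The pattern I expect to fall out is that the $(1,1)$ and $(2,2)$ entries are complex conjugates of the form $\frac{1}{2}((a+d)\pm i(b-c))$, and similarly the off-diagonal entries are of the form $\frac{1}{2}((a-d)\mp i(b+c))$. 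Labeling these by $\alpha$ and $\beta$, a short bilinear expansion should give $|\alpha|^2 - |\beta|^2 = ad-bc = 1$, placing $CgC^{-1}$ in $SU(1,1)$.

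For the reverse inclusion I would run the analogous computation with $C^{-1}hC$ for a generic $h = \bigl(\begin{smallmatrix} \alpha & \beta \\ \bar\beta & \bar\alpha \end{smallmatrix}\bigr) \in SU(1,1)$, verifying that all four entries come out real (as real-linear combinations of $\re\alpha, \im\alpha, \re\beta, \im\beta$) and that the determinant is $|\alpha|^2 - |\beta|^2 = 1$. A slicker alternative, which avoids the second calculation, is to observe that the image of $SL(2,\BB R)$ under conjugation by $C$ is a closed Lie subgroup of $SU(1,1)$ of the same real dimension $3$, hence open; and $SU(1,1)$ is visibly connected, since the single real equation $|a|^2 - |b|^2 = 1$ cuts out a connected $3$-manifold in $\BB C^2$. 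Any open subgroup of a connected group is the whole group.

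The only real obstacle is the bookkeeping in the forward computation — keeping straight the sign patterns and placements of $i$ in the double matrix multiplication so that the conjugate-pair structure characteristic of $SU(1,1)$ emerges cleanly. Conceptually everything is immediate: the Cayley transform identifies two models of the hyperbolic plane, so it must identify their automorphism groups.
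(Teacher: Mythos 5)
Your proposal is correct; note that the paper itself states this lemma without proof, so there is nothing to compare against beyond confirming your computation. The arithmetic works out exactly as you predict: with $C^{-1} = \tfrac12\bigl(\begin{smallmatrix} 1 & 1 \\ i & -i \end{smallmatrix}\bigr)$ one finds $CgC^{-1} = \bigl(\begin{smallmatrix} \alpha & \beta \\ \bar\beta & \bar\alpha \end{smallmatrix}\bigr)$ with $\alpha = \tfrac12\bigl((a+d)+i(b-c)\bigr)$, $\beta = \tfrac12\bigl((a-d)-i(b+c)\bigr)$, and $|\alpha|^2 - |\beta|^2 = ad-bc = 1$. Both of your options for the reverse inclusion are sound: the direct computation of $C^{-1}hC$ is routine, and the dimension-plus-connectedness argument is legitimate because conjugation by $C$ is a homeomorphism of $SL(2,\BB C)$, so the image is a closed $3$-dimensional subgroup of the connected $3$-manifold $SU(1,1)$ and hence all of it.
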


We immediately obtain:

\begin{thm}
All automorphisms of the unit disk $\BB D$ are of the form
$$
z \mapsto \frac{az+b}{cz+d}, \qquad
\begin{pmatrix} a & b \\ c & d \end{pmatrix} \in SU(1,1).
$$
Moreover, two matrices $M_1, M_2 \in SU(1,1)$ produce the same
automorphism on $\BB D$ if and only if $M_2 = \pm M_1$.
\end{thm}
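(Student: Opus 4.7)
The plan is to transport Theorem 2 from the upper half-plane to the disk using the Cayley transform $C$ as an isomorphism of complex manifolds, with the preceding Lemma serving as the algebraic bridge between $SL(2,\BB R)$ and $SU(1,1)$.

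First, I would record that the Cayley map $\phi_C : z \mapsto \frac{z-i}{z+i}$ is a biholomorphism from $\BB H$ onto $\BB D$: it is rational (hence complex analytic where defined), and the inequality $|z-i| < |z+i|$ is equivalent to $\im z > 0$, so $\phi_C$ maps $\BB H$ into $\BB D$; its inverse is the M\"obius transformation associated with $C^{-1}$, which by the same token maps $\BB D$ into $\BB H$. Since $\phi_C$ is a biholomorphism, conjugation $\psi \mapsto \phi_C \circ \psi \circ \phi_C^{-1}$ gives a group isomorphism $\mathrm{Aut}(\BB H) \xrightarrow{\sim} \mathrm{Aut}(\BB D)$.

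Now I would use the fact that the assignment $M \mapsto \phi_M$ (fractional linear transformation) is a group homomorphism from $SL(2,\BB C)$ into the group of automorphisms of $\BB{CP}^1$, so $\phi_C \circ \phi_M \circ \phi_C^{-1} = \phi_{CMC^{-1}}$. By Theorem 2, every automorphism of $\BB H$ is $\phi_M$ for some $M \in SL(2,\BB R)$; therefore every automorphism of $\BB D$ is $\phi_{CMC^{-1}}$. By the Lemma, as $M$ ranges over $SL(2,\BB R)$, the matrix $CMC^{-1}$ ranges exactly over $SU(1,1)$, proving the first claim.

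For the uniqueness statement, suppose $N_1, N_2 \in SU(1,1)$ satisfy $\phi_{N_1} = \phi_{N_2}$ on $\BB{CP}^1$. Write $N_j = C M_j C^{-1}$ with $M_j \in SL(2,\BB R)$, so that $\phi_{M_1} = \phi_{C^{-1} N_1 C} = \phi_{C^{-1} N_2 C} = \phi_{M_2}$ as automorphisms of $\BB{CP}^1$; by the uniqueness part of Theorem 2, this forces $M_2 = \pm M_1$, and conjugation by $C$ preserves the relation $\pm$, so $N_2 = \pm N_1$. There is essentially no obstacle here: the entire argument is a functorial transfer along a single biholomorphism, and the only item requiring care is the compatibility $\phi_A \circ \phi_B = \phi_{AB}$, which is a direct computation that makes matrix conjugation correspond to conjugation of M\"obius transformations.
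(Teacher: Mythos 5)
Your proposal is correct and is exactly the argument the paper intends: the theorem is stated as an immediate consequence of Theorem 2 via conjugation by the Cayley transform, using the Lemma $C\cdot SL(2,\BB R)\cdot C^{-1}=SU(1,1)$. Your write-up simply makes explicit the compatibility $\phi_A\circ\phi_B=\phi_{AB}$ and the transfer of the uniqueness statement, both of which are the intended (and only) points of substance.
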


The group $SU(1,1)/\{\pm Id\}$ is also the automorphism group of
$\{ z \in \BB C ;\: |z|>1 \} \cup \{\infty\}$
and preserves the unit circle $S^1 = \{ z \in \BB C ;\: |z|=1 \}$.

\subsection{Examples of Representations}

In general, if $G$ is any kind of group acting on a set $X$,
then we automatically get a representation $\pi$ of $G$ in the vector space
$V$ consisting of complex (or real) valued functions on $X$:
$$
\bigl( \pi(g) f \bigr)(x) = f(g^{-1} \cdot x),
\qquad g \in G, \: f \in V, \: x \in X.
$$
Note that the inverse in $f(g^{-1} \cdot x)$ ensures
$\pi(g_1) \pi(g_2) = \pi(g_1g_2)$, without it we would have
$\pi(g_1) \pi(g_2) = \pi(g_2g_1)$.

\separate

Now, let
$$
V = \{ \text{holomorphic functions on the upper half-plane $\BB H$} \},
$$
clearly, $V$ is a vector space over $\BB C$ of infinite dimension.
Define a representation of $SL(2,\BB R)$ on $V$ by
$$
\bigl( \pi(g) f \bigr)(z) = f(g^{-1} \cdot z),
\qquad g \in SL(2,\BB R), \: f \in V, \: z \in \BB H.
$$
Explicitly, if
$g = \bigl(\begin{smallmatrix} a & b \\ c & d \end{smallmatrix}\bigr)$, then
$g^{-1} = \bigl(\begin{smallmatrix} d & -b \\ -c & a \end{smallmatrix}\bigr)$,
and
$$
\bigl( \pi(g) f \bigr)(z) = f(g^{-1} \cdot z)
= f \biggl( \frac{dz-b}{-cz+a} \biggr).
$$

Note that $V$ is not irreducible -- it contains a subrepresentation $V_0$
spanned by the constant functions, which form the one-dimensional trivial
representation. On the other hand, $V$ is indecomposable, i.e. cannot
be written as a direct sum of two subrepresentations
(this part is harder to see). Once we learn about Harish-Chandra modules,
it will be easy to see that $V_0$ does not have an $SL(2,\BB R)$-invariant
direct sum complement, in fact, $V_0$ is the only closed invariant subspace
of $V$, and the quotient representation on $V/V_0$ is irreducible.
Note also that $(\pi, V)$ is {\em not} unitary
(if it were unitary, then one could take the orthogonal complement of $V_0$
and write $V$ as a direct sum of subrepresentations $V_0 \oplus V_0^{\perp}$).

This example is in many ways typical to representations of real semisimple
Lie groups: the vector spaces are infinite-dimensional and have some sort
of topological structure, the representations may not have unitary structure,
need not decompose into direct sum of irreducible subrepresentations and may
have interesting components appearing as quotients.
This is quite different from the representations of compact Lie groups.

Let us modify the last example by letting
$$
V = \{ \text{holomorphic functions on the unit disk $\BB D$} \},
$$
and defining a representation of $SU(1,1)$ on $V$ by
$$
\bigl( \pi(g) f \bigr)(z) = f(g^{-1} \cdot z),
\qquad g \in SU(1,1), \: f \in V, \: z \in \BB D.
$$
Explicitly, if $g = \bigl(\begin{smallmatrix} a & b \\ \bar b & \bar a
\end{smallmatrix}\bigr)$, then
$g^{-1} = \bigl(\begin{smallmatrix} \bar a & -b \\ -\bar b & a
\end{smallmatrix}\bigr)$, and
$$
\bigl( \pi(g) f \bigr)(z) = f(g^{-1} \cdot z)
= f \biggl( \frac{\bar az-b}{-\bar bz+a} \biggr).
$$

Of course, this is the same example of representation as before.
However, this $SU(1,1)$ acting on the unit disk $\BB D$ model has
a certain advantage. We already mentioned that the strategy for understanding
representations of real semisimple Lie groups is by reduction to maximal
compact subgroup.
In the case of $SU(1,1)$, a maximal compact subgroup can be chosen to be
the subgroup of diagonal matrices
$$
K = \biggl\{ k_{\theta} = \begin{pmatrix} e^{i\theta} & 0 \\ 0 & e^{-i\theta}
\end{pmatrix} ;\: \theta \in [0,2\pi) \biggr\} \quad \subset SU(1,1).
$$
In the case of $SL(2,\BB R)$, a maximal compact subgroup can be chosen to be
the subgroup of rotation matrices
$$
K' = \biggl\{ k'_{\theta} = \begin{pmatrix} \cos\theta & -\sin\theta \\
\sin\theta & \cos\theta \end{pmatrix} ;\:
\theta \in [0,2\pi) \biggr\} \quad \subset SL(2,\BB R).
$$
The action of $k_{\theta}$ is particularly easy to describe:
$$
k_{\theta} \cdot z = e^{2i\theta} z, \qquad
\bigl( \pi(k_{\theta}) f \bigr)(z) = f(e^{-2i\theta} z)
$$
-- these are rotations by $2\theta$.
On the other hand, the action of $k'_{\theta}$ is much harder to visualize:
$$
k'_{\theta} \cdot z = \frac{\cos\theta z - \sin\theta}{\sin\theta z +\cos\theta},
\quad
\bigl( \pi(k'_{\theta}) f \bigr)(z) = f\biggl( \frac{\cos\theta z + \sin\theta}
{-\sin\theta z +\cos\theta} \biggr),
$$
but one can identify $K'$ with the isotropy subgroup of $i \in \BB H$.

Let us find the eigenvectors in
$$
V = \{ \text{holomorphic functions on the unit disk $\BB D$} \}
$$
with respect to the action of $K$.
These are functions $z^m$, $m=0,1,2,3,\dots$
$$
\bigl( \pi(k_{\theta}) z^m \bigr)(z) = (e^{-2i\theta} z)^m = e^{-2mi\theta} \cdot z^m.
$$
The algebraic direct sum of eigenspaces $\bigoplus_{m \ge 0} \BB C \cdot z^m$
is just the subspace of polynomial functions on $\BB C$.
These are dense in $V$:
$$
V = \overline{\bigoplus_{m \ge 0} \BB C \cdot z^m}.
$$
Later we will see that $\bigoplus_{m \ge 0} \BB C \cdot z^m$ is the
underlying Harish-Chandra module of $V$.

\separate

Fix an integer $n \ge 0$ and let
$$
V = \{ \text{holomorphic functions on the unit disk $\BB D$} \}.
$$
Define a representation of $SU(1,1)$ by
\begin{multline*}
\bigl( \pi_n(g) f \bigr)(z)
= (-\bar bz+a)^{-n} \cdot f \biggl( \frac{\bar az-b}{-\bar bz+a} \biggr), \\
g = \begin{pmatrix} a & b \\ \bar b & \bar a \end{pmatrix} \in SU(1,1), \:
f \in V, \: z \in \BB D.
\end{multline*}
You can verify that $\pi_n(g_1) \pi_n(g_2) = \pi_n(g_1g_2)$ is true,
so we get a representation.
If $n=0$ we get the same representation as before.
If $n \ge 2$ we get the so-called
{\em holomorphic discrete series} representation,
and if $n=1$ we get the so-called
{\em limit of the holomorphic discrete series} representation.
To get the so-called {\em antiholomorphic discrete series and its limit}
representations one uses the antiholomorphic functions on $\BB D$.

\separate

Finally, we describe the (not necessarily unitary) {\em principal series}
representations of $SU(1,1)$.
These require a parameter $\lambda \in \BB C$.
Let $S^1 = \{z \in \BB C;\: |z|=1 \}$ be the unit circle and define
$$
\tilde V = \{ \text{smooth functions on the unit circle $S^1$} \},
$$
\begin{multline*}
\bigl( \pi^+_{\lambda}(g) f \bigr)(z)
= |-\bar bz+a|^{-1-\lambda} \cdot f \biggl( \frac{\bar az-b}{-\bar bz+a} \biggr),\\
g = \begin{pmatrix} a & b \\ \bar b & \bar a \end{pmatrix} \in SU(1,1), \:
f \in \tilde V, \: z \in S^1.
\end{multline*}
It is still true that
$\pi^+_{\lambda}(g_1) \pi^+_{\lambda}(g_2) = \pi^+_{\lambda}(g_1g_2)$,
so we get a representation.
Note that $\pi^+_{\lambda}(-Id)=Id_{\tilde V}$, there is another principal series
with the property $\pi^-_{\lambda}(-Id)=-Id_{\tilde V}$.

Later we will construct the principal series representations for every
real semisimple Lie group $G$. These representations are very important
because, in a certain sense, every representation of $G$ occurs as a
subrepresentation of some principal series representation.

\section{The Universal Enveloping Algebra}

In this section, let $\Bbbk$ be any field (not necessarily $\BB R$ or $\BB C$)
and let $\g g$ be a Lie algebra over $\Bbbk$.
We give a brief overview of properties of the universal enveloping algebra
associated to $\g g$, proofs and details can be found in, for example,
\cite{Va1, Kir}.

\subsection{The Definition}

We would like to embed $\g g$ into a (large) associative algebra ${\cal U}$
so that
$$
[X,Y]=XY-YX, \qquad \forall X,Y \in \g g,
$$
and ${\cal U}$ is ``as close to the free associative algebra as possible''.
First we consider the tensor algebra
$$
\bigotimes \g g = \bigoplus_{n=0}^{\infty} (\otimes^n \g g), \qquad
\otimes^n \g g = \underbrace{\g g \otimes \dots \otimes \g g}_\text{$n$ times},
\quad \otimes^0 \g g = \Bbbk.
$$
Then $\bigotimes \g g$ is an algebra over $\Bbbk$ which is associative,
non-commutative, generated by $\g g$ and has a unit.
It is universal in the sense that any other $\Bbbk$-algebra
with those properties is a quotient of $\bigotimes \g g$.

Similarly, we can define
$$
S(\g g) = \bigoplus_{n=0}^{\infty} S^n(\g g),
$$
where $S^0(\g g)= \Bbbk$ and
$$
S^n(\g g)= \text{$n^{\text{th}}$ symmetric power of $\g g$.}
$$
In other words, $S(\g g)$ is the algebra of polynomials over $\g g^*$
and $S^n(\g g)$ is the subspace of homogeneous polynomials over $\g g^*$
of homogeneity degree $n$, where $\g g^*$ is the vector space dual of $\g g$.
Note that $S(\g g)$ contains $\g g$ as a linear subspace and is generated by it.
Then $S(\g g)$ is an algebra over $\Bbbk$ which is associative, commutative,
generated by $\g g$ and has unit.
It is universal in the sense that any other $\Bbbk$-algebra with those
properties is a quotient of $S(\g g)$.
The algebra $S(\g g)$ can be realized as a quotient of $\bigotimes \g g$:
$$
S(\g g) = \bigotimes \g g / I
$$
where $I$ is the two-sided ideal generated by $XY-YX$, $\forall X,Y \in \g g$.
If $\operatorname{char} \Bbbk =0$, we can also realize $S(\g g)$ as a subalgebra
of $\bigotimes \g g$.
By construction, both $\bigotimes \g g$ and $S(\g g)$ are graded algebras:
$$
\bigotimes \g g = \bigoplus_{n=0}^{\infty} (\otimes^n \g g), \qquad
(\otimes^m \g g) \cdot (\otimes^n \g g) \subset \otimes^{m+n} \g g,
$$
$$
S(\g g) = \bigoplus_{n=0}^{\infty} S^n(\g g), \qquad
S^m(\g g) \cdot S^n(\g g) \subset S^{m+n}(\g g).
$$

\begin{df}
The {\em universal enveloping algebra} of $\g g$ is
$$
{\cal U}(\g g) = \bigotimes \g g /J,
$$
where $J$ is the two-sided ideal generated by
$$
XY-YX-[X,Y], \qquad \forall X,Y \in \g g.
$$
\end{df}

Then ${\cal U}(\g g)$ is an associative algebra over $\Bbbk$ with unit
equipped with a $\Bbbk$-linear map $i: \g g \to {\cal U}(\g g)$ such that
\begin{enumerate}
\item
$i$ is a Lie algebra homomorphism, where ${\cal U}(\g g)$ is given
structure of a Lie algebra by $[a,b]=_{\text{def}} ab-ba$;
\item
${\cal U}(\g g)$ is generated by $i(\g g)$ as a $\Bbbk$-algebra.
\end{enumerate}

We will see soon that $i: \g g \to {\cal U}(\g g)$ is an injection,
but it is not clear at this point.

\noindent
\underline{Caution}:
Let
$E= \bigl(\begin{smallmatrix} 0 & 1 \\ 0 & 0 \end{smallmatrix}\bigr)
\in \mathfrak{sl}(2,\BB C)$,
then $E^2=0$ in the sense of multiplication of $2 \times 2$ matrices, but
$E^2 \ne 0$ as an element of ${\cal U}(\mathfrak{sl}(2,\BB C))$.

The algebra ${\cal U}(\g g)$ is {\em universal} in the following sense:

\begin{thm}
Let $A$ be any associative algebra over $\Bbbk$ with unit, and let
$\rho: \g g \to A$ be a $\Bbbk$-linear map such that
$$
\rho(X)\rho(Y) - \rho(Y)\rho(X) = \rho([X,Y]), \qquad \forall X,Y \in \g g.
$$
Then $\rho$ can be uniquely extended to a morphism of associative algebras
with unit $\rho: {\cal U}(\g g) \to A$.
\end{thm}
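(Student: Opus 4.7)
The plan is to build the extension by first using the universal property of the tensor algebra and then passing to the quotient. Since $\bigotimes \g g$ is the free associative algebra with unit on the vector space $\g g$, any $\Bbbk$-linear map $\rho : \g g \to A$ has a unique extension to a unital algebra homomorphism $\tilde\rho : \bigotimes \g g \to A$, defined on homogeneous tensors by $\tilde\rho(X_1 \otimes \cdots \otimes X_n) = \rho(X_1) \cdots \rho(X_n)$ and on $\Bbbk = \otimes^0 \g g$ as the unit map. (This universal property of $\bigotimes \g g$ was recorded earlier in the section.)

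Next I would show that $\tilde\rho$ kills the ideal $J$. By hypothesis, for every $X,Y \in \g g$,
$$
\tilde\rho(X \otimes Y - Y \otimes X - [X,Y]) = \rho(X)\rho(Y) - \rho(Y)\rho(X) - \rho([X,Y]) = 0.
$$
Thus $\tilde\rho$ vanishes on the generators of $J$. Since $\tilde\rho$ is an algebra homomorphism, $\ker \tilde\rho$ is a two-sided ideal of $\bigotimes \g g$, and therefore it contains the entire two-sided ideal $J$ generated by these elements. Hence $\tilde\rho$ factors uniquely through the quotient map $\bigotimes \g g \twoheadrightarrow \bigotimes \g g / J = {\cal U}(\g g)$, producing a unital algebra homomorphism $\bar\rho : {\cal U}(\g g) \to A$ whose restriction to $i(\g g)$ coincides with $\rho$.

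For uniqueness, I would use that ${\cal U}(\g g)$ is generated as a $\Bbbk$-algebra by $i(\g g)$: any two unital algebra homomorphisms ${\cal U}(\g g) \to A$ agreeing on $i(\g g)$ must agree on all products of elements of $i(\g g)$ and on $1$, hence on all of ${\cal U}(\g g)$.

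There is no real obstacle here; the whole argument is a bookkeeping exercise in universal properties. The one subtlety worth flagging is that the statement identifies $\g g$ with $i(\g g) \subset {\cal U}(\g g)$ without having yet verified that $i$ is injective, so strictly speaking the ``extension'' $\bar\rho$ of $\rho$ means $\bar\rho \circ i = \rho$; the true extension property will only be fully justified once the Poincar\'e--Birkhoff--Witt theorem establishes injectivity of $i$.
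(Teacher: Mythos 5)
Your proof is correct and is exactly the standard argument intended here (the paper states this theorem without proof): extend $\rho$ via the universal property of the tensor algebra, observe the extension kills the generators of $J$ and hence all of $J$, factor through the quotient, and get uniqueness from the fact that $i(\g g)$ generates ${\cal U}(\g g)$. Your closing remark about ``extension'' really meaning $\bar\rho \circ i = \rho$ until injectivity of $i$ is known is also accurate and consistent with the paper's own ordering of results.
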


\begin{cor}
Any representation of $\g g$ (not necessarily finite-dimensional)
has a canonical structure of a ${\cal U}(\g g)$-module.
Conversely, every ${\cal U}(\g g)$-module has a canonical structure of a
representation of $\g g$.
\end{cor}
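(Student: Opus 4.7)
The plan is to apply the universal property (the preceding theorem) in both directions, observing that the two constructions are mutually inverse. Throughout, a representation of $\g g$ on a vector space $V$ means a $\Bbbk$-linear map $\rho: \g g \to \End(V)$ such that $\rho([X,Y]) = \rho(X)\rho(Y) - \rho(Y)\rho(X)$, and a ${\cal U}(\g g)$-module structure on $V$ means a $\Bbbk$-algebra homomorphism $\tilde\rho: {\cal U}(\g g) \to \End(V)$ sending the unit to $\mathrm{Id}_V$.

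For the first direction, suppose $\rho: \g g \to \End(V)$ is a representation. Take $A = \End(V)$ in the preceding theorem; the bracket relation satisfied by $\rho$ is precisely the hypothesis of that theorem. Hence $\rho$ extends uniquely to a morphism of associative unital algebras $\tilde\rho: {\cal U}(\g g) \to \End(V)$, which is exactly a ${\cal U}(\g g)$-module structure on $V$.

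For the converse, suppose $\tilde\rho: {\cal U}(\g g) \to \End(V)$ is a unital algebra homomorphism. Define $\rho = \tilde\rho \circ i: \g g \to \End(V)$, where $i: \g g \to {\cal U}(\g g)$ is the canonical map. Since the defining ideal $J$ contains $XY - YX - [X,Y]$ for all $X,Y \in \g g$, we have $i(X)i(Y) - i(Y)i(X) = i([X,Y])$ in ${\cal U}(\g g)$; applying the algebra homomorphism $\tilde\rho$ gives $\rho(X)\rho(Y) - \rho(Y)\rho(X) = \rho([X,Y])$, so $\rho$ is a Lie algebra homomorphism, i.e.\ a representation of $\g g$.

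Finally, the two constructions are mutually inverse: starting from $\rho$, extending to $\tilde\rho$, and then restricting via $i$ recovers $\rho$ by construction; starting from $\tilde\rho$, restricting to $\rho = \tilde\rho \circ i$ and then extending by the universal property recovers $\tilde\rho$, thanks to the uniqueness clause in the previous theorem (since ${\cal U}(\g g)$ is generated by $i(\g g)$, an extension of $\rho$ to an algebra homomorphism is unique if it exists). There is no real obstacle here; the content of the corollary is entirely a repackaging of the universal property, with the only subtlety being the implicit use of uniqueness to see that the two constructions are inverse to one another.
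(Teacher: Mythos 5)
Your proof is correct and is exactly the intended argument: the paper states this corollary without proof as an immediate consequence of the universal property, and your spelling-out (take $A=\End(V)$ for one direction, compose with $i$ for the other, and use uniqueness to see the constructions are mutually inverse) is the standard and essentially unique way to do it.
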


\noindent
\underline{Restatement}: The categories of representations of $\g g$ and
${\cal U}(\g g)$-modules are equivalent.

\subsection{Some Properties of the Universal Enveloping Algebra}

Note that ${\cal U}(\g g)$ is a {\em filtered} algebra: Let
$$
{\cal U}_n(\g g) =_{\text{def}} \text{ image of }
\bigoplus_{k=o}^n (\otimes^k \g g)
\text{ in } {\cal U}(\g g)=\bigotimes \g g /J,
$$
then
$$
\Bbbk = {\cal U}_0(\g g) \subset {\cal U}_1(\g g) \subset \dots \subset
{\cal U}_n(\g g) \subset \dots \subset
{\cal U}(\g g) = \bigcup_{n=0}^{\infty} {\cal U}_n(\g g)
$$
$$
\text{and} \qquad
{\cal U}_m(\g g) \cdot {\cal U}_n(\g g) \subset {\cal U}_{m+n}(\g g).
$$

\noindent
\underline{Caution}: ${\cal U}(\g g)$ is {\em not} a graded algebra.

\begin{prop}
\begin{enumerate}
\item
If $X \in {\cal U}_m(\g g)$ and $Y \in {\cal U}_n(\g g)$, then
$$
XY-YX \in {\cal U}_{m+n-1}(\g g).
$$
\item
Let $\{X_1,\dots,X_r\}$ be an ordered basis of $\g g$, the monomials
$$
(X_1)^{k_1} \cdot (X_2)^{k_2} \cdot \ldots \cdot (X_r)^{k_r},  \quad
\sum_{i=1}^r k_i \le n, \quad \text{span } {\cal U}_n(\g g).
$$
(Note that we have fixed the order of basis elements.)
\end{enumerate}
\end{prop}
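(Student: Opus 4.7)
\emph{Part (1).} The statement that commutators drop the filtration degree by one is essentially forced by the defining relation $XY - YX = [X,Y]$ for $X,Y \in \g g$, which gives the base case $m = n = 1$: in that case $XY - YX = [X,Y] \in \g g \subset {\cal U}_1(\g g) = {\cal U}_{m+n-1}(\g g)$. To extend this to arbitrary $m,n$, I would use double induction, leaning on the derivation identities
$$
[AB, Y] = A[B,Y] + [A,Y]B, \qquad [X, CD] = [X,C]D + C[X,D],
$$
which hold in any associative algebra. First fix $n = 1$ and induct on $m$: write $X = X_1 X'$ with $X_1 \in \g g$ and $X' \in {\cal U}_{m-1}(\g g)$, apply the first identity, and note that $[X_1, Y] \in \g g \subset {\cal U}_1(\g g)$ while $[X', Y] \in {\cal U}_{m-1}(\g g)$ by induction; combined with the filtration property ${\cal U}_a(\g g) \cdot {\cal U}_b(\g g) \subset {\cal U}_{a+b}(\g g)$, both summands lie in ${\cal U}_m(\g g) = {\cal U}_{m+n-1}(\g g)$. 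Then fix arbitrary $m$ and induct on $n$ using the mirror decomposition $Y = Y_1 Y'$ and the second identity.

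\emph{Part (2).} I would prove this by induction on $n$. The case $n = 0$ is trivial since ${\cal U}_0(\g g) = \Bbbk$ is spanned by the empty monomial. For the inductive step, ${\cal U}_n(\g g)$ is by definition spanned by products $Z_1 Z_2 \cdots Z_k$ with $Z_i \in \g g$ and $k \le n$; by linearity I may assume each $Z_i$ is one of the basis vectors, say $Z_i = X_{j_i}$. The task reduces to rewriting an arbitrary word $X_{j_1} X_{j_2} \cdots X_{j_k}$ as a linear combination of \emph{ordered} monomials of total degree $\le n$. Whenever adjacent factors are out of order, i.e.\ $j_s > j_{s+1}$, I replace
$$
X_{j_s} X_{j_{s+1}} = X_{j_{s+1}} X_{j_s} + [X_{j_s}, X_{j_{s+1}}].
$$
The first term leaves the length $k$ unchanged but strictly decreases the number of inversions of $(j_1,\ldots,j_k)$; the second term contributes a word of length $k - 1 \le n - 1$ (since $[X_{j_s}, X_{j_{s+1}}] \in \g g$), which by the outer inductive hypothesis already lies in the span of ordered monomials of degree $\le n - 1$.

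\emph{Main obstacle.} The delicate point is making the reordering in part (2) actually terminate: each swap spawns a shorter remainder that itself needs reordering, so one needs a well-founded measure. My plan handles this cleanly by nesting two inductions: the outer induction on $n$ absorbs the shorter remainders immediately via the inductive hypothesis, while an inner induction on the inversion count of $(j_1,\ldots,j_k)$ drives the rearrangement of the length-$n$ part to completion. Part (1) is essentially bookkeeping once the derivation identities are in place; the substantive content of the proposition lives in (2), and is precisely the spanning half of the Poincar\'e--Birkhoff--Witt theorem. Note that this proposition makes no claim about linear independence of the ordered monomials, which is a much deeper matter and is not addressed here.
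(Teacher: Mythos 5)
Your argument is correct. For part (1) you are doing essentially what the paper does: the paper handles the case $X \in \g g$ by a telescoping sum, which is just the iterated form of your derivation identity $[X,CD]=[X,C]D+C[X,D]$, and then extends to general $m$ by induction; your double induction is the same computation organized symmetrically. For part (2) your route is genuinely a bit different. The paper writes ${\cal U}_{n+1}(\g g) = {\cal U}_1(\g g)\cdot {\cal U}_n(\g g)$ and, for $X_j$ times an ordered monomial, invokes part (1) \emph{once} to move $X_j$ all the way to its correct slot, the entire error being a single element of ${\cal U}_n(\g g)$ that the outer induction hypothesis absorbs; no inversion count is needed, and the termination issue you identify as the main obstacle never arises. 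You instead perform adjacent transpositions one at a time and control termination with an inner induction on the number of inversions; the price is the extra well-foundedness bookkeeping, but the payoff is that your proof of (2) is self-contained and does not use (1) at all, whereas the paper's does. Both are standard proofs of the spanning half of Poincar\'e--Birkhoff--Witt, and your closing remark that linear independence is not claimed here is exactly right -- that is the content of the PBW theorem stated afterwards. One tiny point worth making explicit in (1): the reduction to monomials $X = X_1 X'$ uses bilinearity of the commutator and the fact that scalars (the length-zero part of ${\cal U}_m(\g g)$) commute with everything.
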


\begin{proof}
We prove the first part by induction on $m$. So suppose first that
$m=1$ and $X \in i(\g g)$, then
\begin{multline*}
XY_1 \dots Y_n - Y_1 \dots Y_nX \\
= XY_1Y_2 \dots Y_n - Y_1XY_2 \dots Y_n \\
+ Y_1XY_2 \dots Y_n - Y_1Y_2XY_3 \dots Y_n \\
\dots \\
+ Y_1 \dots Y_{n-1}XY_n - Y_1 \dots Y_nX \\
= \sum_{i=1}^n Y_1 \dots [X,Y_i] \dots Y_n \quad \in {\cal U}_n(\g g).
\end{multline*}
This implies if $X \in {\cal U}_1(\g g)$, $Y \in {\cal U}_n(\g g)$, then
$[X,Y] \in {\cal U}_n(\g g)$, i.e.
$$
XY \equiv YX \mod {{\cal U}_n(\g g)}.
$$
Then
\begin{multline*}
X_1 \dots X_m X_{m+1} Y \equiv X_1 \dots X_m Y X_{m+1} \\
\equiv Y X_1 \dots X_mX_{m+1} \mod {{\cal U}_{m+n}(\g g)}
\end{multline*}
and the first part follows.

To prove the second part we also use induction on $n$.
If $n=1$ the statement amounts to
$$
{\cal U}_1(\g g) = \Bbbk\text{-}\operatorname{Span} \{ 1,X_1,\dots,X_r \},
$$
which is true. Note that
${\cal U}_{n+1}(\g g) = {\cal U}_1(\g g) \cdot {\cal U}_n(\g g)$.
By induction hypothesis,
$$
{\cal U}_n(\g g) = \Bbbk\text{-}\operatorname{Span} \Bigl\{
(X_1)^{k_1} \cdot (X_2)^{k_2} \cdot \ldots \cdot (X_r)^{k_r} ;\:
\sum_{i=1}^r k_i \le n \Bigr \}.
$$
By the first part,
$$
X_j \cdot (X_1)^{k_1} \cdot \ldots \cdot (X_r)^{k_r}
- (X_1)^{k_1} \cdot \ldots \cdot (X_j)^{k_j+1} \cdot \ldots \cdot (X_r)^{k_r}
$$
lies in ${\cal U}_n(\g g)$.
Hence $X_j \cdot (X_1)^{k_1} \cdot \ldots \cdot (X_r)^{k_r}$ lies in
$$
\Bbbk\text{-}\operatorname{Span} \Bigl\{
(X_1)^{k_1} \cdot (X_2)^{k_2} \cdot \ldots \cdot (X_r)^{k_r} ;\:
\sum_{i=1}^r k_i \le n+1 \Bigr \}.
$$
\end{proof}

\begin{cor}
The associated graded algebra
$$
Gr {\cal U}(\g g) =_{\text{def}}
\bigoplus_{n=0}^{\infty} {\cal U}_n(\g g) / {\cal U}_{n-1}(\g g)
$$
is commutative.
\end{cor}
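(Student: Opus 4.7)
The plan is to reduce the claim directly to Part 1 of the preceding Proposition, which asserts that $XY - YX \in \mathcal{U}_{m+n-1}(\mathfrak{g})$ whenever $X \in \mathcal{U}_m(\mathfrak{g})$ and $Y \in \mathcal{U}_n(\mathfrak{g})$. This is precisely the drop in filtration degree needed to make commutators vanish in the associated graded algebra.

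First I would recall how multiplication on $Gr\,\mathcal{U}(\mathfrak{g})$ is defined. Since $\mathcal{U}_m(\mathfrak{g}) \cdot \mathcal{U}_n(\mathfrak{g}) \subset \mathcal{U}_{m+n}(\mathfrak{g})$, the product of a class $\bar X \in \mathcal{U}_m/\mathcal{U}_{m-1}$ with a class $\bar Y \in \mathcal{U}_n/\mathcal{U}_{n-1}$ is taken to be the image of $XY$ in $\mathcal{U}_{m+n}/\mathcal{U}_{m+n-1}$, for any lifts $X \in \mathcal{U}_m(\mathfrak{g})$ and $Y \in \mathcal{U}_n(\mathfrak{g})$. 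One checks (this is immediate from $\mathcal{U}_{m-1}\cdot \mathcal{U}_n \subset \mathcal{U}_{m+n-1}$ and $\mathcal{U}_m \cdot \mathcal{U}_{n-1} \subset \mathcal{U}_{m+n-1}$) that this is well-defined.

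Next, to prove commutativity, it suffices to check it on homogeneous classes $\bar X, \bar Y$ of bidegree $(m,n)$, since the graded algebra is the direct sum of its homogeneous pieces and multiplication is bilinear. By Part 1 of the Proposition, $XY - YX \in \mathcal{U}_{m+n-1}(\mathfrak{g})$, so $XY$ and $YX$ have the same image in $\mathcal{U}_{m+n}(\mathfrak{g})/\mathcal{U}_{m+n-1}(\mathfrak{g})$. Hence $\bar X\, \bar Y = \bar Y\, \bar X$ in $Gr\,\mathcal{U}(\mathfrak{g})$.

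There is essentially no main obstacle here: the work was done in establishing Part 1 of the Proposition, and the only thing left to handle carefully is the bookkeeping of which filtration piece the commutator lands in, together with the verification that the graded multiplication is well-defined on representatives. The whole proof is a one-line consequence of the Proposition once the definition of $Gr\,\mathcal{U}(\mathfrak{g})$ is unpacked.
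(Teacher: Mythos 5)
Your proof is correct and is exactly the argument the paper intends: the corollary is stated as an immediate consequence of Part 1 of the preceding Proposition, which gives the drop $XY-YX \in \mathcal{U}_{m+n-1}(\g g)$ needed to kill commutators in each graded piece. The only additional content in your write-up, the well-definedness of the graded multiplication, is routine and correctly handled.
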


Since $S(\g g)$ is a universal associative commutative algebra,
we get a unique map $S(\g g) \to Gr {\cal U}(\g g)$ such that
$$
S(\g g) \ni \quad X \mapsto i(X) \quad \in Gr {\cal U}(\g g),
\qquad \forall X \in \g g.
$$

\begin{thm}[Poincar\'e-Birkhoff-Witt]
This map $S(\g g) \to Gr {\cal U}(\g g)$ is an isomorphism of algebras.
\end{thm}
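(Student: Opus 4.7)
The plan is to verify first that the map is well-defined and surjective, and then to pin down injectivity by exhibiting a concrete $\g g$-action on which the ordered monomials act as visibly independent operators. For well-definedness, $Gr\,{\cal U}(\g g)$ is commutative by the preceding corollary, so the universal property of $S(\g g)$ extends the assignment $X \mapsto i(X)$ for $X \in \g g$ uniquely to an algebra homomorphism $\phi : S(\g g) \to Gr\,{\cal U}(\g g)$, and $\phi$ evidently sends $S^n(\g g)$ into ${\cal U}_n(\g g)/{\cal U}_{n-1}(\g g)$. Surjectivity is immediate from part (2) of the preceding proposition: the image of the ordered monomial basis of $S^n(\g g)$ spans ${\cal U}_n(\g g)/{\cal U}_{n-1}(\g g)$.

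The content of the theorem is injectivity, which amounts to showing that the ordered monomials $X_1^{k_1}\cdots X_r^{k_r}$ with $\sum k_i = n$ are linearly independent modulo ${\cal U}_{n-1}(\g g)$. The standard strategy is to build a $\g g$-module on which these monomials act as manifestly independent operators. The natural candidate is $S(\g g)$ itself, identified with the polynomial ring $\Bbbk[t_1,\dots,t_r]$ via $X_i \leftrightarrow t_i$; the goal is to construct a linear map $\rho : \g g \to \End S(\g g)$ satisfying
\[
\rho(X_{i_1})\cdots\rho(X_{i_k})(1) = t_{i_1}\cdots t_{i_k}\quad\text{whenever } i_1\le\cdots\le i_k,
\]
\[
\rho(X_i)\rho(X_j) - \rho(X_j)\rho(X_i) = \rho([X_i,X_j]).
\]
Concretely, $\rho(X_i)$ should act on a standard monomial $t_{j_1}\cdots t_{j_k}$ by left multiplication when $i\le j_1$, and otherwise via the recursion
\[
\rho(X_i)(t_{j_1}\cdots t_{j_k}) = t_{j_1}\,\rho(X_i)(t_{j_2}\cdots t_{j_k}) + \rho([X_i,X_{j_1}])(t_{j_2}\cdots t_{j_k}),
\]
applied by induction on the degree of the monomial.

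The main obstacle, and really the only subtle point in the whole theorem, is verifying that this recursive definition is internally consistent and that the bracket identity holds on all of $S(\g g)$, not just in the cases that were built in by hand. I would proceed by induction on degree, and the essential algebraic input in the induction step is the Jacobi identity for $\g g$, which is precisely where the Lie algebra axioms earn their keep. Everything else is bookkeeping.

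Once $\rho$ has been constructed, the universal property of ${\cal U}(\g g)$ proved earlier lifts it to an algebra map $\rho : {\cal U}(\g g) \to \End S(\g g)$. Evaluation at $1 \in S(\g g)$ sends $\sum c_{k_1,\dots,k_r}\, X_1^{k_1}\cdots X_r^{k_r}$ to the polynomial $\sum c_{k_1,\dots,k_r}\, t_1^{k_1}\cdots t_r^{k_r}$, which vanishes only when all coefficients are zero. Hence the ordered monomials in ${\cal U}(\g g)$ are globally linearly independent; splitting a potential relation by total degree yields linear independence of the degree-$n$ ordered monomials modulo ${\cal U}_{n-1}(\g g)$, which is injectivity of $\phi$ on each $S^n(\g g)$, and completes the proof.
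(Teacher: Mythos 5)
Your outline is correct, but it takes a genuinely different route from the text. The notes do not prove PBW where it is stated; instead, in the following subsection they prove it only for $\Bbbk=\BB R$ or $\BB C$, by identifying ${\cal U}(\g g)$ with the algebra ${\cal D}^l$ of left-invariant differential operators on a Lie group $G$ with Lie algebra $\g g$, and observing that the composition
$$
S(\g g) \to Gr\,{\cal U}(\g g) \xrightarrow{j} Gr\,{\cal D}^l \xrightarrow{\sigma} S(\g g)
$$
is the identity (it is so on degree-one elements, which generate); since the first map is onto and the symbol map $\sigma$ is injective, every map in the chain is forced to be an isomorphism. That argument is short and requires no computation, but it needs the group $G$ to exist, so it is confined to the real and complex case. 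Your proposal is the classical algebraic proof via the ``standard representation'' of $\g g$ on $\Bbbk[t_1,\dots,t_r]$, which is what justifies the remark in the notes that PBW holds over an arbitrary field, including positive characteristic. You have correctly located the entire difficulty: well-definedness and surjectivity of $S(\g g)\to Gr\,{\cal U}(\g g)$ follow from the preceding corollary and proposition exactly as you say, and the crux is the consistency of the recursive definition of $\rho(X_i)$ on non-standard monomials together with the bracket identity, whose verification is a double induction in which the Jacobi identity is the essential input. Be aware that this verification is the bulk of the proof rather than mere bookkeeping --- in particular the case $\rho(X_i)\rho(X_j)(t_{j_1}\cdots)$ with $i>j>j_1$ requires combining the induction hypothesis in degree with an auxiliary induction on the indices --- so to make this a complete proof you would need to write that induction out; but the strategy, the choice of module, and the final step (evaluate at $1$, read off linear independence of ordered monomials, and split by total degree to get injectivity on each $S^n(\g g)$) are all sound and standard.
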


The Poincar\'e-Birkhoff-Witt Theorem is valid over {\em any} field
$\Bbbk$, even if $\operatorname{char} \Bbbk \ne 0$.
In Subsection \ref{U(g)-geom} we will outline a proof of
this theorem for the special case when $\Bbbk$ is $\BB R$ or $\BB C$.

\begin{cor}
The ordered monomials
$$
\Bigl\{ (X_1)^{k_1} \cdot (X_2)^{k_2} \cdot \dots \cdot (X_r)^{k_r}; \:
\sum_{i=1}^r k_i \le n \Bigr\}
$$
form a vector space basis of ${\cal U}_n(\g g)$.
\end{cor}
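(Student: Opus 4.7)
The spanning half is already contained in part (2) of the preceding proposition, so the only thing left to prove is linear independence of the ordered monomials $(X_1)^{k_1}\cdots(X_r)^{k_r}$ with $\sum k_i\le n$. My plan is to deduce this directly from the Poincar\'e-Birkhoff-Witt theorem by projecting any hypothetical relation onto the top-degree graded piece.

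Suppose, toward a contradiction, that $\sum_{\vec k} c_{\vec k}\,(X_1)^{k_1}\cdots(X_r)^{k_r}=0$ in ${\cal U}_n(\g g)$ is a nontrivial relation, where the sum ranges over multi-indices $\vec k=(k_1,\dots,k_r)$ with $|\vec k|:=\sum k_i\le n$. Let $d\le n$ be the maximal value of $|\vec k|$ for which some $c_{\vec k}\ne 0$, and push the relation into the quotient ${\cal U}_d(\g g)/{\cal U}_{d-1}(\g g)$. Every term with $|\vec k|<d$ maps to zero, so we are left with $\sum_{|\vec k|=d} c_{\vec k}\,\overline{(X_1)^{k_1}\cdots(X_r)^{k_r}}=0$ in the degree-$d$ component of $Gr\,{\cal U}(\g g)$. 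By PBW, the canonical map $S(\g g)\to Gr\,{\cal U}(\g g)$ is an isomorphism, and in degree $d$ it sends the symmetric monomial $X_1^{k_1}\cdots X_r^{k_r}\in S^d(\g g)$ to $\overline{(X_1)^{k_1}\cdots(X_r)^{k_r}}$: this is forced, since that map is the unique algebra homomorphism extending $X\mapsto\overline{i(X)}$ and $Gr\,{\cal U}(\g g)$ is commutative, so the order in which the generators are multiplied is immaterial modulo lower filtration. Because the monomials $X_1^{k_1}\cdots X_r^{k_r}$ with $|\vec k|=d$ form a basis of $S^d(\g g)$, all coefficients $c_{\vec k}$ with $|\vec k|=d$ must vanish, contradicting the choice of $d$.

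All the genuine work — the injectivity of $S(\g g)\to Gr\,{\cal U}(\g g)$ — has been absorbed into the statement of PBW, so the only operation in this corollary is the filtered-to-graded bookkeeping of separating off the leading part of a relation. I do not foresee any obstacle; the corollary is really just the observation that once PBW is known, a filtered basis can be read off from its graded counterpart one degree at a time.
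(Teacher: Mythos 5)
Your proof is correct and is precisely the standard deduction the paper intends (the corollary is stated there without proof): spanning comes from part (2) of the preceding proposition, and linear independence follows by isolating the top filtration degree of a putative relation and invoking the PBW isomorphism $S(\g g)\simeq Gr\,{\cal U}(\g g)$, under which the ordered monomials of total degree $d$ map to a basis of the degree-$d$ graded piece. No gaps.
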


\begin{cor}
The map $i: \g g \to {\cal U}(\g g)$ is injective,
hence $\g g$ can be regarded as a vector subspace of ${\cal U}(\g g)$.
\end{cor}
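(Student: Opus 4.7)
The plan is to extract the injectivity of $i$ directly from the Poincar\'e--Birkhoff--Witt isomorphism $S(\g g) \xrightarrow{\sim} Gr\,{\cal U}(\g g)$ by restricting to the degree one component. The key observation is that the PBW map sends $S^1(\g g) = \g g$ isomorphically onto ${\cal U}_1(\g g)/{\cal U}_0(\g g)$, and that this map is, by construction, given by $X \mapsto [i(X)]$, the class of $i(X)$ modulo ${\cal U}_0(\g g) = \Bbbk$.

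First I would suppose $X \in \g g$ satisfies $i(X) = 0$ in ${\cal U}(\g g)$. Then in particular $i(X) \in {\cal U}_0(\g g)$, so the class $[i(X)] \in {\cal U}_1(\g g)/{\cal U}_0(\g g)$ is zero. Under the PBW isomorphism, this class is precisely the image of $X \in S^1(\g g)$, so the injectivity of the PBW map on the degree one piece forces $X = 0$.

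The only thing that requires a moment of care is to articulate why the PBW map restricted to degree one is exactly $X \mapsto [i(X)]$: this is built into the definition of the map $S(\g g) \to Gr\,{\cal U}(\g g)$ displayed just before the theorem, namely $X \mapsto i(X)$ for $X \in \g g$, where the right-hand side is interpreted in ${\cal U}_1(\g g)/{\cal U}_0(\g g)$.

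There is no real obstacle here; the content is entirely in PBW, and this corollary is just its degree one shadow. Once PBW is granted, the argument is a two-line diagram chase, which is why the authors present it immediately after PBW rather than proving it by hand from the definition of ${\cal U}(\g g)$ (where it would be genuinely difficult, since one would need to rule out that the ideal $J$ accidentally contains a nonzero element of $\g g \subset \bigotimes \g g$).
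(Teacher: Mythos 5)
Your argument is correct and is exactly the intended one: the paper states this as an immediate corollary of Poincar\'e--Birkhoff--Witt without writing out a proof, and restricting the PBW isomorphism $S(\g g) \to Gr\,{\cal U}(\g g)$ to the degree-one component, where it becomes $X \mapsto i(X) \bmod {\cal U}_0(\g g)$, is the standard two-line deduction. (Equivalently, one could invoke the preceding corollary with $n=1$: the monomials $1, X_1, \dots, X_r$ are linearly independent in ${\cal U}_1(\g g)$; your graded-piece argument has the minor advantage of not presupposing a finite basis.)
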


\begin{cor}
If $\g h \subset \g g$ is a Lie subalgebra, then the inclusion
$\g h \hookrightarrow \g g$ induces
${\cal U}(\g h) \hookrightarrow {\cal U}(\g g)$.
Moreover, ${\cal U}(\g g)$ is free as left ${\cal U}(\g h)$-module.
\end{cor}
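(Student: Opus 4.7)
The plan is to reduce everything to a compatible choice of PBW basis. Choose an ordered basis $\{Y_1, \ldots, Y_s\}$ of $\g h$ and extend it to an ordered basis $\{Y_1, \ldots, Y_s, X_1, \ldots, X_t\}$ of $\g g$, adopting the convention that all $Y_i$ precede all $X_j$ in the fixed order used for PBW.

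First I would produce the algebra map. The composition $\g h \hookrightarrow \g g \to {\cal U}(\g g)$ is a Lie algebra homomorphism, so by the universal property it extends uniquely to an associative algebra homomorphism $\phi : {\cal U}(\g h) \to {\cal U}(\g g)$. For injectivity of $\phi$, apply the PBW corollary to $\g h$: the ordered monomials $(Y_1)^{j_1} \cdots (Y_s)^{j_s}$ form a $\Bbbk$-basis of ${\cal U}(\g h)$. Their images under $\phi$ are the same ordered monomials, now viewed as elements of ${\cal U}(\g g)$, and these are exactly the ordered PBW monomials of $\g g$ whose $X$-exponents all vanish. By PBW for $\g g$, these form part of a basis, hence are linearly independent. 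Thus $\phi$ carries a basis to a linearly independent set, and injectivity follows.

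For freeness, I claim the family $B := \{(X_1)^{k_1} \cdots (X_t)^{k_t} : k_i \ge 0\}$ is a free left ${\cal U}(\g h)$-basis of ${\cal U}(\g g)$. By PBW for $\g g$, every element $u \in {\cal U}(\g g)$ has a unique expansion as a $\Bbbk$-linear combination of ordered monomials of the form $(Y_1)^{j_1} \cdots (Y_s)^{j_s} \cdot (X_1)^{k_1} \cdots (X_t)^{k_t}$. Grouping the terms according to the $X$-multiexponent $\vec k = (k_1,\ldots,k_t)$ yields $u = \sum_{\vec k} u_{\vec k} \cdot (X_1)^{k_1} \cdots (X_t)^{k_t}$, where each coefficient $u_{\vec k}$ is a $\Bbbk$-linear combination of ordered monomials in the $Y_i$ alone, i.e.\ (via $\phi$) a uniquely determined element of ${\cal U}(\g h)$. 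Uniqueness of the PBW expansion gives uniqueness of the $u_{\vec k}$, so $B$ both spans and is linearly independent over ${\cal U}(\g h)$.

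I do not expect a serious obstacle: the whole argument is an orchestration of PBW applied twice, to $\g h$ and to $\g g$. The one subtlety worth flagging is that the $Y_i$ must precede the $X_j$ in the chosen order. With the opposite convention the PBW expansion would not package directly into left ${\cal U}(\g h)$-combinations of the $B$-monomials: one would have to use the commutation bound $[X_j, Y_i] \in {\cal U}_1(\g g)$ to move ${\cal U}(\g h)$-coefficients past the $X_j$, spoiling the clean bijection between PBW coefficients and module coefficients.
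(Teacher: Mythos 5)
Your argument is correct and is exactly the standard derivation from PBW that the paper leaves implicit (the corollary is stated without proof, immediately after the PBW theorem and its monomial-basis corollary). Both the double application of PBW with the $Y_i$ ordered before the $X_j$ and your closing remark about why that ordering convention matters are right.
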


\begin{cor}
If $\g h_1, \g h_2 \subset \g g$ are Lie subalgebras such that
$\g = \g h_1 \oplus \g h_2$ as vector spaces
($\g h_1$ and $\g h_2$ need not be ideals), then the multiplication map
${\cal U}(\g h_1) \otimes {\cal U}(\g h_2) \to {\cal U}(\g g)$
is a vector space isomorphism.
\end{cor}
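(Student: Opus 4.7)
The plan is to reduce everything to the Poincar\'e--Birkhoff--Witt theorem by choosing ordered bases that are compatible with the decomposition $\g g = \g h_1 \oplus \g h_2$. Pick an ordered basis $\{Y_1,\dots,Y_p\}$ of $\g h_1$ and an ordered basis $\{Z_1,\dots,Z_q\}$ of $\g h_2$. Since the sum is a vector-space direct sum, the concatenation $\{Y_1,\dots,Y_p,Z_1,\dots,Z_q\}$, with the convention that every $Y_i$ precedes every $Z_j$, is an ordered basis of $\g g$. Note that we only need the vector-space direct-sum condition here; no statement about brackets between $Y_i$'s and $Z_j$'s is required for this step.

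Next, I would invoke the PBW corollary (ordered monomials form a basis) three times. Applied to $\g h_1$, it gives $\mathcal U(\g h_1) = \Bbbk\text{-}\Span\{Y_1^{a_1}\cdots Y_p^{a_p}\}$; applied to $\g h_2$, it gives $\mathcal U(\g h_2) = \Bbbk\text{-}\Span\{Z_1^{b_1}\cdots Z_q^{b_q}\}$; applied to $\g g$ with the chosen concatenated ordering, it gives
$$
\mathcal U(\g g) = \Bbbk\text{-}\Span\bigl\{Y_1^{a_1}\cdots Y_p^{a_p}\cdot Z_1^{b_1}\cdots Z_q^{b_q}\bigr\},
$$
precisely because every $Y_i$ precedes every $Z_j$ in our ordering, so the generic ordered monomial in $\g g$ factors as an ordered monomial in $\g h_1$ times an ordered monomial in $\g h_2$.

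Tensoring bases gives a basis, so $\{(Y_1^{a_1}\cdots Y_p^{a_p})\otimes(Z_1^{b_1}\cdots Z_q^{b_q})\}$ is a basis of $\mathcal U(\g h_1)\otimes\mathcal U(\g h_2)$. The multiplication map sends this basis bijectively onto the PBW basis of $\mathcal U(\g g)$ displayed above, hence is a vector-space isomorphism.

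I do not foresee a serious obstacle: the entire content is the bookkeeping observation that the three PBW bases line up under multiplication once the ordering is chosen to list $\g h_1$-vectors first. The only subtle point to emphasize is that the non-ideal hypothesis (the brackets $[Y_i,Z_j]$ need not lie in either factor) is harmless, because the PBW theorem for $\g g$ does not care about the bracket structure between the two blocks of basis vectors beyond what is already encoded in the universal enveloping algebra itself.
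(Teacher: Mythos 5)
Your argument is correct and is exactly the deduction the paper intends: the corollary is stated as an immediate consequence of the Poincar\'e--Birkhoff--Witt theorem, and your choice of a concatenated ordered basis (all of $\g h_1$ before all of $\g h_2$) makes the three PBW monomial bases match up under multiplication. The one implicit ingredient worth acknowledging is the preceding corollary, that ${\cal U}(\g h_i)\hookrightarrow{\cal U}(\g g)$, so that the monomials $Y_1^{a_1}\cdots Y_p^{a_p}$ viewed in ${\cal U}(\g h_1)$ are identified with the corresponding monomials in ${\cal U}(\g g)$ --- but this too follows from PBW exactly as you use it, so the proof is complete.
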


\begin{cor}
${\cal U}(\g g)$ has no zero divisors.
\end{cor}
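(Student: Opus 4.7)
The plan is to leverage the Poincar\'e-Birkhoff-Witt theorem, which identifies $\operatorname{Gr} \mathcal{U}(\g g)$ with the symmetric algebra $S(\g g)$. Since $S(\g g)$ is a polynomial ring (choose any basis of $\g g$ to see this), it has no zero divisors, and I would like to transfer this property to $\mathcal{U}(\g g)$ via the filtration.

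First I would use the filtration to extract leading terms. Given nonzero $a, b \in \mathcal{U}(\g g)$, define $m$ to be the smallest integer with $a \in \mathcal{U}_m(\g g)$, and similarly $n$ for $b$. By minimality, the images $\bar a \in \mathcal{U}_m(\g g)/\mathcal{U}_{m-1}(\g g)$ and $\bar b \in \mathcal{U}_n(\g g)/\mathcal{U}_{n-1}(\g g)$ are both nonzero elements of the associated graded algebra $\operatorname{Gr}\mathcal{U}(\g g)$ (with the convention $\mathcal{U}_{-1}(\g g) = 0$).

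Next I would apply PBW: under the isomorphism $\operatorname{Gr}\mathcal{U}(\g g) \cong S(\g g)$, the product $\bar a \cdot \bar b$ corresponds to a product of two nonzero homogeneous elements in the polynomial algebra $S(\g g)$, which is therefore nonzero. Interpreting this back in the filtration, $\bar a \cdot \bar b$ being nonzero in $\mathcal{U}_{m+n}(\g g)/\mathcal{U}_{m+n-1}(\g g)$ says precisely that $ab \in \mathcal{U}_{m+n}(\g g)$ but $ab \notin \mathcal{U}_{m+n-1}(\g g)$. In particular $ab \neq 0$, which is what we want.

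The proof is almost entirely a formal consequence of PBW, so there is no real obstacle; the only mild subtlety is keeping the indexing of the filtration clean (making sure $\bar a \cdot \bar b$ truly lands in the $(m+n)$-th graded piece, which follows from part (1) of the proposition since $ab - $ any reordering lies in $\mathcal{U}_{m+n-1}(\g g)$). If one preferred a more self-contained argument, one could bypass $\operatorname{Gr}\mathcal{U}(\g g)$ and instead expand $a$ and $b$ in the PBW basis, isolating the monomials of top degree and using unique factorization of monomials to see that their product cannot cancel — but routing through the associated graded algebra is cleaner.
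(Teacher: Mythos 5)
Your proof is correct and is exactly the argument the paper intends: the corollary is stated as a direct consequence of Poincar\'e--Birkhoff--Witt, and passing to leading terms in the associated graded algebra $Gr\,{\cal U}(\g g)\simeq S(\g g)$, which is a polynomial ring and hence a domain, is the standard route. The one detail worth keeping explicit is the point you already flag — that the class of $ab$ in ${\cal U}_{m+n}(\g g)/{\cal U}_{m+n-1}(\g g)$ is by definition the product $\bar a\cdot\bar b$ in the graded algebra — and with that the argument is complete.
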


\begin{ex}
Let
$E= \bigl(\begin{smallmatrix} 0 & 1 \\ 0 & 0 \end{smallmatrix}\bigr)
\in \mathfrak{sl}(2,\BB C)$,
then $E^n \ne 0$ in ${\cal U}(\mathfrak{sl}(2,\BB C))$ for all $n$.
\end{ex}

\begin{prop}
Assume that $\operatorname{char} \Bbbk = 0$, then the map
$S(\g g) \to {\cal U}(\g g)$ defined on monomials by
$$
\operatorname{Sym}(X_1 \dots X_n)
= \frac1{n!} \sum_{\sigma \in S_n} X_{\sigma(1)} \dots X_{\sigma(n)}
$$
is an isomorphism of $\g z(\g g)$-modules
($\g z(\g g)$ is the center of $\g g$).
\end{prop}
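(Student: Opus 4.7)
The plan is to establish three things in order: that the formula descends to a well-defined linear map $\operatorname{Sym} : S(\g g) \to {\cal U}(\g g)$; that it intertwines the adjoint $\g g$-actions on the two sides; and that it is a vector space bijection, deduced from the Poincar\'e-Birkhoff-Witt theorem via the filtration on ${\cal U}(\g g)$. The initial well-definedness is immediate, since the right-hand side is manifestly symmetric in the inputs and hence factors through $S^n(\g g)$; the hypothesis $\operatorname{char} \Bbbk = 0$ enters here precisely to make $n!$ invertible.

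For equivariance, I would recall that the adjoint action extends uniquely from $\g g$ to $S(\g g)$ by derivations, and that on ${\cal U}(\g g)$ it acts via $\operatorname{ad}(X)(u) = Xu - uX$, which is also a derivation by the Leibniz rule for commutators. Both actions agree on $\g g$ itself, so on a monomial $Y_1 \cdots Y_n$ it suffices to expand:
$$\operatorname{ad}(X) \operatorname{Sym}(Y_1 \cdots Y_n) = \frac{1}{n!} \sum_{\sigma \in S_n} \sum_{i=1}^n Y_{\sigma(1)} \cdots [X, Y_{\sigma(i)}] \cdots Y_{\sigma(n)},$$
and to observe that $\operatorname{Sym}(X \cdot (Y_1 \cdots Y_n)) = \sum_{i=1}^n \operatorname{Sym}(Y_1 \cdots [X,Y_i] \cdots Y_n)$ expands to the same double sum after reindexing by $j = \sigma(i)$.

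For bijectivity I would invoke the first part of the previous proposition, which asserts that permuting the factors of a length-$n$ monomial in ${\cal U}(\g g)$ changes it only modulo ${\cal U}_{n-1}(\g g)$. Hence $\operatorname{Sym}(X_1 \cdots X_n) \equiv X_1 \cdots X_n \pmod{{\cal U}_{n-1}(\g g)}$, so the composition
$$S^n(\g g) \xrightarrow{\operatorname{Sym}} {\cal U}_n(\g g) \twoheadrightarrow {\cal U}_n(\g g)/{\cal U}_{n-1}(\g g)$$
is exactly the PBW isomorphism of the previous theorem, and in particular itself an isomorphism. An induction on $n$ using the five lemma applied to the short exact sequences $0 \to {\cal U}_{n-1}(\g g) \to {\cal U}_n(\g g) \to {\cal U}_n(\g g)/{\cal U}_{n-1}(\g g) \to 0$ then shows that $\operatorname{Sym}$ restricts to an isomorphism onto each ${\cal U}_n(\g g)$, and taking the union over $n$ gives the global bijection.

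The main obstacle is the equivariance step: once one commits to the derivation viewpoint the argument is formal, but the index juggling between the outer sum $\sum_\sigma$ and the inner Leibniz sum $\sum_i$ has to be handled carefully. By contrast, bijectivity reduces essentially automatically to PBW once the induced map on the associated graded has been identified with the PBW isomorphism itself.
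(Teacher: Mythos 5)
Your proof is correct and complete; the paper actually states this proposition without proof, so there is nothing to compare against. Your three steps (symmetry gives well-definedness over a field of characteristic zero, the derivation/reindexing argument gives $\operatorname{ad}$-equivariance, and the congruence $\operatorname{Sym}(X_1\cdots X_n)\equiv X_1\cdots X_n \bmod {\cal U}_{n-1}(\g g)$ identifies the associated graded map with the PBW isomorphism, whence bijectivity by induction on the filtration) constitute the standard argument and are all carried out correctly.
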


\noindent
\underline{Caution}: This is {\em not} an algebra isomorphism
unless $\g g$ is commutative.

\subsection{Geometric Realization of ${\cal U}(\g g)$}  \label{U(g)-geom}

We now assume that the field $\Bbbk$ is $\BB R$ or $\BB C$.
Just as the Lie algebra $\g g$ can be identified with left-invariant
vector fields on $G$, the universal enveloping algebra
${\cal U}(\g g)$ can be identified with
$$
{\cal D}^l = \{ \text{left-invariant differential operators on $G$} \}.
$$
Indeed, ${\cal D}^l$ is an associative algebra over $\BB R$ or $\BB C$ with
unit, it is filtered by the order of differential operators.
We have a map $\g g \to {\cal D}^l_1$ sending an element of Lie algebra into
the corresponding left-invariant vector field, which is a differential
operator of order 1.
By the universality of ${\cal U}(\g g)$, we get an algebra homomorphism
$$
j: {\cal U}(\g g) \to {\cal D}^l
$$
which respects the filtration: $j({\cal U}_n(\g g)) \subset {\cal D}^l_n$,
$n=0,1,2,3,\dots$.
Hence we get a map
$$
j: Gr{\cal U}(\g g) \to Gr{\cal D}^l.
$$
Note that for an operator $D \in {\cal D}^l_n$, its symbol
$\sigma_n(D)$ is left-invariant under the action of $G$ and hence uniquely
determined by the value at $e \in G$, so we get an injective map
$$
\sigma: Gr{\cal D}^l \hookrightarrow S(T_eG)=S(\g g)
$$
which is an algebra homomorphism.

Let us consider the following composition of algebra homomorphisms
$$
S(\g g) \to Gr{\cal U}(\g g) \xrightarrow{j}  Gr{\cal D}^l
\xrightarrow{\sigma} S(\g g),
$$
where the first map is the map whose existence is guaranteed by the
universal property of the associative commutative algebra $S(\g g)$.
This composition is the identity map, since it is the identity map for
elements of degree 1, i.e. elements in $\g g$, which generate the algebra.
Thus we obtain a commutative diagram:
$$
\begin{CD}
Gr{\cal U}(\g g)   @>{j}>>  Gr{\cal D}^l \\
@A{\text{onto}}AA  @VV{\sigma, \text{ injective}}V \\
S(\g g) @= S(\g g)
\end{CD}
$$
Each map in the diagram must be an isomorphism.
In particular, we have proved the Poincar\'e-Birkhoff-Witt Theorem
when the field $\Bbbk$ is $\BB R$ or $\BB C$.
We conclude:

\begin{prop}  \label{UDiso}
We have an isomorphism of filtered algebras ${\cal U}(\g g) \simeq {\cal D}^l$.
Under this isomorphism, the center of ${\cal U}(\g g)$ is identified with
the subalgebra of bi-invariant differential operators on $G$.
\end{prop}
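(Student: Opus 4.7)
The first claim is largely already established by the commutative diagram above. My plan for that part is simply to observe that the diagram forces $j$ to induce an isomorphism $Gr{\cal U}(\g g) \xrightarrow{\sim} Gr{\cal D}^l$, and then to promote this to the filtered statement by a short induction on degree. The base case ${\cal U}_0(\g g) = \Bbbk = {\cal D}^l_0$ is clear, and at level $n$ one compares the short exact sequences $0 \to {\cal U}_{n-1}(\g g) \to {\cal U}_n(\g g) \to Gr_n{\cal U}(\g g) \to 0$ and its ${\cal D}^l$-analogue via the Five-Lemma.

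The substantive content is the identification of the center with bi-invariant operators. The strategy is to realize both subspaces as fixed points of a $G$-action. First, I will argue --- assuming $G$ connected --- that the center of ${\cal U}(\g g)$ coincides with the $\operatorname{Ad}(G)$-invariants: centrality is equivalent to commuting with the generating subspace $\g g$, i.e.\ to being annihilated by every $\operatorname{ad}(X)$, and on a connected group this is equivalent to $\operatorname{Ad}$-invariance. On the geometric side, $R_g$ commutes with every $L_h$, so right translation preserves left-invariance, and the bi-invariant operators are exactly the $R_g^*$-fixed elements of ${\cal D}^l$.

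It then remains to check that $j$ intertwines the two actions, and it suffices to verify this on the generating subspace $\g g$. A direct computation using the commutativity of $L_h$ with $R_{g^{-1}}$, together with the standard identity $(R_{g^{-1}})_* (L_g)_* = \operatorname{Ad}(g)$ at $T_eG$, shows that right translation sends the left-invariant vector field attached to $X \in \g g$ to the one attached to $\operatorname{Ad}(g)X$. Since both $R_g^*$ on ${\cal D}^l$ (because $R_g$ is a diffeomorphism) and $\operatorname{Ad}(g)$ on ${\cal U}(\g g)$ act by algebra automorphisms, agreement on a generating subspace propagates through $j$ to the entire algebra, and the two fixed-point sets match up.

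The main obstacle is really just bookkeeping: fixing conventions so that $R_g$ pairs with $\operatorname{Ad}(g)$ rather than $\operatorname{Ad}(g^{-1})$, and handling the connectedness hypothesis cleanly. If $G$ is not connected, one should restate the claim in infinitesimal form (centrality is equivalent to $\operatorname{ad}(X)u = 0$ for every $X \in \g g$, and bi-invariance to being annihilated by the differentiated right-translation action), and the same diagram chase goes through because under $j$ the $\operatorname{ad}$-action on ${\cal U}(\g g)$ corresponds to differentiated right translation on ${\cal D}^l$.
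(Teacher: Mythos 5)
Your treatment of the algebra isomorphism is essentially the paper's own argument: the paper's commutative diagram already forces $j$ to be an isomorphism on associated graded objects, and your five-lemma induction on the filtration degree is exactly the (routine) step needed to pass from the graded to the filtered statement, which the paper leaves implicit.

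For the identification of the center with the bi-invariant operators the paper offers no proof at all, so here you are supplying an argument rather than reproducing one. Your argument is correct and is the standard one: for connected $G$ the center of ${\cal U}(\g g)$ is the set of $\operatorname{ad}(\g g)$-invariants, hence of $\operatorname{Ad}(G)$-invariants; the bi-invariant operators are the fixed points of right translation inside ${\cal D}^l$; and $j$ intertwines the two $G$-actions because both act by algebra automorphisms and agree on the generating subspace $\g g$, where $(R_{g^{-1}})_*$ sends the left-invariant field of $X$ to that of $\operatorname{Ad}(g)X$. You are right to flag the two bookkeeping points: the $\operatorname{Ad}(g)$ versus $\operatorname{Ad}(g^{-1})$ convention is harmless since either choice has the same fixed-point set, and connectedness genuinely matters --- for disconnected $G$ the bi-invariant operators form only a subalgebra of the center, so the infinitesimal reformulation you propose is the honest statement in that case. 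In the paper's setting $G$ is a connected complexification, so the result as stated is safe.
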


\section{Irreducible Representations of $\mathfrak{sl}(2,\BB R)$}
\label{sl(2,R)-rep-section}

In this section we classify all irreducible representations of the Lie algebra
$\mathfrak{sl}(2,\BB R)$, these are the same as the irreducible representations
of $\mathfrak{sl}(2,\BB C)$.
The results of this section will be used to classify all irreducible
representations of $SL(2,\BB R)$.

\subsection{Preliminaries}

Recall that a {\em representation} of a Lie algebra $\mathfrak g$ is a
complex vector space $V$ (possibly of infinite dimension) together with a map
$\pi: \mathfrak g \to \End(V)$ such that
$$
\pi([X,Y]) = \pi(X) \pi(Y) - \pi(Y) \pi(X), \qquad \forall X,Y \in \mathfrak g.
$$
Note that, unlike the Lie group representation, vector space $V$ is not
required to have any topology whatsoever.
If $\mathfrak g$ is a real Lie algebra, $V$ may be taken a real vector space,
but we prefer to work with complex vector spaces.
Representations of $\mathfrak g$ are often called {\em $\mathfrak g$-modules}.

When the Lie algebra is $\mathfrak{sl}(2,\BB R)$ or $\mathfrak{sl}(2,\BB C)$,
it is convenient to fix the following set of generators:
$$
H = \begin{pmatrix} 1 & 0 \\ 0 & -1 \end{pmatrix}, \qquad
E = \begin{pmatrix} 0 & 1 \\ 0 & 0 \end{pmatrix}, \qquad
F = \begin{pmatrix} 0 & 0 \\ 1 & 0 \end{pmatrix}.
$$
These generators satisfy the following relations:
$$
[H,E]=2E, \qquad [H,F]=-2F, \qquad [E,F]=H.
$$

We classify all irreducible representations $(\pi, V)$ of
$\mathfrak{sl}(2,\BB C)$ that satisfy the following additional assumption:
{\em There exists a $\lambda \in \BB C$ such that the $\lambda$-eigenspace
for $H$ in $V$ has dimension one, i.e.
$$
\pi(H) v_0 = \lambda v_0, \qquad \text{for some $v_0 \in V$, $v_0 \ne 0$,
and $\lambda \in \BB C$}
$$
and
$$
\forall v \in V, \quad \pi(H) v = \lambda v \quad \Longrightarrow \quad
\text{$v$ is a scalar multiple of $v_0$.}
$$}

We will see soon that this assumption forces $V$ to decompose into a direct
sum of eigenspaces for $H$ and each eigenspace to have dimension one.
The $\mathfrak{sl}(2,\BB C)$-modules associated to irreducible
representations of $SL(2,\BB R)$ and all irreducible finite-dimensional
$\mathfrak{sl}(2,\BB C)$-modules satisfy this assumption automatically.

\begin{lem}  \label{eigenvectors}
Let $v \in V$ be an eigenvector for $H$ with eigenvalue $\nu \in \BB C$,
then
\begin{enumerate}
\item
$\pi(E) v \in V$ is either zero or an eigenvector for $H$
with eigenvalue $\nu+2$;
\item
$\pi(F) v \in V$ is either zero or an eigenvector for $H$
with eigenvalue $\nu-2$.
\end{enumerate}
\end{lem}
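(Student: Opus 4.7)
The plan is to compute $\pi(H)$ applied to $\pi(E)v$ and $\pi(F)v$ directly, using the commutation relations $[H,E]=2E$ and $[H,F]=-2F$ together with the fact that $\pi$ is a Lie algebra homomorphism. This is a one-line verification, so the "hard part" is really just keeping track of the signs correctly — there is no genuine obstacle.

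Concretely, for the first part I would write
\[
\pi(H)\pi(E)v = \pi(E)\pi(H)v + \bigl(\pi(H)\pi(E)-\pi(E)\pi(H)\bigr)v
= \pi(E)\pi(H)v + \pi([H,E])v.
\]
Using $\pi(H)v = \nu v$ and $[H,E]=2E$, the right-hand side becomes $\nu\,\pi(E)v + 2\pi(E)v = (\nu+2)\pi(E)v$. Therefore either $\pi(E)v=0$, or $\pi(E)v$ is a nonzero eigenvector of $\pi(H)$ with eigenvalue $\nu+2$, proving (1).

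For (2), the argument is identical with $E$ replaced by $F$ and with the relation $[H,F]=-2F$, giving
\[
\pi(H)\pi(F)v = \pi(F)\pi(H)v + \pi([H,F])v = \nu\,\pi(F)v - 2\pi(F)v = (\nu-2)\pi(F)v,
\]
so $\pi(F)v$ is either zero or an eigenvector with eigenvalue $\nu-2$. No induction, no structure theory, and no use of the one-dimensionality hypothesis on the $\lambda$-eigenspace is needed here; the lemma is purely a consequence of the bracket relations among $H$, $E$, $F$.
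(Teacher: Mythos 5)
Your computation is correct and is exactly the argument the paper gives: apply the identity $\pi(H)\pi(E)=\pi(E)\pi(H)+\pi([H,E])$ together with the bracket relations $[H,E]=2E$, $[H,F]=-2F$. Nothing further is needed.
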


\begin{proof}
We have:
\begin{multline*}
\pi(H) \bigl( \pi(E) v \bigr) = \pi([H,E]) v + \pi(E) \pi(H) v \\
= 2\pi(E) v + \nu \pi(E) v = (\nu+2) \pi(E) v;
\end{multline*}
\begin{multline*}
\pi(H) \bigl( \pi(F) v \bigr) = \pi([H,F]) v + \pi(F) \pi(H) v \\
= -2\pi(F) v + \nu \pi(F) v = (\nu-2) \pi(F) v.
\end{multline*}
\end{proof}

Define
$$
v_k = \bigl( \pi(E) \bigr)^k v_0, \qquad v_{-k} = \bigl( \pi(F) \bigr)^k v_0,
\qquad k=1,2,3,\dots.
$$
Then
$$
\pi(E) v_k = v_{k+1} \quad \text{if $k \ge 0$}, \qquad
\pi(F) v_k = v_{k-1} \quad \text{if $k \le 0$}.
$$

\begin{cor}
Each vector $v_k \in V$, $k\in \BB Z$, is either zero or an eigenvector for $H$
with eigenvalue $\lambda+2k$.
\end{cor}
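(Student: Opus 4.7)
The plan is a straightforward induction on $|k|$, using Lemma~\ref{eigenvectors} as the engine. The base case $k=0$ is the hypothesis: by assumption $\pi(H)v_0 = \lambda v_0$, which equals $\lambda + 2\cdot 0$, so $v_0$ satisfies the conclusion.

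For the inductive step with $k \ge 1$, I would suppose that $v_{k-1}$ is either zero or an eigenvector of $\pi(H)$ with eigenvalue $\lambda + 2(k-1)$. By construction, $v_k = \pi(E) v_{k-1}$. If $v_{k-1}=0$, then clearly $v_k=0$ and the conclusion holds vacuously. Otherwise, Lemma~\ref{eigenvectors}(1) applied to $v_{k-1}$ shows that $v_k$ is either zero or an eigenvector with eigenvalue $\bigl(\lambda + 2(k-1)\bigr) + 2 = \lambda + 2k$, as desired.

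The case $k \le -1$ is entirely symmetric: assume by induction that $v_{k+1}$ is either zero or an $H$-eigenvector with eigenvalue $\lambda + 2(k+1)$. Since $v_k = \pi(F) v_{k+1}$, Lemma~\ref{eigenvectors}(2) gives that $v_k$ is either zero or an eigenvector with eigenvalue $\bigl(\lambda + 2(k+1)\bigr) - 2 = \lambda + 2k$.

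There is no real obstacle here; the corollary is an immediate iteration of the preceding lemma, and the only thing to keep track of is that ``or zero'' propagates through the induction (if a vector in the chain vanishes, so do all the subsequent ones in that direction, and the statement is trivially true for them).
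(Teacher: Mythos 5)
Your proof is correct and is exactly the intended argument: the paper leaves this corollary without proof precisely because it follows by the induction on $|k|$ you describe, iterating Lemma~\ref{eigenvectors} in each direction from $v_0$ and noting that vanishing propagates outward along the chain. Nothing to add.
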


Let us consider the {\em Casimir element}
$$
\Omega =_{\text{def}}  H^2+2EF+2FE \quad \in {\cal U}(\mathfrak{sl}(2,\BB C)).
$$
This element is distinguished by the property that it generates
${\cal ZU}(\mathfrak{sl}(2,\BB C))$
-- the center of the universal enveloping algebra of $\mathfrak{sl}(2,\BB C)$.

\begin{lem}
Let $v_0 \in V$ be an eigenvector of $H$ with eigenvalue $\lambda$ such that
$\dim_{\BB C} \ker (\pi(H)-\lambda) = 1$.
Then
$$
\pi(\Omega)v_0 =  \pi(H)^2v_0 + 2\pi(E)\pi(F)v_0+2\pi(F)\pi(E)v_0 = \mu v_0
$$
for some $\mu \in \BB C$.
\end{lem}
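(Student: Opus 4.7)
The plan is to show that $\pi(\Omega)v_0$ lies in the $\lambda$-eigenspace of $\pi(H)$, and then invoke the one-dimensionality hypothesis to conclude that it must be a scalar multiple of $v_0$.

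First I would break $\pi(\Omega)v_0$ into its three summands $\pi(H)^2 v_0$, $2\pi(E)\pi(F)v_0$, and $2\pi(F)\pi(E)v_0$, and check each is an eigenvector of $\pi(H)$ with eigenvalue $\lambda$ (or is zero). The first is immediate: since $\pi(H)v_0 = \lambda v_0$, we get $\pi(H)^2 v_0 = \lambda^2 v_0$. For the second, apply Lemma \ref{eigenvectors}(2) to $v_0$: the vector $\pi(F)v_0$ is either zero or an eigenvector with eigenvalue $\lambda-2$; applying Lemma \ref{eigenvectors}(1) to it then shows $\pi(E)\pi(F)v_0$ is either zero or an eigenvector with eigenvalue $(\lambda-2)+2 = \lambda$. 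The third is symmetric: $\pi(E)v_0$ has eigenvalue $\lambda+2$ (or is zero), and $\pi(F)\pi(E)v_0$ therefore has eigenvalue $(\lambda+2)-2 = \lambda$ (or is zero).

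Since each summand lies in $\ker(\pi(H)-\lambda)$, so does their sum $\pi(\Omega)v_0$. By the assumption $\dim_{\BB C}\ker(\pi(H)-\lambda) = 1$, this kernel equals $\BB C\cdot v_0$, so there exists $\mu \in \BB C$ with $\pi(\Omega)v_0 = \mu v_0$.

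There is essentially no obstacle here; the content is just the bookkeeping of how $\pi(E)$ and $\pi(F)$ shift $H$-eigenvalues by $\pm 2$, which is exactly what Lemma \ref{eigenvectors} provides. An alternative route would be to use the centrality of $\Omega$ in ${\cal U}(\mathfrak{sl}(2,\BB C))$ to observe that $[\pi(H),\pi(\Omega)]=0$, so $\pi(\Omega)$ preserves each $H$-eigenspace; but since centrality of $\Omega$ has only been asserted and not proved at this point in the text, the direct eigenvalue-shift computation above is preferable.
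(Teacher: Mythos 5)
Your proof is correct and follows essentially the same route as the paper: use Lemma \ref{eigenvectors} to see that each summand of $\pi(\Omega)v_0$ lies in $\ker(\pi(H)-\lambda)$, then invoke the one-dimensionality of that kernel. The only cosmetic difference is that the paper applies the dimension hypothesis to each of $\pi(E)\pi(F)v_0$ and $\pi(F)\pi(E)v_0$ separately, while you apply it once to the sum.
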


\begin{proof}
First of all, $\pi(H)^2v_0 = \lambda^2 v_0$.
By Lemma \ref{eigenvectors}, $\pi(F)v_0$ is either zero or an eigenvector
for $H$ with eigenvalue $\lambda-2$ and $\pi(E)\pi(F)v_0$ is either zero
or an eigenvector for $H$ with eigenvalue $\lambda$.
Then our assumption $\dim_{\BB C} \ker (\pi(H)-\lambda) = 1$ implies
$\pi(E)\pi(F)v_0$ is a scalar multiple of $v_0$.
Similarly, $\pi(F)\pi(E)v_0$ is a scalar multiple of $v_0$.
Therefore, $\pi(\Omega)v_0$ is a scalar multiple of $v_0$.
\end{proof}

\begin{cor}  \label{omega-mu}
Under the assumptions of the Lemma, \\
$\pi(\Omega) v_k = \mu v_k$ for all $k \in \BB Z$.
\end{cor}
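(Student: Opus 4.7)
The plan is to exploit the centrality of $\Omega$ in ${\cal U}(\mathfrak{sl}(2,\BB C))$: since $\Omega$ lies in the center, $\pi(\Omega)$ commutes with every $\pi(X)$, $X \in \mathfrak{sl}(2,\BB C)$, hence in particular with $\pi(E)$ and $\pi(F)$. Granted this, the corollary is essentially immediate. Recalling that $v_k = \pi(E)^k v_0$ for $k \geq 0$ and $v_{-k} = \pi(F)^k v_0$ for $k \geq 0$, I would compute, for $k \geq 0$,
$$
\pi(\Omega) v_k = \pi(\Omega) \pi(E)^k v_0 = \pi(E)^k \pi(\Omega) v_0 = \pi(E)^k (\mu v_0) = \mu v_k,
$$
and symmetrically $\pi(\Omega) v_{-k} = \pi(F)^k \pi(\Omega) v_0 = \mu v_{-k}$. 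This covers all $k \in \BB Z$.

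The only real content is the commutation of $\pi(\Omega)$ with $\pi(E)$ and $\pi(F)$. Since the text only asserts the centrality of $\Omega$ in passing and defers a discussion of its properties, I would make this step self-contained by a direct computation inside ${\cal U}(\mathfrak{sl}(2,\BB C))$, using the defining relations $[H,E]=2E$, $[H,F]=-2F$, $[E,F]=H$ and the derivation rule $[A, BC] = [A,B]C + B[A,C]$. Concretely, I would verify
$$
[H^2, E] = H[H,E] + [H,E]H = 2(HE+EH),
$$
$$
2[EF,E] + 2[FE,E] = 2E[F,E] + 2[F,E]E = -2EH - 2HE,
$$
and add to get $[\Omega, E] = 0$; the computation $[\Omega, F] = 0$ is entirely analogous (and $[\Omega, H] = 0$ is even more immediate), so $\Omega$ is central.

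The main (and only) obstacle is this centrality verification, and it is routine once one writes the brackets carefully. No case analysis on whether $v_k$ vanishes is needed, because if some $v_k = 0$ then $\mu v_k = 0$ as well, so the identity $\pi(\Omega)v_k = \mu v_k$ is trivially preserved along the chain. Thus the corollary reduces cleanly to a one-line application of centrality to the defining expressions for $v_k$.
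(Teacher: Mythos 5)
Your proof is correct and follows the same route as the paper: the paper's proof is just the one-line observation that $\Omega$ is central, hence $\pi(\Omega)$ commutes with $\pi(E)$ and $\pi(F)$, and the conclusion follows from $v_k = \pi(E)^k v_0$, $v_{-k}=\pi(F)^k v_0$. Your explicit bracket computation verifying $[\Omega,E]=[\Omega,F]=0$ is a sound (and correctly executed) addition that the paper omits, since it merely asserts centrality and defers its discussion.
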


\begin{proof}
This is a manifestation of the fact that $\Omega$ is a central element of
${\cal U}(\mathfrak{sl}(2,\BB C))$:
\begin{multline*}
\pi(\Omega) v_k = \pi(\Omega) \bigl( \pi(E) \bigr)^k v_0
= \bigl( \pi(E) \bigr)^k \pi(\Omega) v_0 \\
= \mu \bigl( \pi(E) \bigr)^k v_0 = \mu v_k, \qquad \text{if $k \ge 0$};
\end{multline*}
\begin{multline*}
\pi(\Omega) v_k = \pi(\Omega) \bigl( \pi(F) \bigr)^{-k} v_0
= \bigl( \pi(F) \bigr)^{-k} \pi(\Omega) v_0 \\
= \mu \bigl( \pi(F) \bigr)^{-k} v_0 = \mu v_k, \qquad \text{if $k \le 0$}.
\end{multline*}
\end{proof}

Next we would like to prove that
$$
V = \bigoplus_{k \in \BB Z} \BB C \cdot v_k.
$$
Since $V$ is irreducible, it is sufficient to prove that the vector space
$$
V_0 =_{\text{def}} \bigoplus_{k \in \BB Z} \BB C \cdot v_k
$$
is invariant under the action of $\mathfrak{sl}(2,\BB C)$.
Obviously, $V_0$ is invariant under $H$, and it would suffice to prove that
$\pi(F)v_k$ is proportional to $v_{k-1}$ for $k>0$ and that
$\pi(E)v_k$ is proportional to $v_{k+1}$ for $k<0$.

\begin{lem}  \label{classification-lemma}
Let $v_0 \in V$ be an eigenvector for $H$ with eigenvalue $\lambda$,
define $V_0 = \bigoplus_{k \in \BB Z} \BB C \cdot v_k$ and suppose that
$\pi(\Omega)v_0=\mu v_0$ for some $\mu \in \BB C$.
Then the set of nonzero $v_k$ forms a basis for $V_0$ with the following
relations:
$$
\pi(H) v_k = (\lambda +2k) v_k, \qquad k \in \BB Z,
$$
$$
\pi(E) v_k = v_{k+1} \quad \text{if $k \ge 0$}, \qquad
\pi(F) v_k = v_{k-1} \quad \text{if $k \le 0$},
$$
$$
\pi(E) v_k =
\frac14 \bigl( \mu - (\lambda+2k+2)^2 + 2(\lambda+2k+2) \bigr) v_{k+1}
\quad \text{if $k<0$},
$$
$$
\pi(F) v_k =
\frac14 \bigl( \mu - (\lambda+2k-2)^2 - 2(\lambda+2k-2) \bigr) v_{k-1}
\quad \text{if $k>0$}.
$$
In particular, $V_0$ is $\mathfrak{sl}(2,\BB C)$-invariant and all nontrivial
$H$ eigenspaces of $V_0$ are one-dimensional.
\end{lem}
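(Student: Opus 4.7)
The plan is to deduce all of the stated relations from the definition of the $v_k$, the commutator relations among $H, E, F$, and the Casimir eigenvalue $\mu$ established in Corollary \ref{omega-mu}. The relation $\pi(H) v_k = (\lambda + 2k) v_k$ follows by an immediate induction on $|k|$ from Lemma \ref{eigenvectors}. Because the nonzero $v_k$ lie in distinct $H$-eigenspaces, they are automatically linearly independent, so they form a basis of $V_0$. The identities $\pi(E) v_k = v_{k+1}$ for $k \ge 0$ and $\pi(F) v_k = v_{k-1}$ for $k \le 0$ are true by the definition of $v_k$ as iterated applications of $E$ or $F$ to $v_0$.

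The heart of the proof is computing $\pi(F) v_k$ for $k > 0$ and $\pi(E) v_k$ for $k < 0$, and here the strategy is to rewrite the Casimir using $[E,F] = H$. Specifically, $EF - FE = H$ gives the two useful rearrangements
$$
\Omega \;=\; H^2 + 2EF + 2FE \;=\; H^2 + 2H + 4FE \;=\; H^2 - 2H + 4EF.
$$
For $k > 0$, I would write $v_k = \pi(E) v_{k-1}$ and apply the first rearrangement to $v_{k-1}$: using $\pi(\Omega) v_{k-1} = \mu v_{k-1}$ together with $\pi(H) v_{k-1} = (\lambda + 2k - 2) v_{k-1}$, solving for $\pi(F)\pi(E) v_{k-1}$ immediately yields the claimed scalar multiple of $v_{k-1}$. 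For $k < 0$, the symmetric argument uses $v_k = \pi(F) v_{k+1}$ together with the second rearrangement $\Omega = H^2 - 2H + 4EF$ applied to $v_{k+1}$, producing $\pi(E)\pi(F) v_{k+1}$ as a scalar times $v_{k+1}$.

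Once these four cases are established, the $\mathfrak{sl}(2,\BB C)$-invariance of $V_0$ is immediate: $H$ preserves each summand, while $E$ and $F$ each send $v_k$ to a scalar multiple of $v_{k\pm 1}$, so $V_0$ is closed under the action of the generators and hence of $\mathfrak{sl}(2,\BB C)$. The one-dimensionality of each nontrivial $H$-eigenspace inside $V_0$ follows from the fact that the nonzero $v_k$ live in distinct eigenspaces and span $V_0$ by construction.

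There is no real obstacle here — the whole argument is essentially bookkeeping with the two forms of $\Omega$. The only point requiring care is making sure the indices shift correctly: the formulas are most naturally derived by applying $\Omega$ to $v_{k-1}$ (respectively $v_{k+1}$) rather than to $v_k$ itself, so that one of $\pi(E) v_{k-1}$ or $\pi(F) v_{k+1}$ is known to equal $v_k$ by definition and the Casimir identity then directly expresses the other factor.
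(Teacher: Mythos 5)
Your proposal is correct and follows essentially the same route as the paper's proof: rewrite $\Omega = H^2+2H+4FE = H^2-2H+4EF$, apply the Casimir eigenvalue to $v_{k-1}$ (for $k>0$) and $v_{k+1}$ (for $k<0$), and solve for $\pi(F)\pi(E)$ and $\pi(E)\pi(F)$ respectively; the index bookkeeping you describe matches the paper exactly. Your remark that the nonzero $v_k$ are linearly independent because they lie in distinct $H$-eigenspaces is a correct (and slightly more explicit) justification of the basis claim than the paper gives.
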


\begin{proof}
By Corollary \ref{omega-mu}, $\pi(\Omega)v=\mu v$ for all $v \in V_0$.
We can rewrite $\Omega$ as
$$
\Omega = H^2+2EF+2FE = H^2+2H+4FE = H^2-2H+4EF.
$$
To simplify notations, let us suppose that $u_{\nu} \in V_0$ is such that
$\pi(H)u_{\nu}=\nu u_{\nu}$ for some $\nu \in \BB C$. Of course,
we will let $u_{\nu}=v_k$ with $\nu=\lambda+2k$, $k \in \BB Z$. Then
$$
\pi(E)\pi(F)u_{\nu} = \frac14 \bigl(\pi(\Omega) -\pi(H)^2 + 2\pi(H) \bigr) u_{\nu}
= \frac14 (\mu - \nu^2 +2\nu) u_{\nu},
$$
$$
\pi(F)\pi(E)u_{\nu} = \frac14 \bigl(\pi(\Omega) -\pi(H)^2 - 2\pi(H) \bigr) u_{\nu}
= \frac14 (\mu - \nu^2 -2\nu) u_{\nu}.
$$
If $k<0$, letting $u_{\nu}=v_{k+1}$ with $\nu=\lambda+2k+2$, we get
\begin{multline*}
\pi(E) v_k = \pi(E) \pi(F) v_{k+1} \\
= \frac14 \bigl( \mu - (\lambda+2k+2)^2 + 2(\lambda+2k+2) \bigr) v_{k+1}.
\end{multline*}
If $k>0$, letting $u_{\nu}=v_{k-1}$ with $\nu=\lambda+2k-2$, we get
\begin{multline*}
\pi(F) v_k = \pi(F) \pi(E) v_{k-1} \\
= \frac14 \bigl( \mu - (\lambda+2k-2)^2 - 2(\lambda+2k-2) \bigr) v_{k-1}.
\end{multline*}
\end{proof}

\begin{cor}  \label{equiv-ond-cor}
Let $V$ be an irreducible $\mathfrak{sl}(2,\BB C)$-module.
Then the following conditions are equivalent:
\begin{enumerate}
\item
There exists a $\lambda \in \BB C$ such that
$\dim_{\BB C} \ker (\pi(H)-\lambda) = 1$;
\item
There exist a non-zero $v_0 \in V$ that is simultaneously an eigenvector
for $\pi(H)$ and $\pi(\Omega)$, i.e.
$$
\pi(H)v_0 = \lambda v_0 \quad \text{and} \quad \pi(\Omega) v_0 = \mu v_0
\quad \text{for some $\lambda,\mu \in \BB C$.}
$$
\end{enumerate}
\end{cor}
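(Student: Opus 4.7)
The plan is to prove both implications. The forward direction $(1) \Rightarrow (2)$ is essentially free: given any nonzero $v_0$ spanning $\ker(\pi(H) - \lambda)$, the lemma immediately preceding Corollary~\ref{omega-mu} already asserts $\pi(\Omega) v_0 = \mu v_0$ for some $\mu \in \BB C$, so $v_0$ is automatically a joint eigenvector of $\pi(H)$ and $\pi(\Omega)$.

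The substantive direction is $(2) \Rightarrow (1)$. Given $v_0 \ne 0$ with $\pi(H) v_0 = \lambda v_0$ and $\pi(\Omega) v_0 = \mu v_0$, I would form the usual raising/lowering chain $v_k = \pi(E)^k v_0$ and $v_{-k} = \pi(F)^k v_0$ for $k \geq 1$, and let $V_0 = \sum_k \BB C \cdot v_k$. The key observation is that the statement and proof of Lemma~\ref{classification-lemma} do not actually use the 1-dimensionality assumption anywhere: they require only a joint $(\pi(H),\pi(\Omega))$-eigenvector, because the trick of rewriting $\Omega$ as $H^2 + 2H + 4FE = H^2 - 2H + 4EF$ turns the actions of $\pi(E)\pi(F)$ and $\pi(F)\pi(E)$ on any $H$-eigenvector $u_\nu \in V_0$ into scalar multiples of $u_\nu$ via Corollary~\ref{omega-mu} (whose proof invokes only centrality of $\Omega$). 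Consequently $V_0$ is stable under $\pi(E)$, $\pi(F)$, and $\pi(H)$, hence under all of $\mathfrak{sl}(2,\BB C)$; irreducibility of $V$ and the nonvanishing of $v_0 \in V_0$ then force $V = V_0$.

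To finish, I would observe that by Lemma~\ref{eigenvectors} the nonzero $v_k$ lie in pairwise distinct $H$-eigenspaces (with eigenvalues $\lambda + 2k$), so they are automatically linearly independent. Thus $V = V_0$ decomposes as a direct sum of $H$-eigenspaces, each at most one-dimensional; in particular $\ker(\pi(H)-\lambda) = \BB C \cdot v_0$, which is (1). The only step that genuinely requires care is verifying the claim that Lemma~\ref{classification-lemma} does not covertly rely on the 1-dimensionality hypothesis in its proof --- this is the whole conceptual content of the corollary, and is the step most worth double-checking before writing up.
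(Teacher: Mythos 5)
Your proof is correct and is exactly the argument the paper intends: (1)$\Rightarrow$(2) is the unnumbered lemma preceding Corollary \ref{omega-mu}, and (2)$\Rightarrow$(1) follows from Lemma \ref{classification-lemma} (whose hypotheses are precisely a joint $(\pi(H),\pi(\Omega))$-eigenvector) together with irreducibility. Your observation that Corollary \ref{omega-mu} and Lemma \ref{classification-lemma} use only the centrality of $\Omega$, not the one-dimensionality of the eigenspace, is the right point to check, and it does hold.
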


(Some authors start with one condition and some with the other.
Now we know that they are equivalent.)
In the context of representation theory, it is customary to call the
eigenvalues of $\pi(H)$ {\em weights} of $V$ and the eigenspaces of $\pi(H)$
{\em weight spaces} of $V$.
We summarize the results of this subsection as follows:

\begin{prop}  \label{classification-prop}
Let $V$ be an irreducible $\mathfrak{sl}(2,\BB C)$-module with such that
one of the two equivalent conditions from Corollary \ref{equiv-ond-cor}
is satisfied. Then $V$ is a direct sum of weight spaces,
all weight spaces of $V$ are one-dimensional, and the weights of $V$ are
of the form $\lambda+2k$ with $k$ ranging over an ``interval of integers''
$$
\BB Z \cap [a,b], \qquad \BB Z \cap [a,\infty), \qquad \BB Z \cap (-\infty, b]
\qquad \text{or} \qquad \BB Z.
$$
Moreover, there exists a $\mu \in \BB C$ such that $\pi(\Omega)v=\mu v$
for all $v \in V$.
\end{prop}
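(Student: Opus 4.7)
The plan is to reduce the proposition directly to Lemma \ref{classification-lemma} by invoking condition (2) of Corollary \ref{equiv-ond-cor}, which hands us a vector $v_0 \in V$ that is simultaneously an eigenvector of $\pi(H)$ with eigenvalue $\lambda$ and of $\pi(\Omega)$ with some eigenvalue $\mu \in \BB C$. Lemma \ref{classification-lemma} will then produce an explicit $\mathfrak{sl}(2,\BB C)$-invariant subspace $V_0 = \bigoplus_{k \in \BB Z} \BB C \cdot v_k$ of $V$, and irreducibility will do the rest.

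Concretely, I would first invoke the equivalence in Corollary \ref{equiv-ond-cor} (condition (2) is the one directly matched to Lemma \ref{classification-lemma}). Applying that lemma to $v_0$ gives a nonzero $\mathfrak{sl}(2,\BB C)$-invariant subspace $V_0 \subset V$ whose nonzero $v_k$ form a basis, with $\pi(H) v_k = (\lambda + 2k) v_k$, and in which every nontrivial $H$-eigenspace is one-dimensional. Since $V$ is irreducible and $V_0$ contains the nonzero vector $v_0$, we must have $V_0 = V$. This immediately yields the weight-space decomposition $V = \bigoplus_{k : v_k \neq 0} \BB C \cdot v_k$, with one-dimensional weight spaces at the weights $\lambda + 2k$.

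Next I would pin down the support $S = \{ k \in \BB Z : v_k \neq 0 \}$. By construction $v_k = \pi(E)^k v_0$ for $k \geq 0$, so the vanishing of some $v_{k_0}$ with $k_0 > 0$ forces $v_k = 0$ for all $k \geq k_0$; the identity $v_{-k} = \pi(F)^k v_0$ yields the symmetric statement on the negative side. Hence $S \cap \BB Z_{\geq 0}$ is either all of $\BB Z_{\geq 0}$ or an initial segment $\{0, 1, \ldots, N\}$, and similarly $S \cap \BB Z_{\leq 0}$ is either all of $\BB Z_{\leq 0}$ or $\{-M, \ldots, 0\}$; the four combinations are exactly the four interval types in the statement. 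Finally, Corollary \ref{omega-mu} gives $\pi(\Omega) v_k = \mu v_k$ for every $k \in \BB Z$, so $\pi(\Omega)$ acts as the scalar $\mu$ on the spanning set $\{v_k\}$ and hence on all of $V$.

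There is no real obstacle here: essentially every assertion of the proposition has been packaged into a preceding lemma or corollary, so the proof is mostly an act of assembly. The only step that is not purely verbatim from what precedes is the interval argument, whose one small subtlety is that the $v_k$ are built by iterating a single operator ($E$ on the positive side, $F$ on the negative), so that once a zero appears it persists under further applications of that same operator and cannot be revived further along.
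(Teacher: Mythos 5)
Your proposal is correct and follows exactly the route the paper intends: the proposition is stated there as a summary of the preceding results, and its (implicit) proof is precisely the assembly you describe -- apply Lemma \ref{classification-lemma} to the simultaneous eigenvector supplied by condition (2) of Corollary \ref{equiv-ond-cor}, use irreducibility to conclude $V_0=V$, observe that the support of $\{v_k\}$ must be an interval because each side is generated by iterating a single operator, and quote Corollary \ref{omega-mu} for the action of $\Omega$. No gaps.
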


\subsection{Classification of Irreducible $\mathfrak{sl}(2,\BB C)$-Modules}

First, we consider the case when the irreducible
$\mathfrak{sl}(2,\BB C)$-module $V$ is finite-dimensional.

\begin{prop}  \label{finite-dim}
Let $V$ be an irreducible $\mathfrak{sl}(2,\BB C)$-module of dimension $d+1$.
Then $V$ has a basis $\{v_0,v_1,\dots,v_d\}$ such that
\begin{align*}
\pi(H) v_k &= (-d+2k) v_k, \quad 0 \le k \le d, \\
\pi(E) v_k &= v_{k+1}, \quad 0 \le k < d, \qquad \pi(E) v_d =0, \\
\pi(F) v_k &= k(d+1-k) v_{k-1}, \quad 0 < k \le d, \qquad \pi(F) v_0 =0.
\end{align*}
Moreover,
$$
\pi(\Omega) v = d(d+2) v, \qquad \forall v \in V,
$$
and $V$ is determined up to isomorphism by its dimension.
\end{prop}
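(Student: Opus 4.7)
The plan is to build an explicit basis of $V$ starting from a lowest weight vector. Since $V$ is finite-dimensional and $\BB C$ is algebraically closed, $\pi(H)$ has an eigenvector $w$. By Lemma \ref{eigenvectors}, the vectors $\pi(F)^j w$ are $\pi(H)$-eigenvectors of strictly decreasing weights (when nonzero), hence linearly independent, so the chain must terminate. Its last nonzero entry supplies a vector $v_0 \in V$ with $\pi(F) v_0 = 0$ and $\pi(H) v_0 = \lambda v_0$ for some $\lambda \in \BB C$.

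I then set $v_k = \pi(E)^k v_0$ for $k \ge 0$; Lemma \ref{eigenvectors} yields $\pi(H) v_k = (\lambda + 2k) v_k$ whenever $v_k \ne 0$. Writing $\pi(F) v_k = \beta_k v_{k-1}$, the relation $[E,F]=H$ rearranged as $\pi(F)\pi(E) = \pi(E)\pi(F) - \pi(H)$ and applied to $v_k$ produces the recursion $\beta_{k+1} = \beta_k - \lambda - 2k$ with $\beta_0 = 0$, whose closed-form solution is $\beta_k = -k(\lambda + k - 1)$.

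Finite dimensionality now pins down $\lambda$. Letting $d$ be the largest integer with $v_d \ne 0$, so $v_{d+1} = \pi(E) v_d = 0$, applying $\pi(F)$ gives $0 = \pi(F) v_{d+1} = \beta_{d+1} v_d = -(d+1)(\lambda + d) v_d$, and $v_d \ne 0$ forces $\lambda = -d$. Substituting back recovers $\beta_k = k(d+1-k)$ and the displayed weight formula $\pi(H) v_k = (-d+2k) v_k$. The subspace $\Span\{v_0, \ldots, v_d\}$ is then $\mathfrak{sl}(2,\BB C)$-stable and nonzero, so irreducibility forces it to be all of $V$, giving $\dim V = d+1$. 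For the Casimir, a direct computation on $v_0$ using $\Omega = H^2 + 2EF + 2FE$ together with $\pi(F) v_0 = 0$ yields $\pi(\Omega) v_0 = (\lambda^2 + 2\beta_1) v_0 = (d^2 + 2d) v_0$, and Corollary \ref{omega-mu} (centrality of $\Omega$) propagates this to every $v \in V$. Uniqueness up to isomorphism is immediate: any two irreducible $\mathfrak{sl}(2,\BB C)$-modules of dimension $d+1$ admit bases with identical structure constants, so the obvious basis-to-basis map is an $\mathfrak{sl}(2,\BB C)$-equivariant isomorphism.

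The one delicate step is the derivation of $\lambda = -d$ from the termination condition $v_{d+1} = 0$, which is what pins the whole structure to an integer parameter; the remaining content is bookkeeping around that pivot.
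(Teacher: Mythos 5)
Your proof is correct, but it takes a genuinely different route from the paper's. The paper first invokes Schur's Lemma to know that $\pi(\Omega)$ acts by a scalar $\mu$, then feeds that into Lemma \ref{classification-lemma} to obtain the formula $\pi(F)v_k = \tfrac14\bigl(\mu-(\lambda+2k-2)^2-2(\lambda+2k-2)\bigr)v_{k-1}$, and finally determines the two unknowns $\lambda,\mu$ from the two boundary conditions $\pi(F)v_0=0$ and $\pi(E)v_d=0$, solving $\lambda(\lambda-2)=(\lambda+2d)(\lambda+2d+2)$. You bypass the Casimir entirely in the structural part: starting from a lowest weight vector (obtained by running the $\pi(F)$-string down from an arbitrary $H$-eigenvector rather than by choosing the eigenvalue of least real part, both of which are fine), you derive the coefficients $\beta_k$ directly from $[E,F]=H$ via the first-order recursion $\beta_{k+1}=\beta_k-\lambda-2k$, leaving only the single unknown $\lambda$, which the single termination condition $\beta_{d+1}=0$ then pins to $-d$; the Casimir eigenvalue $d(d+2)$ falls out afterwards as a computation on $v_0$ propagated by Corollary \ref{omega-mu}. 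Your argument is the classical, self-contained one and is arguably more elementary; the cost is that the recursion trick relies on having an extremal vector, so it does not extend to the irreducible principal series modules $P(\lambda,\mu)$, whereas the paper's Casimir-based Lemma \ref{classification-lemma} is deliberately set up to handle all four families (finite-dimensional, lowest weight, highest weight, principal series) uniformly, with $\mu$ as a genuine second parameter in the last case. One presentational remark: the step ``writing $\pi(F)v_k=\beta_k v_{k-1}$'' deserves an explicit word that this proportionality is itself proved by the same induction that yields the recursion (base case $\pi(F)v_0=0$), but the argument as you run it is sound.
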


We denote the irreducible $\mathfrak{sl}(2,\BB C)$-module of dimension $d+1$
by $F_d$.

\begin{proof}
The operator $\pi(\Omega)$ commutes with all $\pi(X)$,
$X \in \mathfrak{sl}(2,\BB C)$. Hence, by Schur's Lemma, there exists
a $\mu \in \BB C$ such that $\pi(\Omega) v = \mu v$ for all $v \in V$.
(This is one of the places where we use the finite-dimensionality of $V$.)
Since $V$ is finite-dimensional, $\pi(H)$ has at least one eigenvalue.
Let $\lambda \in \BB C$ be an eigenvalue for $\pi(H)$ with the least
real part and $v_0 \in V$ a corresponding eigenvector.
By Lemma \ref{eigenvectors}, $\pi(F)v_0=0$.
By Lemma \ref{classification-lemma}, the set of nonzero
$v_k= \bigl( \pi(E) \bigr)^k v_0$, $k=0,1,2,3,\dots$, form a basis for $V$.
Since $\dim V = d+1$, $\{v_0,\dots,v_d\}$ is a basis such that
$$
\pi(E) v_k = v_{k+1}, \quad 0 \le k < d, \qquad \pi(E) v_d =0
$$
and
$$
\pi(H) v_k = (\lambda+2k) v_k, \quad 0 \le k \le d.
$$
Since $\pi(F)v_0=0$,
\begin{multline*}
\mu v_0 = \pi(\Omega)v_0 = \bigl( \pi(H)^2 -2\pi(H)+4\pi(E)\pi(F) \bigr) v_0 \\
= (\lambda^2-2\lambda)v_0 = \lambda(\lambda-2)v_0
\end{multline*}
and $\mu=\lambda(\lambda-2)$.
Similarly, from $\pi(E)v_d=0$ we obtain
\begin{multline*}
\mu v_d = \pi(\Omega)v_d = \bigl( \pi(H)^2 +2\pi(H)+4\pi(F)\pi(E) \bigr) v_d \\
= ((\lambda+2d)^2+2(\lambda+2d))v_d = (\lambda+2d)(\lambda+2d+2)v_d
\end{multline*}
and $\mu=(\lambda+2d)(\lambda+2d+2)$.
Solving
$$
\lambda(\lambda-2) = (\lambda+2d)(\lambda+2d+2)
$$
we obtain
$$
\lambda = -d \qquad \text{and} \qquad \mu = \lambda(\lambda-2) = d(d+2).
$$
The property
$$
\pi(F) v_k = k(d+1-k) v_{k-1}, \qquad 0 < k \le d,
$$
follows from Lemma \ref{classification-lemma}:
\begin{multline*}
\frac14 \bigl( \mu - (\lambda+2k-2)^2 - 2(\lambda+2k-2) \bigr)  \\
= \frac14 \bigl( d(d+2) - (-d+2k-2)^2 - 2(-d+2k-2) \bigr)  \\
= \frac14 \bigl( d^2+2d - d^2 - (2k-2)^2 + 4dk -4d +2d - 2(2k-2) \bigr)  \\
= -(k-1)^2 + dk -k+1
= k(d+1-k).
\end{multline*}
This finishes our proof of the theorem.
\end{proof}

\begin{rem}
It is also true that any indecomposable finite-dimensional
$\mathfrak{sl}(2,\BB C)$-module is irreducible.
Hence every finite-dimensional $\mathfrak{sl}(2,\BB C)$-module
is a direct sum of irreducible submodules.
See the book \cite{HT} for details.
\end{rem}

\begin{ex}  \label{F_d-construction}
The irreducible finite-dimensional representations of $SL(2,\BB C)$ and
$\mathfrak{sl}(2,\BB C)$ can be constructed as follows.
Let $SL(2,\BB C)$ act on $\BB C^2$ by matrix multiplication:
$$
\begin{pmatrix} a & b \\ c & d \end{pmatrix}
\begin{pmatrix} z_1 \\ z_2 \end{pmatrix}
= \begin{pmatrix} az_1+bz_2 \\ cz_1+dz_2 \end{pmatrix}, \:
\begin{pmatrix} a & b \\ c & d \end{pmatrix} \in SL(2,\BB C), \:
\begin{pmatrix} z_1 \\ z_2 \end{pmatrix} \in \BB C^2.
$$
Let
$$
V = \{ \text{polynomial functions on $\BB C^2$} \},
$$
$$
V_d = \{ \text{homogeneous polynomials on $\BB C^2$ of degree $d$} \},
$$
and define a representation $\pi$ of $SL(2,\BB C)$ in $V$ by
$$
\bigl( \pi(g) f \bigr)(z) = f(g^{-1} \cdot z),
\quad g \in SL(2,\BB C), \: f \in V, \: z \in \BB C^2.
$$
It is easy to see that the subspaces $V_d$ remain invariant under this action.
Thus
$$
(\pi,V) = \bigoplus_{d \ge 0} (\pi_d,V_d),
$$
where $\pi_d$ denotes the restriction of $\pi$ to $V_d$. Differentiating,
one obtains representations $(\pi_d,V_d)$ of $\mathfrak{sl}(2,\BB C)$.
Then each $\mathfrak{sl}(2,\BB C)$-module $V_d$ has dimension $d+1$,
is irreducible, and hence isomorphic to $F_d$ (homework).
\end{ex}

\begin{rem}
One can extend this construction to any closed subgroup of
$G \subset GL(n,\BB C)$.
However, the resulting representations are not necessarily irreducible.
If $G$ is a complex analytic subgroup of $GL(n,\BB C)$ and the Lie algebra
of $G$ is simple, each irreducible representation of $G$
appears as a subrepresentation of the space of homogeneous polynomials on
$\BB C^n$ of degree $d$, for some $d$.
\end{rem}

Now we look at the irreducible infinite-dimensional
$\mathfrak{sl}(2,\BB C)$-modules. By Proposition \ref{classification-prop},
the weights of $V$ are of the form $\lambda+2k$ with $k$ ranging over an
infinite ``interval of integers''
$$
\BB Z \cap [a,\infty), \qquad \BB Z \cap (-\infty, b]
\qquad \text{or} \qquad \BB Z.
$$
In the first case we call $V$ a {\em lowest weight module} and in the second
-- a {\em highest weight module}.
The irreducible modules of the last type do not have a universally accepted
name, but one could call them {\em irreducible principal series modules}.
First we classify the irreducible lowest weight modules.

\begin{prop}
Let $V$ be an irreducible infinite-dimensional lowest weight
$\mathfrak{sl}(2,\BB C)$-module.
Then $V$ has a basis of $H$-eigenvectors $\{v_0,v_1,v_2,\dots \}$
and a $\lambda \in \BB C$, $\lambda \ne 0, -1, -2, \dots$, such that
\begin{align*}
\pi(H) v_k &= (\lambda+2k) v_k, \quad k \ge 0, \\
\pi(E) v_k &= v_{k+1}, \quad k \ge 0, \\
\pi(F) v_k &= -k(\lambda+k-1) v_{k-1}, \quad k > 0 , \qquad \pi(F) v_0 =0.
\end{align*}
Moreover,
$$
\pi(\Omega) v = \lambda(\lambda-2) v, \qquad \forall v \in V,
$$
and $V$ is determined up to isomorphism by its lowest weight $\lambda$.
\end{prop}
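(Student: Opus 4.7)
The plan is to parallel the finite-dimensional proof (Proposition \ref{finite-dim}), using Proposition \ref{classification-prop} to reduce to bookkeeping about a single lowest weight vector, and then to read off all structure constants from Lemma \ref{classification-lemma}.

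First I would invoke Proposition \ref{classification-prop}: since $V$ is an irreducible lowest weight module, its weights form an infinite interval $\lambda + 2\BB Z_{\ge 0}$ with each weight space one-dimensional. Let $v_0$ span the lowest weight space, so $\pi(H)v_0 = \lambda v_0$. Because $\pi(F)v_0$ would otherwise have weight $\lambda-2$ (Lemma \ref{eigenvectors}), contradicting minimality, we must have $\pi(F)v_0 = 0$. Set $v_k = \pi(E)^k v_0$ for $k \ge 0$; by Proposition \ref{classification-prop} these are all nonzero and form a basis of $V$, with $\pi(H) v_k = (\lambda + 2k) v_k$ and $\pi(E) v_k = v_{k+1}$ built in.

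Next I would pin down the Casimir eigenvalue. Rewriting $\Omega = H^2 - 2H + 4EF$ and applying to $v_0$, the vanishing $\pi(F) v_0 = 0$ gives
\[
\pi(\Omega) v_0 = (\lambda^2 - 2\lambda) v_0,
\]
so $\mu = \lambda(\lambda-2)$. Substituting this into the formula of Lemma \ref{classification-lemma} for $\pi(F) v_k$,
\[
\pi(F) v_k = \tfrac{1}{4}\bigl(\lambda(\lambda-2) - (\lambda+2k-2)^2 - 2(\lambda+2k-2)\bigr) v_{k-1},
\]
a routine algebraic simplification collapses the bracket to $-4k(\lambda+k-1)$, producing the advertised $-k(\lambda+k-1) v_{k-1}$. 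The same Corollary \ref{omega-mu} then gives $\pi(\Omega)v = \lambda(\lambda-2)v$ on all of $V$.

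The subtle step, and the one I expect to be the main obstacle, is characterizing precisely which $\lambda$ arise. The span $W_N = \bigoplus_{k \ge N} \BB C v_k$ is automatically $H$- and $E$-stable, so by irreducibility it is not $F$-stable, meaning $\pi(F) v_N = -N(\lambda + N - 1) v_{N-1}$ must be nonzero for every $N \ge 1$. This forces $\lambda + N - 1 \ne 0$ for all $N \ge 1$, equivalently $\lambda \notin \{0, -1, -2, \dots\}$. Conversely, when $\lambda$ avoids these values, one checks directly that the formulas of the proposition define a well-posed $\mathfrak{sl}(2,\BB C)$-action on $\bigoplus_{k \ge 0} \BB C v_k$ (verifying the three bracket relations on basis vectors) and that every nonzero submodule, being a sum of weight spaces, contains some $v_k$ and hence $v_0$ (via $F$, using the nonvanishing coefficients) and hence all $v_j$ (via $E$); so the module is irreducible.

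Finally, isomorphism type: two such modules with the same $\lambda$ have the prescribed action of $H$, $E$, $F$ on their respective bases $\{v_k\}$, $\{v'_k\}$ written by identical scalar formulas, so $v_k \mapsto v'_k$ extends to an $\mathfrak{sl}(2,\BB C)$-module isomorphism, proving that the lowest weight $\lambda$ is a complete invariant.
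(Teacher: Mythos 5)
Your proof is correct and follows essentially the same route as the paper's: obtain $\pi(F)v_0=0$ from minimality of the weight, deduce $\mu=\lambda(\lambda-2)$, read the $F$-action off Lemma \ref{classification-lemma}, and exclude $\lambda\in\{0,-1,-2,\dots\}$ by exhibiting the proper submodule $\bigoplus_{k\ge N}\BB C\, v_k$ that would otherwise arise. The converse (existence) and the explicit isomorphism check you add are harmless extras that the paper defers to Theorem \ref{classification-thm} or leaves implicit.
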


We denote the irreducible $\mathfrak{sl}(2,\BB C)$-module of lowest weight
$\lambda$ by $V_{\lambda}$.

\begin{proof}
Let $\lambda \in \BB C$ be an eigenvalue for $\pi(H)$ with the least
real part and $v_0 \in V$ a corresponding eigenvector.
By Lemma \ref{eigenvectors}, $\pi(F)v_0=0$.
By Lemma \ref{classification-lemma}, the set of nonzero
$v_k= \bigl( \pi(E) \bigr)^k v_0$, $k=0,1,2,3,\dots$, form a basis for $V$.
By assumption, $V$ is infinite-dimensional, hence we get a basis
$\{v_0,v_1,v_2,\dots\}$ such that
$$
\pi(E) v_k = v_{k+1}, \quad k \ge 0, \qquad \pi(F)v_0=0
$$
and
$$
\pi(H) v_k = (\lambda+2k) v_k, \quad k \ge 0.
$$
Since $\pi(F)v_0=0$,
\begin{multline*}
\pi(\Omega)v_0 = \bigl( \pi(H)^2 -2\pi(H)+4\pi(E)\pi(F) \bigr) v_0 \\
= (\lambda^2-2\lambda)v_0 = \lambda(\lambda-2)v_0
\end{multline*}
and it follows that $\pi(\Omega) v = \lambda(\lambda-2) v$, for all $v \in V$.
The property
$$
\pi(F) v_k = -k(\lambda+k-1) v_{k-1}, \qquad k > 0,
$$
follows from Lemma \ref{classification-lemma}:
\begin{multline*}
\frac14 \bigl( \mu - (\lambda+2k-2)^2 - 2(\lambda+2k-2) \bigr)  \\
= \frac14
\bigl( \lambda(\lambda-2) - (\lambda+2k-2)^2 - 2(\lambda+2k-2) \bigr)  \\
= \frac14 \bigl( \lambda^2 - 2\lambda - \lambda^2 - (2k-2)^2 - 4k\lambda
+ 4\lambda - 2\lambda - 2(2k-2) \bigr)  \\
= -(k-1)^2 - k\lambda -k+1
= -k(\lambda+k-1).
\end{multline*}

It remains to show $\lambda \ne 0, -1, -2, \dots$.
If $\lambda$ is an integer and $\lambda \le 0$, consider $k=-\lambda+1$, then
$$
\pi(F) v_k = -k(\lambda+k-1) v_{k-1} =0,
$$
so $\{v_{-\lambda+1}, v_{-\lambda+2}, v_{-\lambda+3}, \dots \}$ form a basis
for a proper $\mathfrak{sl}(2,\BB C)$-submodule of $V$
(isomorphic to $V_{-\lambda+2}$). This contradicts $V$ being irreducible.
\end{proof}

The classification of irreducible highest weight modules is similar.

\begin{prop}
Let $V$ be an irreducible infinite-dimensional highest weight
$\mathfrak{sl}(2,\BB C)$-module.
Then $V$ has a basis of $H$-eigenvectors $\{\bar v_0,\bar v_1,\bar v_2,\dots \}$
and a $\lambda \in \BB C$, $\lambda \ne 0, 1, 2, \dots$, such that
\begin{align*}
\pi(H) \bar v_k &= (\lambda-2k) \bar v_k, \quad k \ge 0, \\
\pi(F) \bar v_k &= \bar v_{k+1}, \quad k \ge 0, \\
\pi(E) \bar v_k &= k(\lambda-k+1) \bar v_{k-1}, \quad k > 0, \qquad
\pi(E) \bar v_0 =0.
\end{align*}
Moreover,
$$
\pi(\Omega) v = \lambda(\lambda+2) v, \qquad \forall v \in V,
$$
and $V$ is determined up to isomorphism by its highest weight $\lambda$.
\end{prop}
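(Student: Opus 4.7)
The plan is to mirror the preceding lowest-weight classification with the roles of $E$ and $F$ interchanged. First, by Proposition \ref{classification-prop}, $V$ decomposes as a direct sum of one-dimensional weight spaces, a single scalar $\mu \in \BB C$ gives the Casimir action on all of $V$, and the set of weights has the form $\lambda' + 2k$ with $k$ in $\BB Z \cap (-\infty, b]$. I take $\lambda$ to be the largest (``highest'') weight and pick a nonzero $H$-eigenvector $\bar v_0$ of weight $\lambda$; by Lemma \ref{eigenvectors} the vector $\pi(E)\bar v_0$ is either $0$ or has weight $\lambda+2$, and maximality of $\lambda$ forces $\pi(E)\bar v_0 = 0$. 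I then set $\bar v_k = \pi(F)^k \bar v_0$ for $k \ge 0$, which by Lemma \ref{eigenvectors} are $H$-eigenvectors of weight $\lambda - 2k$ (or zero).

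To pin down $\mu$, I use the identity $\Omega = H^2 + 2H + 4FE$ together with $\pi(E)\bar v_0 = 0$ to obtain $\pi(\Omega)\bar v_0 = (\lambda^2 + 2\lambda)\bar v_0$, so $\mu = \lambda(\lambda+2)$ throughout $V$ by Corollary \ref{omega-mu}. To derive the $E$-action for $k > 0$, I apply the alternative form $\Omega = H^2 - 2H + 4EF$ to $\bar v_{k-1}$ (which has weight $\lambda - 2k + 2$) and isolate $\pi(E)\pi(F)\bar v_{k-1} = \pi(E)\bar v_k$; this gives $\pi(E)\bar v_k = \tfrac14\bigl(\mu - (\lambda-2k+2)^2 + 2(\lambda-2k+2)\bigr)\bar v_{k-1}$, and after substituting $\mu = \lambda(\lambda+2)$ the arithmetic collapses to $k(\lambda-k+1)\bar v_{k-1}$. (Equivalently, setting $\bar v_k = v_{-k}$ in the notation of Lemma \ref{classification-lemma} reduces this to the ``$k < 0$'' case of that lemma.) The span $\bigoplus_{k\ge 0}\BB C\cdot \bar v_k$ is then closed under $H$, $E$, and $F$, so by irreducibility it equals $V$ and the nonzero $\bar v_k$ form a basis.

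Finally I rule out $\lambda \in \{0,1,2,\dots\}$: for such $\lambda$, the coefficient $k(\lambda-k+1)$ vanishes at $k = \lambda+1$, so $\pi(E)\bar v_{\lambda+1} = 0$, and the subspace spanned by $\{\bar v_{\lambda+1}, \bar v_{\lambda+2}, \dots\}$ is then $\mathfrak{sl}(2,\BB C)$-stable, proper, and nonzero, contradicting irreducibility. Uniqueness up to isomorphism given the highest weight $\lambda$ is immediate from the explicit formulas. The only real obstacle is bookkeeping --- keeping signs and indices aligned when passing between the two rewritings $\Omega = H^2 + 2H + 4FE$ and $\Omega = H^2 - 2H + 4EF$ --- since conceptually the argument is identical to the lowest-weight case under the symmetry $E \leftrightarrow F$, $H \leftrightarrow -H$.
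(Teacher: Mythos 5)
Your proof is correct and is exactly the argument the paper intends: the text omits the proof of this proposition with the remark that it is ``similar'' to the lowest-weight case, and your write-up is precisely that mirrored argument (highest weight vector killed by $E$, $\bar v_k = \pi(F)^k\bar v_0$, $\mu=\lambda(\lambda+2)$ from $\Omega=H^2+2H+4FE$, the $E$-action from Lemma \ref{classification-lemma}, and exclusion of $\lambda\in\{0,1,2,\dots\}$ via the vanishing coefficient at $k=\lambda+1$). The arithmetic $\tfrac14\bigl(\mu-(\lambda-2k+2)^2+2(\lambda-2k+2)\bigr)=k(\lambda-k+1)$ checks out.
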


We denote the irreducible infinite-dimensional $\mathfrak{sl}(2,\BB C)$-module
of highest weight $\lambda$ by $\bar V_{\lambda}$.
Finally, we turn our attention to what we call
the irreducible principal series modules.

\begin{prop}  \label{P-class}
Let $V$ be an irreducible infinite-dimensional $\mathfrak{sl}(2,\BB C)$-module
that satisfies one of the two equivalent conditions from
Corollary \ref{equiv-ond-cor} and is not a highest nor lowest weight module.
Let $v_0 \in V$ be an eigenvector for $H$ with eigenvalue $\lambda$,
then there is a $\mu \in \BB C$ such that
$$
\pi(\Omega) v = \mu v, \qquad \forall v \in V,
$$
and $V$ has a basis of eigenvectors
$\{ \dots, v_{-2},v_{-1},v_0,v_1,v_2,\dots \}$ such that
$$
\pi(H) v_k = (\lambda +2k) v_k, \qquad k \in \BB Z,
$$
$$
\pi(E) v_k = v_{k+1} \quad \text{if $k \ge 0$}, \qquad
\pi(F) v_k = v_{k-1} \quad \text{if $k \le 0$},
$$
$$
\pi(E) v_k =
\frac14 \bigl( \mu - (\lambda+2k+1)^2 + 1 \bigr) v_{k+1}
\quad \text{if $k<0$},
$$
$$
\pi(F) v_k =
\frac14 \bigl( \mu - (\lambda+2k-1)^2 +1 \bigr) v_{k-1}
\quad \text{if $k>0$}.
$$
The constants $\lambda, \mu \in \BB C$ are subject to the constraint
\begin{equation}  \label{constraint}
\lambda \pm \sqrt{\mu+1} \ne \text{odd integer}.
\end{equation}

We denote such an irreducible infinite-dimensional
$\mathfrak{sl}(2,\BB C)$-module by $P(\lambda, \mu)$.
Two $\mathfrak{sl}(2,\BB C)$-modules $P(\lambda, \mu)$ and $P(\lambda', \mu')$
are isomorphic if and only if $\mu'=\mu$ and $\lambda'=\lambda+2k$,
for some $k \in \BB Z$.
\end{prop}

\begin{proof}
The statements about the basis and $\mathfrak{sl}(2,\BB C)$-action follow
from Lemma \ref{classification-lemma}.
It is easy to see that $V$ is irreducible if and only if
$$
\pi(E) v_k \ne 0 \quad \forall k \in \BB Z
\qquad \text{and} \qquad \pi(F) v_k \ne 0 \quad \forall k \in \BB Z,
$$
hence
$$
\mu+1 \ne (\lambda+ \text{odd integer})^2
$$
and the constraint (\ref{constraint}) follows.

If two $\mathfrak{sl}(2,\BB C)$-modules $P(\lambda, \mu)$ and
$P(\lambda', \mu')$ are isomorphic, then $\mu'=\mu$ and $\lambda'=\lambda+2k$,
which follows by examinations of the weights.
Conversely, if $\mu'=\mu$ and $\lambda'=\lambda+2k$,
the module $P(\lambda', \mu')$ has $\lambda$ as a weight, so let
$v_0 \in P(\lambda', \mu')$ be an eigenvector for $H$ with eigenvalue $\lambda$,
then Lemma \ref{classification-lemma} implies
$P(\lambda, \mu) \simeq P(\lambda', \mu')$.
\end{proof}

We summarize the results of this section as follows:

\begin{thm}  \label{classification-thm}
There are precisely four types of irreducible $\mathfrak{sl}(2,\BB C)$-modules
such that one of the two equivalent conditions from
Corollary \ref{equiv-ond-cor} is satisfied:
\begin{itemize}
\item
Finite-dimensional modules $F_d$ of dimension $d+1$ with weights
$$
\begin{matrix}
[\circ & & \circ & & \dots & & \circ & & \circ] \\
-d & & -d+2 & & \dots & & d-2 & & d
\end{matrix}
$$
$\pi(\Omega)v=d(d+2)v$ for all $v \in F_d$.
\item
Lowest weight modules $V_{\lambda}$ of lowest weight $\lambda \in \BB C$,
$\lambda \ne 0,-1,-2,\dots$, with weights
$$
\begin{matrix}
[\circ & & \circ & & \circ & & \dots  \\
\lambda & & \lambda+2 & & \lambda+4 & & \dots
\end{matrix}
$$
$\pi(\Omega)v=\lambda(\lambda-2)v$ for all $v \in V_{\lambda}$.
\item
Highest weight modules $\bar V_{\lambda}$ of highest weight $\lambda \in \BB C$,
$\lambda \ne 0,1,2,\dots$, with weights
$$
\begin{matrix}
\dots & & \circ & & \circ & & \circ] \\
\dots & & \lambda-4 & & \lambda-2 & & \lambda
\end{matrix}
$$
$\pi(\Omega)v=\lambda(\lambda+2)v$ for all $v \in \bar V_{\lambda}$.
\item
Irreducible principal series modules $P(\lambda,\mu)$ with
$\lambda,\mu \in \BB C$ subject to the constraint
$$
\lambda \pm \sqrt{\mu+1} \ne \text{odd integer}
$$
and weights
$$
\begin{matrix}
\dots & & \circ & & \circ & & \circ & & \dots \\
\dots & & \lambda-2 & & \lambda & & \lambda+2 & & \dots
\end{matrix}
$$
$\pi(\Omega)v=\mu v$ for all $v \in P(\lambda,\mu)$.
\end{itemize}
\end{thm}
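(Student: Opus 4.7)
The plan is to observe that this theorem is essentially a summary: every ingredient has already been assembled in the section, and what remains is to check that the four cases of Proposition \ref{classification-prop} correspond, one-for-one, to the four families listed, with no overlap.

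First I would invoke Proposition \ref{classification-prop} to conclude that, for any irreducible $V$ satisfying the condition of Corollary \ref{equiv-ond-cor}, the weights of $V$ form one of the four kinds of ``interval of integers'' in $\lambda + 2\BB Z$: a finite interval, a ray bounded below, a ray bounded above, or all of $\lambda + 2\BB Z$. This is where the case division comes from. In each case all weight spaces are one-dimensional and $\pi(\Omega)$ acts by a scalar $\mu$.

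Next I would handle the four cases by quoting the preceding propositions. In the finite-dimensional case, Proposition \ref{finite-dim} shows $V \simeq F_d$ where $d+1 = \dim V$, and computes $\mu = d(d+2)$ together with the weight diagram $\{-d,-d+2,\dots,d\}$. In the lowest-weight case, the lowest-weight proposition gives $V \simeq V_\lambda$, the constraint $\lambda \notin \{0,-1,-2,\dots\}$ (needed to rule out a proper submodule via $\pi(F)v_k = -k(\lambda+k-1)v_{k-1}$), and $\mu = \lambda(\lambda-2)$. The highest-weight case is entirely symmetric and gives $\bar V_\lambda$ with $\lambda \notin \{0,1,2,\dots\}$ and $\mu = \lambda(\lambda+2)$. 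Finally in the bi-infinite case, the principal-series proposition gives $V \simeq P(\lambda,\mu)$ with the constraint $\lambda \pm \sqrt{\mu+1} \ne$ odd integer, which is exactly the condition that all the raising/lowering coefficients $\tfrac14(\mu - (\lambda+2k\pm 1)^2 + 1)$ stay nonzero for every $k \in \BB Z$, and with the equivalence relation $(\lambda,\mu) \sim (\lambda+2k,\mu)$ on parameters.

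The main thing to verify, which is the only genuinely new content beyond restating, is that the four families are mutually exclusive, so that the weight pattern really does determine which family $V$ belongs to. This is straightforward: the shape of the support of $\pi(H)$-weights (finite, bounded below only, bounded above only, or neither) is an isomorphism invariant, and these four shapes are obviously distinct. The hardest---or at least most error-prone---step will simply be confirming the boundary cases, namely that the exclusions $\lambda \ne 0,-1,-2,\dots$ in $V_\lambda$, $\lambda \ne 0,1,2,\dots$ in $\bar V_\lambda$, and the odd-integer constraint in $P(\lambda,\mu)$ are precisely the conditions that prevent the module from collapsing into a different family (a finite-dimensional quotient for lowest/highest weight, and a highest- or lowest-weight submodule for the principal series). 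Once this bookkeeping is written out, the classification is complete.
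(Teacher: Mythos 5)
Your uniqueness half is fine and matches what the paper does implicitly: Proposition \ref{classification-prop} gives the four possible weight-support shapes, the three structure propositions (Proposition \ref{finite-dim} and its lowest-weight, highest-weight and principal-series analogues) pin down the isomorphism type and the value of $\mu$ in each case, and the four families are distinguished by the shape of the weight support, which is an isomorphism invariant. But there is a genuine gap: you have only shown that every irreducible module satisfying the hypothesis is of one of the four types; you have not shown that each admissible parameter value is actually realized. All of the preceding propositions are of the form ``\emph{let} $V$ be an irreducible module of such-and-such type; \emph{then} it has this structure'' --- they presuppose existence rather than establish it. The paper explicitly identifies existence as the remaining content of the theorem and devotes its entire proof to it: for instance, for $P(\lambda,\mu)$ one takes $V=\bigoplus_{k\in\BB Z}\BB C\cdot v_k$, \emph{defines} $\pi(H)$, $\pi(E)$, $\pi(F)$ by the formulas of Lemma \ref{classification-lemma}, verifies the bracket relations $[\pi(H),\pi(E)]=2\pi(E)$, $[\pi(H),\pi(F)]=-2\pi(F)$, $[\pi(E),\pi(F)]=\pi(H)$ so that this really is an $\mathfrak{sl}(2,\BB C)$-action, and then observes that the constraint $\lambda\pm\sqrt{\mu+1}\ne$ odd integer is exactly the nonvanishing of all raising and lowering coefficients, which forces irreducibility. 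The analogous constructions (or, for $F_d$, Example \ref{F_d-construction}) are needed for the other three families.

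Your remark about the boundary cases is pointed in the right direction but is doing double duty: in the uniqueness direction the exclusions follow from irreducibility (as in the lowest-weight proposition, where $\lambda\in\{0,-1,-2,\dots\}$ would produce a proper submodule), while in the existence direction they are what you must \emph{impose} on the constructed module to make it irreducible. Without the explicit construction and the verification of the commutation relations, the second half of that statement has nothing to attach to.
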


\begin{proof}
We already know that any irreducible $\mathfrak{sl}(2,\BB C)$-module has
to be of the type $F_d$, $V_{\lambda}$, $\bar V_{\lambda}$ or $P(\lambda,\mu)$.
It remains to prove the existence part. I.e., for example, that for any
choice of the parameters $\lambda,\mu \in \BB C$ such that
$$
\lambda \pm \sqrt{\mu+1} \ne \text{odd integer},
$$
there {\em exists} an irreducible $\mathfrak{sl}(2,\BB C)$-module of type
$P(\lambda,\mu)$. To do that, let
$$
V = \bigoplus_{k \in \BB Z} \BB C \cdot v_k
$$
and {\em define} the action of $\mathfrak{sl}(2,\BB C)$ by
$$
\pi(H) v_k = (\lambda +2k) v_k, \qquad k \in \BB Z,
$$
$$
\pi(E) v_k = v_{k+1} \quad \text{if $k \ge 0$}, \qquad
\pi(F) v_k = v_{k-1} \quad \text{if $k \le 0$},
$$
$$
\pi(E) v_k =
\frac14 \bigl( \mu - (\lambda+2k+1)^2 + 1 \bigr) v_{k+1}
\quad \text{if $k<0$},
$$
$$
\pi(F) v_k =
\frac14 \bigl( \mu - (\lambda+2k-1)^2 +1 \bigr) v_{k-1}
\quad \text{if $k>0$}.
$$
Then check
$$
[\pi(H),\pi(E)]=2\pi(E), \qquad [\pi(H),\pi(F)]=-2\pi(F),
$$
$$
[\pi(E),\pi(F)]=\pi(H),
$$
which proves that the action of $\mathfrak{sl}(2,\BB C)$ on $V$ is well-defined.
Finally, to show that $(\pi,V)$ is irreducible, note that any non-zero
$\mathfrak{sl}(2,\BB C)$-invariant subspace $W \subset V$ must contain one
of the vectors $v_k$ (homework).
Then the constraint (\ref{constraint}) ensures that this vector $v_k$ generates
all of $V$, hence $W=V$ and $V$ is irreducible.
\end{proof}

\begin{rem}
There are many infinite-dimensional $\mathfrak{sl}(2,\BB C)$-modules that are
indecomposable, but not irreducible.
The book \cite{HT} describes them in detail.
We will see examples of those when we discuss the principal series of
$SL(2,\BB R)$.

Of these irreducible $\mathfrak{sl}(2,\BB C)$-modules, only the
finite-{\linebreak}dimensional ones ``lift'' to representations of
$SL(2,\BB C)$.
Precisely those modules with $\lambda \in \BB Z$ occur as underlying
$\mathfrak{sl}(2,\BB C)$-modules of irreducible representations
of $SU(1,1)$.
\end{rem}

\section{The Complexification, the Cartan Decomposition and
Maximal Compact Subgroups}

In this section we state without proofs the properties of maximal compact
subgroups. For more details and proofs see the books \cite{Kn2} and \cite{He}.

\subsection{Assumptions on Groups}

\begin{df}
A Lie algebra $\g g$ is {\em simple} if it has no proper ideals
and $\dim \g g >1$. A Lie algebra $\g g$ is {\em semisimple}
if it can be written as a direct sum of simple ideals $\g g_i$,
$$
\g g = {\bigoplus}_{1\leq i\leq N}\ \g g_i\,.
$$
One calls a Lie algebra $\g g$ {\em reductive} if it can be written
as a direct sum of ideals
$$
\g g = \g s \oplus \g z\,,
$$
with $\g s$ semisimple and $\g z=$ center of $\g g$.
A Lie group is {\em simple}, respectively {\em semisimple},
if it has finitely many connected components and if its Lie algebra is simple,
respectively semisimple.
A {\em closed linear group} is a closed Lie subgroup $G \subset GL(n,\BB R)$
or $G \subset GL(n, \BB C)$.
\end{df}

In this course we always assume that our group is a
{\em connected linear real semisimple Lie group} and denote such a group by
$G_{\BB R}$. Thus $G_{\BB R}$ denotes a closed connected Lie subgroup of some
$GL(n,\BB R)$ with semisimple Lie algebra.
We skip the definition of a reductive Lie group\footnote{The definition
of a reductive Lie group can be found in, for example, \cite{Kn2}.}
-- it is technical and requires more than just the Lie algebra being reductive.
Most results that are true for linear connected semisimple real
Lie groups also extend to linear semisimple real Lie groups that are not
necessarily connected as well as linear reductive real Lie groups,
but the proofs may become more involved.

\begin{ex}
The Lie groups $SL(n, \BB R)$, $SL(n, \BB C)$ are simple;
the Lie groups $GL(n, \BB R)$, $GL(n, \BB C)$ are reductive.
Any compact real Lie group is a closed linear group, as can be deduced from
the Peter-Weyl Theorem (requires some effort), and is moreover reductive.
Any compact real Lie group with discrete center is closed linear semisimple.
On the other hand, the universal covering
$\widetilde{SL(n, \BB R)}$ of $SL(n,\BB R)$, $n \ge 2$, is not a linear group
(see Example \ref{not-linear}).
\end{ex}

We conclude this subsection with a list of equivalent characterizations
of semisimple Lie algebras.

\begin{prop}
The following conditions on a Lie algebra $\g g$ are equivalent to
$\g g$ being semisimple:
\begin{enumerate}
\item[\rm{(a)}]
$\g g$ can be written as a direct sum of simple ideals $\g g_i$,
$$
\g g = {\bigoplus}_{1\leq i\leq N}\ \g g_i\,;
$$
\item[\rm{(b)}]
$\g g$ contains no nonzero solvable ideals;
\item[\rm{(c)}]
The radical of $\g g$, $\operatorname{rad}(\g g)$, is zero;
\item[\rm{(d)}]
The Killing form of $\g g$
$$
K(X,Y) = \tr (\operatorname{ad}X \operatorname{ad}Y), \qquad X,Y \in \g g,
$$
is non-degenerate.
\end{enumerate}
\end{prop}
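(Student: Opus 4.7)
The plan is to establish the cycle of implications (a) $\Rightarrow$ (b) $\Leftrightarrow$ (c) $\Rightarrow$ (d) $\Rightarrow$ (a), with Cartan's criterion for solvability (a finite-dimensional Lie algebra $\g s$ in characteristic zero is solvable iff $K_{\g s}(\g s, [\g s, \g s]) = 0$) serving as the main workhorse. The equivalence (b) $\Leftrightarrow$ (c) is essentially a definition: the radical is by construction the unique maximal solvable ideal, so it vanishes precisely when no nonzero solvable ideal exists.

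For (a) $\Rightarrow$ (b), I would first show that every ideal of $\g g = \bigoplus_{i=1}^N \g g_i$ is a partial sum $\bigoplus_{i \in S} \g g_i$. Given an ideal $\g a$, the bracket $[\g g_i, \g a]$ lies in $\g g_i \cap \g a$, which is an ideal of the simple $\g g_i$ and so is either $0$ or $\g g_i$; a projection argument then pins $\g a$ down as a direct sum of certain $\g g_i$'s. Each simple $\g g_i$ satisfies $[\g g_i, \g g_i] = \g g_i$ (otherwise $[\g g_i,\g g_i]$ would be a proper ideal of $\g g_i$, forcing it abelian and one-dimensional, contradicting $\dim \g g_i > 1$), so no such partial sum is solvable, giving (b).

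The substantive implications are (b) $\Rightarrow$ (d) and (d) $\Rightarrow$ (a). For (b) $\Rightarrow$ (d), let $\g r = \{X \in \g g : K(X, \g g) = 0\}$ be the radical of $K$; the associativity identity $K([X,Y],Z) = K(X,[Y,Z])$ shows $\g r$ is an ideal, and $K|_{\g r} \equiv 0$ forces $\g r$ solvable by Cartan, hence $\g r = 0$ by (b). For (d) $\Rightarrow$ (a), I would first re-derive (b) from (d): if $\g a$ were a nonzero abelian ideal (and any nonzero solvable ideal contains one as the last nontrivial derived term), then for $X \in \g a$ and $Y \in \g g$ the composition $(\operatorname{ad} X \operatorname{ad} Y)^2$ maps $\g g \to \g a \to [\g a,\g a] = 0$, so $\operatorname{ad} X \operatorname{ad} Y$ is nilpotent and $K(X,Y) = 0$, forcing $\g a \subseteq \ker K = 0$. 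Then, given (d) and (b), I would pick a minimal nonzero ideal $\g a \subseteq \g g$; its orthogonal complement $\g a^{\perp}$ is again an ideal, $\g a \cap \g a^{\perp}$ has vanishing Killing form and is therefore a solvable ideal, hence zero. Thus $\g g = \g a \oplus \g a^{\perp}$, and minimality of $\g a$ together with the absence of solvable ideals in $\g a$ forces $\g a$ to be simple. Induction on $\dim \g g$ applied to $\g a^{\perp}$ (which inherits a non-degenerate Killing form from the orthogonal splitting) completes the decomposition.

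The main obstacle is the appeal to Cartan's criterion for solvability, whose proof rests on Jordan decomposition inside $\End(\g g)$ and a technical replica argument in linear algebra; I would quote it as a foundational result rather than reprove it. A smaller but genuine subtlety is arranging the induction in (d) $\Rightarrow$ (a) so that the factors produced are honestly simple rather than merely indecomposable, which is why both the minimality of $\g a$ and the no-abelian-ideals consequence of (d) have to be invoked together.
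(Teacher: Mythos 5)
The paper states this proposition without proof --- the surrounding section explicitly defers all proofs to the books of Knapp and Helgason --- so there is no argument of the author's to compare yours against. Your cycle (a) $\Rightarrow$ (b) $\Leftrightarrow$ (c) $\Rightarrow$ (d) $\Rightarrow$ (a) is the standard textbook proof and is sound: the partial-sum description of ideals in a direct sum of simples, Cartan's solvability criterion applied to the radical of the Killing form, the nilpotence trick killing abelian ideals under (d), and the orthogonal-complement induction are all correctly deployed. Two small points you should make explicit in a written version. First, both in (b) $\Rightarrow$ (d) and in the inductive step of (d) $\Rightarrow$ (a) you silently use the lemma that for an ideal $\g a \subset \g g$ the Killing form of $\g a$ coincides with the restriction of the Killing form of $\g g$; without it, Cartan's criterion (stated for the intrinsic Killing form of $\g s$) does not directly apply to $K|_{\g r}$, and the nondegeneracy of the form you induct with on $\g a^{\perp}$ is not literally the hypothesis (d) for that subalgebra. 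Second, Cartan's criterion and hence the equivalences (b)--(d) with semisimplicity require characteristic zero; this is harmless here since the paper works over $\BB R$ or $\BB C$, but it is worth flagging given that the definition of semisimplicity via (a) makes sense over any field.
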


\subsection{Complexifications and Real Forms}

Let $G_{\BB R}$ be a closed linear connected semisimple real Lie group.
Like any linear Lie group, $G_{\BB R}$ has a {\em complexification}
-- a complex Lie group $G$, with Lie algebra
$$
\g g \ =_{\text{def}}\ \BB C {\otimes}_{\BB R}\, \g g_{\BB R}\,,
$$
containing $G_{\BB R}$ as a Lie subgroup, connected and such that
the inclusion $G_{\BB R} \hookrightarrow G$ induces
$\g g_{\BB R} \hookrightarrow \g g$, $X \mapsto 1\otimes X$.

To construct a complexification, one regards $G_{\BB R}$ as subgroup of
$GL(n,\BB R)$, so that $\g g_{\BB R} \subset \g {gl}(n,\BB R)$.
That makes $\g g$ a Lie subalgebra of
$\g {gl}(n,\BB C) = \BB C \otimes_{\BB R}\, \g {gl}(n,\BB R)$.
Then let $G$ be the connected Lie subgroup of $GL(n,\BB C)$ with Lie algebra
$\g g$. Since $G_{\BB R}$ is connected, it is contained in $G$.
When $G$ is a complexification of $G_{\BB R}$, one calls $G_{\BB R}$ a
{\em real form} of $G$

\begin{ex}
The groups $SL(2,\BB R) \subset GL(2,\BB R)$,
$SU(1,1) \subset GL(2,\BB C) \subset GL(4,\BB R)$ and
$SU(2) \subset GL(2,\BB C) \subset GL(4,\BB R)$
have complexifications isomorphic to $SL(2,\BB C)$.
\end{ex}


In general, the complexification of a linear Lie group depends on the
realization of that group as a linear group.

\begin{ex}
The group of real numbers under addition $(\BB R, +)$ can be realized as a
closed subgroup of $GL(2,\BB R)$ in at least two different ways:
$$
\left\{ \bigl(\begin{smallmatrix} e^t & 0 \\ 0 & e^{-t} \end{smallmatrix}\bigr)
;\: t \in \BB R \right\}
\qquad \text{and} \qquad
\left\{ \bigl(\begin{smallmatrix} 1 & t \\ 0 & 1 \end{smallmatrix}\bigr)
;\: t \in \BB R \right\}.
$$
These realizations lead to complexifications
$$
\left\{ \bigl(\begin{smallmatrix} z & 0 \\ 0 & z^{-1} \end{smallmatrix}\bigr)
;\: z \in \BB C \right\}
\qquad \text{and} \qquad
\left\{ \bigl(\begin{smallmatrix} 1 & z \\ 0 & 1 \end{smallmatrix}\bigr)
;\: z \in \BB C \right\}
$$
respectively.
The first complexification is isomorphic to $(\BB C^{\times}, \cdot)$
and the second to $(\BB C,+)$.
\end{ex}

In our situation, $G_{\BB R}$ is closed connected linear semisimple, and
the complexification $G$ is determined by $G_{\BB R}$ up to isomorphism, but
the {\em embedding} $G \subset GL(n,\BB C)$ still depends on the realization
$G_{\BB R} \subset GL(n,\BB R)$ of $G_{\BB R}$ as linear group.

\begin{thm}
Let $G$ be a connected complex semisimple Lie group. Then $G$ has a
{\em compact real form} $U_{\BB R}$, i.e. a compact connected real subgroup
$U_{\BB R} \subset G$ such that $G$ is a complexification of $U_{\BB R}$.
Moreover, any two compact real forms $U_{\BB R}$ and $U'_{\BB R}$ are conjugate
to each other by an element of $G$, i.e. there exists an element $g \in G$
such that $U'_{\BB R} = g \cdot U_{\BB R} \cdot g^{-1}$.
\end{thm}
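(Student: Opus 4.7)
The plan is to handle existence and uniqueness separately, reducing both to statements about anti-linear involutions of the Lie algebra $\g g$.

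For existence, I would work first at the Lie algebra level. Fix a Cartan subalgebra $\g h \subset \g g$ and decompose $\g g = \g h \oplus \bigoplus_{\alpha \in \Phi} \g g_\alpha$ into root spaces. Using the non-degeneracy of the Killing form (guaranteed by semisimplicity), one can rescale root vectors $X_\alpha \in \g g_\alpha$ to obtain a Chevalley basis, i.e.\ vectors $H_\alpha, X_\alpha$ whose structure constants are all real (in fact integral). Then I would define
$$
\g u_{\BB R} \ =\ \sum_{\alpha \in \Phi} \BB R \cdot iH_\alpha \ +\ \sum_{\alpha \in \Phi^+} \BB R \cdot (X_\alpha - X_{-\alpha}) \ +\ \sum_{\alpha \in \Phi^+} \BB R \cdot i(X_\alpha + X_{-\alpha}).
$$
A direct computation with the Chevalley commutation relations shows $\g u_{\BB R}$ is closed under bracket, has real dimension equal to $\dim_{\BB C} \g g$, and that $\g g = \g u_{\BB R} \oplus i \g u_{\BB R}$. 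A second Chevalley-basis computation shows the Killing form $K$ is negative definite on $\g u_{\BB R}$. Now let $U_{\BB R} \subset G$ be the connected Lie subgroup with Lie algebra $\g u_{\BB R}$. Negative definiteness of $K$ gives an $\operatorname{Ad}(U_{\BB R})$-invariant inner product on $\g u_{\BB R}$, hence a bi-invariant Riemannian metric on $U_{\BB R}$; standard arguments (completeness via bi-invariance, plus the fact that $\operatorname{Ad}(U_{\BB R})$ lies in the orthogonal group of $-K$, which is compact) show $U_{\BB R}$ is compact. By construction its complexification is $G$.

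For uniqueness I would translate to the language of conjugations. A compact real form $U_{\BB R}'$ of $G$ is the same data as an antilinear involution $\tau' : \g g \to \g g$ with $K(X,-\tau' X) > 0$ for $X \ne 0$; call this a \emph{compact conjugation}. Given two such $\tau, \tau'$, the composition $\sigma = \tau \tau'$ is a complex-linear automorphism of $\g g$, so $\sigma = \operatorname{Ad}(g_0)$ for some $g_0 \in G$ (modulo center). I would then seek an element $g \in G$ with $g\g u g^{-1} = \g u'$, equivalently $\operatorname{Ad}(g) \tau \operatorname{Ad}(g)^{-1} = \tau'$. The standard device is to introduce the Hermitian positive operator $P = \sigma \bar\sigma$ (where $\bar\sigma$ is the $\tau$-adjoint of $\sigma$ with respect to the inner product $\langle X,Y\rangle_\tau = -K(X,\tau Y)$), write $P = \exp(A)$ with $A$ self-adjoint, and consider the one-parameter family $\tau'_t = \exp(tA)\,\tau'\,\exp(-tA)$, adjusting $t$ to force $\tau'_1 = \tau$; the key identity is that $t \mapsto \lVert \tau - \tau'_t \rVert^2$ is convex in $t$ along the gradient flow generated by $A$, so a minimizing $t$ yields $\tau = \tau'_t$.

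The main obstacle will be the uniqueness half, specifically establishing the convexity/positivity that drives the gradient-flow argument: one must show that conjugation by $\exp(tA)$ with $A$ self-adjoint moves $\tau'$ monotonically toward $\tau$, which is where the semisimplicity and the positive-definiteness of $\langle \cdot, \cdot\rangle_\tau$ are used in tandem. Existence is then essentially bookkeeping, with the only subtlety being the verification that the integrated group $U_{\BB R}$ is actually closed and compact in $G$ rather than merely an immersed subgroup — this follows from negative-definiteness of $K$ but requires some care.
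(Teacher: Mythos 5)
The paper itself does not prove this theorem -- the section opens by saying the results on maximal compact subgroups are stated without proof, with references to Knapp and Helgason -- so your proposal is to be measured against the classical Weyl--Cartan argument found there, which is indeed the route you are following. Your existence half is essentially right: for a Weyl/Chevalley basis the real span of the $iH_\alpha$, $X_\alpha - X_{-\alpha}$, $i(X_\alpha + X_{-\alpha})$ is a real form on which $K$ is negative definite, and compactness of the integrated subgroup follows because $\operatorname{Ad}(U_{\BB R})$ is the identity component of the algebraic (hence closed) group $\operatorname{Aut}(\g u_{\BB R})$ sitting inside the compact orthogonal group of $-K$, while $\ker(\operatorname{Ad}|_{U_{\BB R}})$ lies in the finite center of $G$.

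The genuine gap is in the endgame of the uniqueness half. Two points. First, the incidental claim that $\sigma = \tau\tau'$ equals $\operatorname{Ad}(g_0)$ is false in general, since $\tau\tau'$ may induce an outer automorphism; what you actually need is that $A = \log P$ is a derivation of $\g g$ (because $P^t$ is an automorphism for all $t$), hence inner, $A = \operatorname{ad}Z$, so that $\exp(tA) = \operatorname{Ad}(\exp tZ)$ really comes from an element of $G$. Second, and more seriously, the step ``$t \mapsto \lVert \tau - \tau'_t\rVert^2$ is convex, so a minimizing $t$ yields $\tau = \tau'_t$'' does not close the argument: a convex function on $\BB R$ need not attain the value $0$ at its minimum, and you give no reason the infimum is $0$ or even attained. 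The standard completion is different in kind. One checks the identity $\tau P \tau^{-1} = P^{-1}$, hence $\tau P^{t}\tau^{-1} = P^{-t}$, and concludes that $\tau'_{1/4} = P^{1/4}\,\tau'\,P^{-1/4}$ \emph{commutes} with $\tau$. Then one proves directly that two commuting compact conjugations coincide: writing $\g u = (\g u \cap \g u'') \oplus (\g u \cap i\,\g u'')$ for the corresponding real forms, any $X$ in the second summand satisfies $K(X,X)<0$ from $\tau$-compactness and $K(X,X)>0$ from $\tau''$-compactness, so that summand vanishes and $\g u = \g u''$. Replacing your minimization step with this commutation argument (or, alternatively, invoking the Cartan fixed point theorem for the action of $U'_{\BB R}$ on the nonpositively curved space $G/U_{\BB R}$, which is a genuine minimization argument but over an orbit, not over a one-parameter family) makes the proof complete.
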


\begin{ex}
A compact real form of $SL(n,\BB C)$ is $SU(n)$.
\end{ex}

\subsection{The Cartan Decomposition}

Suppose that $\g g = \BB C \otimes \g g_{\BB R}$, define a complex conjugation
$\sigma$ of $\g g$ with respect to $\g g_{\BB R}$ by
$$
\sigma(X+iY) = X-iY, \qquad X,Y \in \g g_{\BB R}.
$$
Then
$$
\sigma(\lambda Z) = \bar\lambda \sigma(Z),
\qquad \forall \lambda \in \BB C,\: Z \in \g g,
$$
(homework). For example, let $\g g = \g{sl}(n,\BB C)$ and
$\g g_{\BB R}= \g{sl}(n,\BB R)$, then
$$
\sigma \begin{pmatrix} a_{11} & \ldots & a_{1n} \\ \vdots & \ddots & \vdots \\
a_{n1} & \ldots & a_{nn} \end{pmatrix}
= \begin{pmatrix} \bar a_{11} & \ldots & \bar a_{1n} \\
\vdots & \ddots & \vdots \\ \bar a_{n1} & \ldots & \bar a_{nn} \end{pmatrix}
$$
is entry-by-entry conjugation. On the other hand, if
$\g g_{\BB R}= \g{su}(n) \subset \g{sl}(n,\BB C)$,
$$
\sigma(X) = - X^*, \qquad \forall X \in \g{sl}(n,\BB C),
$$
(negative conjugate transpose). In particular, the conjugation $\sigma$
depends on the choice of a real form $\g g_{\BB R} \subset \g g$.

We start with a closed connected linear semisimple real Lie group $G_{\BB R}$,
take its complexification $G$ and select a compact real form
$U_{\BB R} \subset G$:
$$
G_{\BB R} \quad \subset \quad G \quad \supset \quad U_{\BB R}.
$$
Let $\sigma, \tau : \g g \to \g g$ denote the complex conjugations with respect
to $\g g_{\BB R}$ and $\g u_{\BB R}$.

\begin{prop}
By replacing the compact real form $U_{\BB R} \subset G$ by an appropriate
conjugate, one can arrange that $\tau\sigma=\sigma\tau$.
\end{prop}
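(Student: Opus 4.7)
My plan is to replace $U_{\BB R}$ by $g U_{\BB R} g^{-1}$ for a carefully chosen inner automorphism $\operatorname{Ad}(g)$ of $\g g$, where $\operatorname{Ad}(g)$ is obtained from spectral analysis of the operator $T := \sigma\tau$. Even though $\sigma$ and $\tau$ are each conjugate $\BB C$-linear, their composition $T$ is $\BB C$-linear, and once $\g g$ is equipped with a natural Hermitian form, $T$ turns out to be self-adjoint. Taking an appropriate fractional power of $T^2$ produces the conjugating element.

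The first step is to introduce the positive definite Hermitian form
$$
\langle X, Y\rangle := -B(X, \tau Y), \qquad X,Y \in \g g,
$$
where $B$ is the Killing form of $\g g$; positive definiteness follows from $B$ being negative definite on every compact real form. Using the two basic identities $B(\sigma A, \sigma B) = \overline{B(A,B)}$ and $B(\tau A, \tau B) = \overline{B(A,B)}$ (consequences of $B$ being $\BB C$-bilinear and real-valued on any real form), a short manipulation shows that $T$ is self-adjoint. Consequently $P := T^2 = (\sigma\tau)^2$ is a $\BB C$-linear, positive-definite, self-adjoint operator on $\g g$, and being the composition of two Lie algebra automorphisms, it is itself a Lie algebra automorphism.

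Next I would invoke functional calculus. Decomposing $\g g = \bigoplus_{\lambda>0} V_\lambda$ into $P$-eigenspaces, the automorphism property forces $[V_\lambda, V_\mu] \subseteq V_{\lambda\mu}$, so the operator $P^t$ defined to act by $\lambda^t$ on $V_\lambda$ is a Lie algebra automorphism of $\g g$ for every real $t$. Because $\g g$ is semisimple, all its derivations are inner, and the identity component of $\operatorname{Aut}(\g g)$ coincides with $\operatorname{Ad}(G)$; the continuous one-parameter family $\{P^t\}$ therefore lifts to a continuous path $\{g_t\} \subset G$ with $\operatorname{Ad}(g_t) = P^t$. Set $g := g_{1/4}$; then the conjugate compact real form $g U_{\BB R} g^{-1}$ has associated conjugation $\tau' = P^{1/4}\,\tau\,P^{-1/4}$.

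Finally I would verify $\sigma\tau' = \tau'\sigma$. From $\sigma P\sigma^{-1} = (\tau\sigma)^2 = P^{-1}$ (and the analogous statement for $\tau$) together with positivity of $P$, one derives $\sigma P^s = P^{-s}\sigma$ for all real $s$. After substituting this into $\sigma\tau'$ and $\tau'\sigma$ and cancelling factors of $P^{\pm 1/4}$, the required equality collapses to the algebraic identity $T = P^{1/2}T^{-1}P^{1/2}$, which is automatic because $T$ is self-adjoint with $T^2=P$ (on each $T$-eigenspace with eigenvalue $\lambda$, the right-hand side equals $|\lambda|\cdot\lambda^{-1}\cdot|\lambda|=\lambda$). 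The main obstacle is not this identity but the inner-automorphism step: one must know that the positive self-adjoint automorphism $P^{1/4}$ of $\g g$ arises as $\operatorname{Ad}(g)$ for some $g\in G$, and this is precisely where semisimplicity of $\g g$ is used in an essential way, through the fact that $\operatorname{Aut}(\g g)^0 = \operatorname{Int}(\g g) = \operatorname{Ad}(G)$.
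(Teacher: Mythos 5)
Your argument is correct, and it is precisely the standard proof of this fact (the paper itself states the proposition without proof, deferring to Knapp \cite{Kn2} and Helgason \cite{He}, where this is exactly the argument given): make $T=\sigma\tau$ self-adjoint for the Hermitian form $-B(X,\tau Y)$, set $P=T^2$, check $[V_\lambda,V_\mu]\subseteq V_{\lambda\mu}$ so that $P^t$ is a one-parameter group of automorphisms, realize it as $\operatorname{Ad}(\exp tZ)$ using $\operatorname{Aut}(\g g)^0=\operatorname{Int}(\g g)$, and conjugate $\tau$ by $P^{1/4}$. All the delicate points are handled correctly: positivity of the eigenvalues of $P$ is exactly what lets the relation $\sigma P\sigma^{-1}=P^{-1}$ propagate to $\sigma P^s=P^{-s}\sigma$ despite $\sigma$ being conjugate-linear, and the final identity $T=P^{1/2}T^{-1}P^{1/2}$ holds because $T$ preserves the eigenspaces of $P=T^2$.
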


From now on we {\em fix} a $U_{\BB R} \subset G$ such that
$\tau\sigma=\sigma\tau$.
{\em Define}
$$
\theta: \g g \to \g g, \quad \theta = \tau\sigma=\sigma\tau,
$$
this is a complex linear Lie algebra automorphism of $\g g$ (homework).
Since $\theta^2 = \tau\sigma\tau\sigma = \tau^2\sigma^2 = Id_{\g g}$,
$\theta$ is an involution, called {\em Cartan involution}.

Since $\theta$ is an involution, its eigenvalues are $\pm 1$. Let
\begin{align*}
\g k &= \text{$+1$ eigenspace of $\theta: \g g \to \g g$,} \\
\g p &= \text{$-1$ eigenspace of $\theta: \g g \to \g g$.}
\end{align*}
Then
$$
\g g = \g k \oplus \g p
$$
as complex vector spaces. Now let
$$
\g k_{\BB R} = \g g_{\BB R} \cap \g k, \qquad \g p_{\BB R} = \g g_{\BB R} \cap \g p,
$$
then we have a direct sum decomposition of real vector spaces
$$
\g g_{\BB R} = \g k_{\BB R} \oplus \g p_{\BB R}
$$
called {\em Cartan decomposition} of $\g g_{\BB R}$ (note that we would not have
such a decomposition if $\tau$ and $\sigma$ did not commute).

\begin{lem}
\begin{enumerate}
\item
$\sigma = \tau$ on $\g k$ and $\sigma = -\tau$ on $\g p$;
\item
$[\g k, \g k] \subset \g k$, $[\g p, \g p] \subset \g k$, and
$[\g k, \g p] \subset \g p$;
\item
$\g u_{\BB R} = \g k_{\BB R} \oplus i \g p_{\BB R}$
(direct sum of real vector spaces).
\end{enumerate}
\end{lem}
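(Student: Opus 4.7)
The three parts build on each other, so I would do them in the stated order.

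For \textbf{(1)}, the point is that $\sigma$ and $\tau$ are involutions, and $\theta = \sigma\tau = \tau\sigma$ by hypothesis, so $\tau = \sigma\theta$ and $\sigma = \tau\theta$. If $X \in \g k$, then $\theta(X)=X$ gives $\tau(X) = \sigma\theta(X) = \sigma(X)$; if $X \in \g p$, then $\theta(X)=-X$ gives $\tau(X) = \sigma\theta(X) = -\sigma(X)$.

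For \textbf{(2)}, the idea is simply that $\theta$ is a complex Lie algebra automorphism (noted earlier as homework), and its eigenspaces therefore behave well under the bracket. For $X,Y$ both in the $\varepsilon_1$- and $\varepsilon_2$-eigenspace ($\varepsilon_i \in \{\pm 1\}$) one gets $\theta([X,Y]) = [\theta X,\theta Y] = \varepsilon_1\varepsilon_2[X,Y]$, so $[X,Y]$ lies in the $\varepsilon_1\varepsilon_2$-eigenspace. Applying this with the three sign combinations yields the three inclusions.

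For \textbf{(3)}, I first note the preliminary fact that $\sigma$ preserves $\g k$ and $\g p$: indeed $\sigma\theta = \sigma(\sigma\tau) = \tau = (\sigma\tau)\sigma\cdot\sigma^{-1}\sigma\cdot\ldots$; more cleanly, $\sigma\theta\sigma^{-1} = \sigma(\sigma\tau)\sigma = \tau\sigma = \theta$, so $\sigma$ commutes with $\theta$ and hence stabilizes each eigenspace. Given this, the two inclusions are straightforward. \emph{($\supseteq$)} If $X \in \g k_{\BB R}$ then $\sigma(X)=X$ and $\theta(X)=X$, so by (1), $\tau(X)=\sigma(X)=X$, i.e.\ $X \in \g u_{\BB R}$; if $Y \in \g p_{\BB R}$ then $\sigma(Y)=Y$ and by (1) $\tau(Y) = -\sigma(Y) = -Y$, so $\tau(iY) = -i\tau(Y) = iY$, placing $iY \in \g u_{\BB R}$. \emph{($\subseteq$)} Given $X \in \g u_{\BB R}$, write $X = X_k + X_p$ with $X_k \in \g k$, $X_p \in \g p$. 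Applying $\tau$ and using (1), $\tau(X) = \sigma(X_k) - \sigma(X_p) = X$; since $\sigma$ preserves $\g k$ and $\g p$, comparing $\g k \oplus \g p$ components gives $\sigma(X_k) = X_k$ and $\sigma(X_p) = -X_p$. The first says $X_k \in \g g_{\BB R} \cap \g k = \g k_{\BB R}$; the second says $-iX_p$ is $\sigma$-fixed and lies in $\g p$, so $X_p = i(-iX_p) \in i\g p_{\BB R}$. Directness of the sum is automatic since $\g k \cap \g p = 0$.

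The main obstacle, and the step worth being careful about, is the containment $\g u_{\BB R} \subset \g k_{\BB R} \oplus i\g p_{\BB R}$ in part (3): it hinges on the preliminary observation that $\sigma$ preserves the $\theta$-eigenspace decomposition, which in turn uses $\sigma\tau = \tau\sigma$ in an essential way — exactly the reason the proposition immediately preceding the lemma was needed. Once that is in hand, (1) feeds directly into the computation and the rest is bookkeeping.
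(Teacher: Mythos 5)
Your proof is correct, and parts (1) and (2) coincide with the paper's argument (both rest on $\tau=\sigma\theta$ and on $\theta$ being a Lie algebra automorphism whose eigenspaces multiply according to the product of signs). Where you diverge is the containment $\g u_{\BB R}\subset \g k_{\BB R}\oplus i\g p_{\BB R}$ in part (3): the paper proves only the easy inclusion $\g k_{\BB R}\oplus i\g p_{\BB R}\subset \g u_{\BB R}$ and then closes the gap by a dimension count, using that $\g g_{\BB R}$ and $\g u_{\BB R}$ are both real forms of $\g g$, so $\dim_{\BB R}\g k_{\BB R}+\dim_{\BB R}(i\g p_{\BB R})=\dim_{\BB R}\g g_{\BB R}=\dim_{\BB R}\g u_{\BB R}$. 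You instead argue element-wise: you first observe that $\sigma$ commutes with $\theta$ (hence stabilizes $\g k$ and $\g p$), decompose a $\tau$-fixed vector as $X_k+X_p$, and compare components. Both are valid; the paper's version is shorter, while yours is more self-contained in that it does not invoke the equality of dimensions of the two real forms and makes explicit exactly where the commutativity $\sigma\tau=\tau\sigma$ enters. One cosmetic remark: the first, abandoned computation in your part (3) (``$\sigma\theta=\sigma(\sigma\tau)=\tau=\dots$'') should be deleted; the clean identity $\sigma\theta\sigma^{-1}=\sigma(\sigma\tau)\sigma=\tau\sigma=\theta$ that follows it is all you need.
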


\begin{proof}
\begin{enumerate}
\item
This part follows from $\theta=\tau\sigma$ and $\theta=+Id$ on $\g k$,
$\theta=-Id$ on $\g p$.

\item
This part follows from the fact that the Cartan involution $\theta$
is a Lie algebra isomorphism: $[\theta(X),\theta(Y)]=\theta([X,Y])$
for all $X,Y \in \g g$.
In particular, if $X$ belongs to the $\epsilon_1$-eigenspace of $\theta$
and $Y$ belongs to the $\epsilon_2$-eigenspace, where
$\epsilon_1,\epsilon_2=\pm1$, then $[X,Y]$ belongs to the
$(\epsilon_1\epsilon_2)$-eigenspace.

\item
From the definition of $\tau$,
$$
\g u_{\BB R} = \{ X \in \g g ;\: \tau(X)=X \}.
$$
It follows from part 1 that $\g k_{\BB R}$ and $ i \g p_{\BB R}$ are subspaces of
$\g u_{\BB R}$. Finally, $\g k_{\BB R} \cap i \g p_{\BB R} =\{0\}$ and
$\dim (\g k_{\BB R}) + \dim (i \g p_{\BB R}) = \dim(\g g_{\BB R}) = \dim(\g u_{\BB R})$
imply $\g u_{\BB R} = \g k_{\BB R} \oplus i \g p_{\BB R}$.
\end{enumerate}
\end{proof}

\begin{cor}
$\g k_{\BB R}$ is a real Lie subalgebra of $\g g_{\BB R}$ and
$\g p_{\BB R}$ is a $\g k_{\BB R}$-module under the $\operatorname{ad}$ action.
However, $\g p_{\BB R}$ is {\em not} a Lie algebra, unless
$[\g p_{\BB R}, \g p_{\BB R}]=0$.
\end{cor}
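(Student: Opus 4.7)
The plan is to deduce the corollary directly from part 2 of the preceding lemma by intersecting the complex inclusions with the real form $\g g_{\BB R}$, together with the fact that $\g g_{\BB R}$ itself is a real Lie subalgebra of $\g g$ (so it is closed under the bracket).

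First I would show $\g k_{\BB R}$ is a real Lie subalgebra. By the lemma, $[\g k,\g k] \subset \g k$. Taking $X,Y \in \g k_{\BB R} = \g g_{\BB R} \cap \g k$, we have $[X,Y] \in \g g_{\BB R}$ (since $\g g_{\BB R}$ is a real Lie subalgebra of $\g g$) and $[X,Y] \in [\g k,\g k] \subset \g k$. Hence $[X,Y] \in \g g_{\BB R} \cap \g k = \g k_{\BB R}$. Since $\g k_{\BB R}$ is clearly a real subspace, it is a real Lie subalgebra.

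Next I would verify the module statement. By the lemma, $[\g k,\g p] \subset \g p$. For $X \in \g k_{\BB R}$ and $Y \in \g p_{\BB R}$, the bracket $[X,Y]$ lies in $\g g_{\BB R}$ and in $\g p$, hence in $\g p_{\BB R}$. Thus $\operatorname{ad}(\g k_{\BB R})$ preserves $\g p_{\BB R}$, which (together with the Jacobi identity inherited from $\g g_{\BB R}$) gives the $\g k_{\BB R}$-module structure.

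Finally I would handle the last assertion. By the lemma, $[\g p,\g p] \subset \g k$, so intersecting with $\g g_{\BB R}$ gives $[\g p_{\BB R},\g p_{\BB R}] \subset \g k_{\BB R}$. For $\g p_{\BB R}$ to be a Lie algebra one needs $[\g p_{\BB R},\g p_{\BB R}] \subset \g p_{\BB R}$. But $\g k_{\BB R} \cap \g p_{\BB R} = \{0\}$, because these spaces lie in the $+1$ and $-1$ eigenspaces of the involution $\theta$ respectively. Consequently $[\g p_{\BB R},\g p_{\BB R}] \subset \g k_{\BB R} \cap \g p_{\BB R} = \{0\}$, so $\g p_{\BB R}$ is a Lie subalgebra precisely when $[\g p_{\BB R},\g p_{\BB R}] = 0$. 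There is no real obstacle here; the only subtlety is remembering that one must intersect each complex containment with $\g g_{\BB R}$ and invoke the eigenspace disjointness at the end.
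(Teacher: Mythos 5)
Your proposal is correct and is exactly the intended deduction: the paper states this corollary without proof as an immediate consequence of the preceding lemma, and your argument (intersect each bracket containment $[\g k,\g k]\subset\g k$, $[\g k,\g p]\subset\g p$, $[\g p,\g p]\subset\g k$ with the real form, then use $\g k_{\BB R}\cap\g p_{\BB R}=\{0\}$ from the $\pm1$-eigenspace decomposition of $\theta$) is the standard one. The only stylistic caution is that your ``Consequently $[\g p_{\BB R},\g p_{\BB R}]\subset\g k_{\BB R}\cap\g p_{\BB R}$'' holds only under the hypothesis that $\g p_{\BB R}$ is closed under the bracket, which is what your surrounding sentences make clear.
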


\begin{ex}  \label{SL(n)-ex}
Returning to our previous examples, let us take $G_{\BB R}=SL(n,\BB R)$,
its complexification $G=SL(2,\BB C)$ and a compact real form $U_{\BB R}=SU(n)$.
Then the conjugations
$$
\sigma, \tau: \g{sl}(n,\BB C) \to \g{sl}(n,\BB C) \quad \text{are} \quad
\sigma(X)=\bar X, \quad \tau(X)=-X^*,
$$
$\sigma$ and $\tau$ commute, so the Cartan involution $\theta$ is
$\theta(X)=-X^T$ (negative transpose).
$$
\g k = \g{so}(n,\BB C), \quad
\g p = \{ \text{symmetric matrices in $\g{sl}(n,\BB C)$} \},
$$
$$
\g k_{\BB R} = \g{so}(n,\BB R), \quad
\g p_{\BB R} = \{ \text{symmetric matrices in $\g{sl}(n,\BB R)$} \},
$$
$\g u_{\BB R} = \g k_{\BB R} \oplus i \g p_{\BB R}$ amounts to
$$
\g{su}(n) = \g{so}(n,\BB R) \oplus
i \{ \text{symmetric matrices in $\g{sl}(n,\BB R)$} \}.
$$
\end{ex}

Note that every $X \in \g k_{\BB R} \subset \g u_{\BB R}$ is diagonalizable,
with purely imaginary eigenvalues.
Indeed, $\{t \mapsto \exp tX ;\: t \in \BB R \}$ is a one-parameter subgroup
of $U_{\BB R}$, and must therefore have bounded matrix entries;
that is possible only when $X$ is diagonalizable over $\BB C$,
with purely imaginary eigenvalues.
By the same reasoning, since $i \g p_{\BB R} \subset \g u_{\BB R}$,
every $X \in \g p_{\BB R} \subset \g g_{\BB R}$ is diagonalizable,
with real eigenvalues.

Let $K_{\BB R} = (U_{\BB R} \cap G_{\BB R})^0$ -- the connected component of
the identity element of $U_{\BB R} \cap G_{\BB R}$.
Clearly, $K_{\BB R}$ is a compact connected subgroup of $G_{\BB R}$
with Lie algebra $\g k_{\BB R}$. The following result describes what is called
the {\em global Cartan decomposition of $G_{\BB R}$}.

\begin{prop}  \label{Cartan-decomp}
The map
$$
K_{\BB R} \times \g p_{\BB R} \to G_{\BB R}, \quad (k,X) \mapsto k \cdot \exp X,
$$
is a diffeomorphism of ${\cal C}^{\infty}$-manifolds.
\end{prop}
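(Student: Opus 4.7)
The plan is to reduce everything to the classical polar decomposition of $GL(n,\BB C)$ and then show it restricts properly to $G_{\BB R}$. Guided by Example \ref{SL(n)-ex}, I would first arrange the realization $G_{\BB R} \subset GL(n,\BB R) \subset GL(n,\BB C)$ so that the compact real form $U_{\BB R}$ lies inside $U(n)$ and the Cartan involution at the Lie-algebra level is $\theta(X) = -X^*$; this is always achievable by conjugating the embedding suitably. Under this normalization, $\g k_{\BB R}$ consists of the skew-Hermitian matrices in $\g g_{\BB R}$ and $\g p_{\BB R}$ of the Hermitian ones, and the group-level Cartan involution $\Theta(g) = (g^*)^{-1}$ preserves $G_{\BB R}$.

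Next I would invoke the standard polar decomposition: every $g \in GL(n,\BB C)$ admits a unique factorization $g = u\, e^Y$ with $u \in U(n)$ and $Y$ Hermitian, namely $Y = \tfrac12 \log(g^*g)$ and $u = g\, e^{-Y}$. Both formulas are smooth (the logarithm is smooth on the cone of positive definite Hermitian matrices), so $(u,Y) \mapsto u e^Y$ is a diffeomorphism from $U(n) \times \mathrm{Herm}(n)$ onto $GL(n,\BB C)$. For $g \in G_{\BB R}$, $\Theta$-stability gives $g^*g \in G_{\BB R}$; since $g^*g$ is positive definite, the real powers $(g^*g)^t = e^{2tY}$ form a smooth one-parameter subgroup, which by closedness of $G_{\BB R} \subset GL(n,\BB R)$ lies in $G_{\BB R}$, so differentiating at $t=0$ yields $Y \in \g g_{\BB R}$, hence $Y \in \g p_{\BB R}$. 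Then $u = g\, e^{-Y} \in G_{\BB R}$, and since $u$ is unitary, $u \in G_{\BB R} \cap U_{\BB R}$.

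To upgrade $u \in G_{\BB R} \cap U_{\BB R}$ to $u \in K_{\BB R}$, I would use the path $t \mapsto g\, e^{-tY}$, which lies in $G_{\BB R}$ and joins $g$ to $u$; composing with a path in $G_{\BB R}$ from the identity to $g$ (which exists by connectedness of $G_{\BB R}$) places $u$ in the identity component of $G_{\BB R} \cap U_{\BB R}$, i.e.\ in $K_{\BB R}$. Uniqueness of $(k,X) \in K_{\BB R} \times \g p_{\BB R}$ with $g = k\exp X$ is inherited from uniqueness of the polar decomposition in $GL(n,\BB C)$. Smoothness of the forward map $(k,X) \mapsto k\exp X$ is obvious; smoothness of the inverse $g \mapsto (k,X)$ follows from the explicit formulas $X = \tfrac12 \log(g^*g)$ and $k = g\, e^{-X}$.

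The main obstacle is promoting the Lie-algebra-level Cartan involution $\theta$ to a group-level involution $\Theta$ of $G_{\BB R}$, and verifying that the fractional powers $(g^*g)^t$ remain in $G_{\BB R}$. The algebraic half is clean: $\g g_{\BB R} = \g k_{\BB R} \oplus \g p_{\BB R}$ is manifestly preserved by $X \mapsto X^*$, so this is an anti-automorphism of $\g g_{\BB R}$. Integrating it to the connected group $G_{\BB R}$ requires a separate argument, most naturally via the holomorphic functional calculus applied to the positive definite matrix $g^*g$ together with closedness of $G_{\BB R}$ in $GL(n,\BB R)$; this is the one step where real care is needed, and it is the reason for assuming $G_{\BB R}$ is a linear connected semisimple real Lie group rather than just having a semisimple Lie algebra.
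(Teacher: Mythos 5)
The paper itself does not prove this proposition: the whole section is stated without proofs (with references to Knapp and Helgason), and the only hint given is the remark after Example \ref{SL(n)-ex} that for $SL(n,\BB R)$ the statement reduces to the classical polar decomposition. Your strategy is exactly that standard route, so the overall architecture is right; but two of your steps do not hold as written. First, the claim that the fractional powers $(g^*g)^t = e^{2tY}$ lie in $G_{\BB R}$ ``by closedness of $G_{\BB R}$'' is not justified: closedness gives you nothing here, since a closed subgroup can contain $e^{2nY}$ for all integers $n$ without containing the whole one-parameter group (think of $\BB Z \subset \BB R$). What you actually know is that the integer powers of the positive definite matrix $g^*g$ lie in $G_{\BB R}$, and to pass from integer $t$ to all real $t$ you need an algebraicity input: diagonalize $Y$, observe that any polynomial in the matrix entries of $e^{2tY}$ is an exponential polynomial $\sum_j c_j e^{t\mu_j}$ with real distinct $\mu_j$, and such a function vanishing at all integers vanishes identically -- this applies because a connected semisimple linear group is the identity component of the real points of an algebraic group (its Lie algebra equals its own derived algebra, hence is algebraic). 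This is precisely where semisimplicity, not just closedness, enters, and your closing paragraph's appeal to ``holomorphic functional calculus together with closedness'' does not supply it.

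Second, your argument that $u = g\,e^{-Y}$ lies in $K_{\BB R} = (U_{\BB R}\cap G_{\BB R})^0$ rather than merely in $U_{\BB R}\cap G_{\BB R}$ is broken: concatenating a path in $G_{\BB R}$ from $e$ to $g$ with the path $t \mapsto g\,e^{-tY}$ produces a path in $G_{\BB R}$ from $e$ to $u$, which only shows $u$ lies in the identity component of $G_{\BB R}$ -- true but vacuous, and it says nothing about the identity component of the intersection $U_{\BB R}\cap G_{\BB R}$. The repair is short: once you have the smooth retraction $r : G_{\BB R} \to U_{\BB R}\cap G_{\BB R}$, $h \mapsto h\,e^{-\frac12\log(h^*h)}$, its image is all of $U_{\BB R}\cap G_{\BB R}$ and its source is connected, so $U_{\BB R}\cap G_{\BB R}$ is connected and therefore equals $K_{\BB R}$. (Equivalently, apply $r$ to your path from $e$ to $g$ to get a path inside $U_{\BB R}\cap G_{\BB R}$ from $e$ to $u$.) The remaining ingredients -- uniqueness and smoothness of the inverse via $X = \frac12\log(g^*g)$, and the normalization $\theta(X)=-X^*$ after conjugating $U_{\BB R}$ -- are fine, the last one being exactly the unproved Proposition in the paper asserting that $\sigma$ and $\tau$ can be made to commute.
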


\begin{cor}
$K_{\BB R} \hookrightarrow G_{\BB R}$ is a strong deformation retract.
In particular, $G_{\BB R}$ and $K_{\BB R}$ are homotopic to each other,
and this inclusion induces isomorphisms of homology and homotopy groups.
\end{cor}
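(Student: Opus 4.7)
The plan is to use the global Cartan decomposition directly to write down an explicit strong deformation retraction. By the preceding proposition, every $g \in G_{\BB R}$ can be written uniquely as $g = k \cdot \exp X$ with $k \in K_{\BB R}$ and $X \in \g p_{\BB R}$, and the map $(k,X) \mapsto k \cdot \exp X$ is a diffeomorphism. Using this, I would define
$$
H: G_{\BB R} \times [0,1] \to G_{\BB R}, \qquad H(k \cdot \exp X, t) = k \cdot \exp((1-t) X).
$$
First I would verify that $H$ is well-defined and smooth: because the Cartan decomposition is a diffeomorphism, the assignment $g \mapsto (k(g), X(g))$ is smooth, and then $H$ is the composition of this with the smooth map $(k, X, t) \mapsto k \cdot \exp((1-t)X)$.

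Next I would check the three properties of a strong deformation retraction. At $t = 0$ we have $H(g, 0) = k \cdot \exp X = g$, so $H(\cdot, 0) = \operatorname{id}_{G_{\BB R}}$. At $t = 1$ we have $H(g, 1) = k \cdot \exp 0 = k \in K_{\BB R}$, so $H(\cdot, 1)$ maps $G_{\BB R}$ into $K_{\BB R}$. Finally, if $g \in K_{\BB R}$, then the uniqueness in the Cartan decomposition forces $X = 0$, hence $H(g, t) = g$ for all $t \in [0,1]$; thus $H$ fixes $K_{\BB R}$ pointwise throughout the homotopy. These three properties are precisely the definition of a strong deformation retraction of $G_{\BB R}$ onto $K_{\BB R}$, so the inclusion $K_{\BB R} \hookrightarrow G_{\BB R}$ is a homotopy equivalence.

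For the second sentence of the corollary I would simply invoke the standard fact from algebraic topology that a homotopy equivalence induces isomorphisms on all homotopy and (singular) homology groups; a strong deformation retract is in particular such a homotopy equivalence with the retraction $r = H(\cdot, 1)$ and the inclusion as mutually inverse homotopy equivalences.

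There is no real obstacle here: the hard analytic work has already been packaged into the global Cartan decomposition proposition, which provides both the splitting $g = k \cdot \exp X$ and the smoothness of the factorization. The only thing one needs to notice beyond that is that $\g p_{\BB R}$ is a real vector space and therefore linearly contractible via $X \mapsto (1-t)X$, and that this linear contraction transports through the diffeomorphism to the desired strong deformation retraction of $G_{\BB R}$ onto $K_{\BB R}$.
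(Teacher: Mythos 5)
Your proof is correct and is exactly the argument the paper intends: the corollary is stated immediately after the global Cartan decomposition proposition precisely so that one contracts the $\g p_{\BB R}$-factor linearly via $X \mapsto (1-t)X$ and transports this through the diffeomorphism $(k,X) \mapsto k\cdot\exp X$. The paper itself omits the proof, and your write-up supplies the standard details without any gaps.
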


\begin{ex}
$SL(2,\BB R) \simeq SU(1,1)$ is diffeomorphic to $S^1 \times \BB R^2$ and
homotopic to $S^1$.
\end{ex}

This diffeomorphism allows us to define the {\em global Cartan involution}
(also denoted by $\theta$)
$$
\theta: G_{\BB R} \to G_{\BB R} \quad \text{by} \quad
\text{$\theta: (k,X) \mapsto (k,-X)$ on $K_{\BB R} \times \g p_{\BB R}$.}
$$
This global Cartan involution satisfies $\theta^2=Id_{G_{\BB R}}$ and
its differential at the identity is the Cartan involution
$\theta: \g g_{\BB R} \to \g g_{\BB R}$.
The group $K_{\BB R}$ can be characterized as the fixed point set of $\theta$,
i.e., $K_{\BB R} = \{ g \in G_{\BB R} ;\: \theta g =g \}$.

\begin{lem}
The global Cartan involution is a Lie group automorphism.
\end{lem}

\begin{proof}
Let $\tilde G_{\BB R}$ be the universal covering group of $G_{\BB R}$.
By Proposition \ref{Cartan-decomp}, it is diffeomorphic to
$\tilde K_{\BB R} \times \g p_{\BB R}$, where $\tilde K_{\BB R}$ is the
universal covering group of $K_{\BB R}$.
Since the (local) Cartan involution $\theta: \g g_{\BB R} \to \g g_{\BB R}$
is a Lie algebra automorphism, it ``lifts'' to a Lie group automorphism
$\tilde \theta: \tilde G_{\BB R} \to \tilde G_{\BB R}$.
It is easy to check that $\tilde\theta$ is the identity map on $\tilde K_{\BB R}$
and the $X \mapsto -X$ map on $\g p_{\BB R}$, i.e.
$$
\tilde \theta: (k,X) \mapsto (k,-X) \quad \text{on} \quad
\tilde K_{\BB R} \times \g p_{\BB R}.
$$
Then the group automorphism $\tilde\theta$ descends to a group automorphism
$\theta: G_{\BB R} \to G_{\BB R}$, which is equal to
$\theta: (k,X) \mapsto (k,-X)$ on $K_{\BB R} \times \g p_{\BB R}$.
\end{proof}

\begin{ex}
Returning to the setting of Example \ref{SL(n)-ex}, we have $K_{\BB R} = SO(n)$,
the Cartan involutions are
$$
\theta: \g{sl}(n,\BB R) \to \g{sl}(n,\BB R), \quad \theta(X)=-X^T,
$$
and
$$
\theta: SL(n,\BB R) \to SL(n,\BB R), \quad \theta(g)=(g^{-1})^T.
$$
In the setting of this example, Proposition \ref{Cartan-decomp} is
essentially equivalent to the assertion that any invertible real square
matrix can be expressed uniquely as the product of an orthogonal matrix
and a positive definite symmetric matrix.
\end{ex}

\subsection{Maximal Compact Subgroups}

Recall that $K_{\BB R} = (U_{\BB R} \cap G_{\BB R})^0$.

\begin{prop}
Let $G_{\BB R}$ be a closed linear connected semisimple real Lie group, then
\begin{enumerate}
\item[\rm{(a)}]
Any compact subgroup of $G_{\BB R}$ is conjugate to a subgroup of $K_{\BB R}$
by an element of $G_{\BB R}$;

\item[\rm{(b)}]
$K_{\BB R}$ is a maximal compact subgroup of $G_{\BB R}$;

\item[\rm{(c)}]
Any two maximal compact subgroups of $G_{\BB R}$ are conjugate by an
element of $G_{\BB R}$;

\item[\rm{(d)}]
The normalizer of $K_{\BB R}$ in $G_{\BB R}$ coincides with $K_{\BB R}$.
\end{enumerate}
\end{prop}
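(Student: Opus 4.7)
The plan is to derive all four assertions from the global Cartan decomposition $G_{\BB R} \cong K_{\BB R} \times \g p_{\BB R}$ of the preceding subsection, treating (a) as the principal step and obtaining (b)--(d) from it via short bracket arguments. The key geometric input needed for (a) is that $G_{\BB R}/K_{\BB R}$, identified with $\g p_{\BB R}$, carries a $G_{\BB R}$-invariant Riemannian structure (coming from the restriction of the Killing form to $\g p_{\BB R}$, which is positive definite by semisimplicity) making it a complete, simply connected Riemannian manifold of non-positive sectional curvature. Part (a) then follows from Cartan's fixed-point theorem: a compact group acting isometrically on such a Hadamard manifold has a fixed point, and for a compact $H \subset G_{\BB R}$ acting by left translation a fixed coset $gK_{\BB R}$ amounts exactly to $g^{-1}Hg \subset K_{\BB R}$. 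The principal obstacle is verifying the non-positive curvature of $G_{\BB R}/K_{\BB R}$, which reduces to combining the formula $R(X,Y)Z = -[[X,Y],Z]$ at the origin with $[\g p_{\BB R},\g p_{\BB R}] \subset \g k_{\BB R}$ and $\operatorname{ad}$-invariance of the form; alternatively one can sidestep this by Haar-averaging a strictly convex functional on $\g p_{\BB R}$ over $H$ and reading off a fixed point from its unique minimum.

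For (b), suppose for contradiction that $H \supsetneq K_{\BB R}$ is compact, pick $h \in H \setminus K_{\BB R}$, and use the Cartan decomposition to write $h = k\exp(X)$ with $k \in K_{\BB R} \subset H$ and $X \in \g p_{\BB R}$, $X \ne 0$. Then $\exp(nX) \in H$ for every $n \in \BB Z$, so $\operatorname{Ad}(\exp(nX)) = e^{n\operatorname{ad} X}$ is bounded in $\operatorname{End}(\g g_{\BB R})$. Because $X \in \g p_{\BB R}$, the operator $\operatorname{ad} X$ is self-adjoint with respect to the positive-definite form $-K(\cdot,\theta\cdot)$, hence diagonalizable with real eigenvalues, and boundedness of its exponentiated iterates forces every eigenvalue to be zero; so $\operatorname{ad} X = 0$, and semisimplicity of $\g g_{\BB R}$ gives $X = 0$, a contradiction. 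Part (c) is then immediate from (a): if $H_1, H_2$ are maximal compact subgroups, (a) produces $g_i \in G_{\BB R}$ with $g_iH_ig_i^{-1} \subset K_{\BB R}$; since $g_iH_ig_i^{-1}$ is itself maximal compact, each containment is an equality, and $H_2 = (g_2^{-1}g_1)H_1(g_2^{-1}g_1)^{-1}$.

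For (d), let $g$ normalize $K_{\BB R}$ and decompose $g = k\exp(X)$. Since $k \in K_{\BB R}$ already normalizes $K_{\BB R}$, the task reduces to showing $X = 0$ whenever $\exp(X)$ normalizes $K_{\BB R}$, i.e.\ $e^{\operatorname{ad} X}(\g k_{\BB R}) = \g k_{\BB R}$. The bracket relations $[\g p,\g k] \subset \g p$ and $[\g p,\g p] \subset \g k$ split the exponential series applied to any $Y \in \g k_{\BB R}$ into an even-power part lying in $\g k_{\BB R}$ and an odd-power part lying in $\g p_{\BB R}$, so the normalizing condition forces the odd-power series $\sum_{m \ge 0} (\operatorname{ad} X)^{2m+1}Y/(2m+1)!$ to vanish. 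As in (b), $\operatorname{ad} X$ is diagonalizable with real eigenvalues, and this vanishing is equivalent to $\operatorname{ad} X \cdot Y = 0$, so $X$ centralizes $\g k_{\BB R}$. Combined with $\operatorname{ad} X(\g p_{\BB R}) \subset \g k_{\BB R}$, this yields $(\operatorname{ad} X)^2 = 0$ on all of $\g g_{\BB R}$; diagonalizability then forces $\operatorname{ad} X = 0$, so $X$ is central, and semisimplicity gives $X = 0$, whence $g = k \in K_{\BB R}$.
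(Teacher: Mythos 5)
The paper states this proposition without proof, deferring to the books by Knapp and Helgason, so there is no in-text argument to compare against; your proposal is correct and is essentially the standard proof one finds in those references. Part (a) is Cartan's fixed-point theorem applied to the Hadamard manifold $G_{\BB R}/K_{\BB R} \simeq \g p_{\BB R}$, and your reductions of (b), (c), (d) to eigenvalue arguments for $\operatorname{ad}X$, $X \in \g p_{\BB R}$, are sound: in (b) the boundedness of $e^{n\operatorname{ad}X}$ together with diagonalizability over $\BB R$ (established in the paper's discussion of the Cartan decomposition) kills all eigenvalues; in (d) the parity splitting of $e^{\operatorname{ad}X}Y$ via $[\g p,\g k]\subset\g p$, $[\g p,\g p]\subset\g k$ forces $\sinh(\lambda)Y_\lambda=0$ on each eigencomponent, hence $[X,\g k_{\BB R}]=0$, and then $(\operatorname{ad}X)^2=0$ plus semisimplicity gives $X=0$. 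The only inputs you cite rather than prove are Cartan's fixed-point theorem (or the convex-averaging substitute) and the positive-definiteness of the Killing form on $\g p_{\BB R}$; the latter follows in one line from the negative-definiteness of the Killing form on the compact form $\g u_{\BB R} = \g k_{\BB R}\oplus i\g p_{\BB R}$, and both are at the same level of "standard facts assumed" as the paper's own treatment of this section, so this is an acceptable division of labor.
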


Note that the properties of maximal compact subgroups of real semisimple
closed linear Lie groups are very similar to those of maximal tori of compact
groups.
Since the maximal compact subgroups are all conjugate,
the choice of any one of them is non-essential. At various points,
we shall choose a maximal compact subgroup;
the particular choice will not matter.

\begin{cor}  \label{max-compact-cor}
\begin{enumerate}
\item[\rm{(a)}]
All maximal compact subgroups of $G_{\BB R}$ are connected;

\item[\rm{(b)}]
$K_{\BB R}$ is the only subgroup of $G_{\BB R}$, connected or not, that has
Lie algebra $\g k_{\BB R}$;

\item[\rm{(c)}]
$K_{\BB R}$ contains the center of $G_{\BB R}$.
\end{enumerate}
\end{cor}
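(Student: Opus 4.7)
The plan is to derive all three parts directly from the preceding proposition, whose four assertions supply essentially everything we need: (i) $K_{\BB R}$ is connected by construction, being defined as $(U_{\BB R} \cap G_{\BB R})^0$; (ii) any two maximal compact subgroups of $G_{\BB R}$ are conjugate; (iii) the normalizer $N_{G_{\BB R}}(K_{\BB R})$ coincides with $K_{\BB R}$; and, from the general theory of Lie groups, (iv) a connected Lie subgroup of $G_{\BB R}$ is determined by its Lie subalgebra of $\g g_{\BB R}$.

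For (a), I would begin by noting that by the proposition, every maximal compact subgroup $K' \subset G_{\BB R}$ is of the form $K' = gK_{\BB R}g^{-1}$ for some $g \in G_{\BB R}$. Since conjugation is a diffeomorphism and $K_{\BB R}$ is connected (by (i)), $K'$ is connected as well.

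For (b), let $H \subset G_{\BB R}$ be any subgroup -- not assumed closed or connected -- whose Lie algebra equals $\g k_{\BB R}$. The identity component $H^0$ is a connected Lie subgroup of $G_{\BB R}$ with Lie algebra $\g k_{\BB R}$, and hence $H^0 = K_{\BB R}$ by (iv) together with the fact that $K_{\BB R}$ is itself the connected Lie subgroup with this Lie algebra. Now for any $h \in H$, inner automorphism by $h$ preserves $H$ and hence preserves its identity component $H^0 = K_{\BB R}$; thus $h$ lies in $N_{G_{\BB R}}(K_{\BB R})$, which by (iii) equals $K_{\BB R}$. Therefore $H \subset K_{\BB R}$, and combined with $H \supset H^0 = K_{\BB R}$, we conclude $H = K_{\BB R}$. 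Part (c) is the quickest: every element $z$ of the center of $G_{\BB R}$ centralizes $K_{\BB R}$, hence normalizes it, hence by (iii) lies in $K_{\BB R}$.

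The main obstacle is not computational but conceptual, and it sits in part (b): one has to recognize that $H^0$ is a characteristic subgroup of $H$ (so that every $h \in H$ automatically lies in $N_{G_{\BB R}}(H^0)$), and to invoke the standard uniqueness statement that a connected immersed Lie subgroup is determined by its Lie algebra. Once those two observations are in place, everything reduces to quoting the previous proposition.
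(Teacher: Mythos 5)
Your argument is correct. The paper itself states this corollary without proof (the whole section on maximal compact subgroups is presented without proofs, with references to Knapp and Helgason), so there is no in-text argument to compare against; your derivation from the preceding proposition --- conjugacy of maximal compact subgroups for (a), the identification $H^0=K_{\BB R}$ plus the fact that $H^0$ is normalized by all of $H$ together with $N_{G_{\BB R}}(K_{\BB R})=K_{\BB R}$ for (b), and the normalizer property again for (c) --- is exactly the standard one and fills the gap cleanly. (Implicitly you read ``maximal subgroups'' in (a) as ``maximal compact subgroups'' and take ``subgroup with Lie algebra $\g k_{\BB R}$'' in (b) to mean a Lie subgroup, both of which are the intended readings.)
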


Define:
$$
K = \text{the complexification of $K_{\BB R}$,}
$$
$K$ is a connected complex Lie subgroup of $G$
-- the complexification of $G_{\BB R}$ -- with Lie algebra $\g k$.
Observe that $K$ cannot be compact unless $K_{\BB R}=\{e\}$,
which  does not happen unless $G_{\BB R}=\{e\}$.
Indeed, $\g k_{\BB R} \ne \{0\}$ (otherwise $\g g_{\BB R} = \g p_{\BB R}$ is not
semisimple) and any non-zero $X \in \g k_{\BB R}$ is diagonalizable over $\BB C$,
with purely imaginary eigenvalues, not all zero, so the complex one-parameter
subgroup $\{z \mapsto \exp(zX);\: z \in \BB C \}$ of $K$ is unbounded.
By construction, the Lie algebras $\g g_{\BB R}$, $\g k_{\BB R}$, $\g g$, $\g k$
and the corresponding groups satisfy the following containments:
$$
\begin{matrix}
\g g_{\BB R}  & \subset & \g g & \qquad\qquad & G_{\BB R}  & \subset & G \\
\cup & \qquad & \cup & \qquad\qquad & \cup & \qquad & \cup \\
\g k_{\BB R} & \subset & \g k & \qquad\qquad & K_{\BB R} & \subset & K
\end{matrix}
$$

In general, we have a bijection
$$
\left\{ \begin{matrix} \text{isomorphism classes of} \\
\text{connected real compact} \\
\text{semisimple Lie groups $U_{\BB R}$} \end{matrix}\right\}
\simeq
\left\{ \begin{matrix} \text{isomorphism classes} \\
\text{of connected complex} \\
\text{semisimple Lie groups $G$}\end{matrix}\right\}
$$
\begin{align*}
U_{\BB R} &\mapsto \text{its complexification $G$}, \\
G &\mapsto \text{its compact real form $U_{\BB R}$}
\end{align*}
(in other words, $U_{\BB R}$ is a maximal compact subgroup of $G$).
The statement remains true if one drops ``connected'' and replaces
``semisimple'' with ``reductive''.

Since $\g g = \BB C\otimes_{\BB R}\,\g u_{\BB R}$, these two Lie algebras have
the same representations over $\BB C$ -- representations can be restricted
from $\g g$ to $\g u_{\BB R}$, and in the opposite direction, can be extended
complex linearly.
On the global level, these operations induce a canonical bijection
$$
\left\{ \begin{matrix} \text{finite-dimensional} \\ \text{continuous complex} \\
\text{representations of $U_{\BB R}$} \end{matrix}\right\}
\simeq
\left\{ \begin{matrix} \text{finite-dimensional} \\ \text{holomorphic} \\
\text{representations of $G$} \end{matrix}\right\}
$$
From a representation of $G$ one gets a representation of $U_{\BB R}$
by restriction. In the other direction, one uses the fact that continuous
finite-dimensional representations of Lie groups are necessarily real analytic,
and are determined by the corresponding infinitesimal representations of the
Lie algebra. Hence, starting from a finite-dimensional representation of
$U_{\BB R}$, one obtains a representation $\pi$ of Lie algebras $\g u_{\BB R}$
and $\g g$.
This representation then lifts to a representation $\tilde\pi$ of $\tilde G$
-- the universal cover of $G$, which contains $\tilde U_{\BB R}$
-- the universal cover of $U_{\BB R}$ (here we used $U_{\BB R} \hookrightarrow G$
induces an isomorphism of the fundamental group).
Since we started with a representation of $U_{\BB R}$, the kernel of
$\tilde U_{\BB R} \twoheadrightarrow U_{\BB R}$ acts trivially,
hence $\tilde\pi$ descends to
$\tilde G/ \ker(\tilde U_{\BB R} \twoheadrightarrow U_{\BB R}) =G$.

In particular, every finite-dimensional complex representation of $K_{\BB R}$
extends to a holomorphic representation of $K$.

We had mentioned earlier that the universal covering group of
$G=SL(n, \BB R)$, $n \ge 2$, is not a linear group.
We can now sketch the argument:

\begin{ex}  \label{not-linear}
Let $\widetilde{SL(n,\BB R)}$ be the universal covering group of $SL(n,\BB R)$,
$n \ge 2$. Since
$$
\pi_1(SL(n,\BB R)) = \pi_1(SO(n)) =
\begin{cases}
\BB Z,        & \text{if $n=2$;}  \\
\BB Z/2\BB Z, & \text{if $n \ge 3$,}
\end{cases}
$$
the universal covering $\widetilde{SL(n,\BB R)} \rightarrow SL(n,\BB R)$
is a principal $\BB Z$-bundle when $n=2$ and a principal $\BB Z/2\BB Z$-bundle
when $n \ge 3$. If $\widetilde{SL(n,\BB R)}$ were linear,
its complexification would have to be a covering group of
$SL(n,\BB C)=$ complexification of $SL(n, \BB R)$,
of infinite order when $n=2$ and of order (at least) two when $n \ge 3$.
But $SU(n)$, $n \ge 2$, is simply connected, as can be shown by induction
on $n$. But then $SL(n,\BB C)=$ complexification of $SU(n)$ is also simply
connected, and therefore cannot have a non-trivial covering.
We conclude that $\widetilde{SL(n,\BB R)}$ is not a linear group.
\end{ex}

\section{Definition of a Representation}

Interesting representations of non-compact groups are typically
infinite-dimensional. To apply analytic and geometric methods,
it is necessary to have a topology on the representation space and
to impose an appropriate continuity condition on the representation
in question. In the finite-dimensional case, there is only one
``reasonable" topology and continuity hypothesis,
but in the infinite dimensional case choices must be made.
One may want to study both complex and real representations.
There is really no loss in treating only the complex case,
since one can complexify  a real representation and regard the original
space as an $\BB R$-linear subspace of its complexification.

\subsection{A Few Words on Topological Vector Spaces}

We shall consider representations on {\em complete locally convex\footnote{
    A topological vector space is called {\em locally convex} if there is
    a base for the topology consisting of convex sets.}
Hausdorff topological vector spaces over} $\BB C$.
That includes complex Hilbert spaces, of course.
Unitary representations are of particular interest, and one might think
that Hilbert spaces constitute a large enough universe of representation
spaces. It turns out, however, that even the study of unitary representations
naturally leads to the consideration of other types of topological spaces.
So, before we give the definition of a representation,
we review topological vector spaces.

First of all, all vector spaces are required to be over $\BB C$ and
{\em complete}, which means every Cauchy sequence converges.
(There is a way to define Cauchy sequences for topological vector spaces
without reference to any metric\footnote{Let $V$ be a topological vector space.
A sequence $\{x_n\}_{n=1}^{\infty}$ in $V$ is a {\em Cauchy sequence}
if, for any non-empty open set $U \subset V$ containing $0$,
there is some number $N$ such that
$$
(x_n-x_m) \in U, \qquad \text{whenever $m,n >N$.}
$$
Then it is easy to check that it is sufficient to let the open sets $U$
range over a {\em local base} for $V$ about $0$.})

\separate

\noindent
\underline{Hilbert Spaces}: These are complete complex vector spaces with
positive-definite inner product, their topology is determined by the metric
induced by the inner product.

Example: $L^2$-spaces $L^2(X,\mu)$, where $(X,\mu)$ is a measure space,
the inner product is defined by
$$
\langle f, g \rangle = \int f \bar g \,d\mu.
$$

\separate

\noindent
\underline{Banach Spaces}:
These are complete complex vector spaces with a norm\footnote{Recall,
if $V$ is a vector space, a {\em norm} on $V$ is a function
$\|\,.\,\|: V \to [0, \infty)$ such that
\begin{itemize}
\item
$\|x\|=0$ if and only if $x=0$;
\item
$\|\lambda x\| = |\lambda| \, \|x\|$ for all $x \in V$ and $\lambda \in \BB C$;
\item
(triangle inequality)
$\|x+y\| \le \|x\|+\|y\|$ for all $x,y \in V$.
\end{itemize}},
their topology is determined by the metric induced by the norm.

Example: $L^p$-spaces $L^p(X,\mu)$, $1 \le p \le \infty$,
where $(X,\mu)$ is a measure space, the norm is defined by
$$
\| f\|_p = \left( \int |f|^p\,d\mu \right)^{1/p},
$$
$$
\|f\|_{\infty} = \inf \{ a \ge 0 ;\: \mu( \{ x \in X ;\: f(x)>a \})=0 \},
\quad \inf \varnothing = \infty.
$$
(We exclude $p<1$ because the triangle inequality may fail.)

\separate

\noindent
\underline{Complete Metric Spaces}:
These are complete complex vector spaces with a metric,
their topology is determined by the metric.

Example: $L^p$-spaces $L^p(X,\mu)$, $0 < p \le \infty$,
where $(X,\mu)$ is a measure space, the metric is defined by
$$
d(f,g) = \int |f-g|^p \,d\mu.
$$

\separate

\noindent
\underline{Fr\'echet Spaces}:
These are complex vector spaces with a countable family of seminorms.
Since they occur so often in representation theory,
let us review these spaces in more detail.

Let $V$ be a vector space. A {\em seminorm} on $V$ is a function
$p: V \to [0, \infty)$ such that
\begin{itemize}
\item
$p(\lambda x) = |\lambda| \, p(x)$ for all $x \in V$ and $\lambda \in \BB C$;
\item
(triangle inequality)
$p(x+y) \le p(x)+p(y)$ for all $x,y \in V$.
\end{itemize}
If $p(x)=0$ implies $x=0$, then $p$ is a {\em norm}.

Let $V$ be a vector space with a family of seminorms
$\{ p_{\alpha} \}_{\alpha \in A}$. We declare the ``balls''
$$
U_{x\alpha\epsilon} = \{ v \in V ;\: p_{\alpha}(v-x)<\epsilon \},
\quad x \in V,\: \alpha \in A,\: \epsilon >0,
$$
to be open sets and equip $V$ with the topology generated by these sets
$U_{x\alpha\epsilon}$.
This topology is Hausdorff if and only if for each $x \in V$, $x \ne 0$,
there exists $\alpha \in A$ such that $p_{\alpha}(x) \ne 0$.
If $V$ is Hausdorff and $A$ is countable, then $V$ is metrizable with a
translation-invariant metric (i.e. $d(x,y)=d(x+z,y+z)$ for all $x,y,z \in V$).
Finally, a {\em Fr\'echet space} is a complete Hausdorff topological vector
space whose topology is defined by a countable family of seminorms.
(Of course, every Fr\'echet space is metrizable -- take, for example,
$d(x,y) = \sum_{k=1}^{\infty} 2^{-k} \frac{p_k(x-y)}{1+p_k(x-y)}$.)

Examples: Let $U \subset \BB R^n$ be an open set, and let
$$
V_0 = \{ \text{continuous functions $f: U \to \BB C$} \}.
$$
For each compact set $K \subset U$ we can define a seminorm
$$
p_K(f) = \sup_{x \in K} |f(x)|.
$$
Of course, this family of seminorms is uncountable, but we can find a sequence
of compact sets
$$
K_1 \subset K_2 \subset K_3 \subset \dots \subset U
\quad \text{such that} \quad
\bigcup_{i=1}^{\infty} K_i = U
$$
and use seminorms $p_i = p_{K_i}$ to define topology on $V_0$.
Then $V_0$ becomes a Fr\'echet space (one needs to check the completeness).

We modify the example by letting
$$
V_k = \{ \text{${\cal C}^k$ functions $f: U \to \BB C$} \}.
$$
For each compact set $K$ and multiindex $\alpha$, $|\alpha| \le k$,
we can define a seminorm
$$
p_{K,\alpha}(f) = \sup_{x \in K} \left|
\frac{\partial^{|\alpha|}f}{\partial x^{\alpha}}(x) \right|.
$$
Then the countable family of seminorms $p_{K_i,\alpha}$ turns $V_k$ into a
Fr\'echet space (again, completeness requires verification).

Finally, let
$$
V_{\infty} = \{ \text{${\cal C}^{\infty}$ functions $f: U \to \BB C$} \}.
$$
For each compact set $K$ and multiindex $\alpha$ we have a seminorm
$$
p_{K,\alpha}(f) = \sup_{x \in K} \left|
\frac{\partial^{|\alpha|}f}{\partial x^{\alpha}}(x) \right|.
$$
Then the countable family of seminorms $p_{K_i,\alpha}$ turns $V_{\infty}$ into a
Fr\'echet space (of course, completeness is not entirely obvious).

\separate

As was mentioned before, we study representations on
{\em complete locally convex Hausdorff topological vector spaces over} $\BB C$.
But for most purposes Fr\'echet spaces will suffice.
(Recall that a topological vector space is called {\em locally convex} if
there is a base for the topology consisting of convex sets.)
Hilbert, Banach and Fr\'echet spaces are automatically locally convex,
but not all metric spaces are locally convex.
Local convexity is required to define the integral of vector-valued functions,
which is a crucial tool in the study of representations of reductive groups.

Finally, we prefer to work with topological spaces that are {\em separable},
i.e. have a countable dense subset.

\subsection{The Definition}

In this subsection $G_{\BB R}$ can be any real Lie group.
Recall that $V$ is a complete locally convex Hausdorff topological vector space
over $\BB C$ such as, for example, a Fr\'echet space.
Let $\operatorname{Aut}(V)$ denote the group of continuous, continuously
invertible, linear maps from $V$ to itself; we do not yet specify a topology
on this group.
There are at least four reasonable notions of continuity one could impose on
a homomorphism $\pi: G_{\BB R} \to \operatorname{Aut}(V)$:
\begin{itemize}
\item[\rm{a)}]
{\em continuity}: the action map $G_{\BB R} \times V \to V$ is continuous,
relative to the product topology on $G_{\BB R} \times V$;
\item[\rm{b)}]
{\em strong continuity}: for every $v \in V$, $g \mapsto \pi(g)v$
is continuous as map from $G_{\BB R}$ to $V$;
\item[\rm{c)}]
{\em weak continuity}: for every $v \in V$ and every $l$ in the
continuous linear dual space $V^*$, the complex-valued function
$g \mapsto \langle l, \pi(g)v \rangle$ is continuous;
\item[\rm{d)}]
{\em continuity in the operator norm}, which makes sense only if $V$ is
a Banach space; in that case, $\operatorname{Aut}(V)$ can be equipped with
the norm topology\footnote{If $V$ is a Banach space, then the space of
all bounded linear operators $A: V \to V$ is a Banach space with a norm
$$
\|A\| = \sup \{ \|Av\|;\: v \in V,\: \|v\| \le 1 \}.
$$},
and continuity in the operator norm means that
$\pi: G_{\BB R} \to \operatorname{Aut}(V)$ is a continuous homomorphism of
topological groups.
\end{itemize}

It is easy to see that
\begin{equation*}
\text{ continuity } \Longrightarrow
\text{ strong continuity } \Longrightarrow
\text{ weak continuity},
\end{equation*}
and if $V$ is a Banach space,
\begin{equation*}
\text{ continuity in the operator norm } \Longrightarrow
\text{ continuity}.
\end{equation*}

\begin{rem}
By the group property,
\begin{itemize}
\item[\rm{a)}]
is equivalent to $G_{\BB R} \times V \to V$ being continuous at $(e,v)$,
for every $v \in V$;
\item[\rm{b)}]
is equivalent to $g \mapsto \pi(g)v$ being continuous at $g=e$,
for every $v \in V$.
\end{itemize}
\end{rem}

\begin{ex}  \label{translation-ex}
The translation action of $(\BB R, +)$ on $L^p(\BB R)$ is continuous for
$1 \le p < \infty$, but {\em not} continuous in the operator norm;
for $p =\infty$ the translation action fails to be continuous,
strongly continuous, even weakly continuous.



The translation action of $(\BB R, +)$ on each ${\cal C}^k(\BB R)$,
$0 \le k \le \infty$, is continuous.

Let $V$ be the Banach space of bounded continuous functions on $\BB R$
with norm $\|f\| = \sup_{x \in \BB R} |f(x)|$. Then the translation action
of $(\BB R, +)$ on $V$ fails to be strongly continuous.
\end{ex}

This example shows that requiring continuity in the operator norm is too
restrictive -- most of the representations of interest involve translation.
Thus, from now on, ``representation" shall mean a continuous
-- in the sense of part (a) above -- linear action
$\pi: G_{\BB R} \to \operatorname{Aut}(V)$ on a complete, locally convex
Hausdorff space $V$.

\begin{ex}
Let $H_{\BB R} \subset G_{\BB R}$ be a closed subgroup, suppose that the
homogeneous space $G_{\BB R}/H_{\BB R}$ has a $G_{\BB R}$-invariant measure.
Then $G_{\BB R}$ acts on $L^2(G_{\BB R}/H_{\BB R})$ continuously and unitarily.
More generally, $G_{\BB R}$ acts continuously on
$$
L^p(G_{\BB R}/H_{\BB R}), \quad 1 \le p < \infty,
$$
$$
{\cal C}^{\infty}(G_{\BB R}/H_{\BB R}), \qquad
{\cal C}^{\infty}_c(G_{\BB R}/H_{\BB R}), \qquad
{\cal C}^{-\infty}(G_{\BB R}/H_{\BB R}),
$$
where ${\cal C}^{\infty}_c$ denotes the space of smooth functions with
compact support and ${\cal C}^{-\infty}$ denotes the space of distributions.
Later these will be declared infinitesimally equivalent representations.
\end{ex}

\begin{rem}
If both $G_{\BB R}$ and $H_{\BB R}$ are unimodular, then $G_{\BB R}/H_{\BB R}$ has a
$G_{\BB R}$-invariant measure.

A Lie group is called {\em unimodular} if it has a bi-invariant Haar measure.
The following groups are always unimodular: abelian, compact,
connected semisimple, nilpotent.
\end{rem}

\begin{prop}
Let $V$ be a Banach space, $\pi: G_{\BB R} \to \operatorname{Aut}(V)$ a
group homomorphism with no continuity hypothesis,
then the following are equivalent:
$$
\text{ continuity } \Longleftrightarrow
\text{ strong continuity } \Longleftrightarrow
\text{ weak continuity}.
$$
\end{prop}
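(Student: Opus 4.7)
The implications (a) $\Rightarrow$ (b) $\Rightarrow$ (c) are already noted, so the content of the proposition is (c) $\Rightarrow$ (b) and (b) $\Rightarrow$ (a). My strategy has three steps: first establish local norm-boundedness of $g \mapsto \pi(g)$ from weak (or strong) continuity via uniform boundedness; then use that bound together with a convolution-smoothing argument to upgrade weak to strong continuity; and finally deduce joint continuity by a routine $\varepsilon/2$ split.

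\textbf{Local boundedness.} Fix a compact neighbourhood $K$ of $e \in G_{\BB R}$. Under (c), for each $v \in V$ and each $l \in V^*$ the map $g \mapsto \langle l,\pi(g)v\rangle$ is continuous, hence bounded, on $K$. Thus $\{\pi(g)v : g \in K\}$ is weakly bounded; since $V$ is Banach, weak boundedness equals norm boundedness. A second application of the uniform boundedness principle, now in the operator algebra $\operatorname{End}(V)$, yields $M = \sup_{g \in K}\|\pi(g)\| < \infty$. The same two-step argument works under (b), using norm-compactness of continuous images.

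\textbf{Weak $\Rightarrow$ strong.} Using left Haar measure on $G_{\BB R}$, pick $\phi \in {\cal C}_c(G_{\BB R})$ with support in $K$ and define the Pettis integral
\[
\pi(\phi)v \ = \ \int_{G_{\BB R}} \phi(g)\,\pi(g)v\,dg,
\]
which exists because the integrand is weakly continuous and norm-bounded by $M\|v\|\cdot|\phi(g)|$. A change of variables gives $\pi(h)\pi(\phi)v = \pi(L_h\phi)v$, where $L_h\phi(g) = \phi(h^{-1}g)$, so
\[
\|\pi(h)\pi(\phi)v - \pi(\phi)v\| \ \le \ M\|v\|\cdot\|L_h\phi - \phi\|_{L^1},
\]
which tends to $0$ as $h \to e$ by uniform continuity of $\phi$. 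Hence every vector of the form $\pi(\phi)v$ is strongly continuous. Such vectors are norm-dense: if $l \in V^*$ annihilates all of them, then $\int \phi(g)\langle l,\pi(g)v\rangle\,dg = 0$ for every $\phi \in {\cal C}_c(G_{\BB R})$, so the continuous function $g \mapsto \langle l,\pi(g)v\rangle$ vanishes identically, in particular at $g=e$, forcing $l=0$. An $\varepsilon/M$ approximation combined with the local bound $M$ then propagates strong continuity from this dense subspace to all of $V$, establishing (b). The implication (b) $\Rightarrow$ (a) is then immediate from
\[
\|\pi(g)v - \pi(g_0)v_0\| \ \le \ M\|v - v_0\| + \|\pi(g)v_0 - \pi(g_0)v_0\|
\]
for $g$ in a compact neighbourhood of $g_0$, since both terms tend to $0$ as $(g,v) \to (g_0,v_0)$.

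\textbf{The hard part.} The nontrivial step is the passage weak $\Rightarrow$ strong: pointwise weak convergence $\pi(g_n)v \to v$, even with a uniform norm bound, does not imply norm convergence in a general Banach space, so the group structure must enter nontrivially via convolution smoothing. The delicate points are verifying that the Pettis integral $\pi(\phi)v$ actually defines an element of $V$ (and not only of the bidual), which uses weak continuity together with Banach-space completeness, and then bootstrapping the weakly dense image of the operators $\pi(\phi)$ to a norm-dense one via the Hahn-Banach argument above. Everything else is a standard uniform-boundedness plus triangle-inequality exercise.
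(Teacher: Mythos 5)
Your proof is correct, and it supplies exactly what the paper leaves unproved: the text only remarks that strong continuity $\Rightarrow$ continuity follows from the uniform boundedness principle and that weak $\Rightarrow$ strong is ``more subtle.'' Your two easy steps (weak boundedness $=$ norm boundedness on a compact neighbourhood, then a second application of Banach--Steinhaus to get $\sup_{g\in K}\|\pi(g)\|<\infty$, followed by the $\varepsilon/3$ propagation) are the standard arguments the paper alludes to, and your treatment of the subtle implication is the classical convolution-smoothing device -- the same G{\aa}rding-type trick the paper itself uses later to prove density of ${\cal C}^\infty$ vectors. The estimate $\|\pi(L_h\phi-\phi)v\|\le M\|v\|\,\|L_h\phi-\phi\|_{L^1}$ and the Hahn--Banach density argument are both sound (note only that for $h$ ranging near $e$ the supports of the $L_h\phi$ sit in a slightly enlarged compact set, so $M$ should be taken for that enlargement).

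The one step you flag but do not fully discharge is the claim that the weak (Dunford) integral $\pi(\phi)v$ lies in $V$ rather than merely in $V^{**}$. This is genuinely the crux of weak $\Rightarrow$ strong and deserves a named theorem: either invoke Krein--\v{S}mulian -- the set $\{\pi(g)v:\,g\in\supp\phi\}$ is the weakly continuous image of a compact set, hence weakly compact, so its closed convex hull is weakly compact and contains the (suitably normalized) integral -- or observe that this weakly compact set is weakly separable, hence lies in a norm-separable closed subspace, so the Pettis measurability theorem makes $g\mapsto\phi(g)\pi(g)v$ Bochner integrable. Either route closes the gap; ``completeness of $V$'' alone does not, since the Dunford integral of a merely weakly measurable bounded function can genuinely escape into the bidual for non-reflexive $V$ without such an argument.
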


In this chain of implications,
{\it strong continuity}~$\Longrightarrow$~{\it continuity}
follows relatively easily from the uniform boundedness principle\footnote{
{\bf Theorem} (Uniform Boundedness Principle).
Let $V$ be a Banach space and $W$ be a normed vector space.
Suppose that ${\cal F}$ is a collection of continuous linear operators from
$V$ to $W$ such that
$$
\sup_{T \in {\cal F}} \|T(v)\| < \infty, \quad \forall v \in V.
\qquad \text{Then} \qquad
\sup_{T \in {\cal F}} \|T\| < \infty.
$$},
but the implication
{\it weak continuity}~$\Longrightarrow$~{\it strong continuity}
is more subtle.

If $V$ is a topological vector space, one can equip its continuous linear dual
space $V^*$ with something called the strong dual topology.
In the case $V$ is a Banach space, this is the topology on $V^*$ defined by
the norm
$$
\| l \| =_{\text{def}} \sup_{v\in V, \: \|v\|=1} |l(v)|, \qquad l \in V^*.
$$
In general, if $\pi$ is continuous, the induced dual linear action on $V^*$
need not be continuous.
However, when $V$ is a reflexive Banach space (i.e. $V \simeq (V^*)^*$),
$V$ and $V^*$ play symmetric roles in the definition of weak continuity;
in this case, the dual action is also continuous, so there exists a
``dual representation'' $\pi^*$ of $G_{\BB R}$ on the dual Banach space $V^*$.

An infinite dimensional representation $(\pi, V)$ typically has numerous
invariant subspaces $W \subset V$, but the induced linear action of
$G_{\BB R}$ on $V/W$ is a purely algebraic object unless $V/W$ is Hausdorff,
i.e., unless $W \subset V$ is a closed subspace.
For this reason, the existence of a non-closed invariant subspace should not
be regarded as an obstacle to irreducibility: $(\pi, V)$ is {\em irreducible}
if $V$ has no proper {\em closed} $G_{\BB R}$-invariant subspaces.

In the same spirit, a {\em subrepresentation} of $(\pi, V)$ is a {\em closed}
$G_{\BB R}$-invariant subspace $W \subset V$.

A representation $(\pi, V)$ has {\em finite length} if every increasing chain
of closed $G_{\BB R}$-invariant subspaces breaks off after finitely many steps.
Equivalently, $(\pi, V)$ has finite length if there is a finite chain of
closed invariant subspaces
$$
0 = V_0 \subset V_1 \subset \dots \subset V_N = V
$$
such that each $V_i/V_{i-1}$ is irreducible.

\subsection{Admissible Representations}

As usual, we assume that $G_{\BB R}$ is a connected closed linear real Lie group
with semisimple Lie algebra. Recall that $K_{\BB R}$ denotes a maximal compact
subgroup of $G_{\BB R}$.

One calls a representation $(\pi, V)$ {\em admissible} if
$$
\dim_{\BB R} \operatorname{Hom}_{K_\BB R}(U,V) < \infty
$$
for every finite-dimensional
irreducible representation $(\tau, U)$ of $K_{\BB R}$.
Informally speaking, admissibility means that the restriction of
$(\pi, V)$ to $K_{\BB R}$ contains any irreducible $K_{\BB R}$-representation
only finitely often.

\begin{thm} [Harish-Chandra] \label{admissible}
Every irreducible unitary representation $(\pi,V)$ of $G_{\BB R}$  is admissible.
\end{thm}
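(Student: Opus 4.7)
My plan is to decompose $V$ under $K_{\BB R}$ via Peter--Weyl and then invoke the near-commutativity of a spherical Hecke algebra to bound multiplicities. First, since $K_{\BB R}$ is compact and $(\pi,V)$ is unitary, restricting to $K_{\BB R}$ yields the orthogonal Hilbert direct sum
$$
V \;=\; \widehat{\bigoplus}_{\tau \in \hat K_{\BB R}} V(\tau), \qquad V(\tau) \;\simeq\; U_\tau \otimes M_\tau,
$$
where $M_\tau = \operatorname{Hom}_{K_{\BB R}}(U_\tau, V)$ is the multiplicity space and the orthogonal projection onto $V(\tau)$ is the bounded operator $E_\tau = \dim(U_\tau)\int_{K_{\BB R}} \overline{\chi_\tau(k)}\,\pi(k)\,dk$. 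The conclusion of the theorem is precisely $\dim_{\BB C} M_\tau < \infty$ for every $\tau \in \hat K_{\BB R}$.

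Next, for each fixed $\tau$ I would introduce the $\tau$-twisted Hecke algebra ${\cal H}(G_{\BB R},K_{\BB R},\tau)$ consisting of compactly supported $\End(U_\tau)$-valued continuous functions $f$ on $G_{\BB R}$ with $f(k_1 g k_2) = \tau(k_1)f(g)\tau(k_2)$, under convolution. Integrating against $\pi$ produces operators that annihilate every $V(\tau')$ with $\tau'\ne\tau$ and preserve $V(\tau)$; by construction they commute with $\pi(K_{\BB R})$, so under the identification $V(\tau) \simeq U_\tau \otimes M_\tau$ they act as $\mathrm{id}_{U_\tau} \otimes \sigma(f)$ for bounded operators $\sigma(f) \in B(M_\tau)$, producing a $*$-representation $\sigma$ of ${\cal H}(G_{\BB R},K_{\BB R},\tau)$ on $M_\tau$. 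Topological irreducibility of $(\pi,V)$ forces $\sigma$ to act topologically irreducibly on $M_\tau$: a proper closed $\sigma$-invariant subspace $W \subsetneq M_\tau$ would give the proper closed $G_{\BB R}$-invariant subspace $\overline{\pi(G_{\BB R})(U_\tau \otimes W)} \subset V$, since compressing that subspace back by $E_\tau$ recovers exactly $U_\tau \otimes W \ne V(\tau)$, contradicting irreducibility.

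The hard part --- and the main obstacle --- is to conclude $\dim M_\tau < \infty$ from topological irreducibility of $\sigma$. This is the Gelfand--Harish-Chandra structural input: the spherical algebra ${\cal H}(G_{\BB R},K_{\BB R},\mathrm{triv}) = C_c(K_{\BB R} \backslash G_{\BB R} / K_{\BB R})$ is commutative, so that $(G_{\BB R},K_{\BB R})$ is a Gelfand pair; and for general $\tau$ the algebra ${\cal H}(G_{\BB R},K_{\BB R},\tau)$ is finitely generated as a module over its commutative center (of rank bounded in terms of $\dim U_\tau$). The commutativity of the spherical algebra is proved by Gelfand's trick: using the $KAK$ decomposition $G_{\BB R} = K_{\BB R} A_{\BB R} K_{\BB R}$ together with an anti-involution $g \mapsto g^{-1}$ that becomes the identity on bi-$K_{\BB R}$-invariant functions, the convolution is forced to be commutative. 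An irreducible $*$-representation of an algebra finite over a commutative center must be finite-dimensional (apply the spectral theorem to the center, extract a joint eigenspace, then invoke Schur), so $\dim M_\tau < \infty$ for every $\tau$, which is admissibility.
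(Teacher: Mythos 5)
The paper states this theorem without proof, so there is no in-text argument to compare against; judging your proposal on its own terms, the skeleton is the right one --- it is Godement's classical approach: decompose $V|_{K_{\BB R}}$ by Peter--Weyl, let the $\tau$-spherical Hecke algebra act topologically irreducibly on the multiplicity space $M_\tau$ (your reduction of that step via $E_\tau$ and the closure of $\pi(G_{\BB R})(U_\tau\otimes W)$ is fine), and then bound the dimension of irreducible $*$-representations of that algebra. But the step you yourself flag as ``the hard part'' is where all the content of the theorem lives, and it is not supplied; moreover the one piece of it you do sketch contains an error. The Gelfand trick as you state it is false: commutativity of $C_c(K_{\BB R}\backslash G_{\BB R}/K_{\BB R})$ does not follow from the map $g\mapsto g^{-1}$, because $g^{-1}$ need not lie in $K_{\BB R}\,g\,K_{\BB R}$. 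Already for $SL(3,\BB R)$ with $g=\exp X$, $X=\operatorname{diag}(2,-1,-1)$, the element $-X$ is not in the Weyl group ($S_3$) orbit of $X$, so by the uniqueness part of the $KAK$ decomposition $K_{\BB R}g^{-1}K_{\BB R}\ne K_{\BB R}gK_{\BB R}$. The correct anti-involution is $g\mapsto\theta(g)^{-1}$ (for $SL(n,\BB R)$, the transpose), which fixes $K_{\BB R}$ and $A_{\BB R}$ pointwise and therefore preserves each double coset.

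More seriously, commutativity of the trivial-type spherical algebra only bounds the multiplicity of the trivial $K_{\BB R}$-type. For general $\tau$ you invoke the assertion that ${\cal H}(G_{\BB R},K_{\BB R},\tau)$ is module-finite over its commutative center with rank controlled by $\dim U_\tau$. That is not an established fact for real groups at the level of convolution algebras (its $p$-adic analogue is a theorem of Bernstein), and it is exactly the crux. The classical substitute (Godement, Harish-Chandra) is that ${\cal H}(G_{\BB R},K_{\BB R},\tau)$ satisfies a standard polynomial identity $S_{2N}$ with $N=N(\tau)$ --- equivalently, admits a separating family of representations of dimension at most $N$, which is produced from the not-necessarily-unitary principal series via the bound $\dim U_i\cdot\dim\operatorname{Hom}_{M_{\BB R}}(U_i,E)\le(\dim U_i)^2$ together with the injectivity of the Abel/Harish-Chandra transform; Kaplansky's theorem then bounds the dimension of every topologically irreducible representation, in particular of $M_\tau$. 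The alternative route is Harish-Chandra's infinitesimal one: the Casimir acts by a scalar (Schur for unitary representations), $\Omega_G-2\Omega_K$ is elliptic, and $V(\tau)$ sits inside the solution space of an elliptic system on $G_{\BB R}/K_{\BB R}$ whose finite-dimensionality one can establish directly. Without one of these inputs your argument does not close.
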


Heuristically, admissible representations of finite length constitute the
smallest class that is invariant under ``standard constructions''
(in a very broad sense!) and contains the irreducible unitary representations.
One should regard inadmissible irreducible representations as exotic.
All irreducible representations which have come up naturally in geometry,
differential equations, physics, and number theory are admissible.

\section{Harish-Chandra Modules}

The goal of this section is, starting with a representation $(\pi, V)$
of $G_{\BB R}$, to construct a representation of $\g g_{\BB R}$.
In Example \ref{translation-ex}, where $G_{\BB R} = \BB R$ acts by translations
on $L^p(\BB R)$, $1 \le p < \infty$, we get an action of the Lie algebra on
$f \in L^p(\BB R)$ if and only if $f$ is differentiable.
Thus we cannot expect $\g g_{\BB R}$ act on all of $V$,
and we want to find a ``good'' vector subspace of $V$ on which $\g g_{\BB R}$
can act and which contains enough information about $V$.

\subsection{$K_{\BB R}$-finite and ${\cal C}^{\infty}$ Vectors}

Let $U \subset \BB R^n$ be an open set. Let $V$ be a topological
vector space, and consider a function $f: U \to V$.
We can define partial derivatives (with values in $V$)
$$
\frac{\partial f}{\partial x_i}
= \lim_{\epsilon \to 0} \frac{f(x_1, \dots, x_i+\epsilon, \dots, x_n) -
f(x_1, \dots, x_i, \dots, x_n)}{\epsilon}
$$
if the limit exists.
Then we can make sense of partial derivatives of higher order and
${\cal C}^{\infty}$ functions $f: U \to V$.

If $V$ is a Banach space, we say that a function $f: U \to V$ is
${\cal C}^\omega$ (or real analytic) if near every point in $U$
it can be represented by an absolutely convergent $V$-valued power series.
($V$ is required to be a Banach space so we can use the norm to make sense
of absolute convergence.)
Since $G_{\BB R}$ ``locally looks like $\BB R^n$'', one can make sense of
${\cal C}^{\infty}$ and ${\cal C}^\omega$ functions $G_{\BB R} \to V$.

\begin{df}
Let $(\pi, V)$ be a representation of $G_{\BB R}$. A vector $v \in V$ is
\begin{itemize}
\item[{\rm a)}]
{\em $K_{\BB R}$-finite} if $v$ lies in a finite-dimensional
$K_{\BB R}$-invariant subspace;
\item[{\rm b)}]
a {\em ${\cal C}^{\infty}$ vector} if $g \mapsto \pi(g)v$ is a
${\cal C}^{\infty}$ map from $G_{\BB R}$ to $V$;
\item[{\rm c)}]
in the case of a Banach space $V$ only, an {\em analytic vector}
if $g \mapsto \pi(g)v$ is a ${\cal C}^\omega$ (or real analytic) map from
$G_{\BB R}$ to $V$;
\item[{\rm d)}]
{a weakly analytic vector} if, for every $l \in V^*$, the complex-valued
function $g \mapsto \langle l, \pi(g)v \rangle$ is real analytic.
\end{itemize}
\end{df}

All reasonable notions of a real analytic $V$-valued map agree when
$V$ is a Banach space, but not for other locally convex topological vector
spaces. That is the reason for defining the notion of an analytic vector only
in the Banach case. Surprisingly perhaps, even weakly real analytic functions
with values in a Banach space are real analytic (i.e., locally representable
by absolutely convergent vector valued power series) -- see the appendix in
\cite{La} for an efficient argument.
In the Banach case, then, the notions of an analytic vector and
of a weakly analytic coincide. For other representations,
the former is not defined, but the latter still makes sense.

As a matter of self-explanatory notation, we write $V_{fini}$, $V^{\infty}$
and $V^{\omega}$ for the spaces of $K_{\BB R}$-finite, smooth and weakly analytic
vectors in $V$. We do not equip these spaces with any topology.

\begin{thm} [Harish-Chandra] \label{densityof}
If $(\pi, V)$ is an admissible representation
(which may or may not be of finite length),
\begin{itemize}
\item[{\rm a)}]
$V_{fini}$ is a dense subspace of $V$;
\item[{\rm b)}]
Every $v \in V_{fini}$ is both a ${\cal C}^{\infty}$
vector and a weakly analytic vector;
\item[{\rm c)}]
In particular, $V^{\infty}$ and $V^{\omega}$ are dense in $V$.
\end{itemize}
\end{thm}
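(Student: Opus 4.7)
My plan is to establish the three claims in the natural order: first build up enough smooth vectors via a G\aa rding-type convolution with ${\cal C}_c^\infty(G_{\BB R})$, then use Peter--Weyl on $K_{\BB R}$ together with an approximate identity to deduce density of $V_{fini}$, and finally exploit admissibility plus elliptic regularity to show that every $K_{\BB R}$-finite vector is actually smooth and weakly analytic. Part (c) will follow at once from (a) and (b).

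For the two density statements I would work with the integrated representation. Using completeness and local convexity of $V$, for $f\in{\cal C}_c^\infty(G_{\BB R})$ the vector $\pi(f)v=\int_{G_{\BB R}}f(g)\pi(g)v\,dg$ is well-defined; since $\pi(h)\pi(f)v=\pi(L_hf)v$ with $L_hf(g)=f(h^{-1}g)$ smooth in $h$ into ${\cal C}_c^\infty(G_{\BB R})$, $\pi(f)v$ lies in $V^\infty$, and letting $f$ run through an approximate identity at $e$ shows $V^\infty$ is dense in $V$. For (a), for each $K_{\BB R}$-type $\tau$ the averaging operator $P_\tau=\int_{K_{\BB R}}\dim(U_\tau)\overline{\chi_\tau(k)}\pi(k)\,dk$ is the continuous projection onto the $\tau$-isotypic component $V^\tau$; given an approximate identity $\psi_n\in{\cal C}^\infty(K_{\BB R})$ at $e$, Peter--Weyl lets me $L^1$-approximate each $\psi_n$ by a finite linear combination $\psi_n'$ of matrix coefficients of $K_{\BB R}$-irreducibles, and then $\pi(\psi_n')v\in V_{fini}$ while $\pi(\psi_n')v\to v$.

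For the smoothness half of (b), admissibility makes each $V^\tau$ finite-dimensional, hence closed. Differentiating under the integral (legitimate since $\overline{\chi_\tau}$ is smooth on the compact group $K_{\BB R}$) shows $P_\tau$ sends $V^\infty$ into $V^\infty\cap V^\tau$, and by continuity $P_\tau(V^\infty)$ is dense in $V^\tau$; a dense subspace of a finite-dimensional space is all of it, so $V^\tau\subset V^\infty$ and hence $V_{fini}\subset V^\infty$.

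For weak analyticity, fix a $K_{\BB R}$-biinvariant Riemannian metric on $G_{\BB R}$ and let $\Delta\in{\cal U}(\g g)$ be its left-invariant Laplacian (expressible through the Casimirs of $\g g$ and $\g k$). Since $\Delta$ is $K_{\BB R}$-invariant, $d\pi(\Delta)$ preserves each finite-dimensional $V^\tau$ and so admits a minimal polynomial $Q(X)\in\BB C[X]$ there. For $v\in V^\tau$ (now known to be in $V^\infty$) and any $l\in V^*$, set $\varphi_l(g)=\langle l,\pi(g)v\rangle$; iterating the identity $(X\varphi_l)(g)=\langle l,\pi(g)d\pi(X)v\rangle$ for $X\in\g g$ transfers the vanishing $Q(d\pi(\Delta))v=0$ into the scalar PDE $Q(\Delta)\varphi_l=0$ on $G_{\BB R}$. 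Because $\Delta$ is elliptic with real-analytic (in fact left-invariant-constant) coefficients, so is $Q(\Delta)$, and the analytic version of elliptic regularity forces $\varphi_l$ to be real analytic; thus $v\in V^\omega$, giving $V_{fini}\subset V^\omega$. Part (c) is then immediate since $V_{fini}\subset V^\infty\cap V^\omega$ is already dense. The main obstacle is this last step: I must exhibit an elliptic bi-$K_{\BB R}$-invariant element of ${\cal U}(\g g)$, check that the polynomial identity on the finite-dimensional $V^\tau$ really descends to a scalar elliptic equation on matrix coefficients, and invoke the analytic (not merely ${\cal C}^\infty$) form of elliptic regularity. The remaining parts -- G\aa rding, Peter--Weyl density, and the smoothness half of (b) -- reduce to routine bookkeeping with continuous projections and approximate identities.
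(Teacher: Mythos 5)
Your proposal is correct and follows essentially the same route as the paper: G\aa rding's convolution argument for density of $V^{\infty}$, the isotypic projections $P_\tau=\pi_{K_{\BB R}}(\dim U_\tau\cdot\overline{\chi_\tau})$ combined with Peter--Weyl/Stone--Weierstrass approximation for density of $V_{fini}$, finite-dimensionality of each $V^\tau$ to upgrade ``dense in'' to ``equal to'' for the smoothness claim, and an elliptic $\operatorname{Ad}(K_{\BB R})$-invariant element of ${\cal U}(\g g)$ (the paper uses $\Omega_G-2\Omega_K$) together with analytic elliptic regularity for weak analyticity. The only cosmetic differences are that the paper approximates the approximate identity uniformly via Stone--Weierstrass rather than in $L^1$, and obtains $V^{\infty}\cap V(i)$ dense in $V(i)$ by writing $\pi_{K_{\BB R}}(\phi_i)\circ\pi(f_n)=\pi\bigl(l_{K_{\BB R}}(\phi_i)f_n\bigr)$ instead of differentiating under the integral; the weak-analyticity step is likewise only sketched in the paper.
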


The theorem applies in particular to $K_{\BB R}$, considered as maximal compact
subgroup of itself. Finite-dimensional subspaces are automatically closed,
so the density of $K_{\BB R}$-finite vectors forces any infinite dimensional
representation $(\pi,V)$ of $K_{\BB R}$ to have proper closed invariant
subspaces. In other words,

\begin{cor}
Every irreducible representation of $K_{\BB R}$ is finite dimensional. 
\end{cor}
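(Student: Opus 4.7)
The plan is to apply Theorem \ref{densityof} with $K_{\BB R}$ playing the role of both $G_{\BB R}$ and its own maximal compact subgroup, then exploit the hint built into the statement: finite-dimensional subspaces of a Hausdorff topological vector space are automatically closed.

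First I would verify the admissibility hypothesis so that Theorem \ref{densityof} applies. Let $(\pi,V)$ be an irreducible representation of $K_{\BB R}$. Because $K_{\BB R}$ is compact, I can average any continuous Hermitian inner product on $V$ against the normalized Haar measure on $K_{\BB R}$ to produce a $K_{\BB R}$-invariant continuous Hermitian inner product, with respect to which $(\pi,V)$ is unitary. Now Harish-Chandra's Theorem \ref{admissible}, applied with $G_{\BB R}$ taken to be the compact group $K_{\BB R}$ itself, shows that $(\pi,V)$ is admissible.

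Next I would invoke Theorem \ref{densityof}(a): the subspace $V_{fini}$ of $K_{\BB R}$-finite vectors is dense in $V$. In particular, $V_{fini}$ is nonzero, so I can pick a nonzero $v \in V_{fini}$ and, by the definition of $K_{\BB R}$-finiteness, choose a finite-dimensional $K_{\BB R}$-invariant subspace $W \subset V$ containing $v$. Since $W$ is finite-dimensional, it is automatically closed in the ambient Hausdorff topological vector space $V$. Hence $W$ is a nonzero closed $K_{\BB R}$-invariant subspace of $V$, and irreducibility of $(\pi,V)$ forces $W = V$, so $\dim V = \dim W < \infty$.

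The only mildly delicate step is the admissibility verification, since Theorem \ref{admissible} is stated for unitary representations; I would handle this by the Haar-average unitarization described above. Everything else is a formal consequence of the density statement in Theorem \ref{densityof} combined with the closedness of finite-dimensional subspaces, exactly along the lines sketched in the paragraph preceding the corollary.
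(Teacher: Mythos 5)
Your argument is correct and is essentially the paper's own: the text immediately preceding the corollary applies Theorem \ref{densityof} to $K_{\BB R}$ viewed as maximal compact subgroup of itself, and concludes from the density of $K_{\BB R}$-finite vectors, the automatic closedness of finite-dimensional subspaces, and irreducibility. Your admissibility detour is unnecessary and slightly misapplied --- Theorem \ref{admissible} is stated under a semisimplicity hypothesis that $K_{\BB R}$ (e.g.\ $U(1)$) need not satisfy --- but this is harmless, since the paper's proof that $V_{fini}$ is dense explicitly makes no admissibility assumption, so you can cite that part directly.
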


\subsection{Proof of Theorem \ref{densityof}}

In this subsection we prove Theorem \ref{densityof}. More precisely,
we prove that $V_{fini}$ is dense in $V$ and that $V_{fini} \subset V^{\infty}$,
but we omit the $V_{fini} \subset V^{\omega}$ part.
The proof is important because it illustrates essential techniques of
representation theory.

Since the group $G_{\BB R}$ is semisimple, it is unimodular, i.e. has a
bi-invariant Haar measure. So fix such a measure $dg$.
It is not unique, since, for example, one can scale it by a positive scalar,
but a particular choice of such measure is not essential.

We start with a representation $(\pi, V)$ and for the moment make no
assumptions on admissibility or finite length.
Let $f \in {\cal C}^0_c(G_{\BB R})$
(where ${\cal C}^0_c(G_{\BB R})$ denotes the space of continuous complex-valued
functions with compact support), define $\pi(f) \in \operatorname{End}(V)$ by
$$
\pi(f) v = \int_{g \in G_{\BB R}} f(g)\pi(g)v \,dg.
$$
Note that the integrand $f(g)\pi(g)v$ is a continuous compactly supported
function on $G_{\BB R}$ with values in $V$, its integral is defined as the limit
of Riemannian sums. Since $V$ is complete and locally convex, the limit of
these Riemannian sums exists and the integral is well-defined.

\separate

\noindent
\underline{Notation}: Let $l$ and $r$ denote the linear action of $G_{\BB R}$
on the spaces of functions on $G_{\BB R}$
(such as $L^p(G_{\BB R})$, ${\cal C}^k(G_{\BB R})$, ${\cal C}^k_c(G_{\BB R})$,
$0 \le k \le \infty$) induced by left and right multiplication:
$$
\bigl( l(g)f \bigr)(h) = f(g^{-1}h), \qquad
\bigl( r(g)f \bigr)(h) = f(hg).
$$

\begin{lem}  \label{comp-lem}
\begin{itemize}
\item[{\rm a)}]
If $f \in {\cal C}^0_c(G_{\BB R})$ and $g \in G_{\BB R}$, then
$$
\pi(g) \circ \pi(f) = \pi\bigl( l(g) f \bigr),
\qquad
\pi(f) \circ \pi(g) = \pi\bigl( r(g^{-1}) f \bigr);
$$

\item[{\rm b)}]
For $f_1, f_2 \in {\cal C}^0_c(G_{\BB R})$ we have
$$
\pi(f_1) \circ \pi(f_2) = \pi\bigl( l(f_1) f_2 \bigr) = \pi(f_1 * f_2),
$$
where $f_1 * f_2$ denotes the convolution product:
\begin{align*}
(f_1 * f_2)(h) &= \int_{g \in G_{\BB R}} f_1(g) f_2(g^{-1}h) \,dg \\
&= \int_{g \in G_{\BB R}} f_1(hg^{-1}) f_2(g) \,dg;
\end{align*}

\item[{\rm c)}]
Let $\{f_n\} \subset {\cal C}^0_c(G_{\BB R})$ be an approximate identity
(this means $f_n \ge 0$, $\int_{G_{\BB R}} f_n(g) \,dg=1$ for all $n$
and $\{ \supp(f_n) \} \searrow \{e\}$), then $\pi(f_n) \to Id_V$ strongly,
i.e. $\pi(f_n)v \to v$ for all $v \in V$.
\end{itemize}
\end{lem}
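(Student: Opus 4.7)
The three parts are all computations with vector-valued integrals, so the plan is to exploit the definition $\pi(f)v = \int_{G_{\BB R}} f(g)\pi(g)v\,dg$ together with bi-invariance of Haar measure and continuity of the action. As a preliminary: since $V$ is complete and locally convex, any continuous seminorm $p$ on $V$ satisfies $p\bigl(\int F(g)\,dg\bigr) \le \int p(F(g))\,dg$ for continuous compactly supported $V$-valued $F$, and continuous linear operators on $V$ may be pulled inside the integral. I would use these two facts throughout without further comment.

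For part (a), apply the continuous operator $\pi(g)$ under the integral and change variables:
\[
\pi(g)\pi(f)v = \int_{G_{\BB R}} f(h)\pi(gh)v\,dh = \int_{G_{\BB R}} f(g^{-1}h)\pi(h)v\,dh = \pi\bigl(l(g)f\bigr)v,
\]
where the middle equality is the substitution $h\mapsto g^{-1}h$ using left-invariance of $dg$. The second identity in (a) is analogous, using $h\mapsto hg^{-1}$ and right-invariance. For part (b), write the composition as an iterated integral
\[
\pi(f_1)\pi(f_2)v = \int\!\int f_1(g_1)f_2(g_2)\pi(g_1g_2)v\,dg_2\,dg_1,
\]
where Fubini is justified by continuity and compact support. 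Substituting $h=g_1g_2$ in the inner integral and using left-invariance gives $\pi(f_1)\pi(f_2)v = \pi(f_1*f_2)v$ with the first expression for $f_1*f_2$. The second expression follows by the further change of variable $g_1\mapsto hg_1^{-1}$, valid because $G_{\BB R}$ is unimodular (it is connected and semisimple, as noted in the previous remark).

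For part (c), the key input is the continuity hypothesis on $\pi$ together with local convexity of $V$. Fix $v\in V$ and a continuous seminorm $p$. Since $\int f_n = 1$ and $f_n \ge 0$,
\[
p\bigl(\pi(f_n)v - v\bigr) = p\Bigl(\int f_n(g)\bigl(\pi(g)v - v\bigr)\,dg\Bigr) \le \int f_n(g)\,p\bigl(\pi(g)v - v\bigr)\,dg.
\]
By continuity of $g\mapsto \pi(g)v$ at $e$, given $\epsilon>0$ there is a neighborhood $W$ of $e$ with $p(\pi(g)v - v)<\epsilon$ for $g\in W$; since $\{\supp(f_n)\}\searrow\{e\}$, eventually $\supp(f_n)\subset W$, and the last integral is then $<\epsilon$. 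Hence $\pi(f_n)v\to v$ in every seminorm, i.e.\ in $V$. The only mild obstacle is justifying the ``move seminorm inside the integral'' inequality invoked in (a) and (c); this is exactly what completeness and local convexity of $V$ are for, and it falls out of the Riemann-sum construction of the integral by applying $p$ to finite convex approximations and passing to the limit.
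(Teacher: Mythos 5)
Your proof is correct; the paper does not actually supply an argument (it leaves this lemma as a homework exercise), and what you have written is precisely the standard computation intended: pull continuous operators and seminorms through the vector-valued integral, change variables using bi-invariance (unimodularity) of the Haar measure, and use strong continuity at $e$ for the approximate-identity statement.
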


The proof of this lemma is left as a homework.

\begin{lem}[G{\aa}rding]
$V^{\infty}$ is dense in $V$.
\end{lem}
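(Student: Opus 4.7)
The plan is to smooth out an arbitrary vector $v \in V$ by applying $\pi(f)$ for $f$ a smooth, compactly supported bump, and use a smooth approximate identity to approximate $v$. Concretely, I would first construct a sequence $\{f_n\} \subset {\cal C}^{\infty}_c(G_{\BB R})$ which is an approximate identity in the sense of part (c) of the preceding lemma: pick a chart around $e \in G_{\BB R}$ and take $f_n$ to be a non-negative smooth bump, supported in a neighborhood of $e$ shrinking to $\{e\}$, normalized so that $\int_{G_{\BB R}} f_n(g)\,dg = 1$. Such functions exist on any Lie group. By part (c) of the preceding lemma, $\pi(f_n) v \to v$ in $V$ for every $v \in V$.

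The heart of the proof is then the claim that $\pi(f) v \in V^{\infty}$ whenever $f \in {\cal C}^{\infty}_c(G_{\BB R})$ and $v \in V$. Given this, $\{\pi(f_n) v\} \subset V^{\infty}$ converges to $v$, proving density. To prove the claim I would use part (a) of the preceding lemma, which gives
$$
\pi(g) \bigl( \pi(f) v \bigr) = \pi\bigl( l(g) f \bigr) v
= \int_{h \in G_{\BB R}} f(g^{-1} h) \, \pi(h) v \, dh.
$$
The integrand is supported in the compact set $g \cdot \supp(f)$, so for $g$ ranging in a fixed compact neighborhood of any point the effective domain of integration stays inside a fixed compact set $C$. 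On $C$ the family $\{\pi(h)v\}_{h \in C}$ is bounded (continuity), and $g \mapsto f(g^{-1}h)$ and all its $G_{\BB R}$-derivatives in $g$ are continuous in $(g,h)$ and compactly supported.

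This allows differentiation under the integral sign. Specifically, for $X \in \g g_{\BB R}$, using left translation,
$$
\frac{d}{dt}\bigg|_{t=0} \pi(\exp tX) \bigl( \pi(f) v \bigr)
= \int_{h \in G_{\BB R}} \Bigl( \frac{d}{dt}\bigg|_{t=0} f(\exp(-tX) h) \Bigr) \pi(h) v \, dh
= \pi(\tilde X f) v,
$$
where $\tilde X f \in {\cal C}^{\infty}_c(G_{\BB R})$ is the derivative of $f$ by the right-invariant vector field induced by $X$. Iterating, every higher derivative of $g \mapsto \pi(g)\pi(f)v$ along one-parameter subgroups exists and has the same form $\pi(\tilde X_1 \cdots \tilde X_k f) v$, which is again continuous in $g$ by translation-continuity of $\pi$. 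Hence $g \mapsto \pi(g) \pi(f) v$ is ${\cal C}^{\infty}$ from $G_{\BB R}$ to $V$, i.e., $\pi(f) v \in V^{\infty}$.

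The one technical point I expect to be the main obstacle is justifying the interchange of differentiation and integration, because $V$ is only assumed complete, locally convex, Hausdorff rather than Banach. The justification rests on uniform compact support of the $g$-difference quotients of $f$ together with the fact that the $V$-valued integral behaves like a Riemann integral (continuous compactly supported integrand, completeness and local convexity of $V$ ensure convergence of Riemann sums). One shows uniform convergence of the difference quotient $\epsilon^{-1}(f(\exp(-\epsilon X)h) - f(h))$ to $(\tilde X f)(h)$ in $h$, and then passes the limit under the integral using continuity of the Riemann-type vector-valued integral in its continuous integrand; this is the step that uses local convexity essentially. Once this is in place, the iteration for higher derivatives is routine, and the proof of density is complete.
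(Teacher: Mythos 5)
Your proof is correct and follows essentially the same route as the paper's: both reduce the lemma, via a smooth approximate identity, to showing $\pi(f)v \in V^{\infty}$ for $f \in {\cal C}^{\infty}_c(G_{\BB R})$, and both rest on the identity $\pi(g)\pi(f)v = \pi\bigl(l(g)f\bigr)v$. The only difference is presentational: the paper factors $g \mapsto \pi\bigl(l(g)f\bigr)v$ through the continuous linear map $\tilde f \mapsto \pi(\tilde f)v$ and cites smoothness of $g \mapsto l(g)f$, whereas you verify that same smoothness by differentiating under the integral sign explicitly.
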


\begin{proof}
Let $\{f_n\} \subset {\cal C}^{\infty}_c(G_{\BB R})$ be an approximate identity.
Then $\pi(f_n)v \to v$ for all $v \in V$.
Thus it suffices to show that $\pi(f)v \in V^{\infty}$ if
$f \in {\cal C}^{\infty}_c(G_{\BB R})$, i.e. that the map
$$
G_{\BB R} \to V, \qquad g \mapsto \pi(g)\circ\pi(f)v,
$$
is smooth. By Lemma \ref{comp-lem}, $\pi(g)\circ\pi(f)v = \pi(l(g)f)v$,
and we can break this map as a composition
$$
G_{\BB R} \to {\cal C}^{\infty}_c(G_{\BB R}) \to V, \qquad
g \mapsto l(g)f \quad \text{and} \quad \tilde f \mapsto \pi(\tilde f)v.
$$
For a fixed $v \in V$, the second map
$$
{\cal C}^{\infty}_c(G_{\BB R}) \to V, \qquad \tilde f \mapsto \pi(\tilde f)v,
$$
is linear and continuous.
Thus it remains to show that, for each $f \in {\cal C}^{\infty}_c(G_{\BB R})$,
the map
$$
G_{\BB R} \to {\cal C}^{\infty}_c(G_{\BB R}), \qquad g \mapsto l(g)f,
$$
is smooth. But this is true essentially because the function $f$ is smooth.
\end{proof}

Fix a maximal compact subgroup $K_{\BB R} \subset G_{\BB R}$.
Let $\hat K_{\BB R}$ denote the set of isomorphism classes of finite-dimensional
irreducible representations of $K_{\BB R}$
(each of these representations is automatically unitary).
Thus, for each $i \in \hat K_{\BB R}$, we have a representation $(\tau_i, U_i)$
of $K_{\BB R}$. Let
$$
V(i) =_{\text{def}} \text{ the linear span of all images of
$T \in \operatorname{Hom}_{K_{\BB R}}(U_i,V)$},
$$
and call $V(i)$ the {\em $i$-isotypic subspace of $V$}.
Then the space of $K_{\BB R}$-finite vectors
$$
V_{fini} = \bigoplus_{i \in \hat K_{\BB R}} V(i)
\qquad \text{(algebraic direct sum)}.
$$
Note that the representation $(\pi,V)$ is admissible if and only if
the dimension of each $V(i)$ is finite.

Let $\chi_i$ be the character of $(\tau_i, U_i)$, so
$$
\chi_i \in {\cal C}^{\infty}(K_{\BB R}), \qquad \chi_i(k)=\tr(\tau_i(k)),
\quad k \in K_{\BB R},
$$
and introduce notations
$$
\phi_i = \dim U_i \cdot \overline{\chi_i}, \quad
\pi_{K_{\BB R}} = \pi \bigr|_{K_{\BB R}}, \quad
l_{K_{\BB R}} = l \bigr|_{K_{\BB R}}, \quad r_{K_{\BB R}} = r \bigr|_{K_{\BB R}}.
$$
We consider $\pi_{K_{\BB R}}(\phi_i) \in \operatorname{End}(V)$,
\begin{multline*}
\pi_{K_{\BB R}}(\phi_i)v =_{\text{def}}
\int_{K_{\BB R}} \phi_i(k) \pi_{K_{\BB R}}(k)v \,dk \\
= \dim U_i \cdot \int_{K_{\BB R}} \overline{\chi_i}(k) \pi_{K_{\BB R}}(k)v \,dk,
\end{multline*}
where $dk$ denotes the bi-invariant Haar measure on $K_{\BB R}$ normalized by
the requirement $\int_{K_{\BB R}} 1\,dk=1$.

We recall Schur Orthogonality Relations:

\begin{thm}
Let $(\tau_1,U_1)$, $(\tau_2,U_2)$ be irreducible finite-\\dimensional unitary
representations of a compact Lie group $K_{\BB R}$ with $K_{\BB R}$-invariant
inner products $(\,\cdot\,,\,\cdot\,)_1$ and $(\,\cdot\,,\,\cdot\,)_2$.
If $x_1,x_2 \in U_1$ and $y_1,y_2 \in U_2$,
\begin{multline*}
\int_{K_{\BB R}} (\tau_1(k)x_1,x_2)_1 \cdot \overline{(\tau_2(k)y_1,y_2)_2} \,dk \\
= \begin{cases}
0 & \text{if $(\tau_1,U_1) \not\simeq (\tau_2,U_2)$;} \\
\frac1{\dim U_1} (x_1,y_1)_1 \cdot \overline{(x_2,y_2)_1}
& \text{if $(\tau_1,U_1) = (\tau_2,U_2)$.}
\end{cases}
\end{multline*}
\end{thm}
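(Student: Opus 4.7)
The plan is to deploy the classical averaging trick and reduce everything to Schur's lemma. First I would fix an arbitrary linear operator $A : U_2 \to U_1$ and form the average
$$
T_A \;=\; \int_{K_{\BB R}} \tau_1(k) \circ A \circ \tau_2(k^{-1}) \, dk,
$$
which is well defined because $K_{\BB R}$ is compact and both target and source are finite dimensional. By the left- and right-invariance of the Haar measure on $K_{\BB R}$, a one-line change of variables gives $\tau_1(h) \circ T_A = T_A \circ \tau_2(h)$ for every $h \in K_{\BB R}$, so $T_A$ intertwines the two $K_{\BB R}$-representations.

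Next I would apply Schur's lemma. If $(\tau_1, U_1) \not\cong (\tau_2, U_2)$, then $T_A = 0$ for every choice of $A$. If instead $(\tau_1, U_1) = (\tau_2, U_2) = (\tau, U)$ with the common inner product $(\,\cdot\,,\,\cdot\,)_1$, then $T_A = \lambda_A \cdot \mathrm{Id}_U$ for some scalar $\lambda_A$; taking the trace of both sides, and using that trace commutes with the integral and is invariant under conjugation by $\tau(k)$, forces $\lambda_A = \tr(A)/\dim U$.

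The remaining step is to specialize $A$ so that the matrix coefficients of $T_A$ reproduce the integral appearing in the theorem. I would take $A$ to be the rank-one operator $Au = (u, y_1)_2 \cdot x_1$, which satisfies $\tr(A) = (x_1, y_1)_1$ in the equivalent case. Pairing $T_A y_2$ against $x_2$ and using unitarity in the form $(\tau_2(k^{-1}) u, v)_2 = (u, \tau_2(k) v)_2$, one computes
$$
(T_A y_2, x_2)_1 \;=\; \int_{K_{\BB R}} (\tau_1(k) x_1, x_2)_1 \cdot \overline{(\tau_2(k) y_1, y_2)_2} \, dk,
$$
which is exactly the integral in the statement. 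Substituting $T_A = 0$ in the inequivalent case and $T_A = \bigl( (x_1,y_1)_1/\dim U \bigr) \mathrm{Id}$ in the equivalent case, and then rewriting $(y_2, x_2)_1 = \overline{(x_2, y_2)_1}$, yields both branches of the desired formula.

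The main obstacle is not conceptual — Schur's lemma does all the work — but bookkeeping: one must choose the rank-one operator $A$ with its vectors in the right slots so that every index and every complex conjugation in the final formula lines up. A secondary technical point is the justification that integration commutes with taking inner products and with the trace, which here is immediate because $U_1$ and $U_2$ are finite dimensional and the integrand is continuous on the compact group $K_{\BB R}$.
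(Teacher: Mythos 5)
Your proof is correct. The paper merely \emph{recalls} the Schur orthogonality relations without proving them, so there is no in-text argument to compare against; your averaging construction $T_A=\int_{K_{\BB R}}\tau_1(k)\circ A\circ\tau_2(k^{-1})\,dk$ followed by Schur's lemma, the trace computation $\lambda_A=\tr(A)/\dim U$, and the specialization to the rank-one operator $Au=(u,y_1)_2\,x_1$ is the standard complete proof, and all the conjugations and slots ($\tr A=(x_1,y_1)_1$, $(y_2,x_2)_1=\overline{(x_2,y_2)_1}$) do line up with the stated formula.
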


\begin{cor}
\begin{itemize}
\item[{\rm a)}]
$\tau_i(\phi_i) = Id_{U_i}$;
\item[{\rm b)}]
$$
\chi_i*\chi_j = \begin{cases}
0 & \text{if $i \ne j$;} \\
\frac{\chi_i}{\dim U_i} & \text{if $i = j$,}
\end{cases}
\qquad
\phi_i*\phi_j = \begin{cases}
0 & \text{if $i \ne j$;} \\
\phi_i & \text{if $i = j$.}
\end{cases}
$$
\end{itemize}
\end{cor}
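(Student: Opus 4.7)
The plan is to deduce both parts directly from the Schur orthogonality relations stated above, by expanding the characters as sums of matrix coefficients in a $K_{\BB R}$-invariant orthonormal basis and applying the orthogonality formula.

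For part (a), I would fix an orthonormal basis $\{e^i_1, \dots, e^i_n\}$ of $U_i$ (with $n = \dim U_i$) with respect to a $K_{\BB R}$-invariant inner product $(\,\cdot\,,\,\cdot\,)_i$, and write $\chi_i(k) = \sum_c (\tau_i(k) e^i_c, e^i_c)_i$. Computing the matrix entry
$$
(\tau_i(\phi_i) e^i_a, e^i_b)_i
= \dim U_i \sum_c \int_{K_{\BB R}} (\tau_i(k) e^i_a, e^i_b)_i \cdot \overline{(\tau_i(k) e^i_c, e^i_c)_i} \,dk,
$$
and applying Schur orthogonality with both representations equal to $(\tau_i, U_i)$, each integral collapses to $(\dim U_i)^{-1} \delta_{ac} \delta_{bc}$, so the sum gives $\delta_{ab}$ and hence $\tau_i(\phi_i) = Id_{U_i}$.

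For the convolution formula in part (b), the key trick is to exploit unitarity to rewrite $\chi_j(k^{-1}h)$ as a matrix coefficient of the variable $k$. Using $\tau_j(k)^{-1} = \tau_j(k)^*$, I get
$$
\chi_j(k^{-1}h) = \sum_b (\tau_j(k^{-1}h) e^j_b, e^j_b)_j = \sum_b \overline{(\tau_j(k) e^j_b, \tau_j(h) e^j_b)_j}.
$$
Substituting into $(\chi_i * \chi_j)(h) = \int_{K_{\BB R}} \chi_i(k) \chi_j(k^{-1}h) \,dk$ and invoking Schur orthogonality, the integral vanishes if $i \ne j$; when $i = j$ it produces $(\dim U_i)^{-1} \delta_{ab} (\tau_i(h) e^i_b, e^i_a)_i$, and summing over $a,b$ yields $\chi_i(h)/\dim U_i$. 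Finally, the formula for $\phi_i * \phi_j$ follows by noting that $\overline{\chi_i * \chi_j} = \overline{\chi_i} * \overline{\chi_j}$ (conjugation passes under the integral) and then multiplying the $\chi$-identity by $\dim U_i \cdot \dim U_j$.

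The main obstacle will be the bookkeeping in the last paragraph: correctly manipulating $\tau_j(k^{-1}h)$ so that the integrand matches the left-hand side of the Schur orthogonality theorem, which expects $(\tau_1(k) x_1, x_2)_1 \cdot \overline{(\tau_2(k) y_1, y_2)_2}$ with the variable $k$ appearing only inside the $\tau$'s. Once that rearrangement is in hand, the proof is a routine matter of tracking indices and summing.
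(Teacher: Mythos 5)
Your argument is correct and follows essentially the same route as the paper: expand the characters as sums of matrix coefficients in orthonormal bases, use unitarity to turn $\tau_j(k^{-1}h)$ into a conjugated matrix coefficient in $k$, and apply Schur orthogonality (your choice of $y_2=\tau_j(h)e^j_b$ just saves the extra summation index $p$ that the paper introduces via a resolution of the identity). You also supply a proof of part (a), which the paper leaves as homework, and your computation there is correct.
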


\begin{proof}
\begin{itemize}
\item[{\rm a)}]
Homework.

\item[{\rm b)}]
Let $\{x_m\}$ and $\{y_n\}$ be orthonormal bases for $U_i$ and $U_j$
relative to the $K_{\BB R}$-invariant inner products
$(\,\cdot\,,\,\cdot\,)_i$ and $(\,\cdot\,,\,\cdot\,)_j$.
Write
$$
(\chi_i*\chi_j)(h) = \int_{K_{\BB R}} \chi_i(k) \chi_j(k^{-1}h) \,dk.
$$
Then
\begin{multline*}
\chi_i(k) \chi_j(k^{-1}h)
= \sum_{m,n} (\tau_i(k)x_m,x_m)_i \cdot (\tau_j(k^{-1}h)y_n,y_n)_j \\
= \sum_{m,n,p} (\tau_i(k)x_m,x_m)_i \cdot (\tau_j(k^{-1})y_p,y_n)_j
\cdot (\tau_j(h)y_n,y_p)_j \\
= \sum_{m,n,p} (\tau_i(k)x_m,x_m)_i \cdot \overline{(\tau_j(k)y_n,y_p)_j}
\cdot (\tau_j(h)y_n,y_p)_j,
\end{multline*}
so Schur Orthogonality Relations imply
$$
\int_{K_{\BB R}} (\tau_i(k)x_m,x_m)_i \cdot \overline{(\tau_j(k)y_n,y_p)_j} \,dk
\quad \text{and} \quad \chi_i*\chi_j
$$
are $0$ when $i \ne j$.
When $i=j$, we can take the basis $\{y_n\}$ to be the same as $\{x_m\}$,
then, by Schur Orthogonality Relations again,
\begin{multline*}
\int_{K_{\BB R}} \chi_i(k) \chi_i(k^{-1}h) \,dk  \\
= \frac1{\dim U_i} \sum_{m,n,p} (x_m,x_n)_i \cdot \overline{(x_m,x_p)_i}
\cdot (\tau_i(h)x_n,x_p)_i  \\
= \frac1{\dim U_i} \sum_{p} (\tau_i(h)x_p,x_p)_i = \frac{\chi_i(h)}{\dim U_i}.
\end{multline*}
\end{itemize}
\end{proof}

\begin{lem}
\begin{itemize}
\item[{\rm a)}]
$$
\pi_{K_{\BB R}}(\phi_i) \circ \pi_{K_{\BB R}}(\phi_j) =
\begin{cases}
0 & \text{if $i \ne j$;} \\
\pi_{K_{\BB R}}(\phi_i) & \text{if $i = j$;}
\end{cases}
$$

\item[{\rm b)}]
The image of $\pi_{K_{\BB R}}(\phi_i)$ is $V(i)$
and $\pi_{K_{\BB R}}(\phi_i)$ restricted to $V(i)$ is the identity map.
In other words, $\pi_{K_{\BB R}}(\phi_i)$ is a projection onto $V(i)$;

\item[{\rm c)}]
$V_{fini}$ is dense in $V$ (we make no assumption on admissibility of $V$).
\end{itemize}
\end{lem}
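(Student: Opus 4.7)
For part (a), the plan is to invoke the convolution formula from the preceding lemma (applied with $G_\BB R$ replaced by $K_\BB R$), giving $\pi_{K_\BB R}(\phi_i)\circ\pi_{K_\BB R}(\phi_j) = \pi_{K_\BB R}(\phi_i * \phi_j)$. Combined with the Schur-orthogonality corollary, which computes $\phi_i * \phi_j$ as $0$ if $i\ne j$ and as $\phi_i$ if $i=j$, the claim is immediate.

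For part (b), I would prove the two inclusions separately. For $V(i) \subseteq \operatorname{Im}(\pi_{K_\BB R}(\phi_i))$, take a typical generator $v = T(u)$ with $T \in \operatorname{Hom}_{K_\BB R}(U_i,V)$ and $u \in U_i$; since $T$ intertwines the $K_\BB R$-actions, one can pull the integral through $T$:
\[
\pi_{K_\BB R}(\phi_i)T(u) = \int_{K_\BB R}\phi_i(k)\,T(\tau_i(k)u)\,dk = T\bigl(\tau_i(\phi_i)u\bigr) = T(u),
\]
using $\tau_i(\phi_i) = Id_{U_i}$; this simultaneously shows that $\pi_{K_\BB R}(\phi_i)$ is the identity on $V(i)$. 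For the reverse inclusion, let $w = \pi_{K_\BB R}(\phi_i)v$ and use part (a) of the preceding lemma to write $\pi_{K_\BB R}(g)w = \pi_{K_\BB R}\bigl(l(g)\phi_i\bigr)v$ for every $g \in K_\BB R$. Fixing an orthonormal basis $\{e_q\}$ of $U_i$ with matrix coefficients $a_{pq}(k)=(\tau_i(k)e_q,e_p)$, the identity $\tau_i(g^{-1}k) = \tau_i(g^{-1})\tau_i(k)$ gives $\chi_i(g^{-1}k) = \sum_{p,q}a_{pq}(g^{-1})a_{qp}(k)$, so $l(g)\phi_i$ lies in the finite-dimensional span of the conjugate matrix coefficients $\overline{a_{qp}}$. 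Collecting the resulting terms yields explicit intertwiners $T_p : U_i \to V$, built from $v$ by convolution against individual matrix coefficients of $\tau_i$, with $w = \sum_p T_p(e_p)$, so $w \in V(i)$.

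For part (c), I would combine an approximate identity on $K_\BB R$ with Peter-Weyl. Fix $v \in V$, a continuous seminorm $q$ on $V$, and $\epsilon > 0$. Apply part (c) of the preceding lemma to $K_\BB R$ to pick $f \in {\cal C}^0(K_\BB R)$ from an approximate identity with $q(\pi_{K_\BB R}(f)v - v) < \epsilon/2$. By Peter-Weyl, finite linear combinations of matrix coefficients of the $\tau_i$ are uniformly dense in ${\cal C}^0(K_\BB R)$; since the map $\tilde f \mapsto \pi_{K_\BB R}(\tilde f)v$ is continuous from ${\cal C}^0(K_\BB R)$ with the sup norm into $V$, one can choose such an $\tilde f$ with $q(\pi_{K_\BB R}(\tilde f)v - \pi_{K_\BB R}(f)v) < \epsilon/2$. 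The calculation behind part (b) applies verbatim with $\phi_i$ replaced by a matrix coefficient $\phi_{pq,i} = d_i\overline{a_{pq,i}}$, showing that $\pi_{K_\BB R}(\phi_{pq,i})v \in V(i) \subset V_{fini}$; hence $\pi_{K_\BB R}(\tilde f)v \in V_{fini}$ and $q(\pi_{K_\BB R}(\tilde f)v - v) < \epsilon$.

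The most delicate point is the reverse inclusion in (b) and its matrix-coefficient refinement used in (c): in both, one must pass from the class functions $\phi_i$ (which span only class functions in ${\cal C}^0(K_\BB R)$) to arbitrary matrix coefficients, so a modest amount of bookkeeping with an orthonormal basis of each $U_i$ is required. Once that is carried out, the rest reduces to Schur orthogonality and the standard density part of Peter-Weyl for the compact group $K_\BB R$.
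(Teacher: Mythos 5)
Your proof is correct, and parts (a) and (c) run along the same lines as the paper's: (a) is the convolution identity plus Schur orthogonality, and (c) is the approximate-identity argument combined with the density of matrix coefficients in ${\cal C}^0(K_{\BB R})$ (the paper phrases this via Stone--Weierstrass, you via Peter--Weyl; these are interchangeable here). Where you genuinely diverge is the reverse inclusion $\im \pi_{K_{\BB R}}(\phi_i) \subset V(i)$ in part (b). The paper argues indirectly: the left translates $l_{K_{\BB R}}(k)\phi_i$ span a finite-dimensional space of functions, so the $K_{\BB R}$-translates of $\pi_{K_{\BB R}}(\phi_i)v$ span a finite-dimensional subspace of $V$, hence $\pi_{K_{\BB R}}(\phi_i)v \in V_{fini} = \bigoplus_j V(j)$; then part (a) kills the $V(j)$-components for $j \ne i$ and idempotency finishes the job. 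You instead exhibit $\pi_{K_{\BB R}}(\phi_i)v$ directly as $\sum_q T_q(e_q)$ with $T_q(u) = \dim U_i \int_{K_{\BB R}} (u,\tau_i(k)e_q)\,\pi(k)v\,dk$, and these $T_q$ really are $K_{\BB R}$-equivariant (the substitution $k \mapsto gk$ in the integral verifies it), so membership in $V(i)$ is immediate from the definition of the isotypic subspace. Your route is more constructive and has the side benefit that the same computation, applied to a single matrix coefficient $d_i\overline{a_{pq,i}}$ rather than to the character, is exactly what you need in (c); the paper's route is slightly shorter but leans on the decomposition $V_{fini} = \bigoplus_j V(j)$ and the mutual orthogonality of the projections. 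Both are complete proofs; just make sure, when you write up the intertwiners $T_q$, that you actually check the equivariance by the change of variables above rather than asserting it.
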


\begin{proof}
\begin{itemize}
\item[{\rm a)}]
We have:
$$
\pi_{K_{\BB R}}(\phi_i) \circ \pi_{K_{\BB R}}(\phi_j) =\pi_{K_{\BB R}}(\phi_i*\phi_j)
= \begin{cases}
0 & \text{if $i \ne j$;} \\
\pi_{K_{\BB R}}(\phi_i) & \text{if $i = j$.}
\end{cases}
$$

\item[{\rm b)}]
Recall that
$$
V(i) = \text{the linear span of all images of
$T \in \operatorname{Hom}_{K_{\BB R}}(U_i,V)$}.
$$
Then, for all $x \in U_i$,
\begin{multline*}
\bigl( \pi_{K_{\BB R}}(\phi_i) \circ T \bigr) x
= \int_{K_{\BB R}} \phi_i(k) \bigl( \pi_{K_{\BB R}}(k) \circ T \bigr) x \,dk  \\
= T \circ \int_{K_{\BB R}} \phi_i(k) \tau_i(k)x \,dk
= \bigl( T \circ \tau_i(\phi_i) \bigr) x = Tx,
\end{multline*}
since $\tau_i(\phi_i) = Id_{U_i}$.
This proves that the image of $\pi_{K_{\BB R}}(\phi_i)$ contains $V(i)$ and
that $\pi_{K_{\BB R}}(\phi_i)$ restricted to $V(i)$ is the identity map.

On the other hand, the collection of functions
$\{ l_{K_{\BB R}}(k) \phi_i \}$, $k \in K_{\BB R}$,
is a linear combination of matrix coefficients
$$
\overline{(\tau_i(k^{-1}h)x_m, x_m)_i} = (\tau_i(k)x_m, \tau_i(h)x_m)_i,
$$
where $\{x_m\}$ is a basis for $U_i$ and $(\,\cdot\,,\,\cdot\,)_i$ is a
$K_{\BB R}$-invariant inner product, hence lie in
$$
\text{$\BB C$-span of $\{ (x_n, \tau_i(h)x_m)_i ;\:
1 \le m,n \le \dim U_i \}$},
$$
which is a finite-dimensional vector subspace in ${\cal C}^{\infty}(K_{\BB R})$.
In particular, for all $v \in V$, the collection of vectors
$$
\pi(k) \bigl( \pi_{K_{\BB R}}(\phi_i) v \bigr)
= \pi_{K_{\BB R}} \bigl( l_{K_{\BB R}}(k) \phi_i \bigr)v,
\qquad k \in K_{\BB R},
$$
spans a finite-dimensional subspace of $V$.
Hence
$$
\pi_{K_{\BB R}}(\phi_i) v \in V_{fini} = \bigoplus_{j \in \hat K_{\BB R}} V(j).
$$
By part (a),
$$
\pi_{K_{\BB R}}(\phi_i) \bigr|_{V(j)} =0 \quad \text{if $i \ne j$}.
$$
Since $\pi_{K_{\BB R}}(\phi_i) \circ \pi_{K_{\BB R}}(\phi_i) = \pi_{K_{\BB R}}(\phi_i)$,
it follows that \\
$\im \pi_{K_{\BB R}}(\phi_i) \subset V(i)$.

\item[{\rm c)}]
Recall the Stone-Weierstrass Theorem:
Let $X$ be a compact Hausdorff space and let ${\cal C}^0(X)$ denote the space
of continuous complex-valued functions on $X$.
Suppose $A$ is a subalgebra of ${\cal C}^0(X)$ such that it contains
a non-zero constant function, closed under conjugation and separates points.
Then $A$ is dense in ${\cal C}^0(X)$.

By the Stone-Weierstrass Theorem, $l(K_{\BB R})$-finite functions are dense in
${\cal C}^0(K_{\BB R})$.
Indeed, the $l(K_{\BB R})$-finite functions include matrix coefficients
of finite-dimensional representations of $K_{\BB R}$.
The fact that they separate points follows from existence of a faithful
finite-dimensional representation of $K_{\BB R}$ (which in turn can be deduced
from Peter-Weyl Theorem).

Using the approximate identity argument, one can show that the set
$$
\{ \pi_{K_{\BB R}}(f)v ;\: f \in {\cal C}^0(K_{\BB R}),\: v \in V \}
$$
is dense in $V$. Therefore, the set
\begin{equation}  \label{l-finite}
\{ \pi_{K_{\BB R}}(f)v ;\: f \in {\cal C}^0(K_{\BB R}),\:
\text{$f$ is $l(K_{\BB R})$-finite}, \:v \in V \}
\end{equation}
is also dense in $V$.
But any vector in the set (\ref{l-finite}) is automatically in $V_{fini}$.
\end{itemize}
\end{proof}

\begin{cor}
For each $i \in \hat K_{\BB R}$, $V^{\infty} \cap V(i)$ is dense in $V(i)$.
\end{cor}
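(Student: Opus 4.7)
The plan is to approximate an arbitrary $v \in V(i)$ by smooth vectors in all of $V$ and then squeeze them back into $V(i)$ via the projection $\pi_{K_{\BB R}}(\phi_i)$. Concretely, let $v \in V(i)$. By G{\aa}rding's Lemma, $V^{\infty}$ is dense in $V$, so there is a sequence $\{v_n\} \subset V^{\infty}$ with $v_n \to v$. Apply the projection: set $w_n = \pi_{K_{\BB R}}(\phi_i) v_n$. Since $\phi_i \in {\cal C}^{\infty}(K_{\BB R})$ and $K_{\BB R}$ is compact, the operator $\pi_{K_{\BB R}}(\phi_i)$ is continuous (as the integrated version of a continuous representation on a compact set against a bounded kernel). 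Hence $w_n \to \pi_{K_{\BB R}}(\phi_i) v = v$, the last equality because $\pi_{K_{\BB R}}(\phi_i)$ restricts to the identity on $V(i)$ by the preceding lemma. Each $w_n$ lies in $V(i)$ because $\im \pi_{K_{\BB R}}(\phi_i) = V(i)$.

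The only thing left, and the one genuine point of the argument, is to show $w_n \in V^{\infty}$. For this, one writes out the orbit map at $w_n$:
$$
\pi(g) w_n = \pi(g) \int_{K_{\BB R}} \phi_i(k) \pi(k) v_n \,dk
= \int_{K_{\BB R}} \phi_i(k) \pi(gk) v_n \,dk,
$$
where moving $\pi(g)$ inside the integral is legitimate because $\pi(g)$ is a continuous linear operator and the integrand is a continuous $V$-valued function on the compact set $K_{\BB R}$. The integrand $(g,k) \mapsto \phi_i(k) \pi(gk) v_n$ is smooth in $g$ for each fixed $k$ (because $v_n \in V^{\infty}$ and the map $h \mapsto \pi(h) v_n$ is smooth), with all $g$-derivatives depending continuously on $k$; since $K_{\BB R}$ is compact and $\phi_i$ is smooth, one may differentiate under the integral sign any number of times. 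This shows $g \mapsto \pi(g) w_n$ is smooth, i.e.\ $w_n \in V^{\infty}$.

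The main obstacle, such as it is, is precisely the interchange of differentiation and integration in the last paragraph; once one grants that (which requires only compactness of $K_{\BB R}$, completeness and local convexity of $V$, and standard estimates on vector-valued integrals on compact sets), the density of $V^{\infty} \cap V(i)$ in $V(i)$ follows immediately from $w_n \in V^{\infty} \cap V(i)$ and $w_n \to v$. Note that no admissibility hypothesis on $V$ is needed here; only G{\aa}rding's Lemma and the projection identities from the previous lemma are invoked.
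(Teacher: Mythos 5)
Your proof is correct, but it takes a genuinely different route from the paper's. The paper also produces approximants of the form $\pi_{K_{\BB R}}(\phi_i)(\text{something smooth})$, but it starts from an approximate identity $\{f_n\}\subset{\cal C}^{\infty}_c(G_{\BB R})$ and uses the algebraic identity $\pi_{K_{\BB R}}(\phi_i)\circ\pi(f_n)=\pi\bigl(l_{K_{\BB R}}(\phi_i)f_n\bigr)$: the composite is again an operator of the form $\pi(F)$ with $F=l_{K_{\BB R}}(\phi_i)f_n\in{\cal C}^{\infty}_c(G_{\BB R})$, so the fact that $\pi(F)v\in V^{\infty}$ is exactly what was already established in the proof of G{\aa}rding's Lemma, and no new interchange of limits is required. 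Your route instead isolates and proves the (true, and independently useful) statement that $\pi_{K_{\BB R}}(\phi_i)$ maps $V^{\infty}$ into $V^{\infty}$, at the price of justifying differentiation under the integral sign for a $V$-valued integral over $K_{\BB R}$ --- which does go through by the compactness argument you sketch, but is precisely the analytic work the paper's convolution identity is designed to bypass. One point both versions need and you state a bit quickly: the continuity of $\pi_{K_{\BB R}}(\phi_i)$ on $V$ (used to get $w_n\to v$) rests on the equicontinuity of $\{\pi(k):k\in K_{\BB R}\}$, which follows from the joint continuity of the action map together with compactness of $K_{\BB R}$; it is not automatic from strong continuity alone. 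You are also right that no admissibility is needed --- the paper likewise invokes it only in the subsequent corollary $V_{fini}\subset V^{\infty}$.
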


\begin{proof}
Let $v \in V(i)$. Choose an approximate identity
$\{f_n\} \subset {\cal C}^{\infty}_c(G_{\BB R})$.
Then $\pi(f_n)v \to v$, hence
$$
\pi_{K_{\BB R}}(\phi_i) \circ \pi(f_n)v \to \pi_{K_{\BB R}}(\phi_i) v  =v.
$$
But
$$
\pi_{K_{\BB R}}(\phi_i) \circ \pi(f_n) = \pi \bigl( l_{K_{\BB R}}(\phi_i) f_n \bigr)
\text{ and } l_{K_{\BB R}}(\phi_i) f_n \in {\cal C}^{\infty}_c(G_{\BB R}).
$$
Hence each $\pi_{K_{\BB R}}(\phi_i) \circ \pi(f_n)v \in V^{\infty} \cap V(i)$.
\end{proof}

Recall that a representation $(\pi,V)$ is admissible if $\dim V(i) <\infty$
for all $i \in \hat K_{\BB R}$.

\begin{cor}
If $(\pi,V)$ is admissible, $V_{fini} \subset V^{\infty}$.
\end{cor}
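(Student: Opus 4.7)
The plan is to combine admissibility (which says each $V(i)$ is finite-dimensional) with the previous corollary (which says $V^{\infty} \cap V(i)$ is dense in $V(i)$), using the fact that a dense subspace of a finite-dimensional Hausdorff space is the whole space.

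First I would take an arbitrary $v \in V_{fini}$ and decompose it according to the direct sum $V_{fini} = \bigoplus_{i \in \hat K_{\BB R}} V(i)$ established earlier. Thus $v = v_{i_1} + \dots + v_{i_N}$ for finitely many indices $i_1, \dots, i_N \in \hat K_{\BB R}$, with each $v_{i_j} \in V(i_j)$. Since $V^{\infty}$ is a vector subspace of $V$, it suffices to show that each $v_{i_j}$ belongs to $V^{\infty}$, i.e.\ that $V(i) \subset V^{\infty}$ for every $i \in \hat K_{\BB R}$.

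Fix $i \in \hat K_{\BB R}$. By the admissibility hypothesis, $V(i)$ is a finite-dimensional subspace of $V$; in particular, equipping $V(i)$ with the subspace topology inherited from $V$ turns it into a finite-dimensional Hausdorff topological vector space. It is a standard fact that any two Hausdorff vector space topologies on a finite-dimensional space coincide, and consequently every linear subspace of such a space is closed. Therefore a subspace of $V(i)$ that is dense in $V(i)$ must equal $V(i)$.

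By the previous corollary, $V^{\infty} \cap V(i)$ is dense in $V(i)$. Since $V^{\infty} \cap V(i)$ is a linear subspace of the finite-dimensional space $V(i)$, the density forces $V^{\infty} \cap V(i) = V(i)$, i.e.\ $V(i) \subset V^{\infty}$. Summing over the finitely many components of $v$ yields $v \in V^{\infty}$, completing the proof. The only subtle point, and hence the main obstacle (really a cautionary step rather than a hard one), is ensuring that the topological argument applies: that the subspace topology on $V(i)$ is the unique Hausdorff vector space topology, so that "dense" really does imply "equal" for linear subspaces — but this is immediate from finite-dimensionality.
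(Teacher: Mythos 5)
Your proof is correct and follows exactly the route the paper intends: admissibility makes each $V(i)$ finite-dimensional, the preceding corollary gives density of $V^{\infty}\cap V(i)$ in $V(i)$, and a dense linear subspace of a finite-dimensional Hausdorff space is the whole space. The paper leaves this argument implicit, so nothing is missing from your write-up.
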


Recall that a topological vector space is separable if it has a countable
dense subset. If $(\pi,V)$ is admissible, combining vector space bases
for each $V(i)$, $i \in \hat K_{\BB R}$, results in a countable set of linearly
independent vectors in $V$ such that their linear combinations are dense in $V$.
Thus, if $(\pi,V)$ is admissible, $V$ is automatically separable.

Finally, we comment that, in order to prove that all $K_{\BB R}$-finite vectors
are weakly analytic, one can show that the functions
$g \mapsto \langle l, \pi(g)v \rangle$, for $v\in V_{fini}$ and $l \in V^*$,
satisfy elliptic differential equations with $C^\omega$ coefficients,
which implies they are real analytic.
To construct such an operator, use the Casimir elements
$\Omega_G \subset {\cal U}(\g g)$ (its symbol is hyperbolic) and
$\Omega_K \subset {\cal U}(\g k) \subset {\cal U}(\g g)$
(its symbol is semidefinite, but degenerate) and show that
$\Omega_G - 2 \Omega_K \in {\cal U}(\g g)$ is elliptic.
Then argue that some polynomial of $\Omega_G - 2 \Omega_K$ annihilates
$\langle l, \pi(g)v \rangle$.

\subsection{Harish-Chandra Modules}

For $v \in V^{\infty}$ and $X \in \g g_{\BB R}$, {\em define}
$$
\pi(X)v =_{\text{def}} \frac{d}{dt} \pi \bigl(\exp(tX)\bigr)v \Bigr|_{t=0},
$$
then $\pi(X): V^{\infty} \to V^{\infty}$.
This defines a representation of $\g g_{\BB R}$. on $V^{\infty}$.
Complexifying, we obtain a representation of $\g g$ on $V^{\infty}$.
The action of $\g g$ extends to the universal enveloping algebra, thus
$V^{\infty}$ is a ${\cal U}(\g g)$-module.

\begin{lem}
Whether or not $(\pi,V)$ is admissible, $V^{\infty} \cap V_{fini}$ is
$\g g$-invariant.
\end{lem}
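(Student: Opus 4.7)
The plan is to show that if $v \in V^{\infty} \cap V_{fini}$ and $X \in \g g$, then $\pi(X)v$ still lies in both $V^{\infty}$ and $V_{fini}$. The smoothness half is already in hand from the observation preceding the lemma: $\pi(X) : V^{\infty} \to V^{\infty}$. So the content is showing that $\pi(X)v$ is $K_{\BB R}$-finite.

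First I would replace the unspecified finite-dimensional $K_{\BB R}$-invariant subspace witnessing $v \in V_{fini}$ by the concrete choice $W = \operatorname{Span}_{\BB C}\{\pi(k)v ;\: k \in K_{\BB R}\}$, which is finite-dimensional because $v$ is $K_{\BB R}$-finite. A small but essential observation: every $w = \pi(k)v$ is smooth, because $g \mapsto \pi(g)w = \pi(gk)v$ is the composition of right-translation by $k$ with the smooth map $g \mapsto \pi(g)v$. Hence $W \subset V^{\infty}$, and in particular $\pi(Y)w$ makes sense for every $Y \in \g g$ and every $w \in W$.

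The key tool is the intertwining identity
\[
\pi(k)\pi(Y) w \;=\; \pi(\operatorname{Ad}(k) Y)\,\pi(k) w, \qquad k \in K_{\BB R},\: Y \in \g g_{\BB R},\: w \in V^{\infty},
\]
which follows by differentiating $\pi(k)\pi(\exp tY) w = \pi(\exp(t\operatorname{Ad}(k)Y))\pi(k)w$ at $t=0$, and then extending complex-linearly in $Y$ to all of $\g g$. Next I would introduce
\[
W' \;=\; \operatorname{Span}_{\BB C}\bigl\{\pi(Y) w ;\: Y \in \g g,\: w \in W \bigr\}.
\]
Since $\g g$ and $W$ are both finite-dimensional, so is $W'$. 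The intertwining identity, together with $K_{\BB R}$-invariance of $W$ and $\operatorname{Ad}(K_{\BB R})$-invariance of $\g g$, shows $\pi(k) W' \subset W'$ for all $k \in K_{\BB R}$. Thus $W'$ is a finite-dimensional $K_{\BB R}$-invariant subspace of $V$, so $W' \subset V_{fini}$. Taking $Y = X$ and $w = v$ gives $\pi(X)v \in W' \subset V_{fini}$, and combined with $\pi(X)v \in V^{\infty}$ this completes the proof.

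There is no serious obstacle; the only subtle point is making sure that the finite-dimensional $K_{\BB R}$-space we use actually consists of smooth vectors (so that $\pi(Y)$ is defined on it and the derivative identity above is valid). That is handled by taking $W$ to be the $K_{\BB R}$-orbit span of the particular smooth vector $v$, rather than an arbitrary $K_{\BB R}$-invariant subspace containing $v$.
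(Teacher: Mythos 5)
Your proof is correct and follows essentially the same route as the paper's: both rest on the $K_{\BB R}$-equivariance identity $\pi(k)\pi(Y)w = \pi(\operatorname{Ad}(k)Y)\pi(k)w$ for smooth vectors, i.e.\ on the $K_{\BB R}$-equivariance of the action map $\g g \otimes V^{\infty} \to V^{\infty}$. You merely make explicit the final step the paper leaves implicit, by exhibiting the finite-dimensional $K_{\BB R}$-invariant subspace $W' = \pi(\g g)W$ containing $\pi(X)v$ and checking that the $K_{\BB R}$-translates of $v$ are themselves smooth.
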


\begin{proof}
Let $\g g \otimes V^{\infty} \to V^{\infty}$ be the action map
$$
X \otimes v \mapsto \frac{d}{dt} \pi \bigl( \exp(tX)\bigr)v \Bigr|_{t=0},
$$
This map is $K_{\BB R}$-equivariant, i.e., for all $k \in K_{\BB R}$,
\begin{multline*}
\operatorname{Ad}(k)X \otimes \pi(k)v \mapsto
\frac{d}{dt}\pi\bigl(\exp(t \operatorname{Ad}(k)X)\bigr) \pi(k)v \Bigr|_{t=0} \\
= \frac{d}{dt} \pi(k) \circ \pi \bigl(\exp(tX)\bigr)v \Bigr|_{t=0}  \\
= \lim_{\epsilon \to 0} \frac{\pi(k) \circ \pi \bigl(\exp(\epsilon X)\bigr)v
- \pi(k)v}{\epsilon} \\
= \pi(k) \lim_{\epsilon \to 0} \frac{\pi \bigl( \exp(\epsilon X)\bigr)v - v}
{\epsilon}
= \pi(k) \frac{d}{dt} \pi \bigl( \exp(tX) \bigr)v \Bigr|_{t=0}.
\end{multline*}
Hence, if $v \in V^{\infty}$ is $K_{\BB R}$-finite (i.e. lies in a
finite-dimensional $K_{\BB R}$-invariant subspace), then so is $\pi(X)v$.
\end{proof}

\begin{cor}
If $(\pi,V)$ is admissible, $V_{fini}$ is a ${\cal U}(\g g)$-module.
\end{cor}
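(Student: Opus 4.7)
The plan is to assemble the corollary from the two results immediately preceding it, with essentially no new ideas required. First I would invoke the previous corollary: admissibility of $(\pi,V)$ gives the inclusion $V_{fini} \subset V^{\infty}$, so in particular $V_{fini} = V^{\infty} \cap V_{fini}$. This converts the statement into a claim about a subspace of $V^{\infty}$, where the Lie algebra action $\pi(X) = \frac{d}{dt}\pi(\exp tX)|_{t=0}$ is actually defined.

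Next I would apply the lemma just proved, which says $V^{\infty} \cap V_{fini}$ is $\g g_{\BB R}$-invariant (and hence, by complex-linear extension, $\g g$-invariant). Combined with the previous step this yields that $V_{fini}$ itself is stable under the action of $\g g$ on $V^{\infty}$.

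Finally, I would upgrade $\g g$-invariance to ${\cal U}(\g g)$-invariance. Since $V^{\infty}$ is already a ${\cal U}(\g g)$-module (the $\g g$-representation extends uniquely to ${\cal U}(\g g)$ by the universal property), and ${\cal U}(\g g)$ is generated as an algebra by $i(\g g)$, any $\g g$-invariant subspace of a ${\cal U}(\g g)$-module is automatically a ${\cal U}(\g g)$-submodule. Applying this to $V_{fini} \subset V^{\infty}$ gives the desired conclusion.

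There is no real obstacle here; the substance was done in the preceding lemma (which required verifying $K_{\BB R}$-equivariance of the action map $\g g \otimes V^{\infty} \to V^{\infty}$ via the identity $\pi(k)\pi(X)\pi(k)^{-1} = \pi(\operatorname{Ad}(k)X)$) and in the preceding corollary (which needed admissibility plus the approximate-identity convolution trick to push $V(i)$ into $V^{\infty}$). The only place admissibility enters this final corollary is through the inclusion $V_{fini} \subset V^{\infty}$, so admissibility is precisely what is needed to make the abstract Lie algebra action defined on all of $V_{fini}$ rather than just on the dense subspace $V^{\infty} \cap V_{fini}$.
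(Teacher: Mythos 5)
Your argument is correct and is exactly the route the paper intends: the paper states this corollary without proof immediately after the lemma that $V^{\infty}\cap V_{fini}$ is $\g g$-invariant and the corollary that admissibility gives $V_{fini}\subset V^{\infty}$, so combining the two and extending the $\g g$-action to ${\cal U}(\g g)$ is precisely the implicit proof. No differences to report.
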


From now on we assume that the representation $(\pi,V)$ is
\underline{admissible}.
Not only ${\cal U}(\g g)$ acts on $V_{fini}$, but also $K_{\BB R}$.
As a $K_{\BB R}$-representation,
$$
V_{fini} = \bigoplus_{i \in \hat K_{\BB R}} V(i),
\qquad \text{each $V(i)$ is finite-dimensional}.
$$
Recall that $K$ is a complexification of $K_{\BB R}$
and the restriction from $K$ to $K_{\BB R}$ gives a bijection
$$
\left\{ \begin{matrix} \text{finite-dimensional} \\ \text{holomorphic} \\
\text{representations of $K$} \end{matrix}\right\}
\simeq
\left\{ \begin{matrix} \text{finite-dimensional} \\ \text{continuous complex} \\
\text{representations of $K_{\BB R}$} \end{matrix}\right\}
$$
We conclude that $K$ also acts on $V_{fini}$.

\noindent
\underline{Caution}: $K$ does {\em not} act on $V$!

Even though $V_{fini}$ has no natural Hausdorff topology --
it is not closed in $V$ unless $\dim V < \infty$ -- it makes sense to say
that $K$ acts holomorphically on $V_{fini}$:
like $K_{\BB R}$, $K$ acts {\em locally finitely},
in the sense that every vector lies in a finite dimensional invariant subspace;
the invariant finite dimensional subspaces do carry natural Hausdorff
topologies, and $K$ does act holomorphically on them.
The Lie algebra $\g k$ has two natural actions on $V_{fini}$,
by differentiation of the $K$-action, and via the inclusion
$\g k \subset \g g$ and the ${\cal U}(\g g)$-module structure.
These two actions coincide, essentially by construction.
Moreover, for all $X \in {\cal U}(\g g)$, $v \in V_{fini}$ and $k \in K$,
\begin{equation}\label{adjoint}
\pi(k)(Xv) = (\operatorname{Ad}(k)X)(\pi(k)v),
\end{equation}
as can be deduced from the well known formula
$\exp(\operatorname{Ad}k X) = k \exp(X) k^{-1}$,
for $X \in \g g_{\BB R}$, $k \in K_{\BB R}$. 

\begin{df}  \label{(g,K)-mod_def}
A {\em $(\g g, K)$-module} is a complex vector space $M$, equipped with the
structure of ${\cal U}(\g g)$-module and with a linear action of $K$ such that:
\begin{itemize}
\item[{\rm a)}]
The action of $K$ is locally finite, i.e., every $m \in M$ lies in a
finite-dimensional $K$-invariant subspace on which $K$ acts holomorphically;
\item[{\rm b)}]
When the $K$-action is differentiated, the resulting action of $\g k$ agrees
with the action of $\g k$ on $M$ via $\g k \hookrightarrow \g g$ and the
${\cal U}(\g g)$-module structure.
\item[{\rm c)}]
The identity (\ref{adjoint}) holds for all $k\in K$, $X \in {\cal U}(\g g)$,
$v\in M$.
\end{itemize}
\end{df}

\begin{df}
A {\em Harish-Chandra module} is a $(\g g, K)$-module $M$ which is
finitely generated over ${\cal U}(\g g)$ and admissible, in the sense that
every irreducible $K$-representation occurs in $M$ with finite multiplicity. 
\end{df}

\begin{rem}
If $K$ is connected (which is the case if $G_{\BB R}$ is connected),
the compatibility condition (c) follows from condition (b).
Indeed, it is sufficient to verify (\ref{adjoint}) on the infinitesimal level,
in which case it becomes, by (b),
$$
ZXv = [Z,X]v + XZv, \qquad \forall X \in {\cal U}(\g g), \: Z \in \g k,
$$
which is true.

But in the case of non-connected group $G_{\BB R}$, each maximal compact
subgroup $K_{\BB R} \subset G_{\BB R}$ meets every connected component of
$G_{\BB R}$, hence also disconnected. In this case the compatibility condition
(b) does {\em not} imply (\ref{adjoint}), and the compatibility condition (c)
must be stated separately.
\end{rem}

The discussion leading up to the definition shows that the space of
$K_{\BB R}$-finite vectors $V_{fini}$ of an admissible representation
$(\pi, V)$ is an admissible $(\g g,K)$-module.
In order to show that $V_{fini}$ is a Harish-Chandra module, we need to show
that $V_{fini}$ is finitely-generated over ${\cal U}(\g g)$.
This part requires that $V$ has finite length and a few more steps.
Recall a part of Theorem \ref{densityof}:

\begin{thm}  \label{weakly-analytic-thm}
$V_{fini} \subset V^{\omega}$. In other words, for each $v \in V_{fini}$ and
$l \in V^*$, the matrix coefficient function $G_{\BB R} \to \BB C$,
$g \mapsto \langle l, \pi(g)v \rangle$, is real-analytic.
\end{thm}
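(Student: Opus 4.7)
The strategy is the one outlined in the preceding paragraph: exhibit a left-invariant differential operator $\tilde L$ on $G_{\BB R}$ that is elliptic with real-analytic coefficients, and a nonzero polynomial $Q \in \BB C[t]$ such that $Q(\tilde L)$ annihilates the matrix coefficient $f(g) = \langle l, \pi(g) v\rangle$. Any polynomial in $\tilde L$ inherits ellipticity (its principal symbol is a power of $\sigma(\tilde L)$, up to a nonzero leading coefficient) and has real-analytic coefficients, since left-invariant vector fields on $G_{\BB R}$ are real-analytic in any analytic chart. The classical regularity theorem for elliptic operators with real-analytic coefficients will then force $f$ to be real-analytic. By the corollary $V_{fini} \subset V^{\infty}$ already proved, $\pi(X) v$ is defined for every $X \in {\cal U}(\g g)$, and under the identification ${\cal U}(\g g) \simeq {\cal D}^l$ of Proposition \ref{UDiso} one checks $(\tilde X f)(g) = \langle l, \pi(g) X v\rangle$; so the identity $Q(L) v = 0$ in $V$ translates directly into the PDE $Q(\tilde L) f = 0$ on $G_{\BB R}$.

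For the operator I take $L = \Omega_G - 2\Omega_K$, where $\Omega_G \in {\cal ZU}(\g g)$ is the Casimir built from the Killing form $K$ of $\g g$ and $\Omega_K \in {\cal ZU}(\g k)$ is the Casimir of $\g k$ built from $-K|_{\g k}$. Using the Cartan decomposition $\g g_{\BB R} = \g k_{\BB R} \oplus \g p_{\BB R}$, together with the facts that $K$ is positive definite on $\g p_{\BB R}$ and negative definite on $\g k_{\BB R}$, I pick orthonormal bases $\{X_i\} \subset \g p_{\BB R}$ for $K$ and $\{Y_j\} \subset \g k_{\BB R}$ for $-K$. Then
$$
\Omega_G = \sum_i X_i^2 - \sum_j Y_j^2, \qquad \Omega_K = -\sum_j Y_j^2,
$$
so $L = \sum_i X_i^2 + \sum_j Y_j^2$. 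In exponential coordinates at the identity the principal symbol of $\tilde L$ is the positive-definite quadratic form $\sum_i \xi_i^2 + \sum_j \eta_j^2$, and by left-invariance $\tilde L$ is elliptic at every point of $G_{\BB R}$. This symbol computation, which crucially exploits the Cartan involution to turn the indefinite Killing form into a definite one, is where the geometry of the problem enters and is the main substantive point.

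For the polynomial I use admissibility. The vector $v$ decomposes as a finite sum of its projections to isotypic components, so it lies in $W = V(i_1) \oplus \cdots \oplus V(i_N)$, where each $V(i_k)$ is finite-dimensional. Since $\Omega_K$ is central in ${\cal U}(\g k)$, Schur's lemma forces it to act on the irreducible $U_{i_k}$ by a scalar $c_{i_k}$, hence on the whole isotypic component $V(i_k)$ by the same scalar. Since $\Omega_G$ is central in ${\cal U}(\g g)$ it commutes with the entire $K$-action and therefore preserves each $V(i_k)$. Hence $L$ preserves the finite-dimensional subspace $W$, and I take $Q$ to be the characteristic polynomial of $L|_W$; this $Q$ is monic, nonzero, and satisfies $Q(L) v = 0$. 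Translating via $(\tilde X f)(g) = \langle l, \pi(g) X v\rangle$ gives the elliptic equation $Q(\tilde L) f = 0$ with real-analytic coefficients, and elliptic regularity concludes that $f$ is real-analytic. The remaining ingredients—producing $Q$ on a $K_{\BB R}$-isotypic piece, and invoking elliptic regularity for analytic coefficients—are routine once the elliptic operator $L$ is in hand.
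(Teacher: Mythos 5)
Your proof is correct and follows exactly the route the paper itself sketches (the paper states the strategy via $\Omega_G-2\Omega_K$ but omits the details of the $V_{fini}\subset V^{\omega}$ part): ellipticity of $\Omega_G-2\Omega_K$ from the Cartan decomposition, a Cayley--Hamilton polynomial on the finite-dimensional sum of isotypic components furnished by admissibility, the translation $(\tilde X f)(g)=\langle l,\pi(g)Xv\rangle$, and analytic elliptic regularity. The only blemish is a wording slip in the normalization: the displayed formula $\Omega_K=-\sum_j Y_j^2$ (with $\{Y_j\}$ orthonormal for $-K$), which is precisely what makes $L=\sum_i X_i^2+\sum_j Y_j^2$ elliptic, is the Casimir of $\g k$ built from $K|_{\g k}$ rather than from $-K|_{\g k}$ as you state; taken literally, your stated convention would give $\Omega_K=+\sum_j Y_j^2$ and the non-elliptic $\sum_i X_i^2-3\sum_j Y_j^2$.
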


\begin{cor}
Let $(\pi,V)$ be an admissible representation of $G_{\BB R}$ and let
$W$ be a submodule of the $(\g g,K)$-module $V_{fini}$.
Then $\overline{W}$ -- the closure of $W$ in $V$ -- is $G_{\BB R}$-invariant
(hence a subrepresentation).
\end{cor}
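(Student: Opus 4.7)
The plan is to use a Hahn-Banach duality argument combined with Theorem \ref{weakly-analytic-thm} (weak analyticity of $K_{\BB R}$-finite vectors). To show $\overline{W}$ is $G_{\BB R}$-invariant it suffices, by continuity of each $\pi(g)$, to show $\pi(g)W \subset \overline{W}$ for all $g \in G_{\BB R}$. By Hahn-Banach, a vector $v \in V$ lies in the closed subspace $\overline{W}$ if and only if $\langle l,v\rangle = 0$ for every continuous linear functional $l \in V^*$ that annihilates $W$. So I would fix $w \in W$ and $l \in V^*$ with $l\bigr|_W = 0$ and reduce the problem to showing $\langle l, \pi(g)w\rangle = 0$ for every $g \in G_{\BB R}$.

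Define the matrix coefficient $f: G_{\BB R} \to \BB C$ by $f(g) = \langle l, \pi(g)w\rangle$. Since $w \in V_{fini} \subset V^{\omega}$ by Theorem \ref{weakly-analytic-thm}, the function $f$ is real analytic on $G_{\BB R}$. I next compute all its iterated directional derivatives at the identity. For any $X_1,\dots,X_n \in \g g_{\BB R}$, a standard calculation gives
$$
\frac{\partial^n}{\partial t_1 \cdots \partial t_n}\bigg|_{t_1=\cdots=t_n=0}
f\bigl(\exp(t_1 X_1) \cdots \exp(t_n X_n)\bigr) = \langle l, \pi(X_1)\cdots \pi(X_n) w\rangle.
$$
Because $W$ is a ${\cal U}(\g g)$-submodule, the vector $\pi(X_1)\cdots \pi(X_n) w$ lies in $W$, so this derivative equals $0$.

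Since $f$ is real analytic in a chart around $e$ (for example, normal coordinates given by the exponential map applied to a basis of $\g g_{\BB R}$) and all of its partial derivatives of all orders vanish at $e$, its Taylor series at $e$ is identically zero, hence $f \equiv 0$ on some open neighborhood $U$ of $e$. The identity theorem for real analytic functions on a connected real analytic manifold then forces $f \equiv 0$ on all of $G_{\BB R}$, using the standing assumption that $G_{\BB R}$ is connected. Therefore $\langle l, \pi(g)w\rangle = 0$ for every $g \in G_{\BB R}$ and every $l \in V^*$ annihilating $W$, so $\pi(g)w \in \overline{W}$.

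The crux of the argument is invoking the weak analyticity of $K_{\BB R}$-finite vectors, which is precisely the delicate statement in Theorem \ref{densityof}(b) that has been assumed; once one has it, the derivative computation and the analytic-continuation step are routine. The only mild technical point to check is the identification of the iterated $t_i$-derivative at $0$ with $\langle l, \pi(X_1)\cdots\pi(X_n)w\rangle$, which follows inductively from the definition $\pi(X)v = \frac{d}{dt}\pi(\exp tX)v\bigr|_{t=0}$ and the fact that $V_{fini} \subset V^{\infty}$ so all these derivatives exist strongly in $V$ (and hence under $l$ as well).
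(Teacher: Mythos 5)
Your proof is correct and follows the same route as the paper: reduce via Hahn--Banach to showing the matrix coefficient $g \mapsto \langle l, \pi(g)w\rangle$ vanishes for every $l$ annihilating $W$, invoke Theorem \ref{weakly-analytic-thm} for real analyticity, kill all derivatives at $e$ using the ${\cal U}(\g g)$-invariance of $W$, and conclude by analytic continuation on the connected group. You have merely filled in the details (the iterated-derivative computation and the identity theorem) that the paper leaves implicit.
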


\begin{proof}
Suppose $w \in W$, we need to show that $\pi(g)w$ lies in $\overline{W}$,
for all $g \in G_{\BB R}$.
Equivalently, we must show that the matrix coefficient
$g \mapsto \langle l, \pi(g)w \rangle$ is identically zero for all
$l$ in the annihilator of $W$ in $V^*$.
By Theorem \ref{weakly-analytic-thm}, this function is real-analytic.
$W$ being $\g g$-invariant implies that all derivatives of this function
at the identity element $e$ vanish. Hence the function is zero.
\end{proof}

\begin{lem}
Let $(\pi,V)$ be an admissible representation of $G_{\BB R}$,
then we have a bijection
$$
\left\{ \begin{matrix} \text{closed $G_{\BB R}$-invariant} \\
\text{subspaces of $V$} \end{matrix} \right\}
\simeq
\left\{ \begin{matrix} \text{$(\g g,K)$-submodules} \\
\text{of $V_{fini}$} \end{matrix} \right\}
$$
\begin{align*}
\text{subrepresentation $\tilde V \subset V$} \quad &\mapsto \quad
\tilde V_{fini} \subset V_{fini} \\
\text{$(\g g,K)$-submodule $W \subset V_{fini}$} \quad &\mapsto \quad
\text{closure $\overline{W} \subset V$.}
\end{align*}
\end{lem}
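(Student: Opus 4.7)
The plan is to verify that both maps are well-defined and that they are mutually inverse. For the forward map, if $\tilde V \subset V$ is a closed $G_{\BB R}$-invariant subspace, then $\tilde V$ is itself an admissible representation of $G_{\BB R}$ (its $i$-isotypic component is $\tilde V \cap V(i)$, a subspace of the finite-dimensional $V(i)$), and $\tilde V_{fini} = \tilde V \cap V_{fini}$ is a $(\g g,K)$-submodule of $V_{fini}$ by the results of the previous subsection. For the backward map, if $W \subset V_{fini}$ is a $(\g g,K)$-submodule, the preceding corollary tells us that $\overline W \subset V$ is a closed $G_{\BB R}$-invariant subspace.

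For $\overline{(\cdot)} \circ (\cdot)_{fini} = \operatorname{id}$: given a closed $G_{\BB R}$-invariant $\tilde V \subset V$, Theorem \ref{densityof} applied to the admissible representation $\tilde V$ yields that $\tilde V_{fini}$ is dense in $\tilde V$, and since $\tilde V$ is closed in $V$, we get $\overline{\tilde V_{fini}} = \tilde V$.

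The substantive direction is $(\overline W)_{fini} = W$ for a $(\g g,K)$-submodule $W \subset V_{fini}$. The inclusion $W \subset (\overline W)_{fini}$ is immediate since $W \subset V_{fini}$. For the opposite inclusion, I will use the isotypic decomposition and the continuity of the isotypic projectors $\pi_{K_{\BB R}}(\phi_i)$. Since $W$ is $K_{\BB R}$-invariant and decomposes as $W = \bigoplus_i (W \cap V(i))$, and each $V(i)$ is finite-dimensional by admissibility (hence closed in $V$), the subspace $W \cap V(i)$ is a subspace of a finite-dimensional vector space and therefore closed in $V$. Now let $w \in (\overline W)_{fini}$; by the isotypic decomposition of $(\overline W)_{fini}$ it suffices to treat $w \in \overline W \cap V(i)$ for some $i$. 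Choose $w_n \in W$ with $w_n \to w$ in $V$. Applying the continuous operator $\pi_{K_{\BB R}}(\phi_i)$ gives $\pi_{K_{\BB R}}(\phi_i) w_n \to \pi_{K_{\BB R}}(\phi_i) w = w$, and each $\pi_{K_{\BB R}}(\phi_i) w_n \in W \cap V(i)$ because $W$ is $K_{\BB R}$-invariant. Since $W \cap V(i)$ is closed, $w \in W \cap V(i) \subset W$, completing the argument.

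The main point to get right is that the closure operation $W \mapsto \overline W$ does not enlarge the $K_{\BB R}$-finite part beyond $W$; admissibility is essential here, because it makes each $V(i)$ finite-dimensional and hence ensures that the subspace $W \cap V(i)$ is automatically closed. Everything else --- the density of $\tilde V_{fini}$ in $\tilde V$, the $G_{\BB R}$-invariance of $\overline W$, and the $(\g g,K)$-submodule property of $\tilde V_{fini}$ --- has been established in previous results of the section.
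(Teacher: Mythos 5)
Your proposal is correct and follows the same route as the paper: density of $\tilde V_{fini}$ in the closed subrepresentation $\tilde V$ for one composition, and for the other the application of the continuous projector $\pi_{K_{\BB R}}(\phi_i)$ to a sequence $w_n \to w$, combined with the fact that admissibility makes $W(i) = W \cap V(i)$ finite-dimensional and hence closed. The key observation you single out --- that closure cannot enlarge any isotypic component precisely because each $V(i)$ is finite-dimensional --- is exactly the point of the paper's argument.
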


\begin{proof}
If $\tilde V \subset V$ is a closed $G_{\BB R}$-invariant subspace, then
$\overline{\tilde V_{fini}} = \tilde V$, since $\tilde V_{fini}$ is dense in
$\tilde V$.

Conversely, if $W \subset V_{fini}$ is a $(\g g,K)$-submodule, then
$(\overline{W})_{fini} = W$.
Indeed, we certainly have $(\overline{W})_{fini} \supset W$.
If $(\overline{W})_{fini} \ne W$, there exists an $i \in \hat K_{\BB R}$
such that $\overline{W}(i) \supsetneq W(i)$.
Let $w \in \overline{W}(i)$, then there exists a sequence
$\{ w_n \}$ in $W$ converging to $w$. Apply $\pi_{K_{\BB R}}(\phi_i)$ to get
$$
\pi_{K_{\BB R}}(\phi_i)w_n \to \pi_{K_{\BB R}}(\phi_i)w =w, \qquad
\pi_{K_{\BB R}}(\phi_i)w_n \in W(i)
$$
(here we used that $W$ is $K$-invariant). Hence
$$
\overline{W}(i) \subset \overline{W(i)} = W(i),
$$
since $W(i)$ is finite-dimensional.
\end{proof}

\begin{cor}
A representation $(\pi,V)$ is irreducible if and only if $V_{fini}$
is irreducible as $(\g g,K)$-module.
\end{cor}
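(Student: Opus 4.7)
The plan is to read off the corollary directly from the inclusion-preserving bijection established in the preceding lemma, once we identify what the ``extreme'' subrepresentations correspond to. Irreducibility of $(\pi,V)$ is, by definition, the statement that the only closed $G_{\BB R}$-invariant subspaces of $V$ are $\{0\}$ and $V$; irreducibility of $V_{fini}$ as a $(\g g,K)$-module is the statement that its only $(\g g,K)$-submodules are $\{0\}$ and $V_{fini}$. So I just need to check that the bijection sends $\{0\}\subset V$ to $\{0\}\subset V_{fini}$ and $V\subset V$ to $V_{fini}\subset V_{fini}$; the first is trivial since $\{0\}$ contains no $K_{\BB R}$-finite vectors other than $0$, and the second uses Theorem \ref{densityof}, which says $V_{fini}$ is dense in $V$, so $\overline{V_{fini}}=V$ and conversely the $K_{\BB R}$-finite part of $V$ is $V_{fini}$ by definition.

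Since the bijection is inclusion-preserving and maps the top and bottom elements to the top and bottom elements, it restricts to a bijection between \emph{proper, nonzero} closed $G_{\BB R}$-invariant subspaces of $V$ and \emph{proper, nonzero} $(\g g,K)$-submodules of $V_{fini}$. Hence one side is empty iff the other is, which is exactly the statement of the corollary. I should record that the standing assumption of admissibility of $(\pi,V)$ is needed only insofar as the preceding lemma required it; no new hypothesis enters here.

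There is no serious obstacle: the content is entirely in the bijection lemma, whose hard direction (a $(\g g,K)$-submodule closes up to a $G_{\BB R}$-stable subspace) relied on Theorem \ref{weakly-analytic-thm} (weak analyticity of matrix coefficients of $K_{\BB R}$-finite vectors). Here we are just harvesting that bijection at its two endpoints, so the proof is a one-line deduction.
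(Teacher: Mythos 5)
Your proposal is correct and is exactly the argument the paper intends: the corollary is stated without proof as an immediate consequence of the preceding bijection lemma, and your observation that the inclusion-preserving bijection matches top with top and bottom with bottom (using density of $V_{fini}$ for $\overline{V_{fini}}=V$) is precisely how one reads it off. Nothing is missing.
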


\begin{cor}
Let $(\pi,V)$ be an admissible representation of $G_{\BB R}$ of finite length,
then $V_{fini}$ is a Harish-Chandra module.
\end{cor}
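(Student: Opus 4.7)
The plan is to verify the three defining properties of a Harish-Chandra module in turn: the $(\g g, K)$-module structure, admissibility, and finite generation over ${\cal U}(\g g)$. The first two are already in hand: the discussion preceding Definition \ref{(g,K)-mod_def} showed that for any admissible $(\pi, V)$ the space $V_{fini}$ carries a compatible $(\g g, K)$-module structure, and admissibility of $V_{fini}$ as a $(\g g, K)$-module is just the admissibility hypothesis on $(\pi, V)$, since $V_{fini}(i) = V(i)$ is finite dimensional for each $i \in \hat K_{\BB R}$. So the entire content of the corollary is the finite generation over ${\cal U}(\g g)$.

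To obtain finite generation I would first transport the finite-length filtration from $V$ to $V_{fini}$. Since $(\pi, V)$ has finite length, fix a chain
$$
0 = V_0 \subset V_1 \subset \dots \subset V_N = V
$$
of closed $G_{\BB R}$-invariant subspaces with each $V_j/V_{j-1}$ irreducible. Applying the bijection of the preceding lemma to each $V_j$ yields a chain
$$
0 = W_0 \subset W_1 \subset \dots \subset W_N = V_{fini}, \qquad W_j = (V_j)_{fini},
$$
of $(\g g, K)$-submodules. The corollary just before the lemma identifies irreducibility of $V_j/V_{j-1}$ with irreducibility of $W_j/W_{j-1}$ as $(\g g, K)$-modules, and each $W_j/W_{j-1}$ inherits admissibility from $V_{fini}$ because $(W_j/W_{j-1})(i)$ is a subquotient of the finite-dimensional space $V_{fini}(i)$.

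The key step is then to prove that every irreducible admissible $(\g g, K)$-module $M$ is finitely generated over ${\cal U}(\g g)$. Pick any nonzero $m \in M$. By local finiteness of the $K$-action, $m$ lies in a finite-dimensional $K$-invariant subspace $U \subset M$. Consider $N = {\cal U}(\g g)\cdot U$. This is manifestly $\g g$-invariant, and the compatibility relation $\pi(k)(Xv) = (\operatorname{Ad}(k)X)(\pi(k)v)$ of Definition \ref{(g,K)-mod_def}(c) shows $N$ is also $K$-invariant: for $k \in K$, $X \in {\cal U}(\g g)$, $u \in U$, we have $\pi(k)(Xu) = (\operatorname{Ad}(k)X)(\pi(k)u) \in {\cal U}(\g g)\cdot U$ because $\pi(k)u \in U$. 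Thus $N$ is a nonzero $(\g g,K)$-submodule, and by irreducibility $N = M$, so any basis of $U$ generates $M$ over ${\cal U}(\g g)$.

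Finally, I induct on the length $N$ of the filtration. If $N=1$, the previous paragraph applies. For the inductive step, choose finite generating sets $S_1 \subset W_{N-1}$ for $W_{N-1}$ (by induction) and $\bar S_2 \subset W_N/W_{N-1}$ for the top quotient; lift $\bar S_2$ to a finite set $S_2 \subset W_N$. Then $S_1 \cup S_2$ generates $V_{fini} = W_N$ over ${\cal U}(\g g)$, completing the proof. The main obstacle in this argument is really the cyclic-generation step for irreducible admissible modules; once one remembers to exploit the $K$-action (to enlarge $m$ to a finite-dimensional $K$-stable $U$) and the compatibility (\ref{adjoint}) to make ${\cal U}(\g g)\cdot U$ a $(\g g, K)$-submodule, the rest is formal.
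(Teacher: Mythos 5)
Your proof is correct, and it reaches the conclusion by a genuinely different organization of the argument than the text, even though the two share the same key mechanism. The shared core is the observation that the compatibility relation (\ref{adjoint}) together with local finiteness of the $K$-action lets one replace a $K$-orbit $\{\pi(k)w\}$ by the finite-dimensional space it spans; you use it to show that ${\cal U}(\g g)\cdot U$ is a $(\g g,K)$-submodule for any finite-dimensional $K$-stable $U$, while the text uses the same computation to show that a $(\g g,K)$-submodule generated by finitely many vectors is finitely generated over ${\cal U}(\g g)$. From there the routes diverge: the text argues by contradiction, producing from a non-finitely-generated $V_{fini}$ an infinite strictly increasing chain of $(\g g,K)$-submodules whose closures (via the bijection lemma) give an infinite chain of closed invariant subspaces, contradicting finite length. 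You instead run the argument forward, transporting a composition series of $V$ to a filtration of $V_{fini}$, proving that an irreducible $(\g g,K)$-module is generated over ${\cal U}(\g g)$ by any finite-dimensional $K$-stable subspace containing a nonzero vector, and closing under extensions. Your route buys more — it exhibits an explicit finite generating set and shows each irreducible constituent is cyclic up to a single $K$-type — but it costs you the identification of $W_j/W_{j-1}$ with $(V_j/V_{j-1})_{fini}$ (exactness of $\HC$), which you invoke a bit casually; it can be supplied either from the projection operators $\pi_{K_{\BB R}}(\phi_i)$ or, more in the spirit of the section, by noting via the bijection lemma that a proper intermediate $(\g g,K)$-submodule between $W_{j-1}$ and $W_j$ would close up to a proper closed invariant subspace between $V_{j-1}$ and $V_j$, contradicting irreducibility of $V_j/V_{j-1}$. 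The text's contradiction argument sidesteps this issue entirely, which is why it is shorter.
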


\begin{proof}
We need to show that $V_{fini}$ is finitely generated as a
${\cal U}(\g g)$-module. Observe that if $w_1,\dots,w_n \in V_{fini}$, then
the $(\g g, K)$-submodule $W$ generated by $w_1,\dots,w_n$ is finitely
generated as a ${\cal U}(\g g)$-module.
Indeed, by the compatibility condition (c)  of Definition \ref{(g,K)-mod_def},
as a ${\cal U}(\g g)$-module, $W$ is generated by
$$
\{ \pi(k_1)w_1,\dots,\pi(k_n)w_n ;\: k_1,\dots,k_n \in K \},
$$
and those vectors span a finite-dimensional vector subspace $W_0 \subset W$.
Then a basis of $W_0$ generates $W$ over ${\cal U}(\g g)$.

If $V_{fini}$ is {\em not} finitely generated, we get an infinite chain of
${\cal U}(\g g)$-modules and  hence an infinite chain of $(\g g,K)$-submodules
$$
W_1 \subsetneq W_2 \subsetneq W_3 \subsetneq \dots \subset V_{fini}.
$$
Take their closures in $V$:
$$
\overline{W_1} \subset \overline{W_2} \subset \overline{W_3} \subset \dots
\subset V.
$$
By previous lemma, all inclusions are proper, so we  get an infinite chain of
closed $G_{\BB R}$-invariant subspaces of $V$.
But this contradicts to $V$ being of finite length.
\end{proof}

From now on, we write $\HC(V)$ for the space of $K_{\BB R}$-finite vectors of
an admissible representation $(\pi,V)$ and call $\HC(V)$
{\em the Harish-Chandra module of} $\pi$.
The next statement formalizes the properties of Harish-Chandra
modules we have mentioned so far: 

\begin{thm}[Harish-Chandra] \label{HCf}
The association
$$
V \mapsto \HC(V) = V_{fini}
$$
establishes a covariant, exact\footnote{A functor is {\em exact}
if it preserves exact sequences.},
faithful\footnote{A functor $F: {\cal C} \to {\cal C}'$ is {\em faithful}
if, for all $X,Y \in {\cal C}$, the map
$\operatorname{Hom}_{\cal C}(X,Y) \to \operatorname{Hom}_{{\cal C}'}(F(X),F(Y))$
is injective.} functor
\begin{multline*}
\Bigl\{\begin{matrix}
\text{category of admissible $G_{\BB R}$-representations of finite}
\\
\text{length and continuous linear $G_{\BB R}$-equivariant maps}
\end{matrix}\Bigr\} \\
\overset{\HC}{\longrightarrow}   \
\Bigl\{\begin{matrix}
\text{category of Harish-Chandra modules} \\
\text{and $(\g g, K)$-equivariant linear maps}
\end{matrix}\Bigr\}.
\end{multline*}
\end{thm}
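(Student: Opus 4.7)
The plan is to verify four assertions in turn: (i) the functor is well-defined on objects, (ii) it is covariant (functoriality on morphisms), (iii) it is faithful, and (iv) it is exact. Well-definedness has already been done in the corollary just above the theorem: if $(\pi,V)$ is admissible of finite length, then $V_{fini}$ inherits the structure of an admissible $(\g g,K)$-module from the foregoing lemmas and is finitely generated over ${\cal U}(\g g)$, hence is a Harish-Chandra module.

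For functoriality, I would take a continuous $G_{\BB R}$-equivariant map $T\colon V\to V'$ and show $T(V_{fini})\subset V'_{fini}$ together with $(\g g,K)$-equivariance of $T|_{V_{fini}}$. Preservation of $K_{\BB R}$-finiteness is immediate: $T$ intertwines the $K_{\BB R}$-actions, so it sends a finite-dimensional $K_{\BB R}$-invariant subspace containing $v$ to a finite-dimensional $K_{\BB R}$-invariant subspace containing $T(v)$. Continuity of $T$ combined with the identity $T\bigl(\pi(\exp tX)v\bigr)=\pi'(\exp tX)T(v)$ lets me differentiate at $t=0$ to get $T(V^\infty)\subset (V')^\infty$ and $T\circ\pi(X)=\pi'(X)\circ T$; compatibility with the complex group $K$ reduces to $K_{\BB R}$-compatibility on each finite-dimensional invariant piece, where the holomorphic $K$-action is uniquely determined by that of $K_{\BB R}$.

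Faithfulness is the easiest step: if two continuous $G_{\BB R}$-equivariant maps $T_1,T_2\colon V\to V'$ agree on $\HC(V)=V_{fini}$, then the density of $V_{fini}$ in $V$ (Theorem~\ref{densityof}) together with continuity of $T_1,T_2$ forces $T_1=T_2$.

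Exactness is where the main work lies. Given a short exact sequence $0\to V_1\to V_2\xrightarrow{q}V_3\to 0$---where $V_1$ is a closed $G_{\BB R}$-invariant subspace of $V_2$ and $V_3$ carries the quotient topology---injectivity of $\HC(V_1)\to\HC(V_2)$ is trivial, and exactness in the middle follows because any $K_{\BB R}$-finite vector in $V_2$ lying in $V_1$ is automatically $K_{\BB R}$-finite in $V_1$. The main obstacle is surjectivity of $\HC(V_2)\to\HC(V_3)$, and my plan here is to exploit the projection $\pi_{K_{\BB R}}(\phi_i)$ onto the $i$-isotypic component constructed earlier. Given $v_3\in V_3(i)$, surjectivity of $q$ produces some lift $v_2\in V_2$; set $w_2=\pi_{K_{\BB R}}(\phi_i)v_2\in V_2(i)\subset\HC(V_2)$. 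Since $q$ is continuous and $K_{\BB R}$-equivariant it commutes with the integral defining $\pi_{K_{\BB R}}(\phi_i)$, yielding $q(w_2)=\pi_{K_{\BB R}}(\phi_i)q(v_2)=\pi_{K_{\BB R}}(\phi_i)v_3=v_3$. Because $\HC(V_3)=\bigoplus_i V_3(i)$ is an algebraic direct sum, an arbitrary $K_{\BB R}$-finite vector in $V_3$ is a finite sum of such $v_3$'s, each of which can be lifted individually. The most delicate technical point is justifying that $q$ commutes with the $K_{\BB R}$-averaging, which rests on continuity of $q$ and the completeness and local convexity of the ambient topological vector spaces---exactly the standing assumptions of our setup.
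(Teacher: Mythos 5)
Your proposal is correct and follows exactly the route the paper intends: the theorem is stated there as a formalization of the preceding lemmas, and your argument assembles precisely those ingredients (density of $V_{fini}$ plus continuity for faithfulness, differentiation of $t\mapsto\pi(\exp tX)v$ for functoriality, and the isotypic projections $\pi_{K_{\BB R}}(\phi_i)$ commuting with the continuous quotient map for exactness). The only point worth flagging is that you implicitly use that the quotient $V_3=V_2/V_1$ is again an admissible representation of finite length in the ambient category, but this follows from the same projection argument and the standing assumptions, so there is no gap.
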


This functor is {\em not} full\footnote{A functor $F: {\cal C} \to {\cal C}'$
is {\em full} if, for all $X,Y \in {\cal C}$, the map
$\operatorname{Hom}_{\cal C}(X,Y) \to \operatorname{Hom}_{{\cal C}'}(F(X),F(Y))$
is surjective.}, however.
As we will see in Subsection \ref{inf-equiv-subsection}, one can easily
construct two admissible representations of finite length $(\pi_1,V_1)$ and
$(\pi_2,V_2)$ that have isomorphic Harish-Chandra modules, but there is no
continuous $G_{\BB R}$-equivariant map $V_1 \to V_2$ inducing the isomorphism map
$\HC(V_1) \simeq \HC(V_2)$.

\subsection{Infinitesimal Equivalence}  \label{inf-equiv-subsection}

\begin{df}
Two admissible representations of finite length $(\pi_1,V_1)$ and $(\pi_2,V_2)$
are {\em infinitesimally equivalent} or {\em infinitesimally isomorphic}
if $\HC(V_1) \simeq \HC(V_2)$.
\end{df}

Loosely speaking, infinitesimal equivalence means that the two representations
are the same except for the choice of topology.
This is a good notion of equivalence because, for example,
if two irreducible unitary representations are infinitesimally equivalent,
they are isomorphic as unitary representations.

\separate

\underline{Example}:
Let $G_{\BB R}=SU(1,1)$. Recall that
\begin{equation*}
SU(1,1) = \left\{ \begin{pmatrix} a & b \\ \bar b & \bar a \end{pmatrix} ; \:
a,b \in \BB C, \: |a|^2 - |b|^2 =1 \right\},
\end{equation*}
$G_{\BB R}$ has $G=SL(2,\BB C)$ as complexification,
and is conjugate in $G$ to $SL(2,\BB R)$.
As maximal compact subgroup, we choose the diagonal subgroup,
in which case its complexification also consists of diagonal matrices:
\begin{align*}
K_{\BB R} &= \left\{ k_\theta = \begin{pmatrix} e^{i\theta} & 0 \\ 0 & e^{-i\theta}
\end{pmatrix} ;\: \theta \in \BB R\,\right\} \simeq SO(2) \simeq U(1), \\
K &= \left\{ \begin{pmatrix} a & 0 \\ 0 & a^{-1} \end{pmatrix} ;\:
a \in \BB C^{\times} \right\} \simeq \BB C^{\times}.
\end{align*}
The group $SU(1,1)$ acts transitively by fractional linear transformations
on the open unit disc
$$
\BB D = \{ z \in \BB C ;\: |z|<1 \}.
$$
Since the isotropy subgroup at the origin is $K_{\BB R}$,
$$
\BB D \simeq SU(1,1)/K_{\BB R}.
$$
We denote the space of holomorphic functions on $\BB D$ by $H^{-\omega}(\BB D)$.
The group $SU(1,1)$ acts on $H^{-\omega}(\BB D)$ by left translations:
\begin{equation}  \label{lD-action}
\bigl( \ell(g)f \bigr)(z) = f(g^{-1} \cdot z), \qquad
f \in H^{-\omega}(\BB D), \: z \in \BB D,
\end{equation}
and on the subspace
\begin{equation*}
H^2(\BB D) =_{\text{def}}
\begin{matrix} \text{space of holomorphic functions on $\BB D$} \\
\text{with $L^2$ boundary values,} \end{matrix}
\end{equation*}
topologized by the inclusion $H^2(\BB D) \hookrightarrow L^2(S^1)$.
One can show that both actions are representations, i.e., they are continuous
with respect to the natural topologies of $H^{-\omega}(\BB D)$ and $H^2(\BB D)$.

Note that
$$
\bigl( \ell(k_{\theta}) f \bigr)(z) = f(e^{-2i\theta}\cdot z), \qquad
\ell(k_{\theta})z^n = e^{-2in\theta} z^n.
$$
Hence, $f \in H^2(\BB D)$ is $K_{\BB R}$-finite if and only if
$f$ has a finite Taylor series at the origin, i.e., if and only if
$f$ is a polynomial:
$$
H^2(\BB D)_{fini} = \BB C[z].
$$
In particular, $(\ell,H^2(\BB D))$ is admissible.
This representation is not irreducible, since $H^2(\BB D)$ contains
the constant functions $\BB C$ as an obviously closed invariant subspace.
It does have finite length; in fact, the quotient $H^2(\BB D)/\BB C$
is irreducible, as follows from a simple infinitesimal calculation in
the Harish-Chandra module $\HC(H^2(\BB D))=\BB C[z]$.

Besides $V=H^2(\BB D)$, the action (\ref{lD-action}) on each of the
following spaces, equipped with the natural topology in each case,
defines a representation of $SU(1,1)$:
\begin{itemize}
\item[{\rm a)}]
$H^p(\BB D)$ = space of holomorphic functions on $\BB D$ with
$L^p$ boundary values, $1 \le p < \infty$;
\item[{\rm b)}]
$H^\infty(\BB D)$ = space of holomorphic functions on $\BB D$ with
${\cal C}^{\infty}$ boundary values;
\item[{\rm c)}]
$H^{-\infty}(\BB D)$ = space of holomorphic functions on $\BB D$
with distribution boundary values;
\item[{\rm d)}]
$H^\omega(\BB D)$ = space of holomorphic functions on $\BB D$
with real analytic boundary values;
\item[{\rm e)}]
$H^{-\omega}(\BB D)$ = space of all holomorphic functions on $\BB D$.
\end{itemize}
Taking boundary values, one obtains inclusions
$H^p(\BB D)\hookrightarrow L^p(S^1)$,
which are equivariant with respect to the action of $SU(1,1)$ on
$L^p(S^1)$ by linear fractional transformations.
The latter fails to be continuous when $p=\infty$, but that is not the case
for the image of $H^\infty(\BB D)$ in $L^\infty(S^1)$.
One can show that $H^\infty(\BB D)$ is the space of ${\cal C}^\infty$ vectors for
the Hilbert space representation $(\ell,H^2(\BB D))$.

Arguing as in the case of $H^2(\BB D)$, one finds that the representation
$\ell$ of $SU(1,1)$ on each of the spaces a)-e) has $\BB C[z]$ as
Harish-Chandra module, so all of them are infinitesimally equivalent.
This is the typical situation, not just for $SU(1,1)$,
but for all groups $G_{\BB R}$ of the type we are considering:
every infinite-dimensional admissible representation $(\pi,V)$
of finite length is infinitesimally equivalent to an infinite
family of pairwise non-isomorphic representations.

\subsection{A Few Words about Globalization}  \label{globalization}

One can ask the following very natural question:
``Does every Harish-Chandra module arise as the space of $K_{\BB R}$-finite
vectors of some admissible representation of $G_{\BB R}$ of finite length?''
In other words: ``Does every Harish-Chandra module $M$ have a
{\em globalization}, i.e. an admissible $G_{\BB R}$-representation
$(\pi,V)$ of finite length, such that $\HC(V)=M$?''
The answer to this question is affirmative and is due to W.Casselman
\cite{Ca1}, but the proof is quite complicated and indirect.

Then it is natural to ask if a globalization can be chosen in a functorial
manner -- in other words, whether the functor $\HC$ in Theorem \ref{HCf}
has a right inverse.
Such functorial globalizations do exist. Four of them are of particular
interest, the $C^{\infty}$ and $C^{-\infty}$ globalizations due to
W.Casselman and N.Wallach \cite{Ca2, Wal}, as well as the minimal and
the maximal globalizations due to M.Kashiwara and W.Schmid \cite{Sch, KSch}.
All four are {\em topologically exact}, i.e., they map exact
sequences of Harish-Chandra modules into exact sequences of representations
in which every morphism has {\em closed range}.
The main technical obstacle in constructing the canonical globalizations
is to establish this closed range property.

We describe without proofs the maximal globalization functor due to
M.Kashiwara and W.Schmid \cite{KSch}.

\begin{df}
Let $M$ be a Harish-Chandra module. By the {\em maximal globalization} we mean
a representation $\operatorname{MG}(M)$ of $G_{\BB R}$ such that
$\HC(\operatorname{MG}(M))=M$ and, for any other globalization $V$ of $M$,
the identity map $M \to M$ extends to a $G_{\BB R}$-equivariant continuous
linear map $V \to \operatorname{MG}(M)$.

Similarly, one can define the {\em minimal globalization}.
\end{df}

We start with a Harish-Chandra module $M$.
Let $M^*$ denote the vector space of all complex linear functions on $M$.
Since $M$ does not have any topology, neither does $M^*$. Define
$$
M' =_{\text{def}} (M^*)_{fini},
$$
then $\g g$ and $K$ act on $M'$ making it a $(\g g, K)$-module and, in fact,
a Harish-Chandra module.
We call $M'$ the dual Harish-Chandra module of $M$.

Define
$$
\widetilde{\operatorname{MG}}(M) =_{\text{def}}
\operatorname{Hom}_{(\g g,K_{\BB R})}(M', {\cal C}^{\infty}(G_{\BB R})),
$$
where the $(\g g,K_{\BB R})$-action on ${\cal C}^{\infty}(G_{\BB R})$ is induced
by the action of $G_{\BB R}$ by multiplications on the left:
$$
\bigl( l(g)f \bigr)(h) = f(g^{-1}h).
$$
Then we have a linear action of $G_{\BB R}$ on
$\widetilde{\operatorname{MG}}(M)$ induced by the action of $G_{\BB R}$ on
${\cal C}^{\infty}(G_{\BB R})$ by multiplications on the right:
$$
\bigl( r(g)f \bigr)(h) = f(hg).
$$
Moreover, $\widetilde{\operatorname{MG}}(M)$ has a natural Fr\'echet space
topology. Indeed, fix a vector space basis $\{m_i\}$ of $M'$,
it is automatically countable.
Pick a diffeomorphism between an open set $U$ of $\BB R^n$ and an open set in
$G_{\BB R}$, and let $C \subset U$ be a compact subset.
Then smooth functions on $G_{\BB R}$ can be ``restricted'' to $U$ and
it makes sense to talk about their partial derivatives.
Finally, let $\alpha$ be a multiindex.
Then the topology of $\widetilde{\operatorname{MG}}(M)$ is induced by seminorms
$$
p_{C,\alpha,i}(f) = \sup_{x \in C} \left|
\frac{\partial^{|\alpha|} f(m_i)}{\partial x^{\alpha}}(x) \right|,
\qquad f \in \widetilde{\operatorname{MG}}(M),
$$
with the compact sets $C$ ranging over a countable family covering $G_{\BB R}$.
The linear action of $G_{\BB R}$ on $\widetilde{\operatorname{MG}}(M)$
is continuous with respect to this topology.

The space $\widetilde{\operatorname{MG}}(M)$ remains unchanged if one replaces
${\cal C}^{\infty}(G_{\BB R})$ by the space of real-analytic functions
${\cal C}^{\omega}(G_{\BB R})$:

\begin{lem}
The inclusion
${\cal C}^{\omega}(G_{\BB R}) \hookrightarrow {\cal C}^{\infty}(G_{\BB R})$
induces a topological isomorphism
$$
\operatorname{Hom}_{(\g g,K_{\BB R})}(M', {\cal C}^{\omega}(G_{\BB R}))
\simeq \operatorname{Hom}_{(\g g,K_{\BB R})}(M', {\cal C}^{\infty}(G_{\BB R})).
$$
\end{lem}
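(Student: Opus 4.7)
The plan is to reduce this statement to the analytic regularity theorem for elliptic differential operators with real-analytic coefficients. The inclusion ${\cal C}^{\omega}(G_{\BB R}) \hookrightarrow {\cal C}^{\infty}(G_{\BB R})$ evidently induces an injective continuous linear map on Hom spaces; because both sides carry the Fr\'echet topology given by the seminorms $p_{C,\alpha,i}$ used in the construction of $\widetilde{\operatorname{MG}}(M)$ (the left side as the subspace topology via the inclusion), the assertion that this map is a topological isomorphism reduces to surjectivity. So the content of the lemma is the statement: for every $\phi \in \operatorname{Hom}_{(\g g, K_{\BB R})}(M', {\cal C}^{\infty}(G_{\BB R}))$ and every $m \in M'$, the function $f = \phi(m)$ is real-analytic on $G_{\BB R}$.

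Fix such $\phi$ and $m$. Since $M'$ is an admissible Harish-Chandra module, we may decompose $m$ into $K_{\BB R}$-isotypic components and assume $m \in M'(i)$ for some $i \in \hat K_{\BB R}$, so that $m$ lies in a finite-dimensional subspace. The Casimir $\Omega_K \in {\cal U}(\g k)$ is central in ${\cal U}(\g k)$, so by Schur's lemma it acts on $M'(i)$ as a scalar $c_i$. The Casimir $\Omega_G \in Z({\cal U}(\g g))$ commutes with the $K$-action (compatibility condition (c) of Definition \ref{(g,K)-mod_def}) and hence preserves the finite-dimensional subspace $M'(i)$, so restricts there to a linear endomorphism. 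Applying Cayley-Hamilton to the operator $\Omega_G - 2\Omega_K$ (which on $M'(i)$ equals $\Omega_G - 2c_i$), we obtain a monic polynomial $q \in \BB C[t]$ with $q(\Omega_G - 2\Omega_K) \cdot m = 0$ in $M'$.

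Applying $\phi$ and invoking $(\g g, K_{\BB R})$-equivariance, we conclude $q(D) f = 0$ in ${\cal C}^{\infty}(G_{\BB R})$, where $D$ denotes the right-invariant differential operator on $G_{\BB R}$ corresponding to $\Omega_G - 2\Omega_K \in {\cal U}(\g g)$ via the left-regular representation. As recalled at the end of the proof of Theorem \ref{densityof}, $D$ is elliptic: reading off its principal symbol via the Cartan decomposition $\g g_{\BB R} = \g k_{\BB R} \oplus \g p_{\BB R}$, one finds the Killing form on $\g p_{\BB R}$ plus its negative on $\g k_{\BB R}$, which is positive definite. Then $q(D)$ is also elliptic, since its principal symbol is a nonzero power of $\sigma(D)$, and its coefficients are right-invariant, hence real-analytic. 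The classical Petrovsky-type analytic regularity theorem (elliptic operators with real-analytic coefficients admit only real-analytic solutions) now forces $f$ to be real-analytic on $G_{\BB R}$.

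The main hurdle is locating and invoking the correct regularity statement: ordinary $C^{\infty}$-hypoellipticity does not suffice, and one needs the strictly stronger theorem that elliptic operators with real-analytic coefficients on a real-analytic manifold admit only real-analytic solutions. Once this analytic input is granted, everything else is bookkeeping: Cayley-Hamilton together with the admissibility of $M'$ produces the required polynomial equation from purely algebraic data, and the topological isomorphism part follows automatically from the set-theoretic surjectivity and the common Fr\'echet topology on both Hom spaces.
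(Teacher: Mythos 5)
Your argument is correct. The paper itself gives no proof of this lemma (the globalization subsection is explicitly stated "without proofs"), but the route you take is precisely the one the paper sketches at the end of its proof of Theorem \ref{densityof} for the closely analogous statement $V_{fini}\subset V^{\omega}$: admissibility plus centrality of $\Omega_G$ and $\Omega_K$ yields, via Cayley--Hamilton on the finite-dimensional isotypic component, a polynomial in the elliptic operator $\Omega_G-2\Omega_K$ annihilating $\phi(m)$, and analytic elliptic regularity finishes the job. Two small remarks. First, your ellipticity computation tacitly assumes $\Omega_K$ is formed using the restriction to $\g k$ of the Killing form of $\g g$ (the intrinsic Killing form of $\g k$ may be degenerate, e.g.\ for $K_{\BB R}=SO(2)$); with that convention the symbol is indeed $B|_{\g p_{\BB R}}-B|_{\g k_{\BB R}}$, positive definite, and right-invariance propagates ellipticity over all of $G_{\BB R}$. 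Second, the topological statement is genuinely tautological only because you declare the left-hand side to carry the subspace topology inherited from the seminorms $p_{C,\alpha,i}$; if one instead insisted on the natural inductive-limit topology of ${\cal C}^{\omega}(G_{\BB R})$, continuity of the inverse would need a separate argument, but the paper's phrasing ("the space remains unchanged") supports your reading, so this is a matter of convention rather than a gap.
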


\begin{thm}[M.Kashiwara and W.Schmid, 1994]
The representation $\widetilde{\operatorname{MG}}(M)$ of $G_{\BB R}$
is the maximal globalization of the Harish-Chandra module $M$.
The functor $M \mapsto \widetilde{\operatorname{MG}}(M)$
is the right adjoint\footnote{
Let ${\cal C}$, ${\cal D}$ be two categories, and let
$F: {\cal D} \rightarrow {\cal C}$, $G: {\cal C} \rightarrow {\cal D}$
be two functors. Suppose there is a family of bijections
$$
\operatorname{Hom}_{\cal C}(FY,X) \cong \operatorname{Hom}_{\cal D}(Y,GX)
$$
which is natural in the variables $X$ and $Y$.
Then the functor $F$ is called a left adjoint functor,
while $G$ is called a right adjoint functor.}
of $\HC$ and is topologically exact.
\end{thm}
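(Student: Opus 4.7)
The plan has three stages: first verify that $\widetilde{\operatorname{MG}}(M)$ is a continuous Fr\'echet representation of $G_{\BB R}$ with $\HC(\widetilde{\operatorname{MG}}(M)) = M$; second establish the adjunction, whence the maximal universal property is automatic; third prove topological exactness.

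For the first stage, the right-translation action of $G_{\BB R}$ on ${\cal C}^{\infty}(G_{\BB R})$ commutes with left translation and hence with the entire $(\g g,K_{\BB R})$-action, so $r$ preserves the Hom-space; continuity of $r$ on ${\cal C}^{\infty}(G_{\BB R})$ translates directly into continuity in the seminorms $p_{C,\alpha,i}$. To identify $\HC(\widetilde{\operatorname{MG}}(M))$ with $M$, I build a $(\g g,K)$-equivariant map $\Psi \colon M \to \widetilde{\operatorname{MG}}(M)$ by ``formal matrix coefficients'': for $m \in M$ and $m' \in M'$, expand at the identity to produce for $X \in \g g_{\BB R}$ the Taylor series $\sum_{n \ge 0} \tfrac{1}{n!}\,m'(X^n m)$. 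The analytic heart of this stage is showing this series converges to a real-analytic function on all of $G_{\BB R}$; this uses that $M$ is a finitely generated ${\cal U}(\g g)$-module, $m'$ is $K$-finite, and growth estimates on matrix coefficients (available, for instance, after one embeds $M$ into a convenient globalization via Casselman's subrepresentation theorem). That $\Psi$ identifies $M$ with $\HC(\widetilde{\operatorname{MG}}(M))$ then follows from the Harish-Chandra duality $(M')' \cong M$ together with evaluation at $g = e$.

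For the second stage, given any globalization $(\pi,V)$ of $M$ and a $(\g g,K)$-morphism $\phi \colon \HC(V) \to M$, the dual $\phi^{\vee} \colon M' \to \HC(V)'$ sends each $m'$ to a $K_{\BB R}$-finite functional on $\HC(V)$; by admissibility and Theorem \ref{densityof}, such a functional extends to a continuous functional on $V$. Setting $\Phi(v)(m')(g) = \phi^{\vee}(m')\bigl(\pi(g^{-1})v\bigr)$ produces a smooth function on $G_{\BB R}$ for $v \in V^{\infty}$, and $(\g g, K_{\BB R})$-equivariance in $m'$ is a direct computation; extending by continuity gives a continuous $G_{\BB R}$-equivariant map $\Phi \colon V \to \widetilde{\operatorname{MG}}(M)$. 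Naturality and bijectivity then package $\Psi$ and $\Phi$ into the adjunction
$$
\operatorname{Hom}_{G_{\BB R}}\bigl(V,\widetilde{\operatorname{MG}}(M)\bigr)
\;\cong\;
\operatorname{Hom}_{(\g g,K)}\bigl(\HC(V),M\bigr),
$$
and the maximal globalization property is exactly the case $\phi = \operatorname{id}$.

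The third stage is where the real difficulty lies and is the step I expect to be the main obstacle. Left exactness of $\widetilde{\operatorname{MG}}$ is formal from its Hom nature and the exactness of $M \mapsto M'$ on Harish-Chandra modules. The hard half is the closed-range statement: a surjection $N \twoheadrightarrow M$ of Harish-Chandra modules must induce a continuous, topologically surjective map $\widetilde{\operatorname{MG}}(N) \twoheadrightarrow \widetilde{\operatorname{MG}}(M)$, i.e.\ every $(\g g,K_{\BB R})$-equivariant smooth realization of a quotient must lift continuously through the quotient map. Following Kashiwara-Schmid, one deduces this from a sheaf-theoretic globalization: $\widetilde{\operatorname{MG}}(M)$ is identified with global hyperfunction sections of a canonically associated $G_{\BB R}$-equivariant twisted ${\cal D}$-module on the complex flag variety $G/B$, and topological exactness then follows from the flabbiness and acyclicity of the resolving sheaves. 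Establishing this closed-range property is the genuine technical content of the theorem and absorbs the bulk of the proof effort.
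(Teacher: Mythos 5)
The paper states this theorem \emph{without proof} (the surrounding subsection explicitly says ``We describe without proofs the maximal globalization functor due to M.~Kashiwara and W.~Schmid''), so there is no internal argument to compare against; your outline has to be judged against the actual Kashiwara--Schmid proof. Your first two stages are essentially the standard argument and are sound: the identification $\HC(\widetilde{\operatorname{MG}}(M)) \simeq M$ via evaluation at $e$ in one direction and matrix coefficients in the other (with convergence secured by embedding $M$ into a principal series via Casselman), and the adjunction via $\Phi(v)(m')(g)=\phi^{\vee}(m')(\pi(g^{-1})v)$, from which maximality is the special case $\phi=\operatorname{id}$. One technical gloss: ``extending by continuity'' from $V^{\infty}$ to $V$ is not the right mechanism, since $\widetilde{\operatorname{MG}}(M)$ consists of maps into ${\cal C}^{\infty}(G_{\BB R})$ and continuity alone will not force smoothness of the limit. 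The correct observation is that $g \mapsto \langle l, \pi(g)v\rangle$ is automatically real-analytic for \emph{every} $v\in V$ once $l$ is a $K_{\BB R}$-finite continuous functional, by the elliptic-regularity argument with $\Omega_G-2\Omega_K$ sketched after Theorem \ref{densityof} (applied to the dual Harish--Chandra module, which is admissible of finite length); so $\Phi$ is defined on all of $V$ directly.

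Your third stage correctly isolates the genuine content of the theorem --- the closed-range half of topological exactness --- but does not prove it; it defers to the identification of $\widetilde{\operatorname{MG}}(M)$ with sections of an equivariant sheaf on the flag variety and the acyclicity of the resolving complexes. That is an accurate description of where the difficulty lives and of how Kashiwara--Schmid resolve it, but as written it is a pointer rather than an argument: nothing in your sketch explains why a surjection $N\twoheadrightarrow M$ of Harish--Chandra modules induces a surjection $\widetilde{\operatorname{MG}}(N)\to\widetilde{\operatorname{MG}}(M)$, equivalently why every $(\g g,K_{\BB R})$-equivariant map $M'\to{\cal C}^{\infty}(G_{\BB R})$ extends along the injection $M'\hookrightarrow N'$. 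Since the paper itself omits the proof and cites \cite{KSch}, this is an acceptable place to stop, but you should be explicit that the closed-range property is being quoted, not derived.
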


\subsection{Unitary Representations and Infinitesimal Equivalence}

\begin{thm} [Harish-Chandra]
If two irreducible unitary representations are infinitesimally
equivalent\footnote{By Theorem \ref{admissible} irreducible unitary
representations are automatically admissible, so it makes sense to talk about
their underlying Harish-Chandra modules.},
they are isomorphic as unitary representations.
\end{thm}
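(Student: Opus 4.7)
The plan is to start from a $(\g g, K)$-module isomorphism $T: \HC(V_1) \to \HC(V_2)$, rescale it so that it becomes an isometry of Harish-Chandra modules, extend it by continuity to a unitary map $\hat T: V_1 \to V_2$, and finally check that the extension intertwines the $G_{\BB R}$-actions. Irreducibility of the Hilbert space representations passes to irreducibility of the Harish-Chandra modules by the corollary above, which is what powers the Schur-type uniqueness we will need. By Theorem \ref{admissible}, both representations are admissible, so $\HC(V_i)$ makes sense and is dense in $V_i$ by Theorem \ref{densityof}.

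The first and central step is to show that $T$ is, up to a positive real scalar, isometric. The unitary inner product on $V_i$ restricts to $\HC(V_i)$ as a positive-definite Hermitian form $h_i$ for which $K_{\BB R}$ acts unitarily and $\g g_{\BB R}$ acts by skew-Hermitian operators. Pulling back $h_2$ through $T$ produces a second such form $T^*h_2$ on $\HC(V_1)$. Since each $K$-isotypic component of $\HC(V_1)$ is finite-dimensional and $h_1$ is positive-definite there, there is a unique linear operator $A: \HC(V_1) \to \HC(V_1)$ with $(T^*h_2)(m, m') = h_1(Am, m')$. The $K$-invariance of both forms forces $A$ to commute with $K$, and the skew-Hermitian property for $\g g_{\BB R}$ forces $A\pi(X) = \pi(X)A$ for all $X \in \g g_{\BB R}$, hence for all $X \in \g g$. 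Thus $A$ is a $(\g g, K)$-endomorphism of the irreducible Harish-Chandra module $\HC(V_1)$. The $(\g g, K)$-version of Schur's lemma then gives $A = c\cdot\operatorname{Id}$: picking an eigenvalue of $A$ on a finite-dimensional isotypic component produces a nonzero $(\g g,K)$-submodule in $\ker(A - c)$, which must be all of $\HC(V_1)$. Since both forms are positive-definite Hermitian, $c$ is a positive real number. Replacing $T$ by $c^{-1/2}T$, we may assume $T$ is an isometric $(\g g, K)$-equivariant embedding $\HC(V_1) \hookrightarrow \HC(V_2)$.

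The second step is routine: since $\HC(V_1)$ is dense in $V_1$, this isometric $T$ extends uniquely to a bounded linear isometry $\hat T: V_1 \to V_2$. Its image is closed in $V_2$ and contains the dense subspace $\HC(V_2)=T(\HC(V_1))$, so $\hat T$ is a Hilbert space isomorphism. For the third step I need $\hat T\pi_1(g) = \pi_2(g)\hat T$ on all of $V_1$; by continuity and density this reduces to checking equality on $\HC(V_1)$. For fixed $v \in \HC(V_1)$ and $\ell \in V_2^*$, consider
\[
f(g) \;=\; \langle \ell,\, \hat T \pi_1(g) v \rangle \;-\; \langle \ell,\, \pi_2(g)\hat T v \rangle.
\]
The first term equals $\langle \ell \circ \hat T,\, \pi_1(g) v\rangle$ and is real analytic in $g$ because $v \in \HC(V_1)$ is weakly analytic (Theorem \ref{densityof}); the second term is real analytic because $\hat T v \in \HC(V_2)$ is weakly analytic. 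At $g = e$ both $f$ and all its left-invariant derivatives vanish, since these derivatives are expressed through $\pi_i(X_1)\cdots\pi_i(X_n)$ acting on $v$ respectively $\hat T v$, and the $\g g$-equivariance of $T$ on Harish-Chandra modules makes the two sides agree infinitesimally. Real analyticity together with the connectedness of $G_{\BB R}$ then forces $f \equiv 0$, giving the intertwining relation.

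The hard part is the first step, and specifically the Schur-type uniqueness of an invariant Hermitian form on an irreducible Harish-Chandra module. Everything else is formal once this uniqueness is in hand: the density theorem handles continuous extension, and the weak analyticity of $K_{\BB R}$-finite vectors is exactly what promotes infinitesimal equivariance to global equivariance. The uniqueness in turn depends critically on admissibility, without which the defining operator $A$ need not have an eigenvalue and Schur's lemma can fail.
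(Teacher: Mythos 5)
Your proposal is correct and follows essentially the same route as the paper: compare the two invariant inner products on the common Harish-Chandra module via a $(\g g,K)$-equivariant operator (using orthogonality of the $K_{\BB R}$-isotypic components to define it), conclude they are proportional, extend the resulting isometry by density, and upgrade infinitesimal to global intertwining via the real analyticity of matrix coefficients of $K_{\BB R}$-finite vectors. The only difference is cosmetic: you make the Schur-type step explicit (obtaining $A=c\cdot\operatorname{Id}$ and rescaling by $c^{-1/2}$), where the paper simply composes the two conjugate-linear isomorphisms $W\simeq W^*$ and asserts the Hilbert space isomorphism directly.
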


\begin{proof}
Let $(\pi_1,V_1)$ and $(\pi_2,V_2)$ be two irreducible unitary infinitesimally
equivalent representations with $G_{\BB R}$-invariant inner products
$(\,\cdot\,,\,\cdot\,)_1$ and $(\,\cdot\,,\,\cdot\,)_2$,
and let $M$ be the common Harish-Chandra module.
Restricting, we get two (possibly different) inner products on $M$,
also denoted by $(\,\cdot\,,\,\cdot\,)_1$ and $(\,\cdot\,,\,\cdot\,)_2$.
For all $X \in \g g_{\BB R}$, $u,v \in M$ and $\alpha=1,2$, we have
$$
0 = \frac{d}{dt} \bigl(\pi_{\alpha}(\exp tX)u, \pi_{\alpha}(\exp tX)v\bigr)_{\alpha}
\Bigr|_{t=0} = (Xu,v)_{\alpha} + (u,Xv)_{\alpha}.
$$
Complexifying, we get
$$
(Xu,v)_{\alpha} + (u,\overline{X}v)_{\alpha} = 0,
\qquad X \in \g g, \quad \alpha=1,2,
$$
where $\overline{X}$ denotes the complex conjugate of $X$ relative to
$\g g_{\BB R} \subset \g g$.

Also, for both inner products, applying Schur's lemma for $K_{\BB R}$, we get
$$
M(i) \perp M(j), \qquad i \ne j.
$$
Hence $(\,\cdot\,,\,\cdot\,)_1$ and $(\,\cdot\,,\,\cdot\,)_2$ define
conjugate-linear isomorphisms
$$
M= \bigoplus_{i \in \hat K_{\BB R}} M(i) \simeq \bigoplus_{i \in \hat K_{\BB R}} M(i)^*.
$$
Compose one isomorphism with the inverse of the other to get a linear
isomorphism $M \simeq M$ which, by construction, relates the two inner products
and is $(\g g,K)$-equivariant.

This implies $V_1 \simeq V_2$ as Hilbert spaces, and this isomorphism
commutes with the actions of $K_{\BB R}$ on $V_{\alpha}$ and $\g g$ on
$(V_{\alpha})_{fini}$, $\alpha=1,2$.
We need to show that this isomorphism commutes with the actions of $G_{\BB R}$
as well. For this purpose we use Theorem \ref{weakly-analytic-thm},
which says that every $K_{\BB R}$-finite vector is weakly analytic.
Suppose that this isomorphism $V_1 \simeq V_2$ identifies
\begin{align*}
(V_1)_{fini} \ni u_1 &\longleftrightarrow u_2 \in (V_2)_{fini}, \\
(V_1)_{fini} \ni v_1 &\longleftrightarrow v_2 \in (V_2)_{fini}.
\end{align*}
Then the matrix coefficient functions
$$
(\pi_1(g)u_1,v_1)_1 \quad \text{and} \quad (\pi_2(g)u_2,v_2)_2,
\qquad g \in G_{\BB R},
$$
are real-analytic and have the same derivatives at the identity element,
hence coincide and
$$
V_1 \ni \pi_1(g)u_1 \longleftrightarrow \pi_2(g)u_2 \in V_2.
\qquad \forall g \in G_{\BB R}.
$$
By continuity, the isomorphism $V_1 \simeq V_2$ commutes with the actions of
$G_{\BB R}$.
\end{proof}

\section{Construction of Representations of $SU(1,1)$}
\label{V-chi-construction}

In this section we construct representations of $SU(1,1)$ following
the book \cite{Vi}. This construction is very similar to that of irreducible
finite-dimensional representations of $SL(2,\BB C)$ given in
Example \ref{F_d-construction}.

\subsection{Construction of Representations of $SL(2,\BB R)$}

The construction requires a parameter
$$
\chi=(l,\epsilon), \quad \text{where} \quad
l \in \BB C \quad \text{and} \quad \epsilon \in \{0,1/2\}.
$$
The $SL(2,\BB R)$ realization can be constructed as follows.
Let $SL(2,\BB R)$ act on $\BB R^2 \setminus \{0\}$ by matrix multiplication:
\begin{multline*}
\begin{pmatrix} a & b \\ c & d \end{pmatrix}
\begin{pmatrix} x_1 \\ x_2 \end{pmatrix}
= \begin{pmatrix} ax_1+bx_2 \\ cx_1+dx_2 \end{pmatrix}, \\
\begin{pmatrix} a & b \\ c & d \end{pmatrix} \in SL(2,\BB R), \quad
\begin{pmatrix} x_1 \\ x_2 \end{pmatrix} \in \BB R^2 \setminus \{0\}.
\end{multline*}
Let $\tilde V_{\chi}$ denote the space of $\BB C$-valued functions
$\phi(x_1,x_2)$ on $\BB R^2 \setminus \{0\}$ with the following properties:
\begin{itemize}
\item
$\phi(x_1,x_2)$ is smooth;
\item
$\phi(ax_1,ax_2) = a^{2l} \cdot \phi(x_1,x_2)$, for all $a \in \BB R$, $a>0$,
i.e. $\phi$ is homogeneous of homogeneity degree $2l$;
\item
$\phi$ is even if $\epsilon=0$ and odd if $\epsilon=1/2$:
$$
\phi(-x_1,-x_2)=(-1)^{2\epsilon} \cdot \phi(x_1,x_2).
$$
\end{itemize}
Then $SL(2,\BB R)$ acts on $\tilde V_{\chi}$ by
\begin{multline*}
\bigl( \tilde\pi_{\chi}(g) \phi \bigr)(x_1,x_2)
= \phi \bigl( g^{-1} \cdot (x_1,x_2) \bigr), \\
g \in SL(2,\BB R), \quad \phi \in \tilde V_{\chi}, \quad
(x_1,x_2) \in \BB R^2 \setminus \{0\}.
\end{multline*}
It is easy to see that $\tilde V_{\chi}$ remains invariant under this action,
so this action is well-defined.

The space $\tilde V_{\chi}$ is a closed subspace of the Fr\'echet space of all
smooth functions on $\BB R^2 \setminus \{0\}$, hence inherits Fr\'echet space
topology. The $SL(2,\BB R)$-action is continuous with respect to this topology,
thus we get a representation $(\tilde\pi_{\chi},\tilde V_{\chi})$ of
$SL(2,\BB R)$.

\subsection{Construction of Representations of $SU(1,1)$}

If we want to do any computations with the representation
$(\tilde\pi_{\chi},\tilde V_{\chi})$ of $SL(2,\BB R)$,
it is much more convenient to rewrite it as a representation of $SU(1,1)$.

Recall that
$$
SU(1,1) = \biggl\{ \begin{pmatrix} a & b \\ \bar b & \bar a \end{pmatrix}
\in SL(2,\BB C);\: a,b \in \BB C,\: |a|^2-|b|^2=1 \biggr\}.
$$
We need to replace $\BB R^2$ with a real 2-dimensional $SU(1,1)$-invariant
subspace of $\BB C^2$. Note that the real subspace
$$
\left\{ \begin{pmatrix} z \\ \bar z \end{pmatrix} \in \BB C^2 ;\: z \in \BB C
\right\}
$$
is preserved by $SU(1,1)$:
$$
\begin{pmatrix} a & b \\ \bar b & \bar a \end{pmatrix}
\begin{pmatrix} z \\ \bar z \end{pmatrix}
=
\begin{pmatrix} az+b \bar z \\ \bar b z + \bar a \bar z \end{pmatrix}
=
\begin{pmatrix} az+b \bar z \\ \overline{az+b \bar z} \end{pmatrix}.
$$

As before, we use a parameter
$$
\chi=(l,\epsilon), \quad \text{where} \quad
l \in \BB C \quad \text{and} \quad \epsilon \in \{0,1/2\}.
$$
Let $V_{\chi}$ denote the space of $\BB C$-valued functions
$\phi(z)$ on $\BB C \setminus \{0\}$ with the following properties:
\begin{itemize}
\item
$\phi(z)$ is a smooth function of $x$ and $y$, where $z=x+iy$;
\item
$\phi(az) = a^{2l} \cdot \phi(z)$, for all $a \in \BB R$, $a>0$,
i.e. $\phi$ is homogeneous of homogeneity degree $2l$;
\item
$\phi$ is even if $\epsilon=0$ and odd if $\epsilon=1/2$:
$$
\phi(-z)=(-1)^{2\epsilon} \cdot \phi(z).
$$
\end{itemize}
Then $SU(1,1)$ acts on $V_{\chi}$ by
\begin{multline*}
\bigl( \pi_{\chi}(g) \phi \bigr)(z) = \phi( g^{-1} \cdot z), \\
g \in SU(1,1), \quad \phi \in V_{\chi}, \quad
z \in \BB C \setminus \{0\}.
\end{multline*}
Explicitly, if $g = \begin{pmatrix} a & b \\ \bar b & \bar a \end{pmatrix}$,
then $g^{-1} = \begin{pmatrix} \bar a & -b \\ -\bar b & a \end{pmatrix}$, and
$$
\bigl( \pi_{\chi}(g) \phi \bigr)(z) = \phi(g^{-1} \cdot z)
= \phi(\bar az-b \bar z),
\quad \phi \in V_{\chi}, \: z \in \BB C \setminus \{0\}.
$$
It is easy to see that $V_{\chi}$ remains invariant under this action,
so this action is well-defined. As before, $V_{\chi}$ is a Fr\'echet space
with topology inherited from the space of all smooth functions on
$\BB C \setminus \{0\}$, and the $SU(1,1)$-action is continuous with respect
to this topology. Thus we get a representation $(\pi_{\chi},V_{\chi})$ of
$SU(1,1)$.

Now, let us turn our attention to the space $V_{\chi}$.
Every homogeneous function on $\BB C \setminus \{0\}$ is
completely determined by its values on the unit circle
$S^1=\{z \in \BB C ;\: |z|=1\}$:
$$
\phi(z)= |z|^{2l} \cdot \phi(z/|z|), \qquad z/|z| \in S^1.
$$
Conversely, every smooth function on $S^1$ satisfying
$\phi(-z)=(-1)^{2\epsilon} \cdot \phi(z)$, $z \in S^1$, extends to a homogeneous
function of degree $2l$ which is an element of $V_{\chi}$.

It will be convenient to realize the space $V_{\chi}$ in another way on the
circle. Namely, for $\epsilon=0$ with each function $\phi(z)$ we associate
a function $f(e^{i\theta})$, defined by
$$
f(e^{i\theta}) =_{\text{def}} \phi(e^{i\theta/2}),
\qquad \theta \in \BB R.
$$
Since $\phi(z)$ is even, the function $f(e^{i\theta})$ is uniquely defined and
smooth.
If $\epsilon=1/2$ we take
$$
f(e^{i\theta}) =_{\text{def}} e^{i\theta/2} \cdot \phi(e^{i\theta/2}),
\qquad \theta \in \BB R.
$$
This function is uniquely defined and smooth, since $\phi(z)$ is odd.
In this way, for any $\chi(l,\epsilon)$ the space $V_{\chi}$
can be realized as the space ${\cal C}^{\infty}(S^1)$.

Let us find the expression for $\pi_{\chi}(g)$ under this identification of
$V_{\chi}$ with ${\cal C}^{\infty}(S^1)$. Assume first that $\epsilon=0$.
Let $g = \begin{pmatrix} a & b \\ \bar b & \bar a \end{pmatrix}$, we have:
\begin{multline*}
\bigl( \pi_{\chi}(g) f \bigr)(e^{i\theta})
= \bigl( \pi_{\chi}(g) \phi \bigr)(e^{i\theta/2})
= \phi(\bar a e^{i\theta/2} - b e^{-i\theta/2}) \\
= \bigl| \bar a e^{i\theta/2} - b e^{-i\theta/2} \bigr|^{2l} \cdot
\phi \biggl( \frac{\bar a e^{i\theta/2} - b e^{-i\theta/2}}
{| \bar a e^{i\theta/2} - b e^{-i\theta/2}|} \biggr).
\end{multline*}
We denote
$$
\frac{\bar a e^{i\theta/2} - b e^{-i\theta/2}}{|\bar a e^{i\theta/2} - b e^{-i\theta/2}|}
\quad \text{by} \quad  e^{i\psi/2}, \quad \text{then} \quad
e^{i\psi} = \frac{\bar a e^{i\theta} - b}{-\bar b e^{i\theta} +a}.
$$
Indeed,
$$
e^{i\psi} = \frac{(\bar a e^{i\theta/2} - b e^{-i\theta/2})^2}
{|\bar a e^{i\theta/2} - b e^{-i\theta/2}|^2}
= \frac{\bar a e^{i\theta/2} - b e^{-i\theta/2}}{a e^{-i\theta/2} - \bar b e^{i\theta/2}}
= \frac{\bar a e^{i\theta} - b}{-\bar b e^{i\theta} + a}.
$$
Since, in addition,
$$
\bigl| \bar a e^{i\theta/2} - b e^{-i\theta/2} \bigr| = |-\bar b e^{i\theta} + a|,
$$
it follows that
\begin{multline*}
\bigl( \pi_{\chi}(g) f \bigr)(e^{i\theta})
= \bigl| \bar a e^{i\theta/2} - b e^{-i\theta/2} \bigr|^{2l} \cdot \phi(e^{i\psi/2}) \\
= |-\bar b e^{i\theta} + a|^{2l} \cdot f( e^{i\psi})
= |-\bar b e^{i\theta} + a|^{2l} \cdot
f \biggl( \frac{\bar a e^{i\theta} - b}{-\bar b e^{i\theta} +a} \biggr).
\end{multline*}
Thus we have proved that for $\chi=(l,0)$ the representation $\pi_{\chi}(g)$
is realized in the space ${\cal C}^{\infty}(S^1)$ of complex-valued smooth
functions on the circle, and is given by the formula
$$
\bigl( \pi_{\chi}(g) f \bigr)(e^{i\theta}) = |-\bar b e^{i\theta} + a|^{2l} \cdot
f \biggl( \frac{\bar a e^{i\theta} - b}{-\bar b e^{i\theta} +a} \biggr).
$$

One similarly proves that, for $\chi=(l,1/2)$ the operators $\pi_{\chi}(g)$
are given by the formula
$$
\bigl( \pi_{\chi}(g) f \bigr)(e^{i\theta})
= |-\bar b e^{i\theta} + a|^{2l-1} \cdot (-\bar b e^{i\theta} + a) \cdot
f \biggl( \frac{\bar a e^{i\theta} - b}{-\bar b e^{i\theta} +a} \biggr).
$$
These two formulas can be replaced by a single expression:
$$
\bigl( \pi_{\chi}(g) f \bigr)(e^{i\theta})
= (-\bar b e^{i\theta} + a)^{l+\epsilon} \cdot (-b e^{-i\theta} + \bar a)^{l-\epsilon}
\cdot f \biggl( \frac{\bar a e^{i\theta} - b}{-\bar b e^{i\theta} +a} \biggr).
$$
(Strictly speaking, this expression is ambiguous, since it involves raising
a complex number to a complex power.)

\subsection{Action of the Lie Algebra}

In this subsection we differentiate $(\pi_{\chi}, {\cal C}^{\infty}(S^1))$
and compute the actions of the Lie algebra $\mathfrak{su}(1,1)$ and its
complexification $\mathfrak{sl}(2,\BB C)$.
Note that every element of ${\cal C}^{\infty}(S^1)$ is a smooth vector.

We choose a basis of $\mathfrak{su}(1,1)$
$$
X= \begin{pmatrix} i & 0 \\ 0 & -i \end{pmatrix}, \qquad
Y= \begin{pmatrix} 0 & 1 \\ 1 & 0 \end{pmatrix}, \qquad
Z= \begin{pmatrix} 0 & i \\ -i & 0 \end{pmatrix}.
$$

Then
$$
\exp(tX)= \begin{pmatrix} e^{it} & 0 \\ 0 & e^{-it} \end{pmatrix}, \qquad
\exp(tY)= \begin{pmatrix} \cosh t & \sinh t \\ \sinh t & \cosh t \end{pmatrix},
$$
$$
\exp(tZ)= \begin{pmatrix} \cosh t & i\sinh t \\
-i\sinh t & \cosh t \end{pmatrix}.
$$
We have:
$$
\bigl( \pi_{\chi}(\exp(tX)) f \bigr)(e^{i\theta}) =
e^{2\epsilon it} \cdot f(e^{i(\theta-2t)}),
$$
\begin{multline*}
\bigl( \pi_{\chi}(X) f \bigr)(e^{i\theta})
= \frac{d}{dt} \bigl( \pi_{\chi}(\exp(tX)) f \bigr)(e^{i\theta}) \Bigr|_{t=0} \\
= 2 \Bigl( i\epsilon - \frac{d}{d\theta} \Bigr) f(e^{i\theta});
\end{multline*}
\begin{multline*}
\bigl( \pi_{\chi}(\exp(tY)) f \bigr)(e^{i\theta})
= (-\sinh t e^{i\theta} + \cosh t)^{l+\epsilon} \\
\times (-\sinh t e^{-i\theta} + \cosh t)^{l-\epsilon} \cdot
f \biggl( \frac{\cosh t e^{i\theta} - \sinh t}
{-\sinh t e^{i\theta} + \cosh t} \biggr),
\end{multline*}
\begin{multline*}
\bigl( \pi_{\chi}(Y) f \bigr)(e^{i\theta})
= \frac{d}{dt} \bigl( \pi_{\chi}(\exp(tY)) f \bigr)(e^{i\theta}) \Bigr|_{t=0} \\
= - \Bigl( (l+\epsilon)e^{i\theta} + (l-\epsilon)e^{-i\theta}
-2\sin\theta  \frac{d}{d\theta} \Bigr) f(e^{i\theta});
\end{multline*}
\begin{multline*}
\bigl( \pi_{\chi}(\exp(tZ)) f \bigr)(e^{i\theta})
= (i\sinh t e^{i\theta} + \cosh t)^{l+\epsilon} \\
\times (-i\sinh t e^{-i\theta} + \cosh t)^{l-\epsilon} \cdot
f \biggl( \frac{\cosh t e^{i\theta} - i\sinh t}
{i\sinh t e^{i\theta} + \cosh t} \biggr),
\end{multline*}
\begin{multline*}
\bigl( \pi_{\chi}(Z) f \bigr)(e^{i\theta})
= \frac{d}{dt} \bigl( \pi_{\chi}(\exp(tZ)) f \bigr)(e^{i\theta}) \Bigr|_{t=0} \\
= \Bigl( i(l+\epsilon)e^{i\theta} - i(l-\epsilon)e^{-i\theta}
-2\cos\theta  \frac{d}{d\theta} \Bigr) f(e^{i\theta}).
\end{multline*}

Finally, we rewrite our results in terms of the more familiar  basis of
$\mathfrak{sl}(2,\BB C) \simeq \BB C \otimes \mathfrak{su}(1,1)$
$$
E= \begin{pmatrix} 0 & 1 \\ 0 & 0 \end{pmatrix}, \qquad
F= \begin{pmatrix} 0 & 0 \\ 1 & 0 \end{pmatrix}, \qquad
H= \begin{pmatrix} 1 & 0 \\ 0 & -1 \end{pmatrix}.
$$
Since
\begin{equation}  \label{EFG-rel}
H=-iX, \qquad E=\frac12(Y-iZ), \qquad F=\frac12(Y+iZ),
\end{equation}
we obtain:
\begin{align*}
\pi_{\chi}(H) &= 2 \Bigl( i\frac{d}{d\theta} + \epsilon \Bigr), \\
\pi_{\chi}(E) &= e^{-i\theta} \Bigl( i\frac{d}{d\theta} - (l-\epsilon) \Bigr), \\
\pi_{\chi}(F) &= -e^{i\theta} \Bigl( i\frac{d}{d\theta} + (l+\epsilon) \Bigr).
\end{align*}

\subsection{The Harish-Chandra Module of $(\pi_{\chi}, {\cal C}^{\infty}(S^1))$}

As usual, we choose the diagonal subgroup
$$
K_{\BB R} = \left\{ k_t = \begin{pmatrix} e^{it} & 0 \\ 0 & e^{-it}
\end{pmatrix} ;\: t \in \BB R\,\right\} \simeq SO(2) \simeq U(1)
$$
as maximal compact subgroup of $SU(1,1)$. Then
$$
\bigl( \pi_{\chi}(k_t) f \bigr)(z) = e^{2\epsilon it} \cdot f(e^{-2it} \cdot z),
\qquad z \in S^1.
$$
The eigenvectors of this action are functions $z^n = e^{in\theta}$, $n \in \BB Z$,
$$
\pi_{\chi}(k_t) z^n = e^{2i(\epsilon-n)t} \cdot z^n.
$$
Since every smooth function on $S^1$ has a Fourier series expansion, we see that
$f(e^{i\theta}) \in {\cal C}^{\infty}(S^1)$ is $K_{\BB R}$-finite if and only if
it is a finite linear combination of $z^n = e^{in\theta}$, $n \in \BB Z$:
$$
{\cal C}^{\infty}(S^1)_{fini} = \bigoplus_{n \in \BB Z} \BB C z^n
= \bigoplus_{n \in \BB Z} \BB C e^{in\theta}.
$$
In particular, each representation $(\pi_{\chi},V_{\chi})$ is admissible.

Pick a basis of ${\cal C}^{\infty}(S^1)_{fini}$
$$
v_n = z^{-n} = e^{-in\theta}, \qquad n \in \BB Z,
$$
and let us find the actions of $\pi_{\chi}(H)$, $\pi_{\chi}(E)$, $\pi_{\chi}(F)$
in that basis. We have:
$$
\pi_{\chi}(H)v_n
= 2 \Bigl( i\frac{d}{d\theta} + \epsilon \Bigr) e^{-in\theta}
= 2(n+\epsilon) e^{-in\theta} = 2(n+\epsilon) v_n,
$$
\begin{multline*}
\pi_{\chi}(E)v_n
= e^{-i\theta} \Bigl( i\frac{d}{d\theta} - (l-\epsilon) \Bigr) e^{-in\theta} \\
= (n-l + \epsilon) e^{-i(n+1)\theta} = (n-l + \epsilon)v_{n+1},
\end{multline*}
\begin{multline*}
\pi_{\chi}(F)v_n
= -e^{i\theta} \Bigl( i\frac{d}{d\theta} + (l+\epsilon) \Bigr) e^{-in\theta} \\
= -(n+l + \epsilon) e^{-i(n-1)\theta} = -(n+l + \epsilon)v_{n-1}.
\end{multline*}
Thus we have proved:

\begin{lem}
The Lie algebra $\mathfrak{sl}(2,\BB C)$ acts on ${\cal C}^{\infty}(S^1)_{fini}$
as follows:
\begin{align*}
\pi_{\chi}(H)v_n &= 2(n+\epsilon) v_n, \\
\pi_{\chi}(E)v_n &= (n-l + \epsilon)v_{n+1}, \\
\pi_{\chi}(F)v_n &= -(n+l + \epsilon)v_{n-1}.
\end{align*}
\end{lem}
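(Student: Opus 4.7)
The plan is very direct: apply the three explicit differential-operator formulas for $\pi_{\chi}(H)$, $\pi_{\chi}(E)$, $\pi_{\chi}(F)$ established in the previous subsection to the basis vectors $v_n = e^{-in\theta}$. Each $v_n$ lies in ${\cal C}^{\infty}(S^1)$, so no convergence issues arise; everything reduces to the elementary identity $i\frac{d}{d\theta} e^{-in\theta} = n \cdot e^{-in\theta}$.

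Concretely, I would proceed in three short steps, one for each generator. First, using $\pi_{\chi}(H) = 2(i\frac{d}{d\theta} + \epsilon)$, I compute $\pi_{\chi}(H) v_n = 2(n+\epsilon) e^{-in\theta} = 2(n+\epsilon) v_n$, which is the weight formula. Next, applying $\pi_{\chi}(E) = e^{-i\theta}(i\frac{d}{d\theta} - (l-\epsilon))$ gives $\pi_{\chi}(E) v_n = e^{-i\theta}(n - (l-\epsilon)) e^{-in\theta} = (n - l + \epsilon) e^{-i(n+1)\theta} = (n - l + \epsilon) v_{n+1}$. Finally, applying $\pi_{\chi}(F) = -e^{i\theta}(i\frac{d}{d\theta} + (l+\epsilon))$ yields $\pi_{\chi}(F) v_n = -e^{i\theta}(n + (l+\epsilon)) e^{-in\theta} = -(n+l+\epsilon) e^{-i(n-1)\theta} = -(n+l+\epsilon) v_{n-1}$.

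There is essentially no obstacle in this proof, since all the serious work has already been done in deriving the differential-operator formulas for the three generators. The only bookkeeping point worth flagging is the index convention $v_n = e^{-in\theta}$ (as opposed to $e^{in\theta}$): with this choice the operator $i\frac{d}{d\theta}$ acts on $v_n$ by multiplication by $+n$ rather than $-n$, which is what produces the signs $(n - l + \epsilon)$ and $-(n+l+\epsilon)$ with the correct orientation and ensures that $E$ raises the index $n \mapsto n+1$ while $F$ lowers it $n \mapsto n-1$, consistent with Lemma \ref{eigenvectors}.
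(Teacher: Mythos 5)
Your proposal is correct and is exactly the computation the paper carries out: substitute $v_n = e^{-in\theta}$ into the differential-operator formulas for $\pi_{\chi}(H)$, $\pi_{\chi}(E)$, $\pi_{\chi}(F)$ and use $i\frac{d}{d\theta}e^{-in\theta} = n\,e^{-in\theta}$. The sign bookkeeping you flag is handled identically in the paper, so there is nothing to add.
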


Note that the coefficients $(n \pm l + \epsilon)$ are never zero unless
$l+\epsilon \in \BB Z$. This means that the Harish-Chandra module
${\cal C}^{\infty}(S^1)_{fini}$ is irreducible if and only if
$l+\epsilon \notin \BB Z$. Since a representation is irreducible if and only
if its underlying Harish-Chandra module is irreducible, we have proved:

\begin{prop}
The representation $(\pi_{\chi}, {\cal C}^{\infty}(S^1))$ of $SU(1,1)$ is
irreducible if and only if $l+\epsilon \notin \BB Z$.
\end{prop}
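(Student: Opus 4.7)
The plan is to reduce irreducibility to a combinatorial question about $(\g g, K)$-submodules of the Harish-Chandra module $M = {\cal C}^{\infty}(S^1)_{fini} = \bigoplus_{n \in \BB Z} \BB C v_n$. By the earlier corollary stating that a representation is irreducible iff its underlying Harish-Chandra module is irreducible, it suffices to determine when $M$ has no proper nonzero $(\g g, K)$-submodule.

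First I would show that every submodule $W \subseteq M$ is a direct sum of weight spaces: since the eigenvalues $2(n+\epsilon)$ of $\pi_{\chi}(H)$ are distinct across the one-dimensional weight lines $\BB C v_n$, applying suitable polynomials in $\pi_{\chi}(H)$ to a finite sum $\sum c_n v_n \in W$ isolates each summand $c_n v_n \in W$. Hence $W = \bigoplus_{n \in S} \BB C v_n$ for some $S \subseteq \BB Z$, and the submodule condition becomes: $n \in S$ implies $n+1 \in S$ unless $\pi_{\chi}(E) v_n = 0$, and $n-1 \in S$ unless $\pi_{\chi}(F) v_n = 0$. From the lemma, the exceptional $n$ are precisely the integer solutions of $n = l - \epsilon$ (for $E$) and $n = -l - \epsilon$ (for $F$).

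For the ``if'' direction, suppose $l + \epsilon \notin \BB Z$. Since $2\epsilon \in \{0, 1\}$, neither $l - \epsilon$ nor $-l - \epsilon$ is an integer, so the raising and lowering coefficients are nonzero for every $n \in \BB Z$. Any nonempty $S$ is therefore closed under both $n \mapsto n+1$ and $n \mapsto n-1$, forcing $S = \BB Z$, so $M$ is irreducible. Conversely, if $l + \epsilon \in \BB Z$, then $n_1 := l - \epsilon$ and $n_0 := -l - \epsilon$ are both integers. If $l \ge 0$ take $S = \{n : n_0 \le n \le n_1\}$ (a finite-dimensional submodule: lowering from $n_0$ and raising from $n_1$ both give zero, and interior transitions stay in $S$); if $l < 0$ take $S = \{n : n \le n_1\}$ (a highest-weight submodule: raising dies at $n_1$, while $n_0 > n_1$ lies outside $S$ so the lowering coefficient never forces a jump out). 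In either case $M$ admits a proper nonzero submodule, so $M$ is reducible.

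The only genuinely nontrivial point is the weight-decomposition step, and even that is essentially the standard Vandermonde argument; the rest is direct bookkeeping with the explicit formulas for $\pi_{\chi}(H), \pi_{\chi}(E), \pi_{\chi}(F)$ already computed in the preceding lemma, so I do not anticipate a substantive obstacle.
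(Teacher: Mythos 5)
Your proposal is correct and follows essentially the same route as the paper: reduce to irreducibility of the Harish-Chandra module ${\cal C}^{\infty}(S^1)_{fini}$ and observe that the raising/lowering coefficients $(n-l+\epsilon)$ and $-(n+l+\epsilon)$ vanish for some $n\in\BB Z$ precisely when $l+\epsilon\in\BB Z$. You merely make explicit two steps the paper leaves implicit or defers to the next subsection — that every submodule is a sum of weight spaces (the Vandermonde argument) and the concrete proper submodules ($F_{2l}$ for $l\ge 0$, $M^-$ for $l<0$) in the reducible case — and both are carried out correctly.
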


In particular, for a generic parameter $\chi$,
$(\pi_{\chi}, {\cal C}^{\infty}(S^1))$ is irreducible.

\subsection{Decomposition of $(\pi_{\chi}, {\cal C}^{\infty}(S^1))$
into Irreducible Components}

In this subsection we study how the representation
$(\pi_{\chi}, {\cal C}^{\infty}(S^1))$ of $SU(1,1)$ decomposes into irreducible
components. We have seen that if $l+\epsilon \notin \BB Z$, then
$(\pi_{\chi}, {\cal C}^{\infty}(S^1))$ is irreducible.
Thus we study the case $l+\epsilon \in \BB Z$.

Note that
\begin{align*}
\pi_{\chi}(F)v_{-(l+\epsilon)} &= \pi_{\chi}(F) e^{i(l+\epsilon)\theta} = 0
\quad \text{and} \\
\pi_{\chi}(E)v_{l-\epsilon} &= \pi_{\chi}(E) e^{-i(l-\epsilon)\theta} = 0.
\end{align*}
Define $(\g g, K)$ submodules of ${\cal C}^{\infty}(S^1)_{fini}$
\begin{align*}
M^+ &= \text{ $\BB C$-span of }
\{ v_n;\: n \ge -(l+\epsilon) \} \quad \text{and} \\
M^- &= \text{ $\BB C$-span of } \{ v_n;\: n \le l-\epsilon \}.
\end{align*}
Here is a diagram illustrating modules $M^+$, $M^-$ and their weights:
\begin{align*}
M^+&: \qquad \begin{matrix}
[v_{-l-\epsilon} & & v_{1-l-\epsilon} & & v_{2-l-\epsilon} & & \dots  \\
-2l & & 2-2l & & 4-2l & & \dots \end{matrix}  \\
M^-&: \qquad \begin{matrix}
\dots & & v_{l-2-\epsilon} & & v_{l-1-\epsilon} & & v_{l-\epsilon}] \\
\dots & & 2l-4 & & 2l-2 & & 2l \end{matrix}
\end{align*}
Their closures in ${\cal C}^{\infty}(S^1)_{fini}$ are subrepresentations of
$(\pi_{\chi}, {\cal C}^{\infty}(S^1))$.

If $l < 0$, the submodules $M^+$ and $M^-$ are irreducible,
$M^+ \cap M^- = \{0\}$. In the notations of Section \ref{sl(2,R)-rep-section},
as a representation of $\mathfrak{sl}(2,\BB C)$, $M^+$ is the irreducible
lowest weight module $V_{-2l}$ of lowest weight $-2l$.
Similarly, as a representation of $\mathfrak{sl}(2,\BB C)$, $M^-$ is the
irreducible highest weight module $\bar V_{2l}$ of highest weight $2l$.
In this case, the complete list of proper
$(\g g, K)$ submodules of ${\cal C}^{\infty}(S^1)_{fini}$ is
$$
M^+, \quad M^-, \quad M^+ \oplus M^- \:
(\text{if $(l,\epsilon) \ne (-1/2,1/2)$}).
$$
If $(l,\epsilon) \ne (-1/2,1/2)$, the quotient module
$$
{\cal C}^{\infty}(S^1)_{fini} /(M^+ \oplus M^-)
$$
is irreducible, has dimension $-(2l+1)$, and will be denoted by $F_{|2(l+1)|}$,
which is consistent with notations of Section \ref{sl(2,R)-rep-section}
$$
F_{|2(l+1)|}: \qquad \begin{matrix} [v_{l+1-\epsilon} & & v_{l+2-\epsilon} & & \dots
& & v_{-l-2-\epsilon} & & v_{-l-1-\epsilon}]  \\
2l+2 & & 2l+4 & & \dots & & -2l-4 & & -2l-2 \end{matrix}
$$
We have the following exact sequence of $(\g g, K)$ modules
\begin{equation}  \label{exact_seq}
0 \to M^+ \oplus M^- \to {\cal C}^{\infty}(S^1)_{fini} \to F_{|2(l+1)|} \to 0.
\end{equation}
Note that this exact sequence does {\em not} split, i.e. $F_{|2(l+1)|}$
cannot be realized as a submodule of ${\cal C}^{\infty}(S^1)_{fini}$.
Taking closures in ${\cal C}^{\infty}(S^1)$ results in a similar exact sequence
of representations of $SU(1,1)$.

In the exceptional case $l=-1/2$, $\epsilon=1/2$ we have only two
representations:
\begin{equation}  \label{dir_sum}
{\cal C}^{\infty}(S^1)_{fini} = M^+ \oplus M^-
\end{equation}
and $M^+$, $M^-$ are irreducible.
The corresponding exact sequence of $(\g g, K)$ modules is
$$
0 \to M^+ \oplus M^- \to {\cal C}^{\infty}(S^1)_{fini} \to 0.
$$

If $l \ge 0$, then $M^+$ and $M^-$ have a non-trivial
intersection which is irreducible, has dimension $2l+1$, and will be denoted
by $F_{2l}$, which is consistent with notations of
Section \ref{sl(2,R)-rep-section}:
$$
F_{2l} = M^+ \cap M^-: \qquad \begin{matrix}
[v_{-l-\epsilon} & & v_{-l+1-\epsilon} & & \dots & & v_{l-1-\epsilon} & & v_{l-\epsilon}]  \\
-2l & & -2l+2 & & \dots & & 2l-2 & & 2l \end{matrix}
$$
The complete list of proper $(\g g, K)$ submodules of
${\cal C}^{\infty}(S^1)_{fini}$ is
$$
F_{2l}, \quad M^+, \quad M^-.
$$
The quotient module
$$
{\cal C}^{\infty}(S^1)_{fini} /F_{2l}
$$
is a direct sum of two irreducible modules $M^+/F_{2l}$ and $M^-/F_{2l}$.
As representations of $\mathfrak{sl}(2,\BB C)$, $M^+/F_{2l}$ and $M^-/F_{2l}$
are respectively the irreducible lowest and highest weight modules $V_{2l+1}$
of lowest weight $2l+1$ and $\bar V_{-(2l+1)}$ of highest weight $-(2l+1)$.
We have the following exact sequence of $(\g g, K)$ modules
$$
0 \to F_{2l} \to {\cal C}^{\infty}(S^1)_{fini} \to (M^+/F_{2l}) \oplus (M^-/F_{2l})
\to 0.
$$
Similarly to (\ref{exact_seq}), this sequence does not split.
Taking closures in ${\cal C}^{\infty}(S^1)$ results in a similar exact sequence
of representations of $SU(1,1)$.

As a consequence of this discussion we obtain:

\begin{prop}
Each representation $(\pi_{\chi},{\cal C}^{\infty}(S^1))$ is admissible and has
finite length.
\end{prop}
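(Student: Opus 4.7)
The plan is to reduce both claims to statements about the Harish-Chandra module $\mathcal{C}^{\infty}(S^1)_{fini} = \bigoplus_{n \in \BB Z} \BB C v_n$, which has already been worked out in detail in the preceding subsections.

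For admissibility, I would invoke the $K_{\BB R}$-eigenvector computation: the basis vectors $v_n = e^{-in\theta}$ satisfy $\pi_{\chi}(k_t) v_n = e^{2i(\epsilon-n)t} v_n$, so each $v_n$ lies in a distinct one-dimensional $K_{\BB R}$-isotypic component, and two $v_n, v_m$ lie in the same isotypic component only if $n = m$. Hence every $K_{\BB R}$-isotypic subspace $V_{\chi}(i)$ is at most one-dimensional, which certainly implies $\dim_{\BB C} \operatorname{Hom}_{K_{\BB R}}(U_i, V_{\chi}) < \infty$ for every $i \in \hat K_{\BB R}$. This is immediate and is the easy half.

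For finite length, the strategy is to transfer the question to the Harish-Chandra module, where the problem becomes combinatorial. By the bijection between closed $G_{\BB R}$-invariant subspaces of an admissible representation and $(\g g, K)$-submodules of its Harish-Chandra module (established earlier in the excerpt), an infinite strictly increasing chain of closed subrepresentations of $V_{\chi}$ would yield an infinite strictly increasing chain of $(\g g, K)$-submodules of $\mathcal{C}^{\infty}(S^1)_{fini}$. But the two preceding subsections have given a complete list of all proper $(\g g, K)$-submodules of $\mathcal{C}^{\infty}(S^1)_{fini}$ in every case, and in each case this list is finite: if $l + \epsilon \notin \BB Z$ there are no proper nonzero submodules at all; if $l + \epsilon \in \BB Z$ and $l < 0$ the only proper nonzero ones are $M^+$, $M^-$, and $M^+ \oplus M^-$ (with $M^+ \oplus M^-$ omitted in the exceptional case $(l,\epsilon) = (-1/2, 1/2)$); and if $l + \epsilon \in \BB Z$ and $l \ge 0$ the only proper nonzero ones are $F_{2l}$, $M^+$, and $M^-$. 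Finiteness of the lattice of submodules rules out any infinite strictly increasing chain.

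To package this as a composition series with irreducible quotients (the second characterization of finite length in the excerpt), I would just read one off from the classification above, for instance $0 \subset F_{2l} \subset M^+ \subset M^+ + M^- \subset \mathcal{C}^{\infty}(S^1)_{fini}$ in the $l \ge 0$ integer case, with each successive quotient identified with one of the irreducible $\mathfrak{sl}(2,\BB C)$-modules from Theorem \ref{classification-thm}; and then take closures in $V_{\chi}$ to obtain the corresponding chain of closed subrepresentations.

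The only real obstacle is bookkeeping: I must verify that the finitely many cases truly exhaust all possibilities (in particular, the exceptional case $(l, \epsilon) = (-1/2, 1/2)$ where $M^+ \cap M^- = \{0\}$ and (\ref{dir_sum}) holds, as well as making sure no sporadic submodule is overlooked when $l \ge 0$ and $F_{2l} = M^+ \cap M^-$). Given the explicit action of $H, E, F$ on the basis $\{v_n\}$, this reduces to checking which coefficients $(n - l + \epsilon)$ and $-(n + l + \epsilon)$ vanish, which was already done, so no further calculation is needed.
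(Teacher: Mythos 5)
Your proof is correct and follows essentially the same route as the paper: the paper states this proposition simply ``as a consequence of this discussion,'' namely the one-dimensionality of the $K_{\BB R}$-isotypic components (giving admissibility) and the explicit finite list of $(\g g,K)$-submodules of ${\cal C}^{\infty}(S^1)_{fini}$ in each case (giving finite length via the bijection with closed invariant subspaces). Your additional step of exhibiting an explicit composition series is a harmless elaboration of the same argument.
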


We conclude this subsection with the following result:

\begin{thm}  \label{SU(1,1)-subrep-thm}
Every irreducible representation of $SU(1,1)$ is infinitesimally equivalent
to an irreducible subrepresentation of $(\pi_{\chi}, {\cal C}^{\infty}(S^1))$,
for some $\chi$.
\end{thm}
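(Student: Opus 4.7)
The plan is to pass to Harish-Chandra modules and combine the classification of irreducible $\mathfrak{sl}(2,\BB C)$-modules from Section~\ref{sl(2,R)-rep-section} with the explicit decomposition of $(\pi_\chi,\mathcal{C}^\infty(S^1))$ carried out just above. Let $(\pi,V)$ be an irreducible admissible representation of $SU(1,1)$ and set $M=\HC(V)$. Since $K\cong\BB C^\times$ is connected, any $\g g$-submodule of $M$ is automatically $K$-invariant, so $M$ is already irreducible as an $\mathfrak{sl}(2,\BB C)$-module; by Schur's Lemma the Casimir $\Omega$ acts on $M$ by some scalar $\mu$, and admissibility together with $\g k=\BB C\cdot H$ forces every $H$-weight space of $M$ to be one-dimensional. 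Hence condition~(2) of Corollary~\ref{equiv-ond-cor} is satisfied, and Theorem~\ref{classification-thm} tells us that $M$ is isomorphic to one of $F_d$, $V_\lambda$, $\bar V_\lambda$, or $P(\lambda,\mu)$. Because every irreducible character of $K_{\BB R}=U(1)$ takes integer values, all $H$-weights of $M$ lie in $\BB Z$, which forces $\lambda\in\BB Z$ in the last three cases.

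Next I read off a suitable parameter $\chi=(l,\epsilon)$ by comparing $M$ with the decomposition of $\HC(V_\chi)=\mathcal{C}^\infty(S^1)_{\text{fini}}$ just computed. For $M=F_d$ take $l=d/2$ and $\epsilon\in\{0,1/2\}$ with $l+\epsilon\in\BB Z$; then $F_{2l}\subset\mathcal{C}^\infty(S^1)_{\text{fini}}$ is a copy of $M$. For $M=V_\lambda$ with $\lambda\in\BB Z_{>0}$ take $l=-\lambda/2$ and the matching $\epsilon$; the submodule $M^+$ in the $l<0$ decomposition is then $V_{-2l}=V_\lambda$. The highest weight case $\bar V_\lambda$ is symmetric, realized as $M^-$. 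For $M=P(\lambda,\mu)$ choose $\epsilon\in\{0,1/2\}$ with $\lambda\equiv 2\epsilon\pmod 2$ and pick $l\in\BB C$ solving $4l(l+1)=\mu$; a direct computation from the formulas for $\pi_\chi(H),\pi_\chi(E),\pi_\chi(F)$ in the previous subsections shows that this is precisely the eigenvalue of $\pi_\chi(\Omega)$ on $\mathcal{C}^\infty(S^1)_{\text{fini}}$. The identity $\mu+1=(2l+1)^2$ then converts the irreducibility constraint $\lambda\pm\sqrt{\mu+1}\notin 2\BB Z+1$ into $l\pm\epsilon\notin\BB Z$, which is exactly the condition (since $2\epsilon\in\BB Z$) under which $V_\chi$ itself is irreducible with Harish-Chandra module $P(2\epsilon,\mu)\cong P(\lambda,\mu)=M$.

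Finally, by the bijection between closed $G_{\BB R}$-invariant subspaces of $V_\chi$ and $(\g g,K)$-submodules of its Harish-Chandra module (established for admissible representations in the previous section), the copy of $M$ exhibited inside $\mathcal{C}^\infty(S^1)_{\text{fini}}$ is the Harish-Chandra module of an irreducible closed subrepresentation $W\subset V_\chi$. Since $\HC(W)\cong M=\HC(V)$, the representations $V$ and $W$ are infinitesimally equivalent by definition, and the theorem follows.

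The main obstacle is the principal series case: one has to verify both the Casimir eigenvalue computation $\pi_\chi(\Omega)=4l(l+1)\cdot\mathrm{Id}$ on $\mathcal{C}^\infty(S^1)_{\text{fini}}$ and the equivalence of the algebraic irreducibility constraint on $P(\lambda,\mu)$ with the geometric constraint $l+\epsilon\notin\BB Z$ for $V_\chi$, so that $V_\chi$ can serve as its own irreducible subrepresentation. The other three cases reduce to matching parameters against the subrepresentation list already obtained in this section.
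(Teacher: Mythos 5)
Your proof is correct and follows essentially the same route as the paper's: pass to the Harish-Chandra module, use the scalar action of the Casimir (which in this infinite-dimensional setting rests on Dixmier's Lemma rather than the finite-dimensional Schur's Lemma, as in Corollary \ref{condition-satisfied}) to verify condition (2) of Corollary \ref{equiv-ond-cor}, invoke Theorem \ref{classification-thm}, and match each of the four types against the submodules of ${\cal C}^{\infty}(S^1)_{fini}$ found in the preceding subsection -- and you correctly fill in the two steps the paper leaves as homework, namely the integrality of the $H$-weights and the computation $\pi_{\chi}(\Omega)=4l(l+1)\cdot Id$ with its translation of the constraint (\ref{constraint}) into $l+\epsilon\notin\BB Z$. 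One sentence should be amended: admissibility only makes the $H$-weight spaces finite-dimensional, not one-dimensional (one-dimensionality is a consequence of Lemma \ref{classification-lemma}, not a hypothesis); this claim is not needed, since condition (2) already follows from the scalar Casimir action together with the existence of an $H$-eigenvector.
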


\begin{proof}
Since $K \simeq \BB C^{\times}$ is connected, the $K$-action on
a $(\g g, K)$ module is completely determined by the $\g g$-action.
We will show later (Corollary \ref{condition-satisfied}) that if $(\pi,V)$
is an admissible irreducible representation of $SU(1,1)$, then the action of
$\mathfrak{sl}(2,\BB C)$ on $V_{fini}$ satisfies Condition 2 of
Corollary \ref{equiv-ond-cor}. Recall that all irreducible
$\mathfrak{sl}(2,\BB C)$-modules satisfying one of the two equivalent
conditions of Corollary \ref{equiv-ond-cor} were classified in
Theorem \ref{classification-thm}.
Note that of these irreducible $\mathfrak{sl}(2,\BB C)$-modules only those
with $\pi(H)$ having integer eigenvalues can possibly arise from
$(\g g, K)$ modules (homework).
On the other hand, all of these arise as submodules of
${\cal C}^{\infty}(S^1)_{fini}$ for appropriate values of $\chi$.
Indeed, the finite-dimensional $\mathfrak{sl}(2,\BB C)$-modules $F_d$ appear
when $l=d/2$, $l+\epsilon \in \BB Z$.
The irreducible lowest weight modules $V_{\lambda}$ and highest weight modules
$\bar V_{\lambda}$, $\lambda \in \BB Z$, appear as $M^+$ with $l=-\lambda/2$ and
$M^-$ with $l=\lambda/2$, $l+\epsilon \in \BB Z$.
Finally, the irreducible modules $P(\lambda,\mu)$, $\lambda \in \BB Z$,
appear as ${\cal C}^{\infty}(S^1)_{fini}$ when $l+\epsilon \notin \BB Z$
(homework).
\end{proof}

The following corollary is a precise way of saying when an irreducible
representation of $\mathfrak{su}(1,1)$ ``lifts'' to a representation of
$SU(1,1)$.

\begin{cor}
An irreducible $\mathfrak{sl}(2,\BB C)$-module satisfying one of the two
equivalent conditions of Corollary \ref{equiv-ond-cor} appears as
$\mathfrak{sl}(2,\BB C)$ acting on the space of $SO(2)$-finite vectors of some
admissible irreducible representation of $SU(1,1)$ if and only if $H$
has integer eigenvalues.
\end{cor}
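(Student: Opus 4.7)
The plan is to prove both implications directly: the ``only if'' direction is immediate from $K_{\BB R} \simeq U(1)$, while the ``if'' direction follows from the preceding theorem together with the explicit decomposition of ${\cal C}^{\infty}(S^1)_{fini}$ carried out in the previous subsection. Suppose first that the given irreducible $\mathfrak{sl}(2,\BB C)$-module $M$ arises as $V_{fini}$ for some admissible irreducible representation $(\pi,V)$ of $SU(1,1)$. Every finite-dimensional irreducible representation of $K_{\BB R} \simeq U(1)$ is a character $k_t \mapsto e^{int}$ for some $n \in \BB Z$, so $V_{fini}$ decomposes as $\bigoplus_{n \in \BB Z} V(n)$, where $V(n)$ is the $n$-th isotypic component. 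Differentiating the $K_{\BB R}$-action along the basis vector $X = \begin{pmatrix} i & 0 \\ 0 & -i \end{pmatrix}$ of $\g k_{\BB R}$ gives $\pi(X)|_{V(n)} = in \cdot \mathrm{Id}$; since $H = -iX$ in $\mathfrak{sl}(2,\BB C) = \BB C \otimes_{\BB R} \mathfrak{su}(1,1)$, this forces $\pi(H)|_{V(n)} = n\cdot\mathrm{Id}$, so every $H$-eigenvalue in $M$ is an integer.

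For the converse, the preceding theorem lets me find $\chi$ and an irreducible subrepresentation $W \subset V_\chi$ whose Harish-Chandra module is infinitesimally equivalent to $M$; what remains is to verify that when $M$ has integer $H$-eigenvalues, an appropriate $\chi = (l,\epsilon)$ can in fact be found. By Theorem \ref{classification-thm}, $M$ is one of $F_d$, $V_\lambda$, $\bar V_\lambda$, or $P(\lambda,\mu)$, and the previous subsection enumerated all $(\g g, K)$-submodules of ${\cal C}^{\infty}(S^1)_{fini}$ as a function of $\chi$. I would realize $M$ as follows: $F_d$ as the appropriate subquotient for $\chi = (d/2,\epsilon)$ with $d/2 + \epsilon \in \BB Z$; the lowest weight module $V_\lambda$ with $\lambda$ a positive integer as the submodule $M^+$ for $\chi = (-\lambda/2,\epsilon)$ with $-\lambda/2 + \epsilon \in \BB Z$; $\bar V_\lambda$ analogously as $M^-$; and $P(\lambda,\mu)$ with $\lambda \in \BB Z$ as the entire irreducible module ${\cal C}^{\infty}(S^1)_{fini}$, by using $P(\lambda,\mu) \simeq P(\lambda+2k,\mu)$ to reduce to $\lambda \in \{0,1\}$, setting $\epsilon = \lambda/2$, and then solving $4l(l+1) = \mu$ for $l$.

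The one computation this plan requires is the evaluation of $\pi_\chi(\Omega) = \pi_\chi(H^2 + 2EF + 2FE)$ on the basis $\{v_n\}$, using the explicit formulas for $\pi_\chi(H)$, $\pi_\chi(E)$, $\pi_\chi(F)$ from the previous subsection; the result is multiplication by the scalar $4l(l+1)$, and the same calculation shows that the irreducibility constraint $\lambda \pm \sqrt{\mu+1} \ne \text{odd integer}$ from Theorem \ref{classification-thm} translates exactly into the condition $l + \epsilon \notin \BB Z$ ensuring that ${\cal C}^{\infty}(S^1)_{fini}$ is irreducible. The main obstacle is not conceptual but notational: keeping the two parameterizations $(\lambda,\mu)$ and $(l,\epsilon)$ aligned through the case analysis.
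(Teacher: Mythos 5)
Your proposal is correct and follows the same route the paper takes: the paper derives this corollary from the preceding theorem, whose proof contains exactly your two ingredients --- the integrality of $H$-eigenvalues forced by differentiating the $K_{\BB R}\simeq U(1)$ action (left as homework there), and the case-by-case realization of $F_d$, $V_\lambda$, $\bar V_\lambda$, $P(\lambda,\mu)$ inside ${\cal C}^{\infty}(S^1)_{fini}$ via suitable $\chi=(l,\epsilon)$, including the computation $\mu=4l(l+1)$ identifying the irreducibility constraint with $l+\epsilon\notin\BB Z$ (also left as homework). You have simply supplied the omitted details; no gap.
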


We conclude this subsection with a statement concerning isomorphisms
between various representations $(\pi_{\chi},{\cal C}^{\infty}(S^1))$.

\begin{prop}
Consider two parameters $\chi_1=(l_1,\epsilon_1)$ and $\chi_2=(l_2,\epsilon_2)$.
Then $(\pi_{\chi_1}, {\cal C}^{\infty}(S^1))$ and
$(\pi_{\chi_2}, {\cal C}^{\infty}(S^1))$ are isomorphic as representations
if and only if they are infinitesimally equivalent,
which happens if and only if $\chi_1=\chi_2$ or
$$
\epsilon_1=\epsilon_2, \qquad l_1=-1-l_2 \quad \text{and} \quad 
l_1+\epsilon_1 \notin \BB Z.
$$
\end{prop}

If $l_1+\epsilon_1 \in \BB Z$, let $\chi_2=(\epsilon_1,-1-l_1)$
so that $l_1=-1-l_2$, and assume that $l_1 \ne -1/2$ so that $l_1 \ne l_2$.
Then, as we saw at the beginning of the subsection,
$(\pi_{\chi_1}, {\cal C}^{\infty}(S^1))$ and $(\pi_{\chi_2}, {\cal C}^{\infty}(S^1))$
are reducible, have isomorphic irreducible components,
but not isomorphic nor infinitesimally equivalent.

\begin{proof}
If $l_1+\epsilon_1 \in \BB Z$ the proposition follows from the decomposition
of $(\pi_{\chi_1},{\cal C}^{\infty}(S^1))$ into irreducible components that we did
at the beginning of the subsection.
If $l_1+\epsilon_1 \notin \BB Z$ the proposition can be deduced from
the argument showing that the irreducible modules $P(\lambda,\mu)$,
$\lambda \in \BB Z$, appear among $(\pi_{\chi}, {\cal C}^{\infty}(S^1)_{fini})$
for appropriate choices of $\chi$ and the criterion for isomorphism between
two irreducible $\mathfrak{sl}(2,\BB C)$-modules $P(\lambda,\mu)$ and
$P(\lambda',\mu')$ (Proposition \ref{P-class}).
\end{proof}

\subsection{Unitary Representations of $SU(1,1)$}

Let us return to the case $l+\epsilon \in \BB Z$, $l<0$.
Let ${\cal D}_l^-$ denote the closure of $M^-$ in ${\cal C}^{\infty}(S^1)$;
it contains functions
$$
e^{-i(l-\epsilon)\theta} \cdot f(e^{i\theta}), \quad
\begin{matrix} \text{$f(z)$ is holomorphic on a} \\
\text{neighborhood of $\{z \in \BB C;\: |z| \le 1 \}$.} \end{matrix}
$$
Similarly, let ${\cal D}_l^+$ denote the closure of $M^+$ in
${\cal C}^{\infty}(S^1)$; it contains functions
$$
e^{i(l+\epsilon)\theta} \cdot f(e^{i\theta}), \quad
\begin{matrix} \text{$f(z)$ is antiholomorphic on a} \\
\text{neighborhood of $\{z \in \BB C;\: |z| \le 1 \}$.} \end{matrix}
$$
If $l \le -1$, the repserentations $(\pi_{\chi}, {\cal D}_l^-)$ and
$(\pi_{\chi}, {\cal D}_l^+)$ are called respectively the holomorphic and
antiholomorphic discrete series.
These names reflect the fact that historically they were first realized in
the spaces of holomorphic and antiholomorphic functions.
If $l=-1/2$, the representations $(\pi_{\chi}, {\cal D}_l^-)$ and
$(\pi_{\chi}, {\cal D}_l^+)$ are called the limits of discrete series.

The following is a complete list of irreducible unitary representations of
$SU(1,1)$ (or, more precisely, representations infinitesimally equivalent to
irreducible unitary ones):
\begin{itemize}
\item
The trivial one-dimensional representation;
\item
The holomorphic and antiholomorphic discrete series 
$(\pi_{\chi}, {\cal D}_l^-)$ and $(\pi_{\chi}, {\cal D}_l^+)$,
$l=-1,-3/2,-2,\dots$, $l+\epsilon \in \BB Z$;
\item
The limits of discrete series $(\pi_{\chi}, {\cal D}_{-1/2}^-)$ and
$(\pi_{\chi}, {\cal D}_{-1/2}^+)$, $l=-1/2$, $\epsilon=1/2$;
\item
The spherical unitary principal series $(\pi_{\chi}, {\cal C}^{\infty}(S^1))$,
$l=-1/2+i\lambda$, $\lambda \in \BB R$, $\epsilon=0$;
\item
The nonspherical unitary principal series $(\pi_{\chi}, {\cal C}^{\infty}(S^1))$,
$l=-1/2+i\lambda$, $\lambda \in \BB R^{\times}$, $\epsilon=1/2$;
\item
The complementary series $(\pi_{\chi}, {\cal C}^{\infty}(S^1))$,
$l \in (-1,-1/2) \cup (-1/2,0)$, $\epsilon=0$.
\end{itemize}
Note that in the case of nonspherical unitary principal series
we exclude the value $\chi=(-1/2,1/2)$ because in this case
$$
(\pi_{(-1/2,1/2)}, {\cal C}^{\infty}(S^1)) = 
(\pi_{(-1/2,1/2)}, {\cal D}_{-1/2}^-) \oplus (\pi_{(-1/2,1/2)}, {\cal D}_{-1/2}^+).
$$
In the case of complementary series we exclude $l=-1/2$ because
$(\pi_{(-1/2,0)}, {\cal C}^{\infty}(S^1))$
has already appeared in the spherical unitary principal series.

Of these unitary representations, the following appear in the decomposition
of $L^2(SU(1,1))$ into irreducible components:
\begin{itemize}
\item
The holomorphic and antiholomorphic discrete series 
$(\pi_{\chi}, {\cal D}_l^-)$ and $(\pi_{\chi}, {\cal D}_l^+)$,
$l=-1,-3/2,-2,\dots$, $l+\epsilon \in \BB Z$;
\item
The spherical unitary principal series $(\pi_{\chi}, {\cal C}^{\infty}(S^1))$,
$l=-1/2+i\lambda$, $\lambda \in \BB R$, $\epsilon=0$;
\item
The nonspherical unitary principal series $(\pi_{\chi}, {\cal C}^{\infty}(S^1))$,
$l=-1/2+i\lambda$, $\lambda \in \BB R^{\times}$, $\epsilon=1/2$.
\end{itemize}
The unitary principal series are sometimes called continuous series, so that
$L^2(SU(1,1))$ has a discrete component and a continuous component.

Let us show some examples of invariant inner products.
The first proposition covers the spherical and nonspherical unitary principal
series as well as the limits of discrete series cases.

\begin{prop}
Suppose that $\re l =-1/2$, then $(\pi_{\chi}, {\cal C}^{\infty}(S^1))$ has an
$SU(1,1)$-invariant inner product
$$
(f_1,f_2) = \frac1{2\pi} \int_0^{2\pi}
f_1(e^{i\theta}) \cdot \overline{f_2(e^{i\theta})} \,d\theta,
\qquad f_1,f_2 \in {\cal C}^{\infty}(S^1),
$$
so that $\{ \dots, v_{-1}, v_0, v_1, v_2, \dots \}$ is an orthonormal
pre-Hilbert space basis of ${\cal C}^{\infty}(S^1)$.
Taking the closure of ${\cal C}^{\infty}(S^1)$ with respect to this inner product
produces an infinitesimally equivalent unitary representation in a Hilbert
space.
\end{prop}

\begin{proof}
We need to show that
$$
(\pi_{\chi}(g)f_1,\pi_{\chi}(g)f_2) = (f_1,f_2), \qquad \forall g \in SU(1,1).
$$
Since $SU(1,1)$ is connected, it is sufficient to prove an infinitesimal
version of this property:
$$
(\pi_{\chi}(X)v_m,v_n) + (v_m,\pi_{\chi}(X)v_n) =0, \quad
\forall X \in \mathfrak{su}(1,1), \: m,n \in \BB Z,
$$
or
$$
(\pi_{\chi}(X)v_m,v_n) + (v_m,\pi_{\chi}(\overline{X})v_n) =0, \quad
\forall X \in \mathfrak{sl}(2,\BB C), \: m,n \in \BB Z,
$$
where $\overline{X}$ denotes the complex conjugate of $X$ relative
$\mathfrak{su}(1,1) \subset \mathfrak{sl}(2,\BB C)$.
From (\ref{EFG-rel}), $\overline{H}=-H$ and $\overline{E}=F$.
Thus, it is sufficient to verify
$$
(\pi_{\chi}(H)v_m,v_n) - (v_m,\pi_{\chi}(H)v_n) =0, \qquad \forall m,n \in \BB Z,
$$
and
$$
(\pi_{\chi}(E)v_m,v_n) + (v_m,\pi_{\chi}(F)v_n) =0, \qquad \forall m,n \in \BB Z.
$$
The first equation amounts to
$$
2(m+\epsilon)(v_m,v_n) - 2(n+\epsilon)(v_m,v_n) =0, \qquad \forall m,n \in \BB Z,
$$
which is certainly true.
And the second equation amounts to
$$
(m-l+\epsilon)(v_{m+1},v_n) - \overline{(n+l+\epsilon)}(v_m,v_{n-1}) =0,
\quad \forall m,n \in \BB Z.
$$
The two inner products are non-zero if and only if $m=n-1$,
in which case we get
$$
(n-1-l+\epsilon) - (n+ \bar l +\epsilon) = -(1 + l + \bar l) =0,
$$
since $\re l = -1/2$.
\end{proof}

The second proposition covers the complementary unitary series case.

\begin{prop}
Suppose that $l \in (-1,0)$ and $\epsilon=0$, then
$(\pi_{\chi}, {\cal C}^{\infty}(S^1)_{fini})$ has an
$\mathfrak{su}(1,1)$-invariant inner product defined so that
$\{ \dots, v_{-1}, v_0, v_1, v_2, \dots \}$ form an orthogonal basis of
${\cal C}^{\infty}(S^1)_{fini}$ and
$$
(v_n,v_n) =c_n, \qquad n \in \BB Z,
$$
where the coefficients $c_n$ are defined by the rule
$$
c_0=1, \qquad c_{n+1} = \frac{n+l+1}{n-l} c_n, \qquad n \in \BB Z.
$$
Taking the closure of ${\cal C}^{\infty}(S^1)_{fini}$ with respect to this
inner product produces an infinitesimally equivalent unitary representation
in a Hilbert space.
\end{prop}

\begin{proof}
Since $l \in (-1,0)$, each $c_n>0$ and the inner product is positive definite.

As in the previous proof, it is sufficient to show that
$$
(\pi_{\chi}(H)v_m,v_n) - (v_m,\pi_{\chi}(H)v_n) =0, \qquad \forall m,n \in \BB Z,
$$
and
$$
(\pi_{\chi}(E)v_m,v_n) + (v_m,\pi_{\chi}(F)v_n) =0, \qquad \forall m,n \in \BB Z.
$$
The first equation amounts to
$$
2m(v_m,v_n) - 2n(v_m,v_n) =0, \qquad \forall m,n \in \BB Z,
$$
which is certainly true.
And the second equation amounts to
$$
(m-l)(v_{m+1},v_n) - (n+l)(v_m,v_{n-1}) =0,
\qquad \forall m,n \in \BB Z.
$$
The two inner products are non-zero if and only if $m=n-1$,
in which case we get
$$
(m-l)c_{m+1} - (m+1+l)c_m = 0 \quad \Longleftrightarrow \quad
c_{m+1} = \frac{m+l+1}{m-l} c_m.
$$
\end{proof}

In the cases of holomorphic and antiholomorphic discrete series a similar
argument can be used to prove the following:

\begin{prop}
Suppose that $l+\epsilon \in \BB Z$, $l < 0$ and $(l,\epsilon) \ne (-1/2,1/2)$.
Then $(\pi_{\chi}, M^+)$ has an $\mathfrak{su}(1,1)$-invariant inner product
defined so that
$\{ v_{-l-\epsilon}, v_{1-l-\epsilon}, v_{2-l-\epsilon}, v_{3-l-\epsilon}, \dots \}$
form an orthogonal basis of $M^+$ and
$$
(v_n,v_n)^+ = c_n^+, \qquad n \in \BB Z, \quad n \ge -l-\epsilon,
$$
where the coefficients $c_n^+$ are defined by the rule
$$
c_{-l-\epsilon}^+=1, \quad c_{n+1}^+ = \frac{n+l+\epsilon+1}{n-l+\epsilon} c_n^+,
\qquad n \ge -l-\epsilon.
$$
Similarly, $(\pi_{\chi}, M^-)$ has an $\mathfrak{su}(1,1)$-invariant inner
product defined so that
$\{ v_{l-\epsilon}, v_{l-1-\epsilon}, v_{l-2-\epsilon}, v_{l-3-\epsilon}, \dots \}$
form an orthogonal basis of $M^-$ and
$$
(v_n,v_n)^- = c_n^-, \qquad n \in \BB Z, \quad n \le l-\epsilon,
$$
where the coefficients $c_n^-$ are defined by the rule
$$
c_{l-\epsilon}^-=1, \quad c_{n-1}^- = \frac{n-l+\epsilon-1}{n+l+\epsilon} c_n^-,
\qquad n \le l-\epsilon.
$$
Taking the closures of $M^+$ and $M^-$ with respect to their inner products
produces infinitesimally equivalent unitary representations in Hilbert
spaces.
\end{prop}

We know from homework that the only finite-dimensional unitary representations
of $SU(1,1)$ are the trivial representations.
To show that there are no more irreducible unitary representations of $SU(1,1)$
one can use Theorem \ref{SU(1,1)-subrep-thm} to reduce the problem to finding
a (possibly indefinite) $\mathfrak{su}(1,1)$-invariant sesquilinear product on
$(\pi_{\chi}, {\cal C}^{\infty}(S^1)_{fini})$ with $l+\epsilon \notin \BB Z$
(the irreducible case).
Then argue that $\{ \dots, v_{-1}, v_0, v_1, v_2, \dots \}$ must form an
orthogonal basis of ${\cal C}^{\infty}(S^1)_{fini}$. Hence
$$
(v_n,v_n) =c_n, \qquad n \in \BB Z,
$$
for some choice of coefficients $c_n$.
By the  $\mathfrak{su}(1,1)$-invariance, these coefficients must satisfy
$$
(n-l+\epsilon)c_{n+1} = (n+ \bar l + \epsilon +1) c_n, \qquad n \in \BB Z.
$$
This determines the product $(\,.\,,\,.\,)$ up to proportionality.
Finally, one can determine when the proportionality coefficient can be
chosen so that the product is positive definite.

\section{Notes on Schur's Lemma}

\subsection{Classical Version}

Recall the ``classical'' version of Schur's Lemma:

\begin{lem}
Suppose $(\pi,V)$ is an irreducible representation of a group $G_{\BB R}$
(or a Lie algebra $\g g_{\BB R}$) on a complex finite-dimensional vector space
$V$. If $T:V \to V$ is a linear map such that $T$ commutes with all
$\pi(g)$, $g \in G_{\BB R}$,
(respectively with all $\pi(X)$, $X \in \g g_{\BB R}$),
then $T$ is multiplication by a scalar.
\end{lem}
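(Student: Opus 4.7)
The plan is to exploit the fact that $V$ is finite-dimensional over the algebraically closed field $\BB C$, so that $T$ is guaranteed to have an eigenvalue. Once we have an eigenvalue, the corresponding eigenspace will turn out to be invariant under the group (or Lie algebra) action, and irreducibility will force it to be all of $V$.

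More concretely, first I would choose any eigenvalue $\lambda \in \BB C$ of $T$, which exists because the characteristic polynomial of $T$ has a root in $\BB C$. Set $W = \ker(T - \lambda \cdot Id_V)$; this is a nonzero subspace of $V$. The next step is to verify that $W$ is $G_{\BB R}$-invariant: for any $g \in G_{\BB R}$ and $w \in W$, compute
\[
(T - \lambda \cdot Id_V)(\pi(g)w) = \pi(g)(T - \lambda \cdot Id_V)w = 0,
\]
using that $T$ commutes with $\pi(g)$. Hence $\pi(g)w \in W$. The argument in the Lie algebra case is identical, with $\pi(X)$ replacing $\pi(g)$.

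Finally, since $W \subset V$ is a nonzero $G_{\BB R}$-invariant (respectively $\g g_{\BB R}$-invariant) subspace and $(\pi,V)$ is irreducible, we must have $W = V$. This means $T - \lambda \cdot Id_V$ vanishes on all of $V$, so $T = \lambda \cdot Id_V$ is a scalar operator, as claimed.

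There is no real obstacle here; the only hypothesis that is genuinely used is finite-dimensionality (to ensure the existence of an eigenvalue) and the fact that $\BB C$ is algebraically closed. The infinite-dimensional analogue is much more subtle and will require additional care, which is presumably the point of devoting a separate section to Schur's Lemma in the representation-theoretic setting.
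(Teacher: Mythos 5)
Your proof is correct and follows exactly the same route as the paper: take an eigenvalue $\lambda$ of $T$ (which exists by finite-dimensionality over $\BB C$), observe that the $\lambda$-eigenspace is a nonzero invariant subspace because $T$ commutes with the action, and conclude by irreducibility that it is all of $V$. Nothing to add.
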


\begin{proof}
Let $\lambda \in \BB C$ be an eigenvalue of $T$. Then
$T - \lambda \cdot Id_V$ also commutes with $\pi(g)$, for all $g \in G_{\BB R}$,
(respectively with $\pi(X)$, for all $X \in \g g_{\BB R}$).
Hence the $\lambda$-eigenspace of $T$ is a non-zero invariant subspace of $V$.
By the irreduciblity of $V$, this $\lambda$-eigenspace has to be all of $V$.
Therefore, $T$ is $\lambda \cdot Id_V$.
\end{proof}

Note that the proof depends on the existence of eigenvalues and eigenvectors
of the linear map $T:V \to V$. But if $\dim V = \infty$,
$T$ might not have any eigenvectors (e.g. ``shift'' operator).
So $\dim V < \infty$ is essential for this proof.
Here we discuss two infinite-dimensional analogues of Schur's Lemma.

\subsection{Schur's Lemma for Unitary Representations}

Here is a version of Schur's Lemma for unitary representations reproduced
from \cite{Kn1}.

\begin{prop}
A unitary representation $(\pi,V)$ of a topological group $G$ on a Hilbert
space $V$ is irreducible if and only if the only bounded linear operators
on $V$ commuting with all $\pi(g)$, $g \in G$, are the scalar operators.
\end{prop}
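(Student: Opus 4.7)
The plan is to prove the two implications separately, with the easy direction using the existence of orthogonal complements in a Hilbert space and the hard direction using the spectral theorem.

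For the easy direction, assume the only bounded operators commuting with all $\pi(g)$ are scalars, and suppose for contradiction that $V$ has a proper closed invariant subspace $W$. Because $\pi$ is unitary, $W^{\perp}$ is also closed and invariant, so $V = W \oplus W^{\perp}$ as a $G$-invariant orthogonal decomposition. The orthogonal projection $P: V \twoheadrightarrow W$ is a bounded self-adjoint operator which preserves both summands and therefore commutes with every $\pi(g)$; but $P$ is not a scalar since $0 \ne W \ne V$, contradicting the hypothesis.

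For the hard direction, assume $V$ is irreducible and let $T \in \mathrm{End}(V)$ be bounded with $T\pi(g) = \pi(g)T$ for all $g \in G$. First I would use unitarity, $\pi(g)^* = \pi(g^{-1})$, to show that $T^*$ also commutes with all $\pi(g)$: indeed, taking adjoints of $T\pi(g^{-1}) = \pi(g^{-1})T$ gives $\pi(g)T^* = T^*\pi(g)$. Consequently, writing $T = A + iB$ with $A = (T+T^*)/2$ and $B = (T-T^*)/(2i)$, both $A$ and $B$ are bounded self-adjoint operators commuting with the representation, and it suffices to prove the claim when $T$ is self-adjoint.

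So let $A$ be bounded self-adjoint in the commutant. By the spectral theorem, $A$ admits a resolution of the identity $\{E(B)\}_{B \in \mathrm{Borel}(\BB R)}$ with $A = \int \lambda \, dE(\lambda)$, and every spectral projection $E(B)$ lies in the von Neumann algebra generated by $A$ (equivalently, in the double commutant of $A$). Since each $\pi(g)$ commutes with $A$, it commutes with each $E(B)$. The range $E(B)V$ is therefore a closed $G$-invariant subspace, so by irreducibility $E(B) \in \{0, I\}$ for every Borel set $B$. Thus the spectral measure is concentrated at a single point $\lambda_0 \in \BB R$, giving $A = \lambda_0 \cdot Id_V$. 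The hard part is precisely this invocation of the spectral theorem together with the fact that spectral projections remain in the commutant of everything $A$ commutes with; once that is in hand the conclusion $T = \lambda_0 + i\mu_0$ follows immediately.
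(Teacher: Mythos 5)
Your proof is correct and follows essentially the same route as the paper: orthogonal complementation for the easy direction, and for the hard direction the reduction to the self-adjoint case via $T = A + iB$ followed by the spectral theorem, using that the spectral projections of $A$ commute with everything commuting with $A$. You fill in slightly more detail than the paper (the explicit adjoint computation and the observation that a $\{0,Id_V\}$-valued spectral measure must be a point mass), but the strategy is identical.
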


\begin{proof}
If $V$ is reducible with $U \subset V$ a non-trivial closed invariant subspace,
then the orthogonal projection on $U$ is a non-scalar bounded linear operator
on $V$ commuting with all $\pi(g)$'s.

Conversely, let $T:V \to V$ be a non-scalar bounded linear operator commuting
with all $\pi(g)$'s. Since $(\pi,V)$ is unitary, all $\pi(g)$, $g \in G$,
commute with the adjoint operator $T^*$ as well as self-adjoint operators
$$
A= \frac12 (T+T^*) \quad \text{and} \quad B= \frac1{2i}(T-T^*).
$$
Since $T=A+iB$, at least one of $A$, $B$ is not scalar, say $A$
for concreteness.
Using the Spectral Theorem, we can get a projection $P$ which commutes with
all operators commuting with $A$. Intuitively this involves taking a proper
subset $S$ of $\sigma(A)$ -- the spectrum of $A$ -- and letting $P$ be the
orthogonal projection onto the ``direct sum of eigenspaces with eigenvalues
in $S$''. In particular, $P$ commutes with all $\pi(g)$, $g \in G$.
Then $V= \ker P \oplus \im P$.
\end{proof}

\subsection{Dixmier's Lemma}

In this subsection we follow \cite{Wal1}.

\begin{lem}  \label{spectrum-lem}
Let $V$ be a countable dimensional vector space over $\BB C$.
If $T \in \operatorname{End}(V)$, then there exists a scalar $c \in \BB C$
such that $T - c \cdot Id_V$ is not invertible on $V$.
\end{lem}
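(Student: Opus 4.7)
The plan is to proceed by contradiction. Suppose $T - c \cdot Id_V$ is invertible on $V$ for every $c \in \BB C$. The strategy is to build a $\BB C$-linear injection from an uncountable-dimensional space into $V$, which will contradict countable-dimensionality.

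First I would observe that any nonzero polynomial $p(z) \in \BB C[z]$ factors over $\BB C$ as $p(z) = a \prod_{i=1}^n (z - c_i)$, so the operator $p(T) = a \prod_{i=1}^n (T - c_i \cdot Id_V)$ is a product of invertible operators and is therefore invertible. Consequently, for any rational function $p(z)/q(z) \in \BB C(z)$ with $q \ne 0$, the operator $p(T) \circ q(T)^{-1} \in \operatorname{End}(V)$ is well-defined, and these assignments fit together into a $\BB C$-algebra homomorphism $\BB C(z) \to \operatorname{End}(V)$ sending every nonzero element to an invertible operator on $V$.

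Next, pick any nonzero $v \in V$ and form the evaluation map $\phi: \BB C(z) \to V$, $f \mapsto f(T) v$. If $f \ne 0$, then $f(T)$ is invertible, and since $v \ne 0$, $f(T)v \ne 0$. Hence $\phi$ is $\BB C$-linear and injective. Finally, I would verify that $\BB C(z)$ has uncountable dimension over $\BB C$: the family $\{(z - c)^{-1} : c \in \BB C\}$ is linearly independent, because given any finite relation
$$
\sum_{i=1}^n \alpha_i (z - c_i)^{-1} = 0
$$
with the $c_i$ distinct, multiplying by $(z - c_j)$ and evaluating at $z = c_j$ forces $\alpha_j = 0$ for every $j$. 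Since $\BB C$ is uncountable, this produces an uncountable linearly independent set in $\BB C(z)$, and via $\phi$ an uncountable linearly independent set in $V$, contradicting the hypothesis that $\dim_{\BB C} V$ is countable.

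I do not expect a serious obstacle here; the whole argument relies on the algebraic closedness of $\BB C$ (to factor polynomials, ensuring every nonzero polynomial in $T$ is invertible) and the partial-fractions observation that $\BB C(z)$ is uncountable-dimensional. The only subtle point is recognizing that the countability hypothesis on $\dim_{\BB C} V$ is exactly what is needed to force a contradiction against the uncountability of $\{(z-c)^{-1}\}_{c \in \BB C}$.
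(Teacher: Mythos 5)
Your proof is correct and is essentially identical to the paper's: both assume every $T - c\cdot Id_V$ is invertible, extend $T$ to a map $\BB C(x) \to \operatorname{End}(V)$, use evaluation at a nonzero vector to inject $\BB C(x)$ into $V$, and contradict countable dimension via the uncountable independent family $\{(x-c)^{-1}\}$. You merely add the (correct) partial-fractions verification of linear independence that the paper leaves as a remark.
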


\begin{proof}
Suppose $T - c \cdot Id_V$ is invertible for all $c \in \BB C$.
Then $P(T)$ is invertible for all polynomials $P \in \BB C[x]$, $P \ne 0$.
Thus, if $R=P/Q$ is a rational function with $P,Q \in \BB C[x]$,
we can define $R(T)$ by $P(T) \cdot (Q(T))^{-1}$.
This rule defines a map $\BB C(x) \to \operatorname{End}(V)$.
If $v \in V$, $v \ne 0$, and $R \in \BB C(x)$, $R \ne 0$, then
$R(T)v \ne 0$. Hence the map
$$
\BB C(x) \to V, \quad R \mapsto R(T)v,
$$
is injective. Since $\BB C(x)$ has uncountable dimension over $\BB C$
(for example, the rational functions $(x-a)^{-1}$, $a \in \BB C$,
are linearly independent), we get a contradiction.
\end{proof}

\begin{ex}
Let $V$ be a countable dimensional vector space with basis $\{e_n\}$,
$n \in \BB Z$. (Thus $V = \bigoplus_{n \in \BB Z} \BB C e_n$.)
Consider the shift operator $T: V \to V$ defined by $T(e_n)=e_{n+1}$.
Then $T - c \cdot Id_V$ is not invertible on $V$ whenever $c \in \BB C^{\times}$.
We have: $\ker(T - c \cdot Id_V)=0$ (i.e. $T$ has no eigenvectors) and
$\im(T - c \cdot Id_V) \subsetneq V$ for all $c \in \BB C^{\times}$.
For example, if $c=1$,
$$
\im(T - Id_V) = \Bigl\{ v=\sum_n a_ne_n \in V ;\: \sum_n a_n=0 \Bigr\}.
$$
\end{ex}

\begin{df}
Let $V$ be a vector space over $\BB C$ and $S \subset \operatorname{End}(V)$
a subset. Then $S$ acts {\em irreducibly} if whenever $W \subset V$ is a
subspace such that $SW \subset W$, then $W=V$ or $W=\{0\}$.
\end{df}

\begin{lem}[Dixmier]
Suppose $V$ is a vector space over $\BB C$ of countable dimension and that
$S \subset \operatorname{End}(V)$ acts irreducibly.
If $T \in \operatorname{End}(V)$ commutes with every element of $S$,
then $T$ is a scalar multiple of the identity operator.
\end{lem}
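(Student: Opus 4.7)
The plan is to combine the previous lemma (countable-dimensional operators must have some ``spectral value'') with the irreducibility of $S$. First I would observe that since $T$ commutes with every element of $S$, so does the operator $T - c\cdot Id_V$ for any $c \in \BB C$; consequently both $\ker(T - c\cdot Id_V)$ and $\im(T - c\cdot Id_V)$ are $S$-invariant subspaces of $V$.

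Next, invoke the preceding lemma to produce a scalar $c \in \BB C$ such that $T - c\cdot Id_V$ fails to be invertible on $V$. Failing to be invertible means either the kernel is nonzero or the image is a proper subspace (or both). By the irreducibility of the $S$-action, each of these $S$-invariant subspaces must be either $\{0\}$ or all of $V$. If $\ker(T - c\cdot Id_V) = V$, then $T = c \cdot Id_V$ and we are done. Otherwise $\ker(T - c\cdot Id_V) = \{0\}$, and then non-invertibility forces $\im(T - c\cdot Id_V) \subsetneq V$, hence by irreducibility $\im(T - c\cdot Id_V) = \{0\}$, which again yields $T = c\cdot Id_V$.

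The only subtle point is applying the previous lemma, whose hypothesis requires $V$ to be countable dimensional over $\BB C$ --- this is precisely what guarantees the existence of the ``eigenvalue'' $c$ that would not exist in general for infinite-dimensional $V$ (as the shift operator example illustrates). Everything else is the standard Schur's Lemma argument: commutation yields $S$-invariance of kernel and image, and irreducibility kills the nontrivial possibilities. I expect no real obstacle; the main content has already been packaged into the earlier lemma.
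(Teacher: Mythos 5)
Your proof is correct and follows the same route as the paper: apply the preceding lemma to obtain $c$ with $T - c\cdot Id_V$ not invertible, note that its kernel and image are $S$-invariant hence $\{0\}$ or $V$, and rule out the only case ($\ker=\{0\}$, $\im=V$) that would make it invertible. The case analysis is organized slightly differently but the argument is identical in substance.
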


\begin{proof}
By Lemma \ref{spectrum-lem}, there exists a $c \in \BB C$ such that
$T - c \cdot Id_V$ is not invertible on $V$.
Then $\ker(T - c \cdot Id_V)$ and $\im(T - c \cdot Id_V)$ are preserved by $S$,
hence these spaces are either $\{0\}$ or $V$.
If $\ker(T - c \cdot Id_V)=\{0\}$ and $\im(T - c \cdot Id_V)=V$,
then $T$ is invertible (there is no topology or continuity involved here).
Hence $\ker(T - c \cdot Id_V)=V$ or $\im(T - c \cdot Id_V)=\{0\}$ and
in both cases $T=c \cdot Id_V$.
\end{proof}

Note that if $V$ is an admissible $(\g g, K)$-module, then $V$ has countable
dimension, since the set $\hat K_{\BB R}$ is countable.
The following lemma essentially tells us that we can drop the admissibility
assumption if $V$ is irreducible.

\begin{lem}
Let $V$ be an irreducible $(\g g, K)$-module, then $V$ has countable dimension.
\end{lem}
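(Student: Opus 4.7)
The plan is to exploit the compatibility built into the definition of a $(\g g, K)$-module together with irreducibility to show that $V$ is spanned by a countable set.

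First I would pick any nonzero vector $v \in V$. By condition (a) in Definition \ref{(g,K)-mod_def}, $K$ acts locally finitely, so $v$ lies in a finite-dimensional $K$-invariant subspace $W \subset V$. Next I would consider $M = \mathcal{U}(\g g) \cdot W$. This is manifestly $\g g$-invariant, and I claim it is also $K$-invariant: given $k \in K$, $X \in \mathcal{U}(\g g)$, and $w \in W$, the compatibility identity (\ref{adjoint}) gives
$$
\pi(k)(Xw) = (\operatorname{Ad}(k)X)\bigl(\pi(k)w\bigr),
$$
and since $\pi(k)w \in W$ and $\operatorname{Ad}(k)X \in \mathcal{U}(\g g)$, the right-hand side lies in $M$. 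Thus $M$ is a nonzero $(\g g, K)$-submodule of $V$, and by irreducibility $M = V$.

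Now I would use the Poincar\'e-Birkhoff-Witt theorem: since $\g g$ is finite-dimensional (over $\BB C$), the ordered monomials in a fixed basis of $\g g$ form a countable $\BB C$-basis of $\mathcal{U}(\g g)$. Combining this with a finite basis of $W$, the products $X_\alpha \cdot w_j$ (indexed over the countable set of PBW monomials $X_\alpha$ and the finitely many basis vectors $w_j$) span $\mathcal{U}(\g g) \cdot W = V$. Hence $V$ is spanned by a countable set, so $\dim_{\BB C} V$ is at most countable.

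There is essentially no obstacle here; the one point that requires care is the $K$-invariance of $\mathcal{U}(\g g) \cdot W$, which is precisely what the compatibility condition (c) of Definition \ref{(g,K)-mod_def} is designed to guarantee. Without that compatibility the argument would only give $\g g$-invariance, which is not enough to invoke irreducibility as a $(\g g, K)$-module.
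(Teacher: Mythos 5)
Your argument is correct and is essentially the paper's own proof: the paper takes $W_v$ to be the (finite-dimensional) span of the $K$-orbit of $v$, forms ${\cal U}(\g g)W_v$, invokes irreducibility, and cites Poincar\'e--Birkhoff--Witt for countability. You merely spell out the $K$-invariance of ${\cal U}(\g g)\cdot W$ via the compatibility identity, a step the paper asserts without comment.
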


\begin{proof}
Pick a $v \in V$, $v \ne 0$, and define
$$
W_v = \text{$\BB C$-span of $\{ kv ;\: k \in K \}$}.
$$
Since the action of $K$ on $V$ is locally finite, $\dim W_v < \infty$.
Then ${\cal U}(\g g)W_v$ is a $(\g g, K)$-invariant subspace of $V$,
and by the Poincar\'e-Birkhoff-Witt Theorem the dimension of
${\cal U}(\g g)W_v$ is countable.
Since $V$ is irreducible, $V={\cal U}(\g g)W_v$ and $V$ has countable dimension.
\end{proof}

\begin{cor}
Let $V$ be an irreducible $(\g g,K)$-module, then
$$
\operatorname{Hom}_{(\g g,K)}(V,V)=\BB C \cdot Id_V.
$$
\end{cor}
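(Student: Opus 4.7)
The plan is to apply Dixmier's Lemma directly, once the relevant countable-dimensionality hypothesis is verified. Let me outline the steps.

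First, I would observe that the preceding lemma already supplies the hypothesis needed by Dixmier's Lemma: any irreducible $(\g g,K)$-module $V$ has countable dimension over $\BB C$. So I can immediately pass to an application of the Dixmier lemma without any extra work on the dimension count.

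Next, I would take an arbitrary $T \in \operatorname{Hom}_{(\g g,K)}(V,V)$ and identify the set $S \subset \operatorname{End}(V)$ that $T$ commutes with. Namely, let $S$ consist of the operators $\pi(X)$ for $X \in {\cal U}(\g g)$ together with the operators $\pi(k)$ for $k \in K$. Because $T$ is a morphism of $(\g g,K)$-modules, $T$ commutes with every element of $S$. The irreducibility of $V$ as a $(\g g,K)$-module means precisely that any subspace $W \subset V$ with $SW \subset W$ must be $\{0\}$ or $V$, so $S$ acts irreducibly in the sense of the definition preceding Dixmier's Lemma.

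With these two ingredients in place, Dixmier's Lemma immediately yields $T = c \cdot Id_V$ for some $c \in \BB C$, and hence $\operatorname{Hom}_{(\g g,K)}(V,V) = \BB C \cdot Id_V$. I do not anticipate any real obstacle here; the corollary is essentially a packaging of the previous two lemmas, and the only conceptual point worth checking is that the ${\cal U}(\g g)$-action plus the $K$-action together constitute a set that acts irreducibly on $V$, which is just the definition of an irreducible $(\g g,K)$-module.
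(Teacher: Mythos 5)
Your proposal is correct and is essentially the paper's own argument: the preceding lemma gives countable dimensionality, and Dixmier's Lemma applied to the set $S$ of operators coming from ${\cal U}(\g g)$ and $K$ (which acts irreducibly by the definition of an irreducible $(\g g,K)$-module) yields $T = c \cdot Id_V$. The paper states this in one line; you have merely made the identification of $S$ explicit.
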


\begin{proof}
Dixmier's Lemma applies, since $V$ has countable dimension.
\end{proof}

\subsection{Infinitesimal Characters}

Let ${\cal ZU}(\g g)$ denote the center of ${\cal U}(\g g)$ and introduce
$$
{\cal Z}_{{\cal U}(\g g)}(G_{\BB R}) =_{\text{def}} \{ a \in {\cal U}(\g g);\:
Ad(g)a=a, \forall g \in G_{\BB R} \}.
$$
Clearly, ${\cal Z}_{{\cal U}(\g g)}(G_{\BB R}) \subset {\cal ZU}(\g g)$ and
$$
{\cal Z}_{{\cal U}(\g g)}(G_{\BB R}) = {\cal ZU}(\g g) \quad
\text{if $G_{\BB R}$ is connected}.
$$
Applying Dixmier's Lemma we obtain:

\begin{lem}
Let $V$ be an irreducible $(\g g, K)$-module. Then each
$z \in {\cal Z}_{{\cal U}(\g g)}(G_{\BB R})$ acts on $V$ by a scalar multiple of
identity.
\end{lem}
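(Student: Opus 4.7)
The plan is to show that multiplication by $z$ defines a $(\g g,K)$-module endomorphism of $V$, and then apply the corollary just proved that $\operatorname{Hom}_{(\g g,K)}(V,V) = \BB C \cdot Id_V$. The hypothesis that $V$ has countable dimension, needed for Dixmier's Lemma, is already established for irreducible $(\g g,K)$-modules in the preceding lemma, so that corollary is available.

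The $\g g$-equivariance of $\pi(z)$ is immediate: since ${\cal Z}_{{\cal U}(\g g)}(G_{\BB R}) \subset {\cal ZU}(\g g)$, we have $\pi(z)\pi(X) = \pi(X)\pi(z)$ for every $X \in \g g$. The substantive step is $K$-equivariance. For this I will invoke compatibility condition (c) of Definition \ref{(g,K)-mod_def}, which gives
$$
\pi(k)(zv) = (\operatorname{Ad}(k)z)(\pi(k)v), \qquad k \in K, \ v \in V,
$$
so it suffices to check that $\operatorname{Ad}(k)z = z$ for all $k \in K$.

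This is exactly the point I expect to be the main obstacle, because the hypothesis only gives $\operatorname{Ad}(g)z = z$ for $g \in G_{\BB R}$, so a priori only $K_{\BB R} = (U_{\BB R} \cap G_{\BB R})^0 \subset G_{\BB R}$ fixes $z$, not all of the complexification $K$. I will bridge the gap in two steps: first, differentiate the identity $\operatorname{Ad}(k)z = z$ along $k \in K_{\BB R}$ to obtain $\operatorname{ad}(X)z = 0$ for every $X \in \g k_{\BB R}$; then extend $\BB C$-linearly to conclude $\operatorname{ad}(X)z = 0$ for all $X \in \g k = \BB C\otimes_{\BB R}\g k_{\BB R}$. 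Since $K_{\BB R}$ is connected (as the identity component $(U_{\BB R} \cap G_{\BB R})^0$) its complexification $K$ is connected as well, so exponentiating yields $\operatorname{Ad}(k)z = z$ for all $k \in K$.

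Putting the pieces together, $\pi(z): V \to V$ lies in $\operatorname{Hom}_{(\g g,K)}(V,V)$, and the corollary of Dixmier's Lemma forces it to be a scalar multiple of $Id_V$. A minor technical point I will mention in passing is that $z \in {\cal U}_n(\g g)$ for some $n$, and $\operatorname{Ad}$ restricts to a holomorphic finite-dimensional representation of $K$ on ${\cal U}_n(\g g)$, so the connectedness argument is legitimate and one could alternatively argue directly that the holomorphic $\operatorname{Ad}(K)$-orbit map is determined by its restriction to the real form $K_{\BB R}$.
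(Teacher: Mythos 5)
Your proof is correct and follows the paper's intended route: the paper simply says ``Applying Dixmier's Lemma we obtain'' this statement, and your argument supplies exactly the details that are left implicit --- namely that $\pi(z)$ commutes with $\g g$ by centrality and with all of $K$ (not just $K_{\BB R}$) via condition (c) of Definition \ref{(g,K)-mod_def} together with the differentiate--complexify--exponentiate argument on the connected group $K$, after which the corollary $\operatorname{Hom}_{(\g g,K)}(V,V)=\BB C\cdot Id_V$ finishes the job.
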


\begin{cor}  \label{inf-char-cor}
Let $(\pi,V)$ be an irreducible admissible representation of $G_{\BB R}$,
then each $z \in {\cal Z}_{{\cal U}(\g g)}(G_{\BB R})$ acts on $V^{\infty}$
by a scalar multiple of identity.
\end{cor}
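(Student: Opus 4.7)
The plan is to deduce the corollary from the preceding lemma (applied to the Harish-Chandra module of $V$) and then spread the resulting scalar action from $V_{fini}$ to all of $V^{\infty}$ via the $K_{\BB R}$-isotypic projectors $\pi_{K_{\BB R}}(\phi_i)$. Since $(\pi,V)$ is irreducible and admissible, $V_{fini} = \HC(V)$ is an irreducible $(\g g, K)$-module by the earlier corollary characterizing irreducibility of $V$ through $V_{fini}$, so the preceding lemma furnishes, for each $z \in {\cal Z}_{{\cal U}(\g g)}(G_{\BB R})$, a scalar $\chi(z) \in \BB C$ with $\pi(z)w = \chi(z)w$ for all $w \in V_{fini}$.

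Fix such a $z$ and set $T_z := \pi(z) - \chi(z)\cdot\operatorname{Id}$ on $V^{\infty}$; the goal is $T_z \equiv 0$. Differentiating $\pi(\exp(t\operatorname{Ad}(k)X)) = \pi(k)\pi(\exp tX)\pi(k)^{-1}$ in $t$ at $t=0$ gives $\pi(\operatorname{Ad}(k)X) = \pi(k)\pi(X)\pi(k)^{-1}$ on $V^{\infty}$ for every $X \in \g g_{\BB R}$ and $k \in K_{\BB R}$; this extends to ${\cal U}(\g g)$ and, together with $\operatorname{Ad}(k)z=z$, yields $\pi(k)\pi(z) = \pi(z)\pi(k)$. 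Pulling $\pi(z)$ inside the integral $\pi_{K_{\BB R}}(\phi_i)v = \int_{K_{\BB R}} \phi_i(k)\pi(k)v\,dk$ (justified by differentiation under the integral sign, using $v \in V^{\infty}$ and the compactness of $K_{\BB R}$) then gives $\pi(z)\pi_{K_{\BB R}}(\phi_i)v = \pi_{K_{\BB R}}(\phi_i)\pi(z)v$, so $T_z$ commutes with $\pi_{K_{\BB R}}(\phi_i)$. Since $v \in V^{\infty}$ forces $\pi_{K_{\BB R}}(\phi_i)v \in V^{\infty}$ (differentiating under the integral again) and $\pi_{K_{\BB R}}(\phi_i)v \in V(i) \subset V_{fini}$, we obtain $T_z(\pi_{K_{\BB R}}(\phi_i)v) = 0$, hence $\pi_{K_{\BB R}}(\phi_i)(T_z v) = 0$ for every $i \in \hat K_{\BB R}$.

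The remaining step is to show that any $w \in V$ with $\pi_{K_{\BB R}}(\phi_i)w = 0$ for all $i$ must vanish. Because $\phi_i$ is a class function on $K_{\BB R}$, the projector $\pi_{K_{\BB R}}(\phi_i)$ commutes with every $\pi(k)$, $k \in K_{\BB R}$; hence the closed $K_{\BB R}$-subrepresentation $W = \overline{\operatorname{span}\{\pi(k)w : k \in K_{\BB R}\}} \subset V$ inherits $\pi_{K_{\BB R}}(\phi_i)|_W = 0$ for every $i$, i.e.\ $W_{fini} = 0$. By the Peter-Weyl density of $K_{\BB R}$-finite vectors in any continuous representation of a compact group on a complete locally convex Hausdorff space, this forces $W = 0$ and so $w = 0$. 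Applying the conclusion to $w = T_z v$ yields $\pi(z)v = \chi(z)v$ for every $v \in V^{\infty}$, which is the desired statement.

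The main obstacle I anticipate is this last step: cleanly invoking the Peter-Weyl density of $K_{\BB R}$-finite vectors in the general LCS setting, and deducing from it that vanishing of every $\pi_{K_{\BB R}}(\phi_i)$ forces the underlying vector to be zero. This is where one really uses compactness of $K_{\BB R}$ and the approximate-identity / matrix-coefficient machinery from the proof of Theorem \ref{densityof}; the rest of the argument is routine bookkeeping with $\operatorname{Ad}$-invariance of $z$ and with the standing properties of the projectors $\pi_{K_{\BB R}}(\phi_i)$.
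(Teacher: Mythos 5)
Your argument is correct and follows the paper's own route: the paper's entire proof consists of applying the preceding lemma to the irreducible $(\g g,K)$-module $V_{fini}$ and invoking its density in $V^{\infty}$. You have simply made the density step precise — using the $\operatorname{Ad}(K_{\BB R})$-invariance of $z$ and the isotypic projectors $\pi_{K_{\BB R}}(\phi_i)$ to show that an element of $V^{\infty}$ annihilated by every projector must vanish — which is a worthwhile elaboration, since $\pi(z)$ is a differential operator that is not continuous on $V$ and one cannot pass to limits from $V_{fini}$ naively.
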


\begin{proof}
Apply the above lemma to $V_{fini}$, which is dense in $V^{\infty}$.
\end{proof}

\begin{cor}  \label{condition-satisfied}
Let $(\pi,V)$ be an irreducible admissible representation of $SU(1,1)$,
then the Casimir element acts on $V_{fini}$ by multiplication by a scalar.
In particular, the action of $\mathfrak{sl}(2,\BB C)$ on $V_{fini}$ satisfies
condition 2 of Corollary \ref{equiv-ond-cor}.
\end{cor}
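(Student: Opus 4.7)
The plan is to apply Corollary \ref{inf-char-cor} directly, using that $SU(1,1)$ is connected so that the Casimir element lies in the $G_{\BB R}$-invariant part of ${\cal U}(\g g)$. First I would recall that $\Omega = H^2 + 2EF + 2FE \in {\cal ZU}(\mathfrak{sl}(2,\BB C))$ is a central element of the universal enveloping algebra, as stated earlier in the notes. Since $SU(1,1)$ is connected, the remark preceding Corollary \ref{inf-char-cor} gives
$$
{\cal Z}_{{\cal U}(\g g)}(SU(1,1)) = {\cal ZU}(\mathfrak{sl}(2,\BB C)) \ni \Omega.
$$
Applying Corollary \ref{inf-char-cor} to the irreducible admissible representation $(\pi,V)$, the element $\Omega$ acts on $V^{\infty}$ by a scalar $\mu \in \BB C$. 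Since $(\pi,V)$ is admissible, Theorem \ref{densityof} gives $V_{fini} \subset V^{\infty}$, so $\pi(\Omega)v = \mu v$ for every $v \in V_{fini}$.

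For the ``in particular'' claim, I need to produce a non-zero vector in $V_{fini}$ which is simultaneously an eigenvector for $\pi(H)$ and $\pi(\Omega)$. The previous step already gives the $\Omega$-eigenvector property on all of $V_{fini}$, so it suffices to find a non-zero $H$-eigenvector in $V_{fini}$. Since $V \ne 0$ and $V_{fini}$ is dense in $V$ by Theorem \ref{densityof}, $V_{fini}$ is non-zero, and the decomposition $V_{fini} = \bigoplus_{i \in \hat K_{\BB R}} V(i)$ shows that some isotypic component $V(i)$ is non-zero. Choose any non-zero $v_0 \in V(i)$; then $v_0$ lies in a one-dimensional character space of $K_{\BB R} = SO(2)$, say $\pi(k_t)v_0 = e^{int} v_0$ for some $n \in \BB Z$. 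Differentiating at $t=0$ with respect to the generator $X = \begin{pmatrix} i & 0 \\ 0 & -i \end{pmatrix}$ of $\g k_{\BB R}$ gives $\pi(X)v_0 = in\, v_0$, and since $H = -iX$, we conclude $\pi(H)v_0 = n\, v_0$. Together with $\pi(\Omega)v_0 = \mu v_0$, this verifies condition 2 of Corollary \ref{equiv-ond-cor}.

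There is no real obstacle here: the entire argument is an assembly of already-established facts (centrality of $\Omega$, connectedness of $SU(1,1)$, Corollary \ref{inf-char-cor}, density of $V_{fini}$ in $V$, and the relation between $H$ and the generator of $\g k_{\BB R}$). The only thing to be careful about is matching the Lie algebra conventions: the maximal compact $K_{\BB R}$ is generated by $X$, not by $H$, so one must use $H = -iX$ (valid after complexification to $\mathfrak{sl}(2,\BB C)$) to convert $K_{\BB R}$-weights into $H$-eigenvalues.
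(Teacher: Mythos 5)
Your proof is correct and follows exactly the route the paper intends: the corollary is an immediate consequence of Corollary \ref{inf-char-cor} (via connectedness of $SU(1,1)$, so that $\Omega \in {\cal Z}_{{\cal U}(\g g)}(G_{\BB R}) = {\cal ZU}(\g g)$) together with $V_{fini}\subset V^{\infty}$. Your extra step producing a simultaneous $H$- and $\Omega$-eigenvector from a nonzero isotypic component of the abelian group $K_{\BB R}\simeq SO(2)$, using $H=-iX$, is exactly the detail the paper leaves implicit, and it is carried out correctly.
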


\begin{df}
If $(\pi,V)$ is a representation of $G_{\BB R}$ and $\chi$ is an algebra
homomorphism of ${\cal Z}_{{\cal U}(\g g)}(G_{\BB R})$ into $\BB C$ such that
$$
\pi(z)v=\chi(z)v, \qquad \forall z \in {\cal Z}_{{\cal U}(\g g)}(G_{\BB R}),\: 
v \in V^{\infty},
$$
then $\chi$ is called the {\em infinitesimal character} of $(\pi,V)$.
\end{df}

By Corollary \ref{inf-char-cor}, every admissible irreducible representation
of $G_{\BB R}$ has an infinitesimal character.

\noindent
\underline{Caution}: Even if $G_{\BB R}$ is connected, some non-equivalent
irreducible representations may have the same infinitesimal character.
For example, representations $(\pi_{\chi}, V_{\chi})$ of $SU(1,1)$ always have
infinitesimal character. Some of these are reducible, and their irreducible
components have the same infinitesimal character.

\section{Iwasawa and $KAK$ Decompositions}

In this section we describe Iwasawa and $KAK$ decompositions. 
For proofs and details see the book \cite{Kn2}.
We assume that $G_{\BB R}$ is a closed linear connected semisimple
real Lie group.

\subsection{Iwasawa Decomposition} \label{Iwasawa-subsection}

An important class of representations are the so-called principal series
representations. The Subrepresentation Theorem says that every irreducible
admissible representation of $G_{\BB R}$ is infinitesimally equivalent to
a subrepresentation of some principal series representation.
To construct these we need Iwasawa decomposition, which is sometimes
called the $KAN$ decomposition.

Recall the Cartan involution and the Cartan decomposition:
$$
\theta: \g g_{\BB R} \to \g g_{\BB R}, \qquad \theta^2=Id,
$$
$$
\g g_{\BB R} = \g k_{\BB R} \oplus \g p_{\BB R}, \qquad
\theta \bigr|_{\g k_{\BB R}} = Id, \qquad \theta \bigr|_{\g p_{\BB R}} = -Id,
$$
$$
[\g k_{\BB R}, \g k_{\BB R}] \subset \g k_{\BB R}, \qquad
[\g p_{\BB R}, \g p_{\BB R}] \subset \g k_{\BB R}, \qquad
[\g k_{\BB R}, \g p_{\BB R}] \subset \g p_{\BB R}.
$$
Let $K_{\BB R} \subset G_{\BB R}$ be the connected Lie subgroup with Lie algebra
$\g k_{\BB R}$, then $K_{\BB R}$ is a maximal compact subgroup of $G_{\BB R}$
(Corollary \ref{max-compact-cor}),
and it acts on $\g g_{\BB R}$ and $\g p_{\BB R}$ by $Ad$.
Every $X \in \g p_{\BB R} \subset \g g_{\BB R}$ is diagonalizable
with real eigenvalues, hence the action of $\operatorname{ad}X$ on
$\g g_{\BB R}$ is diagonalizable and has real eigenvalues as well.

Choose a maximal abelian subspace $\g a_{\BB R} \subset \g p_{\BB R}$.
Clearly, $\dim \g a_{\BB R} \ge 1$, $\g a_{\BB R}$ is $\theta$-invariant and,
for all $X \in \g a_{\BB R}$, $\operatorname{ad}X$ acts on $\g g_{\BB R}$
semisimply with real eigenvalues (homework). Hence
$$
Z_{\g g_{\BB R}}(\g a_{\BB R}) = \{ Y \in \g g_{\BB R} ;\: [Y,X]=0, \:
\forall X \in \g a_{\BB R} \} = \g m_{\BB R} \oplus \g a_{\BB R},
$$
where
$$
\g m_{\BB R} = Z_{\g k_{\BB R}}(\g a_{\BB R}) = \{ Y \in \g k_{\BB R} ;\: [Y,X]=0, \:
\forall X \in \g a_{\BB R} \}.
$$
Let
\begin{align*}
A_{\BB R} &=_{\text{def}} \text{connected subgroup of $G_{\BB R}$ with
Lie algebra $\g a_{\BB R}$}, \\
M_{\BB R} &=_{\text{def}} Z_{K_{\BB R}}(A_{\BB R}) = Z_{K_{\BB R}}(\g a_{\BB R}).
\end{align*}
The group $M_{\BB R}$ need not be connected, and so
$M_{\BB R} \ne \exp \g m_{\BB R}$ in general, but its Lie algebra is $\g m_{\BB R}$.
Complexify:
$$
\g a =_{\text{def}} \BB C \otimes \g a_{\BB R}, \qquad
\g m =_{\text{def}} \BB C \otimes \g m_{\BB R}, \qquad
\g a, \g m \subset \g g= \BB C \otimes \g g_{\BB R}.
$$

Consider the adjoint action of $\g a$ on $\g g$ (we already know that
it is semisimple). We have:
$$
Z_{\g g}(\g a) = \g m \oplus \g a
\quad \text{and} \quad
\g g = \g m \oplus \g a \oplus \Bigl(\bigoplus_{\alpha \in \Phi} \g g^{\alpha}\Bigr),
$$
where $\Phi = \Phi(\g g, \g a) \subset \g a^* \setminus \{0\}$ is the
``reduction of relative roots of the pair $(\g g, \g a)$'' and
$$
\g g^{\alpha} = \{ Y \in \g g; (\operatorname{ad} X) Y = \alpha(X) \cdot Y,\:
\forall X \in \g a \}.
$$
(Caution: $\g a$ need not be a Cartan subalgebra of $\g g$, but it is a
somewhat similar object.)
Both $\theta$ and complex conjugation (with respect to $\g g_{\BB R}$)
act on $\g g$. We know that $\theta = -Id$ on $\g a$ and the roots are
real-valued on $\g a_{\BB R}$. Therefore, for all $\alpha \in \Phi$,
$$
\theta\alpha=-\alpha, \qquad
\theta(\g g^{\alpha})= \g g^{-\alpha}, \qquad
\overline{\g g^{\alpha}}=\g g^{\alpha}.
$$
All properties of root space decomposition of a complex semisimple Lie
algebra carry over to this setting, except
\begin{enumerate}
\item
$\dim \g g^{\alpha}$ can be strictly bigger than 1;
\item
It is possible to have non-trivially proportional roots
$\pm\alpha$ and $\pm2\alpha$ only.
\end{enumerate}
(This is why non-reduced root systems are important.)

A {\em Weyl chamber} is a connected component of
$$
\g a_{\BB R} \setminus \bigcup_{\alpha \in \Phi(\g g, \g a)}
\{ X \in \g a_{\BB R} ;\: \alpha(X)=0 \}
$$
($\g a_{\BB R}$ without a finite number of hyperplanes).
Each Weyl chamber is a convex cone. Choose one and call it the
``highest Weyl chamber''.
Let $\Phi^+ \subset \Phi(\g g,\g a)$ consist of those roots
$\alpha \in \Phi(\g g,\g a)$ that are positive on the highest Weyl chamber.
Then the highest Weyl chamber is precisely
$$
\{ X \in \g a_{\BB R} ;\: \alpha(X)>0 \: \forall \alpha \in \Phi^+ \}
$$
and
$$
\Phi(\g g,\g a) = \Phi^+ \sqcup
\{ \alpha \in \Phi(\g g,\g a);\: -\alpha \in \Phi^+ \}.
$$
Consider an analogue of the Weyl group
\begin{equation}  \label{WeylGr}
W = W(G_{\BB R},A_{\BB R}) = N_{K_{\BB R}}(A_{\BB R}) / Z_{K_{\BB R}}(A_{\BB R})
= N_{K_{\BB R}}(A_{\BB R}) / M_{\BB R}.
\end{equation}
This is a finite group generated by reflections about root hyperplanes,
it acts simply transitively on the set of Weyl chambers.

Define
$$
\g n = \bigoplus_{\alpha \in \Phi^+} \g g^{-\alpha}, \qquad \g n \subset \g g,
$$
this is a nilpotent subalgebra because
$[\g g^{\alpha_1}, \g g^{\alpha_2}] \subset \g g^{\alpha_1+\alpha_2}$.
Since the roots are real-valued on $\g a_{\BB R}$, 
$\g n = \BB C \otimes \g n_{\BB R}$, where $\g n_{\BB R} = \g g_{\BB R} \cap \g n$,
i.e. $\g n$ is defined over $\BB R$. Let
\begin{align*}
N_{\BB R} &=
\text{connected subgroup of $G_{\BB R}$ with Lie algebra $\g n_{\BB R}$} \\
&= \exp(\g n_{\BB R}).
\end{align*}
Note that $\exp: \g n_{\BB R} \to N_{\BB R}$ is a polynomial map.

\begin{thm}[Iwasawa Decomposition]  \label{Iwasawa}
\begin{itemize}
\item[{\rm a)}]
Any two maximal abelian subspaces of $\g p_{\BB R}$ are conjugate under
$K_{\BB R}$;
\item[{\rm b)}]
$M_{\BB R}$, $A_{\BB R}$ and $N_{\BB R}$ are closed subgroups of $G_{\BB R}$,
the groups $A_{\BB R}$ and $M_{\BB R}$ commute, both normalize $N_{\BB R}$;
\item[{\rm c)}]
The map $K_{\BB R} \times A_{\BB R} \times N_{\BB R} \to G_{\BB R}$,
$(k,a,n) \mapsto kan$, is a diffeomorphism of ${\cal C}^{\infty}$-manifolds.
\end{itemize}
\end{thm}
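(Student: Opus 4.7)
For part (a), I would run a critical point argument on the compact group $K_{\BB R}$. Given two maximal abelian subspaces $\g a_{\BB R}, \g a'_{\BB R} \subset \g p_{\BB R}$, pick regular elements $X \in \g a_{\BB R}$ and $X' \in \g a'_{\BB R}$, meaning elements whose $\g p_{\BB R}$-centralizer is exactly the corresponding subspace. Equip $\g g_{\BB R}$ with the positive-definite, $K_{\BB R}$-invariant inner product $B_\theta(Y,Z) = -B(Y,\theta Z)$ built from the Killing form $B$. The smooth function $k \mapsto B_\theta(\operatorname{Ad}(k)X, X')$ on $K_{\BB R}$ attains its maximum at some $k_0$; setting its directional derivatives to zero and using $\operatorname{ad}$-invariance of $B$ plus the fact that $B$ is nondegenerate on $\g k_{\BB R}$ yields $[\operatorname{Ad}(k_0)X, X'] = 0$. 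Regularity of $X'$ places $\operatorname{Ad}(k_0)X$ inside $\g a'_{\BB R}$; since $\operatorname{Ad}(k_0)\g a_{\BB R}$ is an abelian subspace of $\g p_{\BB R}$ containing the regular element $\operatorname{Ad}(k_0)X$, the maximality assumptions force $\operatorname{Ad}(k_0)\g a_{\BB R} = \g a'_{\BB R}$.

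Part (b) is a bookkeeping exercise. Closedness is piecewise: $A_{\BB R} = \exp(\g a_{\BB R})$ is closed because $\operatorname{ad}|_{\g a_{\BB R}}$ is a commuting family of real-diagonalizable operators; $N_{\BB R}$ is closed because $\exp: \g n_{\BB R} \to N_{\BB R}$ is a polynomial diffeomorphism in the nilpotent setting; and $M_{\BB R} = Z_{K_{\BB R}}(A_{\BB R})$ is a centralizer inside the closed subgroup $K_{\BB R}$. Commutation of $A_{\BB R}$ with $M_{\BB R}$ is the definition of $\g m$. Both factors normalize $N_{\BB R}$ for infinitesimal reasons: $\operatorname{Ad}(a)$ preserves each restricted root space $\g g^{-\alpha}$ since $\operatorname{ad}(\g a)$ acts by scalars there, and $\operatorname{Ad}(m)$ commutes with every $\operatorname{ad}(X)$, $X \in \g a_{\BB R}$, so also preserves the root space decomposition, hence $\g n$.

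Part (c) is the core. The first step is the infinitesimal decomposition
\[
\g g_{\BB R} = \g k_{\BB R} \oplus \g a_{\BB R} \oplus \g n_{\BB R},
\]
proved as follows. Starting from $\g g_{\BB R} = \g m_{\BB R} \oplus \g a_{\BB R} \oplus \sum_{\alpha \in \Phi^+}(\g g^\alpha \oplus \g g^{-\alpha}) \cap \g g_{\BB R}$ and $\g m_{\BB R} \subset \g k_{\BB R}$, rewrite each real element $X_\alpha + X_{-\alpha}$ (with $X_{\pm\alpha} \in \g g^{\pm\alpha}$) as $(X_\alpha + \theta X_\alpha) + (X_{-\alpha} - \theta X_\alpha)$: the first summand lies in $\g k$ and the second in $\g g^{-\alpha} \subset \g n$, and both are real by $\theta$-stability of $\g g_{\BB R}$. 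Directness follows from $\g n_{\BB R} \cap \theta(\g n_{\BB R}) = 0$ and a short eigenvalue check for $\g a_{\BB R}$. Translation now shows the differential of $\mu: K_{\BB R} \times A_{\BB R} \times N_{\BB R} \to G_{\BB R}$, $(k,a,n) \mapsto kan$, is a linear isomorphism everywhere, so $\mu$ is a local diffeomorphism and $K_{\BB R} A_{\BB R} N_{\BB R}$ is open. For injectivity it is enough to show $K_{\BB R} \cap A_{\BB R} N_{\BB R} = \{e\}$: if $k = an$, applying $\theta$ gives $k = a^{-1}\theta(n)$, hence $a^2 = \theta(n) n^{-1}$, and projecting this identity against the direct sum decomposition forces $a = e$ and then $n \in N_{\BB R} \cap K_{\BB R} = \{e\}$.

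The main obstacle is surjectivity: one must show the open subset $K_{\BB R} A_{\BB R} N_{\BB R}$ is also closed in the connected group $G_{\BB R}$, so that connectedness forces equality with $G_{\BB R}$. I would combine the global Cartan decomposition $G_{\BB R} = K_{\BB R}\exp(\g p_{\BB R})$ with the natural $A_{\BB R} N_{\BB R}$-action on the symmetric space $G_{\BB R}/K_{\BB R} \cong \g p_{\BB R}$, and prove that the $A_{\BB R} N_{\BB R}$-orbit of the basepoint is closed, for instance by exhibiting a proper smooth map $A_{\BB R} \times N_{\BB R} \to G_{\BB R}/K_{\BB R}$. Alternatively, one can realize $G_{\BB R} \hookrightarrow GL(n,\BB R)$ compatibly with $K_{\BB R} \subset O(n)$, $A_{\BB R}$ diagonal, and $N_{\BB R}$ upper unipotent, and then pull the classical Gram--Schmidt factorization of $GL(n,\BB R)$ back to $G_{\BB R}$ using the uniqueness just established. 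Either route requires care, and this closedness step is where the bulk of the analytic work lives.
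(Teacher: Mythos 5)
The paper does not prove this theorem: the section opens with ``For proofs and details see the book \cite{Kn2}'', so there is no in-text argument to compare yours against. Judged on its own, your outline is the standard textbook proof (essentially Knapp's), and the overall architecture --- critical-point argument on $K_{\BB R}$ for (a), infinitesimal decomposition $\g g_{\BB R} = \g k_{\BB R} \oplus \g a_{\BB R} \oplus \g n_{\BB R}$ via $X_\alpha + X_{-\alpha} = (X_\alpha + \theta X_\alpha) + (X_{-\alpha} - \theta X_\alpha)$, local diffeomorphism plus injectivity plus an open--closed or Gram--Schmidt argument for surjectivity --- is sound. Your sign conventions even match the paper's choice $\g n = \bigoplus_{\alpha \in \Phi^+} \g g^{-\alpha}$.

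Two places need tightening. First, in the injectivity step the identity $a^2 = \theta(n)n^{-1}$ is an equation between \emph{group} elements, so ``projecting against the direct sum decomposition'' is not literally available; you need a separate argument that $A_{\BB R} \cap \theta(N_{\BB R})N_{\BB R} = \{e\}$, e.g.\ by passing to $\operatorname{Ad}$ in a basis ordered by restricted roots, where $\operatorname{Ad}(a^2)$ is diagonal with positive entries, $\operatorname{Ad}(\theta(n))$ and $\operatorname{Ad}(n^{-1})$ are opposite unipotent triangular matrices, and uniqueness of the $LDU$ factorization forces all three to be the identity; one then kills the kernel of $\operatorname{Ad}$ using that $\operatorname{ad}X$ has real eigenvalues for $X \in \g a_{\BB R}$ and that $\g g_{\BB R}$ is semisimple. (A similar caveat applies to ``translation now shows the differential is an isomorphism everywhere'': at a general point $(k,a,n)$ the translated differential is $Z \mapsto \operatorname{Ad}((an)^{-1})Z + \operatorname{Ad}(n^{-1})H + Y$, and one must check this is onto using $[\g a,\g n]\subset\g n$ and the nilpotence of $\g n$, not just the decomposition at the identity.) Second, the surjectivity step is only a statement of intent; the closedness of the $A_{\BB R}N_{\BB R}$-orbit in $G_{\BB R}/K_{\BB R}$, or the reduction to Gram--Schmidt in a compatible linear realization, is exactly where the real work is, as you acknowledge. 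Neither issue is a wrong turn --- both are filled in the cited reference --- but as written these are the gaps.
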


(Sometimes this decomposition is called the $KAN$ decomposition.)

\begin{ex}
Let $G_{\BB R} = SL(n,\BB R)$, $K_{\BB R}=SO(n)$,
\begin{align*}
\g k_{\BB R} &= \{ X \in \operatorname{End}(\BB R^n) ;\: X^T=-X,\: \tr X =0 \}, \\
\g p_{\BB R} &= \{ X \in \operatorname{End}(\BB R^n) ;\: X^T=X,\: \tr X =0 \}, \\
\g a_{\BB R} &= \Bigl\{ X \in \operatorname{End}(\BB R^n) ;\:
\begin{matrix}\text{$X$ is diagonal with respect to} \\
\text{the standard basis and $\tr X =0$} \end{matrix} \Bigr\}, \\
A_{\BB R} &= \Bigl\{ a \in SL(n,\BB R);\:
\begin{matrix} \text{$a$ is diagonal with} \\
\text{positive diagonal entries} \end{matrix} \Bigr\}, \\
M_{\BB R} &= \left\{ \begin{pmatrix} \epsilon_1 & 0 & 0 \\ 0 & \ddots & 0 \\
0 & 0 & \epsilon_n \end{pmatrix} ;\: \epsilon_j = \pm 1 ,\:
\prod_j \epsilon_j =1 \right\}.
\end{align*}
In this case, $\dim M_{\BB R}=0$, $\g m_{\BB R}=0$ and
$\g a_{\BB R}$ is a Cartan subalgebra of $\g g_{\BB R}$
(meaning that $\g a$ is a Cartan subalgebra of $\g g$).
This implies that
$\Phi(\g g_{\BB R},\g a_{\BB R}) = \Phi(\g g,\g a)$ is the full root system of
$\g{sl}(n,\BB C)$.
In particular, there are no non-trivially proportional roots,
$\dim \g g^{\alpha}=1$ for all $\alpha \in \Phi$ and $W$ is the full Weyl group.
This situation is referred to by saying that $G_{\BB R}$ is {\em split}.
Each complex semisimple Lie group has a split real form, which is unique
up to conjugation. One can think of it as the opposite extreme to the
compact real form. For example, $SL(n,\BB C)$ has real forms:
compact $SU(n)$, split $SL(n,\BB R)$ as well as others
(such as $SU(p,q)$, $p+q=n$) if $n>2$. Write
$$
\g a_{\BB R} = \left\{ \begin{pmatrix} x_1 & 0 & 0 \\ 0 & \ddots & 0 \\
0 & 0 & x_n \end{pmatrix} ;\: x_j \in \BB R,\:
\sum_j x_j =0 \right\}.
$$
The group $W$ acting on $\g a_{\BB R}$ is isomorphic to the symmetric group $S_n$
acting on $\g a_{\BB R}$ by permutation of the $x_j$'s.
The root system is
$$
\Phi = \{ \pm(x^*_j-x^*_k) ;\: 1 \le j < k \le n \},
$$
where $x_1^*, \dots, x_n^* \in (\BB R^n)^*$ denote the dual basis of $\BB R^n$
so that $\langle x_j^*,e_k \rangle = \delta_{jk}$.
We can choose the highest Weyl chamber to be
$$
\left\{ \begin{pmatrix} x_1 & 0 & 0 \\ 0 & \ddots & 0 \\
0 & 0 & x_n \end{pmatrix} ;\: x_j \in \BB R,\: x_1 > x_2 > \dots > x_n,\:
\sum_j x_j =0 \right\},
$$
then
\begin{align*}
\Phi^+ &= \{ x_j^*-x_k^* ;\: 1 \le j < k \le n \}, \\
\g n_{\BB R} &= \text{Lie algebra of strictly lower-triangular matrices}, \\
N_{\BB R} &= \text{Lie group of lower-triangular unipotent matrices}.
\end{align*}
The Iwasawa decomposition for $GL(n,\BB R)$ (requires a slight modification
for $SL(n,\BB R)$) asserts that any invertible $n \times n$ real matrix can be
expressed uniquely as
{\Large
$$
\begin{pmatrix} \text{orthogonal} \\ \text{matrix} \end{pmatrix}
\begin{pmatrix} \text{diagonal matrix with} \\ \text{positive diagonal entries}
\end{pmatrix}
\begin{pmatrix} \text{lower triangular} \\ \text{unipotent matrix} \end{pmatrix}
$$}
and these three factors are real-analytic functions.
This can be proved using the Gramm-Schmidt orthogonalization process,
and the general case can be reduced to $GL(n,\BB R)$.
\end{ex}

Recall that by Theorem \ref{Iwasawa} the groups $M_{\BB R}$, $A_{\BB R}$
normalize $N_{\BB R}$, the group $A_{\BB R} \cdot N_{\BB R}$ is closed in
$G_{\BB R}$, and $M_{\BB R}$ is compact. Hence
$$
P_{\BB R} =_{\text{def}} M_{\BB R} A_{\BB R} N_{\BB R}
$$
is a closed subgroup of $G_{\BB R}$ called a {\em minimal parabolic subgroup}.
Then $N_{\BB R} \subset P_{\BB R}$ is a closed normal subgroup.

\noindent
\underline{Caution}: Beware of the following clash of notations.
The Lie algebra of $P_{\BB R}$ is
$\g m_{\BB R} \oplus \g a_{\BB R} \oplus \g n_{\BB R}$ and \underline{not}
the subspace $\g p_{\BB R}$ from the Cartan decomposition
$\g g_{\BB R} = \g k_{\BB R} \oplus \g p_{\BB R}$.
Unless, $[\g p_{\BB R}, \g p_{\BB R}] =0$, $\g p_{\BB R}$ is not even a Lie algebra.

\begin{lem}
We have: $P_{\BB R} \cap K_{\BB R} = M_{\BB R}$.
\end{lem}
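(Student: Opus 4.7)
The proof plan is a direct application of the uniqueness in the Iwasawa decomposition (Theorem \ref{Iwasawa}(c)).

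First I would note the easy inclusion $M_{\BB R} \subset P_{\BB R} \cap K_{\BB R}$. Indeed, by its definition $M_{\BB R} = Z_{K_{\BB R}}(A_{\BB R})$, so $M_{\BB R} \subset K_{\BB R}$, and obviously $M_{\BB R} \subset M_{\BB R} A_{\BB R} N_{\BB R} = P_{\BB R}$.

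For the reverse inclusion, suppose $g \in P_{\BB R} \cap K_{\BB R}$. Since $g \in P_{\BB R}$, we may write $g = man$ with $m \in M_{\BB R}$, $a \in A_{\BB R}$, $n \in N_{\BB R}$; and since $m \in M_{\BB R} \subset K_{\BB R}$, this is an expression of $g$ as a product of a $K_{\BB R}$-element, an $A_{\BB R}$-element, and an $N_{\BB R}$-element. On the other hand, since $g \in K_{\BB R}$, the trivial decomposition $g = g \cdot e \cdot e$ is another such expression. By the uniqueness part of Theorem \ref{Iwasawa}(c) (the map $K_{\BB R} \times A_{\BB R} \times N_{\BB R} \to G_{\BB R}$ is a diffeomorphism, hence in particular a bijection), we conclude $m = g$, $a = e$, and $n = e$. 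Therefore $g = m \in M_{\BB R}$.

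There is no real obstacle here; the entire content is packaged in Theorem \ref{Iwasawa}. The only subtle point to state explicitly is that $M_{\BB R} \subset K_{\BB R}$, which is needed to legitimately regard $m$ as the $K_{\BB R}$-factor in the Iwasawa decomposition of $g$. Combining both inclusions yields $P_{\BB R} \cap K_{\BB R} = M_{\BB R}$.
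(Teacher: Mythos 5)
Your proof is correct and rests on the same ingredient as the paper's: the uniqueness (injectivity) in the Iwasawa decomposition of Theorem \ref{Iwasawa}(c). The paper phrases the reduction slightly differently, reducing to $K_{\BB R} \cap (A_{\BB R} N_{\BB R}) = \{e\}$, but this is the same argument; your version of applying uniqueness directly to the two factorizations $g = man = g\cdot e\cdot e$ is, if anything, a bit more explicit.
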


\begin{proof}
Since $M_{\BB R} \subset P_{\BB R} \cap K_{\BB R}$, it is enough to show that
$K_{\BB R} \cap (A_{\BB R} N_{\BB R}) =\{e\}$,
but this follows from Iwasawa decomposition.
\end{proof}

From this lemma and Iwasawa decomposition we immediately obtain:

\begin{cor}
The homogeneous space
$G_{\BB R} / P_{\BB R} \simeq K_{\BB R} / M_{\BB R}$ is compact.
\end{cor}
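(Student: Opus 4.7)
The plan is to use the Iwasawa decomposition as a diffeomorphism and the preceding lemma to exhibit a continuous bijection $K_{\BB R}/M_{\BB R} \to G_{\BB R}/P_{\BB R}$ which, by compactness of the source and Hausdorffness of the target, is automatically a homeomorphism.

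First I would consider the composition
$$
\iota: K_{\BB R} \hookrightarrow G_{\BB R} \twoheadrightarrow G_{\BB R}/P_{\BB R},
$$
where the first arrow is inclusion and the second is the canonical projection. Surjectivity of $\iota$ is a direct consequence of Theorem \ref{Iwasawa}(c): any $g \in G_{\BB R}$ factors as $g = kan$ with $k \in K_{\BB R}$, $a \in A_{\BB R}$, $n \in N_{\BB R}$, and since $an \in A_{\BB R} N_{\BB R} \subset P_{\BB R}$, we have $gP_{\BB R} = kP_{\BB R} = \iota(k)$.

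Next, I would identify the fibers of $\iota$. Two points $k, k' \in K_{\BB R}$ satisfy $\iota(k) = \iota(k')$ if and only if $k^{-1}k' \in P_{\BB R}$, i.e.\ $k^{-1}k' \in K_{\BB R} \cap P_{\BB R}$. By the preceding lemma, $K_{\BB R} \cap P_{\BB R} = M_{\BB R}$, so the fiber through $k$ is precisely the left coset $kM_{\BB R}$. Thus $\iota$ descends to a continuous bijection
$$
\bar\iota: K_{\BB R}/M_{\BB R} \longrightarrow G_{\BB R}/P_{\BB R}.
$$

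Finally, I would promote $\bar\iota$ to a homeomorphism. Since $K_{\BB R}$ is compact and $M_{\BB R} \subset K_{\BB R}$ is a closed subgroup, the quotient $K_{\BB R}/M_{\BB R}$ is compact. On the other hand, $P_{\BB R}$ is a closed subgroup of the Lie group $G_{\BB R}$, so $G_{\BB R}/P_{\BB R}$ is Hausdorff. A continuous bijection from a compact space to a Hausdorff space is automatically a homeomorphism, and it follows at once that $G_{\BB R}/P_{\BB R}$ is compact. I expect no serious obstacle here; the only point requiring mild care is this last topological step, and with a little more bookkeeping on the differentials of the Iwasawa map one can upgrade $\bar\iota$ to a diffeomorphism of ${\cal C}^\infty$ manifolds, but that refinement is not needed to conclude compactness.
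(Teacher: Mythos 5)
Your argument is exactly the one the paper intends: surjectivity of $K_{\BB R}\to G_{\BB R}/P_{\BB R}$ from the Iwasawa decomposition, identification of the fibers via $K_{\BB R}\cap P_{\BB R}=M_{\BB R}$, and compactness transported through the resulting continuous bijection. The paper leaves this as an "immediate" consequence of the lemma and Theorem \ref{Iwasawa}, and your write-up is a correct and complete filling-in of that step.
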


\subsection{$KAK$ Decomposition}

By part (a) of Theorem \ref{Iwasawa},
$$
\g p_{\BB R} = \bigcup_{k \in K_{\BB R}} Ad(k) \g a_{\BB R}.
$$
Then it is immediate from the global Cartan decomposition
$G_{\BB R} = K_{\BB R} \cdot \exp \g p_{\BB R}$ (Proposition \ref{Cartan-decomp})
that $G_{\BB R} = K_{\BB R} A_{\BB R} K_{\BB R}$ in the sense that every element
$g \in G_{\BB R}$ can be expressed as $g=k_1ak_2$ with
$k_1,k_2 \in K_{\BB R}$ and $a \in A_{\BB R}$.
The following theorem describes the degree of non-uniqueness of this
decomposition.
Recall that the analogue of the Weyl group $W(G_{\BB R},A_{\BB R})$ was defined
by (\ref{WeylGr}).

\begin{thm}[$KAK$ Decomposition]
Every element $g \in G_{\BB R}$ can be expressed as $g=k_1ak_2$ with
$k_1,k_2 \in K_{\BB R}$ and $a \in A_{\BB R}$. In this decomposition,
$a$ is uniquely determined up to conjugation by an element of
$W(G_{\BB R},A_{\BB R})$.
If $a \in A_{\BB R}$ is fixed as $\exp X$ with $X \in \g a_{\BB R}$ and if
$\alpha(X) \ne 0$ for all $\alpha \in \Phi(\g g, \g a)$, then $k_1$
is unique up to multiplication on the right by an element of $M_{\BB R}$.
\end{thm}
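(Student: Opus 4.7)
The plan is to split into existence and uniqueness, and to reduce the uniqueness statement, via the Cartan involution, to a well-known rigidity property of maximal abelian subspaces in $\g p_{\BB R}$.

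For existence, start from the global Cartan decomposition $G_{\BB R}=K_{\BB R}\cdot\exp(\g p_{\BB R})$ and write $g=k\cdot\exp Y$ with $k\in K_{\BB R}$, $Y\in \g p_{\BB R}$. By part (a) of Theorem \ref{Iwasawa}, any maximal abelian subspace of $\g p_{\BB R}$ containing $Y$ is $\operatorname{Ad}(K_{\BB R})$-conjugate to $\g a_{\BB R}$, so there exist $k'\in K_{\BB R}$ and $X\in \g a_{\BB R}$ with $Y=\operatorname{Ad}(k')X$. Then $g=k\cdot k'\exp(X)(k')^{-1}=(kk')\,a\,(k')^{-1}$ with $a=\exp X$, giving the $KAK$ decomposition.

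For uniqueness of $a$ up to $W$, I would apply the global Cartan involution $\theta$. Since $\theta=\mathrm{Id}$ on $K_{\BB R}$ and $\theta(\exp X)=\exp(-X)$ for $X\in\g p_{\BB R}$, if $g=k_1ak_2$ then $g\,\theta(g)^{-1}=k_1 a^2 k_1^{-1}$. Thus $a^2$ is determined by $g$ up to $K_{\BB R}$-conjugation, and because $\exp$ restricts to a diffeomorphism $\g p_{\BB R}\xrightarrow{\sim}\exp(\g p_{\BB R})$ (global Cartan decomposition), $2X=2\log a$ is determined up to the $\operatorname{Ad}(K_{\BB R})$-action on $\g p_{\BB R}$. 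The problem therefore reduces to the key claim: if $X,X'\in\g a_{\BB R}$ satisfy $X'=\operatorname{Ad}(k)X$ for some $k\in K_{\BB R}$, then $X'=wX$ for a representative $k_w$ of some $w\in W(G_{\BB R},A_{\BB R})$. I would prove this first for regular $X$ (i.e.\ $\alpha(X)\ne 0$ for all $\alpha\in\Phi$): the centralizer of $X$ in $\g g$ is $\g m\oplus \g a$, so its intersection with $\g p_{\BB R}$ is exactly $\g a_{\BB R}$, whence $\g a_{\BB R}$ is the unique maximal abelian subspace of $\g p_{\BB R}$ containing $X$. Both $\g a_{\BB R}$ and $\operatorname{Ad}(k)\g a_{\BB R}$ are maximal abelian subspaces containing the regular element $X'$, so they coincide, i.e.\ $k\in N_{K_{\BB R}}(A_{\BB R})$; hence $\operatorname{Ad}(k)|_{\g a_{\BB R}}$ is a Weyl element carrying $X$ to $X'$. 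For singular $X$, I would approximate $X$ by a sequence of regular $X_n\to X$ inside the same Weyl chamber closure, obtain $X'_n=w_n X_n$ with $w_n\in W$, pass to a subsequence with constant $w_n=w$ (since $W$ is finite), and let $n\to\infty$ to get $X'=wX$.

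For the final clause, fix $a=\exp X$ with $X$ regular and suppose $g=k_1 a k_2=k_1' a k_2'$. Applying the $g\mapsto g\theta(g)^{-1}$ argument gives $k_1 a^2 k_1^{-1}=k_1' a^2 (k_1')^{-1}$, so $k:=k_1^{-1}k_1'\in K_{\BB R}$ satisfies $\operatorname{Ad}(k)X=X$. Since $k\in K_{\BB R}$ implies $\theta$ and $\operatorname{Ad}(k)$ commute, $\operatorname{Ad}(k)$ preserves the $\theta$-eigenspace decomposition of $Z_{\g g_{\BB R}}(X)=\g m_{\BB R}\oplus\g a_{\BB R}$, so it preserves $\g a_{\BB R}$; hence $k\in N_{K_{\BB R}}(A_{\BB R})$ and represents an element $w\in W$ fixing the regular element $X$. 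Regularity forces $w=1$, so $k\in Z_{K_{\BB R}}(A_{\BB R})=M_{\BB R}$, i.e.\ $k_1'\in k_1 M_{\BB R}$.

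The main obstacle is the rigidity claim that $K_{\BB R}$-conjugate elements of $\g a_{\BB R}$ are $W$-conjugate. The regular case is clean once one has the centralizer identity $Z_{\g g}(X)=\g m\oplus\g a$, but the extension to singular $X$ requires the continuity/finiteness argument above, and care is needed that $w_n$ can be chosen constant along a subsequence so the limiting identity $X'=wX$ is meaningful.
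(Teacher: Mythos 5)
Your existence argument, the reduction of the uniqueness of $a$ to the map $g \mapsto g\,\theta(g)^{-1}$, the regular case of your rigidity claim, and the final clause about $k_1$ are all correct; this is the standard route (the paper itself only sketches existence and refers to Knapp for the rest). The one genuine gap is in the singular case of the key claim that $K_{\BB R}$-conjugate elements of $\g a_{\BB R}$ are $W$-conjugate. You take regular $X_n \to X$ and assert you can ``obtain $X'_n = w_n X_n$,'' but $X'_n = \operatorname{Ad}(k)X_n$ need not lie in $\g a_{\BB R}$: the hypothesis only gives $\operatorname{Ad}(k)X \in \g a_{\BB R}$ for the single singular element $X$, not for nearby regular elements of $\g a_{\BB R}$ (e.g.\ for $X=0$ the element $k$ is completely arbitrary in $K_{\BB R}$, and $\operatorname{Ad}(k)\g a_{\BB R}$ is generically transverse to $\g a_{\BB R}$ away from $0$). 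So your regular-case lemma does not apply to the pairs $(X_n, X'_n)$, and there is nothing to pass to the limit with; passing to a constant subsequence $w_n = w$ does not rescue a premise that fails.

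The standard repair is a centralizer argument rather than an approximation: given $X' = \operatorname{Ad}(k)X$ with $X, X' \in \g a_{\BB R}$, both $\g a_{\BB R}$ and $\operatorname{Ad}(k)\g a_{\BB R}$ are maximal abelian subspaces of $\g p_{\BB R}$ contained in the $\theta$-stable reductive subalgebra $Z_{\g g_{\BB R}}(X')$. Applying the conjugacy statement of Theorem \ref{Iwasawa}(a) inside the reductive group $Z_{G_{\BB R}}(X')^0$, whose maximal compact subgroup is $Z_{K_{\BB R}}(X')^0$, yields $k'' \in Z_{K_{\BB R}}(X')$ with $\operatorname{Ad}(k''k)\g a_{\BB R} = \g a_{\BB R}$; then $k''k \in N_{K_{\BB R}}(A_{\BB R})$ and $\operatorname{Ad}(k''k)X = \operatorname{Ad}(k'')X' = X'$, producing the required $w \in W$. (One must note that the conjugacy of maximal abelian subspaces of $\g p$ persists for reductive, not just semisimple, groups; alternatively one can separate the finite $W$-orbits by $W$-invariant polynomials and invoke the Chevalley restriction theorem.) With this substitution, the rest of your proof goes through as written.
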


Note that the roles of $k_1$ and $k_2$ in this decomposition are symmetric,
since $g^{-1} = (k_2)^{-1}a^{-1}(k_1)^{-1}$.
Hence, if $a \in A_{\BB R}$ is fixed as $\exp X$ with $X \in \g a_{\BB R}$ and
if $\alpha(X) \ne 0$ for all $\alpha \in \Phi(\g g, \g a)$, then $k_2$
is unique up to multiplication on the left by an element of $M_{\BB R}$.

\section{Principal Series Representations}

\subsection{$G_{\BB R}$-equivariant Vector Bundles}

The principal series representations of $G_{\BB R}$ are realized in the space
of sections of certain $G_{\BB R}$-equivariant vector bundles over a homogeneous
space $G_{\BB R}/P_{\BB R}$. For this reason we review equivariant vector bundles.

Let $G$ be any Lie group and suppose that it acts on a manifold $M$.
For concreteness, let us suppose that the manifold, the action map and
the vector bundles are all smooth.
But keep in mind that everything we do in this subsection applies to the
complex analytic setting as well.

A $G$-equivariant vector bundle on $M$ is a vector bundle ${\cal E} \to M$ 
with $G$-action on ${\cal E}$ by bundle maps lying over the $G$-action on $M$.
Write ${\cal E}_m$ for the fiber of ${\cal E}$ at $m \in M$, then,
for each $g \in G$,
$$
g: {\cal E}_m \to {\cal E}_{g \cdot m} \quad
\text{is a linear isomorphism of vector spaces}.
$$

Now suppose that $G$ acts on $M$ transitively. This means that $M$ is a
homogeneous space $G/H$ for some closed subgroup $H \subset G$.
We have a bijection:
$$
\left\{ \begin{matrix} \text{$G$-equivariant complex vector} \\
\text{bundles over $G/H$ of rank $n$}  \end{matrix}\right\}
\simeq
\left\{ \begin{matrix} \text{complex representations} \\
\text{of $H$ of dimension $n$} \end{matrix}\right\}
$$
In one direction, if ${\cal E} \to G/H$ is a $G$-equivariant complex vector
bundle of rank $n$, then $H$ acts on the $n$-dimensional vector space
$E={\cal E}_{eH}$ linearly, hence we get a representation $(\tau,E)$ of $H$.
Moreover, we have an isomorphism
\begin{equation}  \label{vbundle}
\begin{matrix}
{\cal E} & \simeq & G \times_H E & =_{\text{def}} & G \times E / \sim \\
\downarrow & & \downarrow \\
G/H & = & G/H \end{matrix}
\hskip-10pt (gh,v) \sim (g,\tau(h)v).
\end{equation}
To see this, note that the map
$$
G \times {\cal E}_{eH} \to {\cal E}, \qquad
(g,v) \mapsto gv \in {\cal E}_{gH} \subset {\cal E},
$$
is onto and then determine which points get identified with which.
Conversely, any representation $(\tau, E)$ of $H$  determines a
$G$-equivariant vector bundle by (\ref{vbundle}),
$\operatorname{rank} {\cal E} = \dim E$.
And, of course, we have a similar bijection between real vector bundles over
$G/H$ and real representations of $H$.

Now, let ${\cal E} \to G/H$ be the vector bundle corresponding to a
representation $(\tau, E)$ of $H$. Then, by (\ref{vbundle}), the global
sections of ${\cal E}$ can be described as follows:
\begin{multline*}
{\cal C}^{\infty}(G/H, {\cal E}) \simeq \\
\{ f: G \xrightarrow{\text{smooth }} E ;\: f(gh)=\tau(h^{-1})f(g),\:
\forall g \in G,\: h \in H \}
\end{multline*}
(because $(g,v)=(gh,\tau(h^{-1})v)$ in $G \times_H E$).

\subsection{Construction of the Principal Series Representations}

In this subsection we construct the principal series representations.
These are obtained by {\em inducing} representations from a minimal
parabolic subgroup $P_{\BB R}$ to $G_{\BB R}$.
In other words, one starts with a finite-dimensional representation
of $P_{\BB R}$, forms the corresponding $G_{\BB R}$-equivariant vector
bundle ${\cal E} \to G_{\BB R}/P_{\BB R}$ and lets $G_{\BB R}$ act on
${\cal C}^{\infty}(G_{\BB R}/P_{\BB R}, {\cal E})$
-- the space of global sections of that bundle.
When the group $G_{\BB R}$ is $SU(1,1)$, the principal series representations
are precisely the representations $(\pi_{\chi},V_{\chi})$ constructed in
Section \ref{V-chi-construction}.
We use the same notations as in Subsection \ref{Iwasawa-subsection}
where we discussed the Iwasawa decomposition.

Note that any $\nu \in \g a^*$ lifts to a multiplicative character
(i.e. one-dimensional representation)
$$
e^{\nu} : A_{\BB R} \to \BB C^{\times} \quad \text{by the rule} \quad
e^{\nu}(\exp X) =_{\text{def}} e^{\nu(X)}, \quad X \in \g a_{\BB R},
$$
then
$$
e^{\nu}(a_1) \cdot e^{\nu}(a_2) = e^{\nu}(a_1a_2), \qquad
\forall a_1, a_2 \in A_{\BB R}.
$$
Using the decomposition $P_{\BB R} = M_{\BB R} A_{\BB R} N_{\BB R}$,
we can extend $e^{\nu}$ to a one-dimensional representation
$e^{\nu}: P_{\BB R} \to \BB C^{\times}$ with $e^{\nu}$ being trivial on
$M_{\BB R}$ and $N_{\BB R}$:
$$
e^{\nu}(man) = e^{\nu}(a), \qquad
m \in M_{\BB R},\: a \in A_{\BB R},\: n \in N_{\BB R}.
$$
Note that, since $M_{\BB R}$ and $A_{\BB R}$ normalize $N_{\BB R}$ and
$M_{\BB R}$ commutes with $A_{\BB R}$, it is still true that
$$
e^{\nu}(p_1) \cdot e^{\nu}(p_2) = e^{\nu}(p_1p_2), \qquad
\forall p_1, p_2 \in P_{\BB R}.
$$
This one-dimensional representation determines a $G_{\BB R}$-equivariant
line bundle ${\cal L}_{\nu} \to G_{\BB R}/P_{\BB R}$.

In particular, we can apply above construction to $\nu=-2\rho$, where
$$
\rho = \frac12 \sum_{\alpha \in \Phi^+} (\dim \g g^{\alpha}) \alpha
\quad \in \g a_{\BB R}^* \subset \g a^*,
$$
and obtain a line bundle ${\cal L}_{-2\rho} \to G_{\BB R}/P_{\BB R}$.
Since $\rho \in \g a_{\BB R}^*$, this line bundle has a real structure, i.e.
can be obtained by complexifying a certain $G_{\BB R}$-equivariant {\em real}
line bundle.

\begin{lem}  \label{cotangent-iso}
As a $G_{\BB R}$-equivariant line bundle with real structure,
${\cal L}_{-2\rho} \to G_{\BB R}/P_{\BB R}$ is isomorphic to the top exterior power
of the complexified cotangent bundle of $G_{\BB R}/P_{\BB R}$,
$\BB C \otimes \Lambda^{\text{top}} T^* G_{\BB R}/P_{\BB R}$.
\end{lem}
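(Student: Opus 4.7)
The plan is to exhibit both line bundles as arising from one‑dimensional representations of $P_{\BB R}$ and show the two representations coincide as characters $P_{\BB R} \to \BB C^{\times}$. Using the bijection between $G_{\BB R}$‑equivariant (complex) line bundles on $G_{\BB R}/P_{\BB R}$ and one‑dimensional representations of $P_{\BB R}$ (established in the preceding subsection), it suffices to compute the character attached to $\BB C \otimes \Lambda^{\mathrm{top}} T^*(G_{\BB R}/P_{\BB R})$ on its fiber at $eP_{\BB R}$ and verify that it equals $e^{-2\rho}$ as defined in the text.

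First I would identify $T_{eP_{\BB R}}(G_{\BB R}/P_{\BB R}) \simeq \g g_{\BB R}/\g p_{\BB R}$ as $P_{\BB R}$‑modules, with $P_{\BB R}$ acting through the adjoint representation modulo $\g p_{\BB R}$. The restricted root space decomposition $\g g_{\BB R} = \g m_{\BB R} \oplus \g a_{\BB R} \oplus \g n_{\BB R} \oplus \bar{\g n}_{\BB R}$ (with $\bar{\g n}_{\BB R} = \theta \g n_{\BB R} = \bigoplus_{\alpha \in \Phi^+} \g g^{\alpha}_{\BB R}$) gives a canonical $P_{\BB R}$‑linear identification $\g g_{\BB R}/\g p_{\BB R} \simeq \bar{\g n}_{\BB R}$ via the projection along $\g p_{\BB R}$. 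Thus the character I must compute is
$$
\chi(p) = \det\bigl(\operatorname{Ad}(p) \bigm|_{\bar{\g n}_{\BB R}}\bigr)^{-1},
\qquad p \in P_{\BB R},
$$
and the task is to check $\chi = e^{-2\rho}$ on each of the three factors $M_{\BB R}$, $A_{\BB R}$, $N_{\BB R}$.

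The $A_{\BB R}$ computation is straightforward: since $\operatorname{Ad}(a)$ acts on each $\g g^{\alpha}_{\BB R}$ by the scalar $e^{\alpha(\log a)}$, the product over positive roots (counted with multiplicity $\dim \g g^{\alpha}$) is exactly $e^{2\rho(\log a)}$, giving $\chi(a) = e^{-2\rho}(a)$. For $n \in N_{\BB R}$, I would filter $\bar{\g n}_{\BB R}$ by the height of positive roots and observe that, modulo $\g p_{\BB R}$, the commutator $[\g n_{\BB R}, \g g^{\alpha}_{\BB R}]$ lies in root spaces of strictly smaller height (roots of the form $\alpha - \beta$ with $\beta \in \Phi^+$ being either negative, zero, or of smaller height); this forces $\operatorname{Ad}(n)$ to act unipotently on $\g g_{\BB R}/\g p_{\BB R}$, so $\chi(n) = 1 = e^{-2\rho}(n)$.

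The main obstacle is the $M_{\BB R}$ factor: $M_{\BB R}$ need not be connected, so compactness alone only yields $\det\bigl(\operatorname{Ad}(m)|_{\bar{\g n}_{\BB R}}\bigr) \in \{\pm 1\}$ (e.g.\ by using a $B_{\theta}$‑orthogonal structure preserved by $\operatorname{Ad}(m)$ for $m \in K_{\BB R}$). To rule out the sign ambiguity I would use the $\theta$‑equivariant isomorphism
$$
X \mapsto X + \theta X, \qquad \g n_{\BB R} \xrightarrow{\sim} \g k_{\BB R}/\g m_{\BB R},
$$
which is $M_{\BB R}$‑equivariant because $\theta$ commutes with $\operatorname{Ad}(m)$ for $m \in K_{\BB R}$. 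Combined with the fact that $\theta$ itself identifies $\g n_{\BB R}$ and $\bar{\g n}_{\BB R}$ as $M_{\BB R}$‑modules, this reduces the computation to $\det\bigl(\operatorname{Ad}(m)|_{\g k_{\BB R}}\bigr) \cdot \det\bigl(\operatorname{Ad}(m)|_{\g m_{\BB R}}\bigr)^{-1}$. Both factors equal $1$ because $K_{\BB R}$ and $M_{\BB R}$ are compact and hence unimodular, so $\det \operatorname{Ad}$ is trivial on their own Lie algebras. This yields $\chi(m) = 1 = e^{-2\rho}(m)$, completing the match of characters. Finally, since the Killing form on $\g n_{\BB R} \oplus \bar{\g n}_{\BB R}$ and the decomposition $\g g_{\BB R} = \g p_{\BB R} \oplus \bar{\g n}_{\BB R}$ are defined over $\BB R$, the isomorphism descends from the complexified top form bundle to its real structure, proving the lemma in the stated form.
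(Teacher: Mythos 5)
Your overall route is the same as the paper's: realize each line bundle by its one\-/dimensional $P_{\BB R}$-module at $eP_{\BB R}$, identify the (co)tangent space there with $\theta\g n_{\BB R}$ (resp.\ $\g n_{\BB R}$), and match the characters on $A_{\BB R}$, $N_{\BB R}$ and $M_{\BB R}$ separately; your $A_{\BB R}$ and $N_{\BB R}$ computations and the treatment of $M_{\BB R}^0$ are correct. The gap is in the step for the non-identity components of $M_{\BB R}$ --- precisely the point the paper says ``requires a bit more work'' and omits. Your reduction of $\det\bigl(\operatorname{Ad}(m)|_{\theta\g n_{\BB R}}\bigr)$ to $\det\bigl(\operatorname{Ad}(m)|_{\g k_{\BB R}}\bigr)\cdot\det\bigl(\operatorname{Ad}(m)|_{\g m_{\BB R}}\bigr)^{-1}$ via $X \mapsto X+\theta X$ is correct and genuinely useful, but the justification you give for both determinants being $1$ --- ``compact, hence unimodular, hence $\det\operatorname{Ad}$ is trivial'' --- is not valid. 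Unimodularity only gives $|\det\operatorname{Ad}(m)|=1$; for a \emph{disconnected} compact group, $\det\operatorname{Ad}$ on its own Lie algebra can be $-1$ (a reflection in $O(2)$ acts by $-1$ on $\g{so}(2)$). Connectedness does rescue the factor $\det\bigl(\operatorname{Ad}(m)|_{\g k_{\BB R}}\bigr)=1$, since $K_{\BB R}$ is connected when $G_{\BB R}$ is, but $M_{\BB R}$ is typically disconnected, so $\det\bigl(\operatorname{Ad}(m)|_{\g m_{\BB R}}\bigr)$ is a priori only $\pm1$ --- and that sign ambiguity is the entire difficulty of the lemma.

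The gap is repairable with one more identity. Since $G_{\BB R}$ is connected with semisimple Lie algebra, $\det\bigl(\operatorname{Ad}(g)|_{\g g_{\BB R}}\bigr)=1$ for all $g$. Applying this to $m \in M_{\BB R}$ and using the $M_{\BB R}$-invariant decomposition $\g g_{\BB R} = \g m_{\BB R} \oplus \g a_{\BB R} \oplus \g n_{\BB R} \oplus \theta\g n_{\BB R}$ (trivial action on $\g a_{\BB R}$; equal determinants on $\g n_{\BB R}$ and $\theta\g n_{\BB R}$ via conjugation by $\theta$) gives
$$
1 = \det\bigl(\operatorname{Ad}(m)|_{\g m_{\BB R}}\bigr)\cdot
\det\bigl(\operatorname{Ad}(m)|_{\theta\g n_{\BB R}}\bigr)^2,
$$
so $\det\bigl(\operatorname{Ad}(m)|_{\g m_{\BB R}}\bigr)=1$ because the second factor is positive. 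Feeding this into your relation $1=\det\bigl(\operatorname{Ad}(m)|_{\g k_{\BB R}}\bigr)=\det\bigl(\operatorname{Ad}(m)|_{\g m_{\BB R}}\bigr)\cdot\det\bigl(\operatorname{Ad}(m)|_{\theta\g n_{\BB R}}\bigr)$ then forces $\det\bigl(\operatorname{Ad}(m)|_{\theta\g n_{\BB R}}\bigr)=1$, which is what you need. (Alternatively, one can invoke the structure theorem $M_{\BB R}=FM_{\BB R}^0$ with $F\subset \exp(i\g a_{\BB R})$ acting trivially on $\g m_{\BB R}\oplus\g a_{\BB R}$.) As written, however, the $M_{\BB R}$ step does not stand.
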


\begin{proof}
As a $P_{\BB R}$-module,
the tangent space of $G_{\BB R}/P_{\BB R}$ at $eP_{\BB R}$ is
$$
\g g_{\BB R} / Lie(P_{\BB R}) =
(\g m_{\BB R} \oplus \g a_{\BB R} \oplus \g n_{\BB R} \oplus \theta \g n_{\BB R})
/ (\g m_{\BB R} \oplus \g a_{\BB R} \oplus \g n_{\BB R}),
$$
and
$$
T_{eP_{\BB R}} G_{\BB R}/P_{\BB R} = \g g_{\BB R} / Lie(P_{\BB R})
\simeq \theta \g n_{\BB R}
$$
as $M_{\BB R}A_{\BB R}$-modules.
Identifying $\g g_{\BB R}$ with its dual $\g g_{\BB R}^*$ via the Killing form
and noting that $\g n_{\BB R}$ and $\theta \g n_{\BB R}$ are dual to each other,
we conclude that
$$
T^*_{eP_{\BB R}} G_{\BB R}/P_{\BB R} \simeq \g n_{\BB R}
$$
as $M_{\BB R}A_{\BB R}$-modules. Then we have an isomorphism of $P_{\BB R}$-modules
$$
\Lambda^{\text{top}} T^*_{eP_{\BB R}} G_{\BB R}/P_{\BB R}
\simeq \Lambda^{\text{top}} \g n_{\BB R},
$$
since $N_{\BB R}$ acts trivially on both sides
(the fact that $N_{\BB R}$ acts trivially on each side boils down to the
property of root spaces
$[\g g^{\alpha_1}, \g g^{\alpha_2}] \subset \g g^{\alpha_1+\alpha_2}$.)

Since we have a bijection between $G_{\BB R}$-equivariant real line bundles 
over $G_{\BB R}/P_{\BB R}$ and one-dimensional real representations of
$P_{\BB R}$, it is sufficient to show isomorphism of $P_{\BB R}$-modules
$$
\Lambda^{\text{top}} T^*_{eP_{\BB R}} G_{\BB R}/P_{\BB R}
\simeq \Lambda^{\text{top}} \g n_{\BB R}
\qquad \text{and} \qquad (e^{-2\rho},\BB R).
$$
It is certainly clear that the actions of $A_{\BB R}$ and $N_{\BB R}$
on both sides are the same.
Since the trivial representation is the only real one-dimensional
representation of a compact connected Lie group, $M_{\BB R}^0$ -- the connected
component of the identity element in $M_{\BB R}$ -- acts trivially on both sides.
It remains to show that every connected component of $M_{\BB R}$ contains
a representative $m$ which acts trivially on $\Lambda^{\text{top}} \g n_{\BB R}$.
This part requires a bit more work and we omit it.
\end{proof}

\begin{cor}
The homogeneous space $G_{\BB R}/P_{\BB R}$ is orientable.
\end{cor}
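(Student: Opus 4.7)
The plan is to reduce orientability of $G_{\BB R}/P_{\BB R}$ to triviality of a real line bundle, and then to exhibit a nowhere-vanishing global section of that bundle using the Iwasawa decomposition. Recall that a smooth manifold $M$ is orientable if and only if its top exterior power of the real cotangent bundle $\Lambda^{\text{top}} T^* M$ admits a nowhere-vanishing global section (equivalently, is trivial as a real line bundle). So the corollary reduces to proving triviality of $\Lambda^{\text{top}} T^*(G_{\BB R}/P_{\BB R})$ as a real $G_{\BB R}$-equivariant line bundle.

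By Lemma \ref{cotangent-iso}, this line bundle, equipped with its real structure, is $G_{\BB R}$-equivariantly isomorphic to the real line bundle $G_{\BB R} \times_{P_{\BB R}} \BB R$ associated to the one-dimensional real representation $(e^{-2\rho}, \BB R)$ of $P_{\BB R}$. The crucial observation is that $\rho \in \g a_{\BB R}^*$ is a \emph{real}-valued linear functional, and $e^{-2\rho}$ is defined to be trivial on $M_{\BB R}$ and on $N_{\BB R}$; consequently the character $e^{-2\rho}: P_{\BB R} \to \BB R^{\times}$ takes values in the positive reals $\BB R^+$. Any $G_{\BB R}$-equivariant real line bundle whose associated character lands in $\BB R^+$ is trivializable, since $\BB R^+$ is contractible (equivalently, since triviality of a real line bundle is equivalent to reducibility of the structure group from $\BB R^{\times}$ to $\BB R^+$).

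To make this concrete, I would construct the section explicitly. Using the Iwasawa decomposition $G_{\BB R} = K_{\BB R} A_{\BB R} N_{\BB R}$ from Theorem \ref{Iwasawa}, each $g \in G_{\BB R}$ has a unique expression $g = k(g) a(g) n(g)$. Define $f: G_{\BB R} \to \BB R^+$ by $f(g) = e^{2\rho}(a(g))$. Sections of the line bundle $G_{\BB R} \times_{P_{\BB R}} \BB R$ correspond to functions $f: G_{\BB R} \to \BB R$ satisfying $f(gp) = e^{2\rho}(p) f(g)$, and the required equivariance follows from a short computation: writing $p = m_0 a_0 n_0$ and using that $M_{\BB R}$ normalizes $N_{\BB R}$, commutes with $A_{\BB R}$, and that $A_{\BB R}$ normalizes $N_{\BB R}$, one finds $a(gp) = a(g) a_0$, whence $f(gp) = e^{2\rho}(a(g)) e^{2\rho}(a_0) = e^{2\rho}(p) f(g)$. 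Since $f$ is everywhere positive, the corresponding section of $\Lambda^{\text{top}} T^*(G_{\BB R}/P_{\BB R})$ is nowhere vanishing, proving orientability.

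The main obstacle is bookkeeping rather than conceptual: one must take care with sign conventions (whether sections correspond to $e^{-2\rho}$-equivariance or $e^{2\rho}$-equivariance under $(g,v) \sim (gp, e^{-2\rho}(p)v)$) and verify that the Iwasawa $A$-component behaves multiplicatively under right multiplication by elements of $P_{\BB R}$. Neither step is deep, and the real content is contained in the positivity of $e^{-2\rho}$ already extracted from Lemma \ref{cotangent-iso}.
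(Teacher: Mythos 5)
Your proof is correct and follows essentially the same route as the paper: reduce orientability to a nowhere-vanishing section of $\Lambda^{\text{top}}T^*(G_{\BB R}/P_{\BB R})\simeq{\cal L}_{-2\rho}$ via Lemma \ref{cotangent-iso}, then use that $e^{-2\rho}$ is positive-valued on $P_{\BB R}$ to produce such a section. Your explicit section $g\mapsto e^{2\rho}(a(g))$ restricts to the constant function $1$ on $K_{\BB R}$, so it is precisely the section the paper describes through the identification $G_{\BB R}/P_{\BB R}\simeq K_{\BB R}/M_{\BB R}$.
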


\begin{proof}
Recall that a manifold $M$ is orientable if and only if 
the top exterior power of the cotangent bundle $\Lambda^{\text{top}} T^*M$
has a non-vanishing global section. In our case $M=G_{\BB R}/P_{\BB R}$ and
$\Lambda^{\text{top}} T^*M$ is isomorphic to the real subbundle of
${\cal L}_{-2\rho}$.
The group $K_{\BB R}$ acts on $G_{\BB R}/P_{\BB R} \simeq K_{\BB R}/M_{\BB R}$
transitively, and it is easy to see that there is a real $K_{\BB R}$-invariant
non-vanishing section of ${\cal L}_{-2\rho}$ -- the one corresponding to a
nonzero constant map $K_{\BB R} \to \BB R$.
\end{proof}

Now we have everything we need to construct the principal series
representations of $G_{\BB R}$.
Let $(\tau,E)$ be an irreducible representation of $M_{\BB R}$ (over $\BB C$),
and pick an element $\nu \in \g a^*$.
Define a representation $(\tau_{\nu},E)$ of $P_{\BB R}$ as follows.
Informally, it can be described by saying that
$A_{\BB R}$ acts on $E$ by $e^{\nu-\rho} \cdot Id_E$,
$M_{\BB R}$ acts by $\tau$ and $N_{\BB R}$ acts trivially.
More precisely, we first consider a representation of $M_{\BB R} \times A_{\BB R}$
which is the exterior tensor product representation $\tau \boxtimes e^{\nu-\rho}$
on $E \otimes \BB C \simeq E$, and then define the representation
$(\tau_{\nu},E)$ of $P_{\BB R}$ by extending $\tau \boxtimes e^{\nu-\rho}$
via the isomorphism
$P_{\BB R}/N_{\BB R} \simeq M_{\BB R} A_{\BB R} \simeq M_{\BB R} \times A_{\BB R}$.

By the above construction, the representation $(\tau_{\nu},E)$ leads to a
$G_{\BB R}$-equivariant vector bundle ${\cal E}_{\nu} \to G_{\BB R}/P_{\BB R}$.
Its global sections are
\begin{multline*}
{\cal C}^{\infty}(G_{\BB R}/P_{\BB R}, {\cal E}_{\nu}) \simeq \\
\{ f: G_{\BB R} \xrightarrow{{\cal C}^{\infty}} E ;\: f(gp)=\tau_{\nu}(p^{-1})f(g),\:
\forall g \in G_{\BB R},\: p \in P_{\BB R} \}.
\end{multline*}
The group $G_{\BB R}$ acts on ${\cal C}^{\infty}(G_{\BB R}/P_{\BB R}, {\cal E}_{\nu})$
by left translations; we denote this action by $\pi_{E,\nu}$ and call
it a {\em principal series representation}:
$$
(\pi_{E,\nu}(\tilde g) f)(g) = f(\tilde g^{-1}g), \qquad
\tilde g, g \in G_{\BB R},\:
f \in {\cal C}^{\infty}(G_{\BB R}/P_{\BB R}, {\cal E}_{\nu}).
$$

\subsection{Admissibility of the Principal Series Representations}

Recall that we have a diffeomorphism of $K_{\BB R}$-spaces
$G_{\BB R}/P_{\BB R} \simeq K_{\BB R}/M_{\BB R}$.
This induces isomorphisms of vector spaces:
\begin{multline}  \label{iso}
{\cal C}^{\infty}(G_{\BB R}/P_{\BB R}, {\cal E}_{\nu}) \\
\simeq \{ f: G_{\BB R} \xrightarrow{{\cal C}^{\infty}} E ;\:
f(gp)=\tau_{\nu}(p^{-1})f(g),\: \forall g \in G_{\BB R},\: p \in P_{\BB R} \} \\
\simeq \{ f: K_{\BB R} \xrightarrow{{\cal C}^{\infty}} E ;\:
f(km)=\tau(m^{-1})f(k),\: \forall k \in K_{\BB R},\: m \in M_{\BB R} \} \\
\simeq {\cal C}^{\infty}(K_{\BB R}/M_{\BB R}, {\cal E}_{\nu}).
\end{multline}
The last two spaces are (in obvious way) only $K_{\BB R}$-modules.

We would like to know how $\pi_{E,\nu}$ acts on 
${\cal C}^{\infty}(K_{\BB R}/M_{\BB R}, {\cal E}_{\nu})$.
For an element $g \in G_{\BB R}$, let $k(g) \in K_{\BB R}$, $a(g) \in A_{\BB R}$
and $n(g) \in N_{\BB R}$ be the Iwasawa components of $g$.
Suppose that $f: K_{\BB R} \to E$ is a smooth function such that
$$
f(um)=\tau(m^{-1})f(u),\: \forall u \in K_{\BB R},\: m \in M_{\BB R}.
$$
We can think of $f$ as a function on $G_{\BB R}$ via the isomorphism
(\ref{iso}), then, for all $g \in G_{\BB R}$,
\begin{multline*}
(\pi_{E,\nu}(g)f)(u) = f(g^{-1}u)
= f \bigl( k(g^{-1}u) \cdot a(g^{-1}u) \cdot n(g^{-1}u) \bigr)  \\
= e^{\rho-\nu} \bigl( a(g^{-1}u) \bigr) \cdot f \bigl( k(g^{-1}u) \bigr).
\end{multline*}
We conclude that $\pi_{E,\nu}(g)$ acts on
${\cal C}^{\infty}(K_{\BB R}/M_{\BB R}, {\cal E}_{\nu})$ by the $G_{\BB R}$-action
on $G_{\BB R}/P_{\BB R} \simeq K_{\BB R}/M_{\BB R}$, $(g,u) \mapsto k(gu)$, and
multiplication by a real analytic function
$$
G_{\BB R} \times K_{\BB R} \to \BB C, \qquad
(g,u) \mapsto e^{\rho-\nu} \bigl( a(g^{-1}u) \bigr),
$$
called the {\em factor of automorphy}.

The $K_{\BB R}/M_{\BB R}$ picture allows us to identify the space of
$K_{\BB R}$-finite vectors in
$(\pi_{E,\nu}, {\cal C}^{\infty}(G_{\BB R}/P_{\BB R}, {\cal E}_{\nu}))$.
By Peter-Weyl Theorem, the space of $K_{\BB R} \times K_{\BB R}$-finite vectors
in ${\cal C}^{\infty}(K_{\BB R})$ is
$$
{\cal C}^{\infty}(K_{\BB R})_{fini} \simeq
\bigoplus_{i \in \hat K_{\BB R}} (\tau_i \boxtimes \tau_i^*, U_i \otimes U_i^*).
$$
Therefore, the space of $K_{\BB R}$-finite vectors
$$
{\cal C}^{\infty}(K_{\BB R}/M_{\BB R}, {\cal E}_{\nu})_{fini} \simeq
\bigoplus_{i \in \hat K_{\BB R}} U_i \otimes \operatorname{Hom}_{M_{\BB R}} (U_i,E),
$$
where $\operatorname{Hom}_{M_{\BB R}} (U_i,E)$
denotes the space of $M_{\BB R}$-equivariant
linear maps $U_i \to E$ ($M_{\BB R}$ acts via $\tau_i |_{M_{\BB R}}$ on $U_i$
and via $\tau$ on $E$) and $K_{\BB R}$ acts via $\tau_i$ on $U_i$
and trivially on $\operatorname{Hom}_{M_{\BB R}} (U_i,E)$.
Assuming $(\tau,E)$ is irreducible, we can obtain a very crude estimate for
the dimension of the $i$-isotypic component of
${\cal C}^{\infty}(K_{\BB R}/M_{\BB R}, {\cal E}_{\nu})$:
\begin{multline}  \label{dim-est}
\dim {\cal C}^{\infty}(K_{\BB R}/M_{\BB R}, {\cal E}_{\nu})(i)
= \dim U_i \cdot \dim \operatorname{Hom}_{M_{\BB R}} (U_i,E) \\
= \dim U_i \cdot \bigl( \text{multiplicity of $(\tau,E)$ in
$(\tau_i |_{M_{\BB R}}, U_i)$} \bigr) \\
\le (\dim U_i)^2.
\end{multline}
In particular, the principal series representations are admissible.

It is also true that the principal series representations have finite length.
Proving this requires character theory, see the last paragraph of
Subsection \ref{regularity-subsection}.
Moreover, if the representation $(\tau,E)$ of $M_{\BB R}$ is irreducible
and $\nu \in \g a^*$ is ``sufficiently generic'', the resulting principal
series representation $\pi_{E,\nu}$ of $G_{\BB R}$ is irreducible.
The description of $\pi_{E,\nu}$ as a representation of $G_{\BB R}$ on the vector
space
$$
\{ f: K_{\BB R} \xrightarrow{{\cal C}^{\infty}} E ;\:
f(km)=\tau(m^{-1})f(k),\: \forall k \in K_{\BB R},\: m \in M_{\BB R} \}
$$
(in terms of factor of automorphy) shows that $\pi_{E,\nu}$ induces (continuous!)
representations on various related spaces
$$
{\cal C}^k(K_{\BB R}/M_{\BB R}, {\cal E}_{\nu}),
\qquad 0 \le k \le \infty,\: k=-\infty,\omega,
$$
and
$$
L^p(K_{\BB R}/M_{\BB R}, {\cal E}_{\nu}), \qquad 1 \le p < \infty,
$$
consisting of functions $f: K_{\BB R} \to E$ that are $L^p$ with respect to
the Haar measure on $K_{\BB R}$ (and satisfy
$f(km)=\tau(m^{-1})f(k)$ for all $k \in K_{\BB R}$, $m \in M_{\BB R}$ as before).
All of these representations have the same $K_{\BB R}$-structure, i.e. their
spaces of $K_{\BB R}$-finite vectors are the same as $(\g g, K)$-modules.
So all these representations are infinitesimally equivalent to each other.

\subsection{Subrepresentation Theorem}

Of particular interest is the following highly non-trivial result
due to W.Casselman \cite{Ca0}:

\begin{thm}[Subrepresentation Theorem]
For every irreducible Harish-Chandra module $W$, there exist an irreducible
representation $(\tau,E)$ of $M_{\BB R}$ and a $\nu \in \g a^*$ with an embedding
$$
W \hookrightarrow {\cal C}^{\infty}(G_{\BB R}/P_{\BB R}, {\cal E}_{\nu})_{fini}.
$$
\end{thm}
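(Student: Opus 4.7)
The plan is to use Casselman's theory of the Jacquet module together with Frobenius reciprocity for parabolic induction of $(\g g, K)$-modules. First, I would introduce the Jacquet module. Setting $\bar{\g n} = \theta \g n$ (the opposite nilpotent), define
$$J(W) \;=\; W / \bar{\g n} W.$$
Since $\g m \oplus \g a$ normalizes $\bar{\g n}$, the quotient $J(W)$ inherits an action of $\g m \oplus \g a$, and with a little more care an action of $M_{\BB R}$, so $J(W)$ is a $(\g m \oplus \g a, M_{\BB R})$-module.

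The crucial input is Casselman's deep theorem that $J(W)$ is finite-dimensional over $\BB C$. I would quote this as a black box: it is proved either analytically, via asymptotic expansions of matrix coefficients of a globalization of $W$ at the identity (the ``leading exponents'' argument), or algebraically through $\bar{\g n}$-adic completions of ${\cal U}(\g g)$-modules together with a Nakayama-type lemma. Next I would check $J(W) \ne 0$: since $W$ is finitely generated over ${\cal U}(\g g)$ and $\bar{\g n}$ shifts $\g a$-weights in a definite direction on any ${\cal U}(\g g)$-generating set, iterating $\bar{\g n} W = W$ would force $W = 0$.

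Once $J(W)$ is finite-dimensional and nonzero, the commuting action of the abelian $\g a$ decomposes it into generalized eigenspaces. I would choose such a generalized eigenvalue and, matching the normalization of $\pi_{E,\nu}$ in the text, name the shifted value $\nu \in \g a^*$ so that $\g a$ acts on the corresponding weight space through $\nu - \rho$. On that weight space the compact group $M_{\BB R}$ acts, and I would extract an irreducible $M_{\BB R}$-subquotient $(\tau, E)$ to obtain a nonzero $(\g m \oplus \g a, M_{\BB R})$-equivariant map $J(W) \to E$ with $\g a$ acting by $\nu - \rho$.

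Finally, I would invoke Frobenius reciprocity for parabolic induction,
$$\operatorname{Hom}_{(\g g, K)}\!\bigl(W,\; {\cal C}^{\infty}(G_{\BB R}/P_{\BB R}, {\cal E}_{\nu})_{fini}\bigr) \;\cong\; \operatorname{Hom}_{(\g m \oplus \g a, M_{\BB R})}\!\bigl(J(W),\; E \otimes e^{\nu-\rho}\bigr),$$
where the right-hand side uses precisely the $\rho$-shift built into the definition of $\pi_{E,\nu}$. The nonzero element on the right produces a nonzero $(\g g, K)$-equivariant map $\phi: W \to {\cal C}^{\infty}(G_{\BB R}/P_{\BB R}, {\cal E}_{\nu})_{fini}$, and irreducibility of $W$ forces $\ker \phi = 0$, giving the desired embedding. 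The main obstacle is Casselman's finite-dimensionality theorem for $J(W)$; the non-vanishing step and the reciprocity argument are formal once this is granted.
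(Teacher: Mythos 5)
The paper states the Subrepresentation Theorem without proof, presenting it only as a ``highly non-trivial result due to W.~Casselman,'' so there is no argument in the text to compare yours against. Your outline is the standard Casselman route (Jacquet module, finite-dimensionality, Frobenius reciprocity) and is correct as a roadmap, provided the hard input you flag --- finite-dimensionality of the Jacquet module --- is genuinely granted as a black box; that is where essentially all the difficulty of the theorem lives.

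Three points deserve tightening. First, a convention mismatch: with the paper's $P_{\BB R}=M_{\BB R}A_{\BB R}N_{\BB R}$ and sections realized as $f(gp)=\tau_{\nu}(p^{-1})f(g)$, evaluation at the identity coset intertwines the $\g g$-action with $\tau_{\nu}$ on $\operatorname{Lie}(P_{\BB R})$, where $\g n$ acts by zero; so the reciprocity you want reads $\operatorname{Hom}_{(\g g,K)}\bigl(W,\,{\cal C}^{\infty}(G_{\BB R}/P_{\BB R},{\cal E}_{\nu})_{fini}\bigr)\cong\operatorname{Hom}_{(\g m\oplus\g a,M_{\BB R})}\bigl(W/\g n W,\,E\otimes e^{\nu-\rho}\bigr)$, i.e.\ it pairs $\operatorname{Ind}_{P_{\BB R}}$ with the $\g n$-coinvariants, not the $\bar{\g n}$-coinvariants. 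Your $J(W)=W/\bar{\g n}W$ pairs with induction from the opposite parabolic; since all minimal parabolics are $G_{\BB R}$-conjugate this is harmless, but you should either switch to $W/\g n W$ or say explicitly that you conjugate at the end. Second, to produce an element of $\operatorname{Hom}(J(W),E)$ you need an irreducible \emph{quotient} of $J(W)$, not merely a subquotient; finite-dimensionality guarantees one exists, and on it $\g a$ (central in $\g m\oplus\g a$) acts by a genuine scalar by Schur's lemma, which is how $\nu$ is pinned down. Third, your non-vanishing argument as phrased appeals to $\g a$-weights of $W$, but $W$ need not decompose into $\g a$-weight spaces (the $\g a$-action on a Harish-Chandra module is not semisimple in general). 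The correct version of this step is to first establish that $W$ is finitely generated as a ${\cal U}(\bar{\g n})$-module --- itself a nontrivial lemma belonging to the same circle of ideas as the finiteness of $J(W)$ --- and then run a graded Nakayama argument with the augmentation ideal of ${\cal U}(\bar{\g n})$. With those repairs the proof is the standard and correct one.
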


\begin{cor}
Every admissible irreducible representation of $G_{\BB R}$ is infinitesimally
equivalent to a subrepresentation of some principal series representation.
\end{cor}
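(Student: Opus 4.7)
The plan is to reduce the statement to showing that the $\g n$-homology (or Jacquet quotient) $W/\g nW$ is nonzero, and then to produce the embedding via Frobenius reciprocity. First I would observe that, by the realization of sections of the induced bundle as functions $f: G_{\BB R}\to E$ satisfying $f(gp)=\tau_\nu(p^{-1})f(g)$, a $(\g g,K)$-equivariant map $\Phi: W \to {\cal C}^{\infty}(G_{\BB R}/P_{\BB R}, {\cal E}_\nu)_{fini}$ is the same data as a $P_{\BB R}$-equivariant linear map $\varphi: W \to E$: one direction sends $\Phi$ to the evaluation $w \mapsto \Phi(w)(e)$; the other extends $\varphi$ by $\Phi(w)(g) = \varphi(g^{-1}\cdot w)$ and checks $K_{\BB R}$-finiteness using admissibility of $W$. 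Since $N_{\BB R}$ acts trivially on $E$ and $\g a$ acts by the character $\nu-\rho$, the second description amounts to an $(\g m \oplus \g a)$-equivariant map $W/\g nW \to E$ (with $E$ carrying the character $e^{\nu-\rho}$ of $\g a$ and the irreducible $M_{\BB R}$-action $\tau$).

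Granting this adjunction, it suffices to produce an irreducible $M_{\BB R} A_{\BB R}$-quotient of the $(\g m \oplus \g a)$-module $W/\g nW$: this produces the pair $(\tau,E)$ and the parameter $\nu$, together with a nonzero map
\[
\Phi: W \twoheadrightarrow W/\g nW \twoheadrightarrow E.
\]
Since $W$ is irreducible as a $(\g g, K)$-module and $\Phi$ is nonzero and $(\g g,K)$-equivariant (after the adjunction), $\ker\Phi=0$ and $\Phi$ is the desired embedding.

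The hard part, which is the genuine content of Casselman's theorem, is to show that $W/\g nW \neq 0$ and that it has an irreducible $(\g m \oplus \g a)$-quotient. Naive Nakayama-style arguments fail because $\g n$ is a Lie subalgebra acting on a module over ${\cal U}(\g g)$, not an ideal in a commutative ring acting on a finitely generated module, so $\g nW$ can very well exhaust $W$ in the absence of extra input. Casselman's original approach bypasses this by first globalizing $W$ to an admissible representation $(\pi,V)$ of finite length (using the Casselman--Wallach globalization theory alluded to in Subsection \ref{globalization}), and then analyzing the asymptotic behavior of $K_{\BB R}$-finite matrix coefficients $g \mapsto \langle l,\pi(g)v\rangle$ as $g$ runs to infinity along a ray in the positive Weyl chamber of $A_{\BB R}$. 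These matrix coefficients satisfy a system of differential equations with regular singularities coming from the infinitesimal character on ${\cal ZU}(\g g)$ (available by Corollary \ref{inf-char-cor}); the admissibility, together with restrictions imposed on $\g a$-weights via the Harish-Chandra isomorphism, forces the generalized $\g a$-weight spaces of $W/\g nW$ to be finite-dimensional, and the nonvanishing of the leading exponents in the convergent asymptotic expansions of these matrix coefficients is exactly what gives the nonvanishing of $W/\g nW$. This asymptotic analysis is the technical heart of the proof and is what I would expect to take by far the most work.

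Once the nonvanishing and local finiteness of $W/\g nW$ are established, selecting an irreducible $M_{\BB R} A_{\BB R}$-quotient is routine (pass to a generalized $\g a$-weight space, then to a simple $M_{\BB R}$-subquotient via Schur's lemma for the compact group $M_{\BB R}$), and the embedding follows by adjunction and irreducibility as above.
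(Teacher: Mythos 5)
Your proposal is correct in outline, but it takes a much more ambitious route than the paper does. In the paper this corollary is a one-line consequence of the Subrepresentation Theorem, which is stated without proof and attributed to Casselman: given an admissible irreducible $(\pi,V)$, its Harish-Chandra module $W=\HC(V)$ is irreducible, the theorem embeds $W$ into ${\cal C}^{\infty}(G_{\BB R}/P_{\BB R},{\cal E}_{\nu})_{fini}$, the closure $\overline{W}$ is a closed invariant subspace of the principal series by the weak-analyticity corollary, and $(\overline{W})_{fini}=W$ by the bijection between closed invariant subspaces and $(\g g,K)$-submodules, whence the infinitesimal equivalence. What you have written is instead a sketch of the proof of the Subrepresentation Theorem itself: the Frobenius-reciprocity adjunction identifying $(\g g,K)$-maps into the induced module with $(\g m\oplus\g a, M_{\BB R})$-maps out of the Jacquet coinvariants $W/\g n W$, followed by the assertion that $W/\g n W\neq 0$ via the asymptotics of $K_{\BB R}$-finite matrix coefficients. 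That is a faithful account of Casselman's strategy, and your diagnosis of where the difficulty sits (nonvanishing and local finiteness of the Jacquet module, proved analytically after globalizing) is accurate; but the analytic core is left as a black box, so in the end you, like the paper, are citing Casselman rather than proving him. Two small points to tighten: the formula $\Phi(w)(g)=\varphi(g^{-1}\cdot w)$ presupposes a $G_{\BB R}$-action on $W$, which a Harish-Chandra module does not carry --- the adjunction must either be formulated purely algebraically at the $(\g g,K)$-level or stated after globalization, consistent with the order of your own argument; and your conclusion is an embedding of Harish-Chandra modules, so you still need the closure step above to obtain the representation-level statement the corollary actually asserts.
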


The following corollary plays the key role in constructing globalizations
of Harish-Chandra modules discussed in Subsection \ref{globalization}.

\begin{cor}
Every Harish-Chandra module can be realized as the underlying
Harish-Chandra module of an admissible representation of finite length.
In other words, every Harish-Chandra module has a globalization.
\end{cor}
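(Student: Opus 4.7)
The plan is to reduce to the irreducible case via the Subrepresentation Theorem just proved, and then to extend to general Harish-Chandra modules of finite length by induction on length. Suppose first that $M$ is irreducible. The Subrepresentation Theorem produces an embedding $M \hookrightarrow {\cal C}^\infty(G_{\BB R}/P_{\BB R}, {\cal E}_\nu)_{fini}$ for some irreducible $(\tau,E)$ of $M_{\BB R}$ and some $\nu \in \g a^*$. The ambient representation $V = (\pi_{E,\nu}, {\cal C}^\infty(G_{\BB R}/P_{\BB R}, {\cal E}_\nu))$ is admissible, via the crude $K_{\BB R}/M_{\BB R}$-model estimate $\dim V(i) \le (\dim U_i)^2$, and has finite length. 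Applying the bijection between closed $G_{\BB R}$-invariant subspaces of $V$ and $(\g g,K)$-submodules of $\HC(V)$, the closure $\overline{M} \subset V$ is a subrepresentation with $\HC(\overline{M}) = M$; as a closed invariant subspace of a finite-length representation, $\overline{M}$ itself has finite length, yielding the desired globalization in the irreducible case.

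For a general Harish-Chandra module $M$, I would induct on its length. Pick a maximal proper submodule $M_1 \subset M$, so that $M_2 = M/M_1$ is irreducible. By the inductive hypothesis $M_1$ admits a globalization $V_1$, and by the irreducible case just handled $M_2$ admits a globalization $V_2$. The task reduces to constructing an admissible representation $V$ of finite length fitting into $0 \to V_1 \to V \to V_2 \to 0$ such that the induced extension $0 \to \HC(V_1) \to \HC(V) \to \HC(V_2) \to 0$ coincides with $0 \to M_1 \to M \to M_2 \to 0$; exactness of $\HC$ then forces $\HC(V) = M$.

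The hard part is precisely this lifting of extensions: there is no a priori reason for an extension class in the Harish-Chandra category to be realized by a continuous extension of admissible representations. My preferred route is the Kashiwara--Schmid construction from Subsection \ref{globalization}: for any Harish-Chandra module $N$, set
$$
\widetilde{\operatorname{MG}}(N) = \operatorname{Hom}_{(\g g, K_{\BB R})}(N^\vee, {\cal C}^\infty(G_{\BB R})),
$$
where $N^\vee = (N^*)_{fini}$ is the dual Harish-Chandra module and $G_{\BB R}$ acts via right translation on ${\cal C}^\infty(G_{\BB R})$. This is manifestly functorial and can be shown to be topologically exact in $N$, so applied to $0 \to M_1 \to M \to M_2 \to 0$ it produces globalizations of all three terms simultaneously. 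The substantive step, and the main technical obstacle, is verifying the natural isomorphism $\HC(\widetilde{\operatorname{MG}}(N)) \simeq N$: one constructs a canonical map $N \to \widetilde{\operatorname{MG}}(N)_{fini}$ via matrix-coefficient evaluation and proves it is bijective, using the real-analyticity of matrix coefficients of $K_{\BB R}$-finite vectors (from the proof of Theorem \ref{densityof}) together with the Subrepresentation Theorem applied to the irreducible composition factors of $N^\vee$ to produce enough $(\g g, K_{\BB R})$-equivariant functions on $G_{\BB R}$. Once this isomorphism is established, $V = \widetilde{\operatorname{MG}}(M)$ provides the globalization for arbitrary $M$.
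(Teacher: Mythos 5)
Your treatment of the irreducible case is correct and is exactly what the placement of this corollary after the Subrepresentation Theorem is meant to convey: embed $M$ into ${\cal C}^{\infty}(G_{\BB R}/P_{\BB R},{\cal E}_{\nu})_{fini}$, take the closure $\overline{M}$ in the principal series representation, and use the bijection between closed invariant subspaces and $(\g g,K)$-submodules to conclude $\HC(\overline{M})=M$, with admissibility and finite length inherited from the ambient representation. You are also right to flag that the passage to a general $M$ is not formal: the Subrepresentation Theorem as stated applies only to irreducible modules, and an extension class in the category of Harish-Chandra modules must be lifted to a continuous extension of representations. The text itself does not carry out this step; it attributes the full globalization theorem to Casselman and describes his proof as complicated and indirect.

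However, your proposed repair via the Kashiwara--Schmid functor is circular. The assertion $\HC(\widetilde{\operatorname{MG}}(N))\simeq N$ is not an auxiliary lemma --- it \emph{is} the globalization theorem (and more) --- and your sketch of its proof presupposes what is to be shown. To ``evaluate matrix coefficients'' of vectors in $N$, that is, to produce honest real-analytic functions $g\mapsto\langle l,\pi(g)v\rangle$ on $G_{\BB R}$ rather than formal power series at the identity determined by the ${\cal U}(\g g)$-action, you already need a representation of the group whose Harish-Chandra module is $N$; the real-analyticity statement of Theorem \ref{densityof} concerns $K_{\BB R}$-finite vectors of an \emph{already given} admissible representation. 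Applying the Subrepresentation Theorem only to the irreducible composition factors of $N^{\vee}$ yields $(\g g,K_{\BB R})$-maps from those factors into ${\cal C}^{\infty}(G_{\BB R})$, but gives no control over the extension: you would still have to extend a map defined on a submodule of $N^{\vee}$ to all of $N^{\vee}$, which is precisely the extension-lifting problem you set out to solve. The route that actually works (Casselman's) is a strengthened subrepresentation theorem --- every finitely generated admissible $(\g g,K)$-module, not merely an irreducible one, embeds into a finite direct sum of principal series Harish-Chandra modules --- proved via Jacquet modules and asymptotic expansions of matrix coefficients; once that is in hand, the general case follows by taking closures exactly as in your irreducible case.
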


The Subrepresentation Theorem replaces a much older result due to
Harish-Chandra:

\begin{thm}[Subquotient Theorem]
Any irreducible Harish-Chandra module arises as a subquotient
(i.e. quotient of a submodule) of the underlying Harish-Chandra module
of some principal series representation.
\end{thm}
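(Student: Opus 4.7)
The plan is to produce a nonzero $(\g g, K)$-equivariant map between $W$ and the Harish-Chandra module of a carefully chosen principal series representation $\pi_{E,\nu}$, then exploit irreducibility of $W$ to upgrade this into a subquotient realization.

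First I would extract from $W$ the two invariants that control principal series matching: its infinitesimal character and a compatible $K_{\BB R}$-type. By Dixmier's lemma, $W$ has a well-defined infinitesimal character $\chi_W : {\cal Z}{\cal U}(\g g) \to \BB C$; this is available because $W$ has countable dimension, as discussed before Corollary \ref{inf-char-cor}. Pick any irreducible $K_{\BB R}$-type $(\tau_i, U_i)$ occurring in $W$, and then an irreducible $M_{\BB R}$-subrepresentation $(\tau, E) \subset U_i |_{M_{\BB R}}$. Using the Harish-Chandra homomorphism ${\cal Z}{\cal U}(\g g) \to S(\g a)^W$ (combined with the description of the infinitesimal character of $\pi_{E,\nu}$ in terms of $\nu$), select $\nu \in \g a^*$ so that $\pi_{E,\nu}$ has infinitesimal character $\chi_W$. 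By the $K_{\BB R}$-type estimate (\ref{dim-est}), the type $U_i$ occurs in $\HC(\pi_{E,\nu})$ with positive multiplicity, so $W$ and $\HC(\pi_{E,\nu})$ share both the infinitesimal character and at least one $K_{\BB R}$-type.

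Next I would construct the desired nonzero map. The triangular decomposition ${\cal U}(\g g) = {\cal U}(\theta \g n) \cdot {\cal U}(\g m \oplus \g a) \cdot {\cal U}(\g n)$ (a consequence of Iwasawa plus PBW) together with the induced-picture description of $\HC(\pi_{E,\nu})$ yields a Frobenius reciprocity of the form
$$
\operatorname{Hom}_{(\g g, K)}\bigl(\HC(\pi_{E,\nu}),\, W\bigr) \;\simeq\; \operatorname{Hom}_{(\g m \oplus \g a,\, M_{\BB R})}\bigl(E \otimes e^{\nu - \rho},\, W/\g n W\bigr),
$$
so producing a map amounts to finding inside the Jacquet module $W/\g n W$ a nonzero $M_{\BB R}$-invariant line on which $\g a$ acts by the generalized character $\nu - \rho$ and $M_{\BB R}$ by $\tau$. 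The matching of $\chi_W$ with the infinitesimal character of $\pi_{E,\nu}$ constrains the possible $\g a$-eigenvalues appearing in $W/\g n W$ to a finite $W(G_{\BB R},A_{\BB R})$-orbit, and one tunes the choice of $\nu$ inside that orbit, and of $E$ inside the resulting $M_{\BB R}$-module, to hit the Hom space.

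Finally, once a nonzero $(\g g, K)$-map $\phi : \HC(\pi_{E,\nu}) \to W$ is in hand, its image is a nonzero $(\g g, K)$-submodule of $W$, hence equals $W$ by irreducibility, so $W \simeq \HC(\pi_{E,\nu}) / \ker \phi$ realizes $W$ as a subquotient. The hard part, and the real content of the theorem, is the non-vanishing of $W/\g n W$ and the location of a weight vector with the required $\g a$-eigenvalue. This is exactly the input supplied by the asymptotic analysis of matrix coefficients (Harish-Chandra's original approach, later algebraized by Casselman via the Jacquet module): one must prove that the Jacquet module of an admissible irreducible $(\g g, K)$-module is nonzero, finitely generated over ${\cal U}(\g a)$, and therefore admits a genuine $\g a$-eigenvector whose eigenvalue lies in the finite set dictated by $\chi_W$. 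With that input, the matching of $M_{\BB R}$-types and the Frobenius reciprocity above close the argument; without it, one only obtains formal statements.
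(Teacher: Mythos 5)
The paper itself offers no proof of this theorem: it is stated as a known result of Harish-Chandra, just as the Subrepresentation Theorem is attributed to Casselman without proof. So there is nothing in the text to compare your argument against line by line; what you have written is an outline of the standard modern route (Casselman's algebraization via the Jacquet module $W/\g n W$), which in fact is the route to the \emph{stronger} Subrepresentation Theorem rather than to Harish-Chandra's original subquotient statement.

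As a proof, however, your outline has a genuine gap, and it sits exactly where you place it yourself: everything hinges on knowing that the Jacquet module $W/\g n W$ of an irreducible admissible $(\g g,K)$-module is nonzero, finitely generated over ${\cal U}(\g a)$, and hence contains an honest $(\g m\oplus\g a, M_{\BB R})$-eigenvector with eigenvalue in the finite set allowed by the infinitesimal character. None of the machinery developed in these notes (admissibility, PBW, Dixmier's lemma, the $K_{\BB R}$-type estimate (\ref{dim-est})) yields this; it requires either Harish-Chandra's asymptotic expansion of matrix coefficients along $A_{\BB R}$ or Casselman's algebraic finiteness theorem for $W/\g n W$, and without one of these inputs the argument is purely formal, as you concede. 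A second, smaller issue is the direction of your Frobenius reciprocity: induction is right adjoint to restriction, so the formal isomorphism computes $\operatorname{Hom}_{(\g g,K)}\bigl(W, \HC(\pi_{E,\nu})\bigr)$ in terms of $W/\g n W$, not $\operatorname{Hom}_{(\g g,K)}\bigl(\HC(\pi_{E,\nu}), W\bigr)$; to get maps \emph{out} of the principal series you must either dualize (pass to $W'$ and $(\tau^*,E^*)$, $-\nu$, using that the class of principal series is closed under duality) or invoke a second-adjointness statement involving the opposite nilradical $\theta\g n$. That is repairable, but as written the displayed isomorphism is not justified. In short: correct strategy, correctly identified missing lemma, but the missing lemma is the theorem's actual content.
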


\subsection{Unitary Principal Series Representations}

In this subsection we construct the unitary principal series representations.
Suppose that $\nu \in i \g a_{\BB R}^*$, then $e^{\nu}: A_{\BB R} \to \BB C$ is
unitary (but $e^{\nu-\rho}$ is not).
Fix an $M_{\BB R}$-invariant positive definite inner product on $E$,
then get a $G_{\BB R}$-invariant pairing
\begin{multline*}
{\cal C}^{\infty}(G_{\BB R}/P_{\BB R}, {\cal E}_{\nu}) \times
{\cal C}^{\infty}(G_{\BB R}/P_{\BB R}, {\cal E}_{\nu})  \\
\to {\cal C}^{\infty}(G_{\BB R}/P_{\BB R}, {\cal L}_{-2\rho}) \simeq
\Omega^{\text{top}}(G_{\BB R}/P_{\BB R}) \to \BB C.
\end{multline*}
In this composition, the first map is taking inner product on the fibers of
${\cal E}_{\nu}$ pointwise over $G_{\BB R}/P_{\BB R}$,
the second map is the isomorphism from Lemma \ref{cotangent-iso} and
the last map is given by integration over $G_{\BB R}/P_{\BB R}$.
This composition results in a $G_{\BB R}$-invariant positive definite
hermitian pairing.
Complete the representation spaces to get action $\pi_{E,\nu}$ on
$$
L^2(G_{\BB R}/P_{\BB R}, {\cal E}_{\nu}) \qquad
\text{for \underline{$\nu \in i \g a_{\BB R}^*$}.}
$$
This representation is denoted by
$$
\operatorname{Ind}_{P_{\BB R}}^{G_{\BB R}} (\tau \boxtimes e^{\nu})
$$
and is often called an {\em induced representation},
the shift by $\rho$ is implicit.
Thus, for $\nu \in i \g a_{\BB R}^*$, we get the
{\em unitary principal series representations}.
When $\nu$ is in $\g a^*$ and not necessarily in $i \g a_{\BB R}^*$,
we get the ``non-unitary'' principal series representations,
or -- much better -- ``not necessarily unitary'' principal series
representations, since some of these may be unitary for totally non-obvious
reasons.

Now, let $\nu \in \g a^*$ and let $(\tau^*,E^*)$ be the representation of
$M_{\BB R}$ dual to $(\tau,E)$. Form a representation $(\tau_{-\nu}^*,E^*)$
of $P_{\BB R}$ and let ${\cal E}^*_{-\nu}$ be the corresponding
$G_{\BB R}$-equivariant vector bundle over $G_{\BB R}/P_{\BB R}$.
Then the same argument as before proves that the bilinear pairing
$E \times E^* \to \BB C$ induces a $G_{\BB R}$-invariant non-degenerate
bilinear pairing
$$
{\cal C}^{\infty}(G_{\BB R}/P_{\BB R}, {\cal E}_{\nu}) \times
{\cal C}^{\infty}(G_{\BB R}/P_{\BB R}, {\cal E}^*_{-\nu}) \to \BB C.
$$
Completing with respect to appropriate topology yields representations
$\pi_{E,\nu}$ on, for example,
$$
{\cal C}^{\infty}(G_{\BB R}/P_{\BB R}, {\cal E}_{\nu}), \qquad
L^p(G_{\BB R}/P_{\BB R}, {\cal E}_{\nu}), \quad 1 < p < \infty,
$$
with dual representations $\pi_{E^*,-\nu}$ on
$$
{\cal C}^{-\infty}(G_{\BB R}/P_{\BB R}, {\cal E}^*_{-\nu}), \qquad
L^q(G_{\BB R}/P_{\BB R}, {\cal E}^*_{-\nu}), \quad p^{-1}+q^{-1}=1,
$$
where ${\cal C}^{-\infty}$ denotes the space of distributions.

\section{Group Characters}

\subsection{Hilbert-Schmidt Operators}

Note that if the vector space $V$ is infinite dimensional, then 
taking the trace of a linear transformation $T \in \operatorname{End}(V)$
is problematic. In this subsection we discuss Hilbert-Schmidt operators.
These will be used in the next subsection to define trace class operators.
Let $V$ be a Hilbert space and fix an orthonormal basis $\{v_i\}$ of $V$.

\begin{df}
A bounded operator $T \in \operatorname{End}(V)$ is a
{\em Hilbert-Schmidt operator} if $\sum_i \|Tv_i\|^2 < \infty$.
\end{df}

We will see shortly that this definition as well as the next one
do not depend on the choice of basis.
We denote the space of Hilbert-Schmidt operators by
$\operatorname{End}(V)_{HS}$.

\begin{df}
Let $T_1$ and $T_2$ be two Hilbert-Schmidt operators, then their
{\em Hilbert-Schmidt inner product} is
$$
\langle T_1,T_2\rangle_{HS} =_{\text{def}}
\sum_i \langle T_1v_i,T_2v_i \rangle = \tr (T_1^*T_2).
$$
This inner product induces the {\em Hilbert-Schmidt norm}
$$
\|T\|_{HS}^2 = \langle T,T \rangle_{HS} = \sum_i \|Tv_i\|^2.
$$
\end{df}

This inner product turns the space of Hilbert-Schmidt operators into a Hilbert
space (in particular, it is complete).

Note that $\|Tv_i\|^2 = \sum_j | \langle Tv_i,v_j \rangle |^2$,
hence we obtain the following expression for the Hilbert-Schmidt norm:
\begin{equation}  \label{HS-norm}
\|T\|_{HS}^2 = \sum_i \|Tv_i\|^2 = \sum_{ij} | \langle Tv_i,v_j \rangle |^2.
\end{equation}
The last sum is the sum of squares of absolute values of matrix coefficients
of $T$ in basis $\{v_i\}$.

\begin{lem}
If $T \in \operatorname{End}(V)_{HS}$, then its adjoint $T^*$ also is a
Hilbert-Schmidt operator with the same norm. Moreover, if
$T_1,T_2 \in \operatorname{End}(V)_{HS}$, then
\begin{equation}  \label{T^*}
\langle T_1,T_2\rangle_{HS} = \overline{\langle T_1^*,T_2^*\rangle_{HS}}.
\end{equation}
\end{lem}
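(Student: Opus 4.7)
The plan is to reduce both claims to the matrix-coefficient expression already noted in the excerpt,
\[
\|T\|_{HS}^2 \;=\; \sum_i \|Tv_i\|^2 \;=\; \sum_{i,j} |\langle Tv_i, v_j\rangle|^2,
\]
which comes from applying Parseval's identity to $Tv_i$ in the basis $\{v_j\}$. The whole proof is essentially symmetry bookkeeping around the defining identity for the adjoint, $\langle Tv_i, v_j\rangle = \overline{\langle v_i, T^*v_j\rangle^{\,}}^{-1} = \overline{\langle T^*v_j, v_i\rangle}$, which says that the matrix of $T^*$ in the basis $\{v_i\}$ is the conjugate transpose of the matrix of $T$.

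For the first assertion, I would take absolute values in that identity (which kills the complex conjugate) and then interchange the roles of the summation indices:
\[
\|T\|_{HS}^2 = \sum_{i,j} |\langle Tv_i,v_j\rangle|^2 = \sum_{i,j} |\langle T^*v_j,v_i\rangle|^2 = \sum_j \|T^*v_j\|^2 = \|T^*\|_{HS}^2,
\]
where the middle equality uses Tonelli to swap sums of nonnegative terms. This simultaneously shows $T^* \in \operatorname{End}(V)_{HS}$ and $\|T^*\|_{HS} = \|T\|_{HS}$ (and, as a bonus, gives independence of the orthonormal basis, applied once to a second basis and then to $T^*$).

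For the inner-product identity, I would expand each $\langle T_1 v_i, T_2 v_i\rangle$ by Parseval in the basis $\{v_j\}$ to obtain
\[
\langle T_1, T_2\rangle_{HS} = \sum_{i,j} \langle T_1 v_i, v_j\rangle \,\overline{\langle T_2 v_i, v_j\rangle},
\]
and similarly
\[
\langle T_1^*, T_2^*\rangle_{HS} = \sum_{i,j} \langle T_1^* v_i, v_j\rangle \,\overline{\langle T_2^* v_i, v_j\rangle}.
\]
Substituting the conjugate-transpose relation $\langle T_\alpha^* v_i, v_j\rangle = \overline{\langle T_\alpha v_j, v_i\rangle}$ ($\alpha = 1,2$) into the second expression and reindexing $(i,j) \mapsto (j,i)$ produces
\[
\langle T_1^*, T_2^*\rangle_{HS} = \sum_{i,j} \overline{\langle T_1 v_i, v_j\rangle} \,\langle T_2 v_i, v_j\rangle = \overline{\langle T_1, T_2\rangle_{HS}},
\]
which is the claim.

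The only point that needs a little care is justifying the interchange of summation order in the second part: the sums are not of nonnegative terms, so I would invoke absolute convergence via Cauchy--Schwarz, $\sum_{i,j}|\langle T_1 v_i,v_j\rangle\,\overline{\langle T_2 v_i,v_j\rangle}| \le \|T_1\|_{HS}\|T_2\|_{HS}$, so that Fubini applies and the reindexing is legitimate. I do not expect any genuine obstacle here; the entire argument is a short exercise in Parseval's identity plus the definition of the adjoint.
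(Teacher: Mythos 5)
Your proof of the first assertion is exactly the paper's: both pass to the double sum $\sum_{i,j}|\langle Tv_i,v_j\rangle|^2$ and use $\langle Tv_i,v_j\rangle=\overline{\langle T^*v_j,v_i\rangle}$ together with the freedom to rearrange a sum of nonnegative terms. For the identity $\langle T_1,T_2\rangle_{HS}=\overline{\langle T_1^*,T_2^*\rangle_{HS}}$ you take a genuinely different route: you expand both sides into matrix coefficients via Parseval, substitute the conjugate-transpose relation, and reindex, justifying the rearrangement by Cauchy--Schwarz and Fubini. The paper instead deduces the second part formally from the first via the polarization identity
$$
4\langle T_1,T_2\rangle_{HS}=\|T_1+T_2\|_{HS}^2-\|T_1-T_2\|_{HS}^2+i\|T_1+iT_2\|_{HS}^2-i\|T_1-iT_2\|_{HS}^2,
$$
replacing each norm by the norm of the adjoint (note $(T_1+iT_2)^*=T_1^*-iT_2^*$, which produces the complex conjugate). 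The polarization route only ever manipulates sums of nonnegative terms, so no convergence issue arises; your route is more computational but you correctly flag and discharge the one point of care, namely the absolute convergence of the double sum needed to reindex. Both arguments are complete and correct; yours has the small additional benefit of making the basis-independence of $\|\cdot\|_{HS}$ visible already at this stage, which the paper only establishes afterwards via unitary conjugation.
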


\begin{proof}
From (\ref{HS-norm}) we get:
\begin{multline*}
\|T\|_{HS}^2 = \sum_{ij} | \langle Tv_i,v_j \rangle |^2
= \sum_{ij} | \langle v_i,T^*v_j \rangle |^2  \\
= \sum_{ij} | \langle T^*v_j,v_i \rangle |^2 = \|T^*\|_{HS}^2.
\end{multline*}
The second part follows from the first part and the polarization identity:
\begin{multline*}
4 \langle T_1,T_2\rangle_{HS} \\
= \|T_1+T_2\|_{HS}^2 - \|T_1-T_2\|_{HS}^2 + i \|T_1+iT_2\|_{HS}^2
- i \|T_1-iT_2\|_{HS}^2  \\
= \|T_1^*+T_2^*\|_{HS}^2 - \|T_1^*-T_2^*\|_{HS}^2 + i \|T_1^*-iT_2^*\|_{HS}^2
- i \|T_1^*+iT_2^*\|_{HS}^2  \\
= 4 \overline{\langle T_1^*,T_2^*\rangle_{HS}}.
\end{multline*}
\end{proof}

\begin{lem}
Let $T \in \operatorname{End}(V)_{HS}$, $A \in \operatorname{End}(V)$ with
operator norm $\|A\|$. Then $AT, TA \in \operatorname{End}(V)_{HS}$ and
$$
\|AT\|_{HS} \le \|A\| \cdot \|T\|_{HS}, \qquad
\|TA\|_{HS} \le \|A\| \cdot \|T\|_{HS}.
$$
\end{lem}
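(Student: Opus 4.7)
The plan is to handle the two inequalities separately, treating $AT$ by direct computation in the fixed orthonormal basis and reducing $TA$ to the first case via the adjoint.

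First I would prove $\|AT\|_{HS} \le \|A\| \cdot \|T\|_{HS}$. Using the fixed orthonormal basis $\{v_i\}$ and the definition
$$
\|AT\|_{HS}^2 = \sum_i \|ATv_i\|^2,
$$
the key observation is that for each $i$, the bound $\|ATv_i\| \le \|A\| \cdot \|Tv_i\|$ holds because $A$ is a bounded operator with operator norm $\|A\|$. Summing over $i$ gives
$$
\|AT\|_{HS}^2 \le \|A\|^2 \sum_i \|Tv_i\|^2 = \|A\|^2 \cdot \|T\|_{HS}^2,
$$
which is finite since $T \in \operatorname{End}(V)_{HS}$. This both establishes $AT \in \operatorname{End}(V)_{HS}$ and yields the desired inequality.

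Next I would handle $TA$ by passing to the adjoint. Using the preceding lemma, which states $\|S^*\|_{HS} = \|S\|_{HS}$ for any Hilbert-Schmidt operator $S$, I would apply it to $S = TA$ (once we know this is Hilbert-Schmidt). Specifically, since $(TA)^* = A^*T^*$, and $T^* \in \operatorname{End}(V)_{HS}$ with $\|T^*\|_{HS} = \|T\|_{HS}$, the first part applied to the pair $(A^*, T^*)$ in place of $(A,T)$ gives
$$
\|A^*T^*\|_{HS} \le \|A^*\| \cdot \|T^*\|_{HS} = \|A\| \cdot \|T\|_{HS},
$$
where I use the standard fact $\|A^*\| = \|A\|$ for bounded operators on a Hilbert space. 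Thus $A^*T^* \in \operatorname{End}(V)_{HS}$, and by the previous lemma its adjoint $TA$ is also Hilbert-Schmidt with $\|TA\|_{HS} = \|A^*T^*\|_{HS} \le \|A\| \cdot \|T\|_{HS}$.

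There is no real obstacle here; the only subtlety is remembering to use the previous lemma rather than attempting to compute $\sum_i \|TAv_i\|^2$ directly, since bounding $\|TAv_i\|$ in terms of $\|Av_i\|$ is the wrong direction (there is no pointwise control on $\|T\|$ acting on the vectors $Av_i$). The adjoint trick bypasses this cleanly.
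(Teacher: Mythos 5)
Your proposal is correct and matches the paper's own argument essentially line for line: the direct basis computation $\sum_i \|ATv_i\|^2 \le \|A\|^2 \sum_i \|Tv_i\|^2$ for the first inequality, and the adjoint trick $\|TA\|_{HS} = \|(TA)^*\|_{HS} = \|A^*T^*\|_{HS} \le \|A^*\|\cdot\|T^*\|_{HS} = \|A\|\cdot\|T\|_{HS}$ for the second. No gaps.
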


\begin{rem}
This lemma can be restated as follows. The Hilbert-Schmidt operators form
a two-sided ideal in the Banach algebra of bounded linear operators on $V$.
\end{rem}

\begin{proof}
We have:
$$
\|AT\|_{HS}^2 = \sum_i \|ATv_i\|^2 \le \|A\|^2 \sum_i \|Tv_i\|^2
= \|A\|^2 \cdot \|T\|_{HS}^2.
$$
Finally, $TA \in \operatorname{End}(V)_{HS}$ if and only if
$(TA)^* \in \operatorname{End}(V)_{HS}$ and
\begin{multline*}
\|TA\|_{HS}=\|(TA)^*\|_{HS} = \|A^*T^*\|_{HS} \\
\le \|A^*\| \cdot \|T^*\|_{HS} = \|A\| \cdot \|T\|_{HS}.
\end{multline*}
\end{proof}

\begin{cor}
For $U \in \operatorname{Aut}(V)$ unitary, $\|UTU^{-1}\|_{HS} = \|T\|_{HS}$.
\end{cor}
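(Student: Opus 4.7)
The plan is to derive the corollary as a two-line consequence of the preceding lemma, which already gives $\|AT\|_{HS}\le \|A\|\cdot\|T\|_{HS}$ and $\|TA\|_{HS}\le \|A\|\cdot\|T\|_{HS}$ for any bounded $A$ and Hilbert--Schmidt $T$. The key observation I will invoke is that a unitary operator on a Hilbert space has operator norm equal to $1$, and its inverse $U^{-1}=U^*$ is again unitary with $\|U^{-1}\|=1$. So there will be essentially no obstacle; the work is just to apply the submultiplicativity estimate in both directions.

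First I would apply the lemma twice to the composition $UTU^{-1}$, obtaining
$$
\|UTU^{-1}\|_{HS}\ \le\ \|U\|\cdot\|TU^{-1}\|_{HS}\ \le\ \|U\|\cdot\|U^{-1}\|\cdot\|T\|_{HS}\ =\ \|T\|_{HS},
$$
which gives one inequality. Then I would get the reverse inequality by the same estimate applied to $S=UTU^{-1}$ conjugated by $U^{-1}$: since $T=U^{-1}SU=U^{-1}(UTU^{-1})U$, the lemma yields
$$
\|T\|_{HS}\ \le\ \|U^{-1}\|\cdot\|(UTU^{-1})U\|_{HS}\ \le\ \|U^{-1}\|\cdot\|U\|\cdot\|UTU^{-1}\|_{HS}\ =\ \|UTU^{-1}\|_{HS}.
$$
Combining the two inequalities gives the equality $\|UTU^{-1}\|_{HS}=\|T\|_{HS}$.

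As a remark, an alternative route would be to use the trace expression $\|T\|_{HS}^2=\tr(T^*T)$ from the definition, together with $(UTU^{-1})^*(UTU^{-1})=U(T^*T)U^{-1}$ and the invariance of trace under conjugation, but since the previous lemma is already stated and proved just above, I prefer the direct two-sided estimate. The only thing to flag is that I am using $U^{-1}=U^*$ and $\|U\|=\|U^*\|=1$, which are standard consequences of unitarity. There is no hard step.
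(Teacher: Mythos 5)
Your proof is correct and is essentially the same argument as the paper's: the paper also uses $\|U\|=\|U^{-1}\|=1$, applies the preceding lemma to get $\|UTU^{-1}\|_{HS}\le\|T\|_{HS}$, and then invokes "by symmetry" for the reverse inequality, which is exactly the conjugation-by-$U^{-1}$ step you spell out explicitly.
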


\begin{proof}
Since $U$ is unitary, $\|U\|=\|U^{-1}\|=1$. By above lemma,
$\|UTU^{-1}\|_{HS} \le \|T\|_{HS}$. By symmetry, $\|T\|_{HS} \le \|UTU^{-1}\|_{HS}$,
and we have equality $\|UTU^{-1}\|_{HS} = \|T\|_{HS}$.
\end{proof}

Now, let $\{\tilde v_i\}$ be another orthonormal basis of $V$,
then there exists a unique linear transformation $U$ such that
$U\tilde v_i = v_i$ for all $i$, then $U^{-1} v_i = \tilde v_i$ for all $i$
and $U$ is automatically unitary. We have:
\begin{multline*}
\|T\|_{HS}^2 = \|UTU^{-1}\|_{HS}^2 = \sum_i \|UTU^{-1}v_i\|^2 \\
= \sum_i \|UT \tilde v_i\|^2 = \sum_i \|T \tilde v_i\|^2.
\end{multline*}
This calculation shows that we can take any orthonormal basis of $V$
without affecting the Hilbert-Schmidt norm or inner product.

\subsection{Trace Class Operators}

In this subsection we define trace class operators. As the name suggests,
there is a well defined notion of trace for these operators.

\begin{df}
A {\em trace class operator} on $V$ is a finite linear combination of
compositions $S_1S_2$, where $S_1, S_2 \in \operatorname{End}(V)_{HS}$.
In other words,
$$
\{\text{trace class operators}\} = \bigl( \operatorname{End}(V)_{HS} \bigr)^2
$$
in the sense of ideals in the Banach algebra of bounded linear operators on $V$.
\end{df}

Let $\{v_i\}$ be an orthonormal basis of $V$ and suppose that $T=S_1S_2$
with $S_1, S_2 \in \operatorname{End}(V)_{HS}$. Then
$$
\langle S_2, S_1^* \rangle_{HS} = \sum_i \langle S_2v_i, S_1^*v_i \rangle
= \sum_i \langle S_1S_2v_i,v_i \rangle = \sum_i \langle Tv_i,v_i \rangle
$$
is absolutely convergent and does not depend on the choice of
orthonormal basis $\{v_i\}$.
We define $\tr T = \langle S_2, S_1^* \rangle_{HS}$ if $T=S_1S_2$ and extend
this definition by linearity to all trace class operators.
Then $\tr T$ is well defined, i.e. independent of the decomposition of $T$
into a linear combination of products of pairs of Hilbert-Schmidt operators
and the choice of orthonormal basis. This definition is consistent with
the notion of trace in finite dimensional case.

\begin{lem}  \label{trace-lem}
If $T \in \operatorname{End}(V)$ is of trace class and
$A \in \operatorname{End}(V)$ is bounded, then $AT$, $TA$ are of trace class
and $\tr(AT)=\tr(TA)$. In particular, if $A$ has a bounded inverse, then
$ATA^{-1}$ is of trace class and $\tr(ATA^{-1})=\tr T$.
\end{lem}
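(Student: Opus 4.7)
The plan is to reduce everything to the two facts already in hand: the Hilbert-Schmidt operators form a two-sided ideal in the algebra of bounded operators, and the trace functional on products of two Hilbert-Schmidt operators is given by $\tr(S_1S_2) = \langle S_2, S_1^*\rangle_{HS}$, which is symmetric in the two factors by the identity (\ref{T^*}).

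First I would handle the ideal property. Since $T$ is trace class, I can write $T = \sum_{k=1}^n c_k S_{1,k}S_{2,k}$ with each $S_{i,k}$ Hilbert-Schmidt. Then
\[
AT = \sum_k c_k (AS_{1,k}) S_{2,k}, \qquad TA = \sum_k c_k S_{1,k}(S_{2,k}A),
\]
and the preceding lemma says $AS_{1,k}$ and $S_{2,k}A$ are Hilbert-Schmidt, so both $AT$ and $TA$ lie in $\bigl(\operatorname{End}(V)_{HS}\bigr)^2$, i.e.\ are trace class.

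Next I would establish the cyclicity of trace for products of two Hilbert-Schmidt operators: if $P,Q \in \operatorname{End}(V)_{HS}$, then $\tr(PQ) = \tr(QP)$. By definition $\tr(PQ) = \langle Q, P^*\rangle_{HS}$ and $\tr(QP) = \langle P, Q^*\rangle_{HS}$, and applying (\ref{T^*}) gives
\[
\langle P, Q^*\rangle_{HS} = \overline{\langle P^*, Q\rangle_{HS}} = \overline{\overline{\langle Q, P^*\rangle_{HS}}} = \langle Q, P^*\rangle_{HS},
\]
proving the claim. With this in hand, for each summand $T_k := S_{1,k}S_{2,k}$ I compute
\[
\tr(AT_k) = \tr\bigl((AS_{1,k})S_{2,k}\bigr) = \tr\bigl(S_{2,k}(AS_{1,k})\bigr) = \tr\bigl((S_{2,k}A)S_{1,k}\bigr) = \tr\bigl(S_{1,k}(S_{2,k}A)\bigr) = \tr(T_k A),
\]
using the two-Hilbert-Schmidt cyclicity twice, first grouping the factors as $(AS_{1,k})$ and $S_{2,k}$, then as $(S_{2,k}A)$ and $S_{1,k}$. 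Summing over $k$ with the coefficients $c_k$ gives $\tr(AT) = \tr(TA)$. The special case $\tr(ATA^{-1}) = \tr(T)$ follows by applying the identity just proved to the bounded operator $A^{-1}$ and the trace class operator $AT$: $\tr(ATA^{-1}) = \tr(A^{-1}\cdot AT) = \tr(T)$.

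The only subtle point, and the one I would want to verify carefully, is that the formula $\tr(S_1S_2) = \langle S_2, S_1^*\rangle_{HS}$ is unambiguous even when a trace class operator is written in more than one way as a linear combination of products $S_{1,k}S_{2,k}$; this is the well-definedness assertion implicit in the text's definition of $\tr$, and the cyclicity argument above relies on it. Once that is granted, the chain of equalities is essentially mechanical, and no new analytic input beyond the ideal property and the identity (\ref{T^*}) is required.
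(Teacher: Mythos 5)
Your proof is correct and follows essentially the same route as the paper: reduce by linearity to $T=S_1S_2$, use the two-sided ideal property of $\operatorname{End}(V)_{HS}$ to conclude $AT$ and $TA$ are trace class, and then manipulate the Hilbert--Schmidt inner product via (\ref{T^*}). The only organizational difference is that the paper runs a single chain $\tr(AT)=\langle S_2,(AS_1)^*\rangle_{HS}=\cdots=\langle S_2A,S_1^*\rangle_{HS}=\tr(TA)$, moving $A$ across the inner product by adjointness, whereas you route the same computation through a cyclicity lemma $\tr(PQ)=\tr(QP)$ for Hilbert--Schmidt $P,Q$ applied twice, which --- as you correctly flag --- leans on the well-definedness of $\tr$ (independence of the factorization) that the text establishes right after the definition.
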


\begin{proof}
By linearity, we may assume that $T=S_1S_2$ with
$S_1, S_2 \in \operatorname{End}(V)_{HS}$. Then
$AT=(AS_1)S_2$, $TA=S_1(S_2A)$ with $AS_1$ and
$S_2A \in \operatorname{End}(V)_{HS}$, thus $AT$, $TA$ are of trace class.

Applying (\ref{T^*}), we obtain:
\begin{multline*}
\tr(AT) = \langle S_2, (AS_1)^* \rangle_{HS}
= \overline{\langle S_2^*, AS_1 \rangle_{HS}}
= \overline{\langle A^*S_2^*, S_1 \rangle_{HS}}  \\
= \overline{\langle (S_2A)^*, S_1 \rangle_{HS}}
= \langle S_2A, S_1^* \rangle_{HS} = \tr(TA).
\end{multline*}
\end{proof}

\begin{rem}
We have the following inclusions of two-sided ideals in the Banach algebra
of bounded linear operators on $V$:
\begin{multline*}
\biggl\{ \begin{matrix} \text{finite rank} \\ \text{operators}
\end{matrix}\biggr\} \subset
\biggl\{ \begin{matrix} \text{trace class} \\ \text{operators}
\end{matrix}\biggr\}  \\
\subset \biggl\{ \begin{matrix} \text{Hilbert-Schmidt} \\ \text{operators}
\end{matrix}\biggr\} \subset
\biggl\{ \begin{matrix} \text{compact} \\ \text{operators}
\end{matrix}\biggr\}.
\end{multline*}
(There are many equivalent definitions of compact operators, here is one.
Let $T: V_1 \to V_2$ be a linear transformation between two topological vector
spaces with $V_1$ a normed vector space.
Then $T$ is said to be {\em compact operator} if the closure of the image
in $V_2$ of the unit ball is compact.)
If $V$ is infinite dimensional these inclusions are all proper.
\end{rem}

\subsection{The Definition of a Character}

Let $(\pi,V)$ be an admissible representation of $G_{\BB R}$ of finite length
on a Hilbert space $V$ (but not necessarily unitary).
Fix a maximal compact subgroup $K_{\BB R} \subset G_{\BB R}$.
We can change the inner product on $V$ without changing the Hilbert space
topology so that $K_{\BB R}$ acts unitarily.
(First average the inner product over the $K_{\BB R}$-action, and then argue
using the uniform boundedness principle that the topology with respect to
the new inner product is the same as before.)
Then
$$
V_{fini} = \bigoplus_{i \in \hat K_{\BB R}} V(i)
$$
becomes an orthogonal decomposition.
Take an orthonormal basis in each $V(i)$, put these bases together to get
a (Hilbert space) orthonormal basis $\{v_k\}$ of $V$.
Fix a bi-invariant Haar measure $dg$ on $G_{\BB R}$.

\begin{thm}[Harish-Chandra]
For each $f \in {\cal C}^{\infty}_c(G_{\BB R})$, the operator
$$
\pi(f) = \int_{G_{\BB R}} f(g)\pi(g)\,dg
$$
is of trace class, and the linear functional
$$
\Theta_{\pi} : f \mapsto \tr \pi(f)
$$
is a conjugate-invariant distribution on $G_{\BB R}$.
\end{thm}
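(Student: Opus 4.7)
The plan is to establish three things in turn: that $\pi(f)$ is trace class for every $f \in {\cal C}^\infty_c(G_{\BB R})$; that the functional $\Theta_\pi$ is continuous in the ${\cal C}^\infty_c$ topology; and that it is conjugation invariant. As preparation I would replace the inner product on $V$ by a $K_{\BB R}$-invariant one (possible because $K_{\BB R}$ is compact, and without changing the topology by the uniform boundedness principle), so that the isotypic projectors $\pi(\phi_i)$ become orthogonal projections onto the finite-dimensional isotypic spaces $V(i)$, and $\sum_i \pi(\phi_i) = \operatorname{Id}$ in the strong operator topology.

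The heart of the argument is an estimate showing that the $K_{\BB R}$-type components $\pi(\phi_i)\pi(f)$ decay rapidly in $i$. Let $\Omega_K \in {\cal U}(\g k)$ be the Casimir of $\g k$; since $K_{\BB R}$ is compact, $\pi(\Omega_K)$ acts on each $V(i)$ as a non-negative scalar $c_i$. Viewing $\Omega_K$ as a right-invariant differential operator $\Omega_K^R$ on $G_{\BB R}$, the bi-invariance of Haar measure and a change of variables give the key identity $\pi\bigl((1+\Omega_K^R)^N f\bigr) = (1+\pi(\Omega_K))^N \circ \pi(f)$, where the right-hand side makes sense because $\pi(f)$ maps $V$ into $V^\infty$. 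Composing on the left with $\pi(\phi_i)$, which commutes with $\pi(\Omega_K)$ and restricts to the identity on the $c_i$-eigenspace $V(i)$, gives
\[
\pi(\phi_i)\pi(f) \;=\; (1+c_i)^{-N}\,\pi(\phi_i)\,\pi\bigl((1+\Omega_K^R)^N f\bigr).
\]
Since $\pi(\phi_i)$ has operator norm one and the range of $\pi(\phi_i)\pi(f)$ is contained in $V(i)$, its trace norm is bounded by
\[
\|\pi(\phi_i)\pi(f)\|_{\tr} \;\le\; \dim V(i)\cdot(1+c_i)^{-N}\cdot p_N(f),
\]
where $p_N(f) = \bigl(\sup_{g\in\supp f}\|\pi(g)\|\bigr)\,\|(1+\Omega_K^R)^N f\|_{L^1}$ is a continuous seminorm on ${\cal C}^\infty_c(G_{\BB R})$ (finite because $\pi$ is continuous on the compact set $\supp f$).

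The main obstacle I foresee is summing this over $\hat K_{\BB R}$: one needs $\sum_i \dim V(i)\,(1+c_i)^{-N} < \infty$ for some $N$. Here both hypotheses, admissibility and finite length, enter crucially. By the Subrepresentation Theorem, each irreducible Jordan-H\"older constituent of $V$ embeds in some principal series, and the crude estimate (\ref{dim-est}) then forces $\dim V(i) = O\bigl((\dim U_i)^2\bigr)$. Combined with the Weyl dimension formula for $K_{\BB R}$ and the known growth of Casimir eigenvalues on $\hat K_{\BB R}$, this makes $\sum_i \dim V(i)\,(1+c_i)^{-N}$ converge once $N$ exceeds a constant depending only on $\dim K_{\BB R}$. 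Consequently $\pi(f) = \sum_i \pi(\phi_i)\pi(f)$ converges in trace norm, so $\pi(f)$ is trace class; and the bound $|\Theta_\pi(f)| \le C\,p_N(f)$ exhibits $\Theta_\pi$ as a distribution on $G_{\BB R}$.

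Conjugation invariance is then immediate: writing $f^h(g) = f(h^{-1}gh)$ and using that Haar measure on the semisimple group $G_{\BB R}$ is $\operatorname{Ad}$-invariant (since $\det\operatorname{Ad}(h)=1$), the substitution $g'=h^{-1}gh$ gives $\pi(f^h) = \pi(h)\pi(f)\pi(h)^{-1}$, so the trace identity $\tr(ABA^{-1})=\tr B$ for trace class $B$ yields $\Theta_\pi(f^h) = \Theta_\pi(f)$.
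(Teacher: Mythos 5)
Your proposal is correct and follows essentially the same route as the paper: the $\g k$-Casimir is used to produce polynomial decay across $K_{\BB R}$-types, the admissibility estimate $\dim V(i)\le C\,(\dim U_i)^2$ coming from the principal series together with the Weyl dimension formula controls the multiplicities, and conjugation invariance follows from $\tr(ABA^{-1})=\tr B$. The only difference is bookkeeping at the end: the paper factors $\pi(f)$ as a product of two Hilbert--Schmidt operators (its working definition of trace class), whereas you sum the trace norms of the isotypic blocks $\pi(\phi_i)\pi(f)$ directly.
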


\begin{rem}
Note that $\pi(f)$ (and hence $\tr \pi(f)$) depends on the product $f(g)\,dg$
and not directly on $f$ or $dg$. The expression $f(g)\,dg$ can be interpreted
as a smooth compactly supported measure on $G_{\BB R}$.
Thus the theorem asserts that $\Theta_{\pi}$ is a continuous linear functional
on the space of smooth compactly supported measures on $G_{\BB R}$.

Let $\theta_k: G_{\BB R} \to \BB C$ be the diagonal matrix coefficient functions
$\langle \pi(g)v_k,v_k \rangle$. The proof shows that
$\Theta_{\pi} = \sum_k \theta_k$ as {\em distributions} and the convergence
is in the weak-$*$ topology (i.e.
$\Theta_{\pi}(f) = \sum_k \int_{G_{\BB R}} \theta_k(g) \cdot f(g)\,dg$, for all
$f \in {\cal C}^{\infty}_c(G_{\BB R})$).
In other words, the distribution $\Theta_{\pi}$ extends the notion of the
character of a finite dimensional representation.
For this reason, $\Theta_{\pi}$ is called the {\em Harish-Chandra character}
of $(\pi,V)$.

We will use a formal notation of integration for the pairing between
distributions and smooth functions with compact support and write
$$
\tr \pi(f) = \int_{G_{\BB R}} \Theta_{\pi}(g) \cdot f(g)\,dg
$$
for $\Theta_{\pi}(f)$.
\end{rem}

\begin{proof}
Let $\Omega_{\g k} \in {\cal U}(\g k)$ be the Casimir element of $\g k$.
Pick a Cartan subalgebra $\g t$ of $\g k$ and a system of positive roots
relative to $\g t$. Let
$$
\rho_c = \frac 12 (\text{sum of positive roots}) \quad \in \g t^*.
$$
For each $i \in \hat K_{\BB R}$, let $\mu_i \in \g t^*$ denote the
highest weight of $(\tau_i,U_i)$.
Since $(\tau_i,U_i)$ is irreducible, $\Omega_{\g k}$ acts on $U_i$ by
multiplication by a scalar.
The value of that scalar is easily determined by checking the action of
$\Omega_{\g k}$ on the highest weight vector of $U_i$:
$$
(\mu_i+\rho_c,\mu_i+\rho_c)-(\rho_c,\rho_c)
= (\mu_i+2\rho_c,\mu_i) \ge \|\mu_i\|^2.
$$
Note that $\mu_i$ is a complete invariant of $(\tau_i,U_i)$ and ranges over
a lattice intersected with certain cone (namely, the positive Weyl chamber).
Therefore, for $n$ sufficiently large,
$$
\sum_{i \in \hat K_{\BB R}} \bigl( 1+\|\mu_i\|^2 \bigr)^{-n} <\infty.
$$
Recall that the Weyl dimension formula implies that $\dim U_i$ is expressible
as a polynomial function of $\mu_i$.
We also know from (\ref{dim-est})
(at least for the principal series representations and hence for all
of their subrepresentations) that
$$
\dim V(i) \le (\text{length of $(\pi,V)$}) \cdot (\dim U_i)^2,
$$
which is bounded by a polynomial in $(1+\|\mu_i\|^2)$.
Note that, for $n > 0$, 
$$
\pi(1+\Omega_{\g k})^{-n} :
\bigoplus_{i \in \hat K_{\BB R}} V(i) \to \bigoplus_{i \in \hat K_{\BB R}} V(i)
$$
extends to a bounded operator on $V$ -- in fact, a Hilbert-Schmidt operator --
if $n$ is sufficiently large. For $f \in {\cal C}^{\infty}_c(G_{\BB R})$, write
\begin{equation}  \label{factorization}
\pi(f) = \pi \bigl( r(1+\Omega_{\g k})^{2n} f \bigr) \cdot
\pi(1+\Omega_{\g k})^{-n} \cdot \pi(1+\Omega_{\g k})^{-n}.
\end{equation}
Thus we have expressed $\pi(f)$ as a composition of two Hilbert-Schmidt
operators, hence $\pi(f)$ is of trace class.
This proves that $\tr \pi(f)$ is well-defined and equals
$\sum_k \int_{G_{\BB R}} \theta_k\cdot f(g)\,dg$, where
$\theta_k: G_{\BB R} \to \BB C$ are the diagonal matrix coefficient
functions $\langle \pi(g)v_k,v_k \rangle$.

Note that
$$
\bigl\| \pi \bigl( r(1+\Omega_{\g k})^{2n} f \bigr) \bigr\|
\le C \cdot \sup \bigl| r(1+\Omega_{\g k})^{2n} f \bigr|,
$$
where the constant $C$ depends only on the support of $f$
(follows from the uniform boundedness principle).
This means that in the factorization (\ref{factorization}) of $\pi(f)$ one
Hilbert-Schmidt operator is independent of $f$ and the other has
Hilbert-Schmidt norm bounded by the operator norm of
$\pi \bigl( r(1+\Omega_{\g k})^{2n} f \bigr)$.
We conclude that the map $f \mapsto \tr \pi(f)$ is continuous in the topology
of ${\cal C}^{\infty}_c(G_{\BB R})$, i.e. $\Theta_{\pi}$ is a distribution.

Finally, for $g \in G_{\BB R}$,
we can write $f$ conjugated by $g$ as $l(g)r(g)f$, then
by Lemma \ref{trace-lem} we get
$$
\tr \pi \bigl( l(g)r(g)f \bigr)
= \tr \bigl( \pi(g^{-1}) \pi(f) \pi(g) \bigr) = \tr \pi(f),
$$
which proves that the distribution $\Theta_{\pi}$ is conjugation-invariant.
\end{proof}

\subsection{Properties of Characters}

As before, let $\{v_k\}$ be a (Hilbert space) orthonormal basis of $V$
obtained by combining orthonormal bases for all $V(i)$, $i \in \hat K_{\BB R}$.
By Theorem \ref{densityof}, each diagonal matrix coefficient function
$\theta_k(g) = \langle \pi(g)v_k,v_k \rangle$ is real analytic.
We have already mentioned that $\Theta_{\pi} = \sum_k \theta_k$
in the sense of distributions.
Note that, for each $k$, the Taylor series of $\theta_k$ at the identity
element (or at an arbitrary point of $K_{\BB R}$ when $G_{\BB R}$ is not connected)
is completely determined by the structure of $V_{fini}$ as a Harish-Chandra
module. Thus we conclude:

\begin{lem}
The characters of any two infinitesimally equivalent representations coincide.
\end{lem}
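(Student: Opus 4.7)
The plan is to decompose the character as a sum of real-analytic functions indexed by $K_{\BB R}$-types, and then to show that each piece is intrinsic to the Harish-Chandra module. First I would group the orthonormal basis $\{v_k\}$ by $K_{\BB R}$-isotypic components $V(i)$. Writing $P_i = \pi_{K_{\BB R}}(\phi_i)$ for the canonical projection onto the finite-dimensional space $V(i)$, I would introduce the real-analytic function
\[
\Theta_i(g) \; =\!\! \sum_{v_k \in V(i)} \!\! \langle \pi(g)v_k, v_k\rangle \; = \; \tr_{V(i)}\bigl(P_i\,\pi(g)\bigr|_{V(i)}\bigr),
\]
which is a finite sum of real-analytic matrix coefficients. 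Using the Hilbert-Schmidt factorization from the proof of the preceding theorem to justify interchanging the trace with integration against a test function, I would show that $\Theta_\pi = \sum_i \Theta_i$ as distributions on $G_{\BB R}$.

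The heart of the argument is to show that each $\Theta_i$ is intrinsic to $\HC(V)$. I would compute its Taylor expansion at an arbitrary $g_0 \in K_{\BB R}$. Since $\phi_i$ is a class function on $K_{\BB R}$, the operator $\pi(g_0)$ commutes with $P_i$ and preserves $V(i)$; and since each $v_k \in V_{fini}$ is a weakly analytic vector (Theorem~\ref{densityof}), the scalar function $\langle \pi(g_0 \exp tX) v_k, v_l\rangle$ is real-analytic in $t$ with $n$-th derivative at $t=0$ equal to $\langle \pi(g_0)\,\pi(X^n)\,v_k,\, v_l\rangle$. Summing over an orthonormal basis of $V(i)$,
\[
\frac{d^n}{dt^n}\bigg|_{t=0} \Theta_i(g_0\exp tX) \; = \; \tr_{V(i)}\bigl(\pi(g_0)\,P_i\,\pi(X^n)\bigr|_{V(i)}\bigr).
\]
All three ingredients inside the trace---the restriction $\pi(g_0)|_{V(i)}$ as an operator on the $K$-module $V(i)$, the ${\cal U}(\g g)$-action on $V_{fini}$, and the canonical isotypic projection $P_i$---are intrinsic to the $(\g g, K)$-module $\HC(V)$, and trace is invariant under isomorphism. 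Hence a $(\g g, K)$-isomorphism $\HC(V_1) \simeq \HC(V_2)$ forces the two Taylor expansions to agree at every point of $K_{\BB R}$.

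To finish, I would invoke real analyticity: by the global Cartan decomposition every connected component of $G_{\BB R}$ meets $K_{\BB R}$, so a real-analytic function on $G_{\BB R}$ is determined on each component by its Taylor series at a single $K_{\BB R}$-point. Thus $\Theta_i$ is entirely determined by $\HC(V)$, and summing distributionally yields the same for $\Theta_\pi$. Two infinitesimally equivalent representations therefore have identical characters.

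The main obstacle is the rigorous justification of the distributional identity $\Theta_\pi = \sum_i \Theta_i$---that is, the interchange of the sum over $k$ (which is not absolutely convergent in the usual sense) with integration against a test function. This is essentially the content of the Hilbert-Schmidt factorization $\pi(f) = \pi\bigl(r(1+\Omega_{\g k})^{2n}f\bigr)\cdot \pi(1+\Omega_{\g k})^{-2n}$ used in the proof of the previous theorem, which reduces convergence to a summable bound on $(1+\|\mu_i\|^2)^{-n}$; once this is in hand the remainder of the argument is formal.
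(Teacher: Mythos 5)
Your argument is correct and follows essentially the same route as the paper: the paper's (very brief) justification is precisely that $\Theta_\pi=\sum_k\theta_k$ as distributions, that each diagonal matrix coefficient $\theta_k$ is real-analytic, and that its Taylor series at a point of $K_{\BB R}$ is determined by the Harish-Chandra module structure. Your version merely organizes the sum by $K_{\BB R}$-isotypic components and spells out the Taylor-coefficient computation and the analytic-continuation step that the paper leaves implicit.
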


Also observe that if
$$
0 \to (\pi_1,V_1) \to (\pi,V) \to (\pi_2,V_2) \to 0
$$
is a short exact sequence of representations of $G_{\BB R}$, then
$$
\Theta_{\pi} = \Theta_{\pi_1} + \Theta_{\pi_2}
$$
(even if the sequence does not split).
Thus we can say that the character of a representation $(\pi,V)$ is an
invariant of the ``infinitesimal semisimplification of $(\pi,V)$''.

\begin{thm}
Let $\{(\pi_{\alpha},V_{\alpha})\}_{\alpha \in {\cal A}}$ be a collection of
irreducible admissible representations of $G_{\BB R}$, no two of which are
infinitesimally equivalent. Then $\{ \Theta_{\pi_{\alpha}} \}_{\alpha \in {\cal A}}$
is a linearly independent set.
\end{thm}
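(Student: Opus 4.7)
The plan is to derive a contradiction from a hypothetical nontrivial finite linear dependence $\sum_{\alpha \in F} c_\alpha \Theta_{\pi_\alpha} = 0$ (with $F$ finite and $c_\alpha \neq 0$ for all $\alpha \in F$) by exhibiting a test function $f \in {\cal C}^{\infty}_c(G_{\BB R})$ against which only a single term of $\sum c_\alpha \tr \pi_\alpha(f) = 0$ survives, and is nonzero.

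First I would reduce to a single $K_{\BB R}$-type. Fix $\alpha_0 \in F$ and choose $i \in \hat K_{\BB R}$ with $V_{\alpha_0}(i) \neq 0$. Since $\phi_i$ is central on $K_{\BB R}$, the projector lemma of the previous sections gives that $\pi_\alpha(\phi_i)$ is a $K_{\BB R}$-equivariant projection onto $V_\alpha(i)$ with $\phi_i*\phi_i=\phi_i$. Replacing $f$ by $\phi_i*f*\phi_i$ and using the cyclicity of the trace, the relation becomes
\[
\sum_{\alpha \in F,\, V_\alpha(i)\neq 0}\ c_\alpha \tr\bigl( \pi_\alpha(\phi_i)\,\pi_\alpha(f) \big|_{V_\alpha(i)} \bigr) \ =\ 0
\]
for every $f \in {\cal C}^{\infty}_c(G_{\BB R})$ -- a finite sum in which each $V_\alpha(i)$ is finite-dimensional by admissibility.

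The crux is the simultaneous density claim
\[
\Phi:\ {\cal C}^{\infty}_c(G_{\BB R}) \longrightarrow \prod_{\alpha \in F,\, V_\alpha(i)\neq 0} \End\bigl(V_\alpha(i)\bigr), \qquad
f \mapsto \bigl( \pi_\alpha(\phi_i)\,\pi_\alpha(f) \big|_{V_\alpha(i)} \bigr)_\alpha,
\]
which I would prove to be surjective by routing through the Hecke algebra ${\cal H}$ of $(\g g, K)$. The spaces $V_\alpha(i) = \phi_i \cdot \HC(V_\alpha)$ are pairwise non-isomorphic simple $\phi_i {\cal H} \phi_i$-modules with scalar commutant (by idempotent-Morita reduction from the Dixmier-style Schur's lemma of the excerpt together with the pairwise infinitesimal inequivalence of the $\pi_\alpha$). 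Burnside's theorem combined with its standard ``semisimple'' extension via pairwise non-isomorphism then yields surjectivity $\phi_i {\cal H} \phi_i \twoheadrightarrow \prod_\alpha \End(V_\alpha(i))$. To pass back to ${\cal C}^{\infty}_c(G_{\BB R})$, I would argue that since every vector in $V_\alpha(i) \subset V_{\alpha,fini} \subset V_\alpha^{\infty}$ is smooth, every Hecke-algebra element $\phi_i u \phi_i$ with $u \in {\cal U}(\g g)$ acts on the finite-dimensional subspace $\bigoplus_\alpha V_\alpha(i)$ as a norm-limit of operators $\pi_\alpha(\phi_i * L_u f_n * \phi_i)$ for an approximate identity $\{f_n\} \subset {\cal C}^{\infty}_c(G_{\BB R})$, where $L_u$ is the corresponding left-invariant differential operator; since $\End(\bigoplus_\alpha V_\alpha(i))$ is finite-dimensional, the image of $\Phi$ is automatically closed, so the limit already belongs to $\operatorname{Im}\Phi$ and surjectivity is preserved.

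Granting $\Phi$ surjective, I choose $f$ mapping to $(Id_{V_{\alpha_0}(i)}, 0, \dots, 0)$, and the previous identity becomes $c_{\alpha_0}\dim V_{\alpha_0}(i)=0$, contradicting $c_{\alpha_0}\neq 0$. The main obstacle is the density step -- in particular, the passage from Hecke-algebra surjectivity to ${\cal C}^{\infty}_c(G_{\BB R})$-surjectivity via G\aa rding-type approximation. Burnside/Jacobson on the Hecke-algebra side is routine once the pairwise non-isomorphism and scalar commutants are in hand, but the approximation hinges on combining the smoothness of $K_{\BB R}$-finite vectors (Harish-Chandra's density theorem from the excerpt) with a careful interchange between differentiation of approximate identities and the representation action.
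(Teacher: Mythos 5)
The paper states this theorem without proof, so there is nothing internal to compare against; judged on its own terms, your argument is the standard Harish-Chandra/Knapp proof and its structure is sound. The three moving parts are all correctly identified: (i) cutting by $\phi_i*f*\phi_i$ and using $\tr(AT)=\tr(TA)$ to reduce the dependence relation to the finite-dimensional spaces $V_\alpha(i)$; (ii) the surjectivity of $\Phi$, which is exactly where irreducibility and pairwise infinitesimal inequivalence enter, via the idempotent correspondence $M\mapsto \phi_i M$ (simplicity, scalar commutant by the Dixmier lemma, preservation of non-isomorphism) followed by the Jacobson/Burnside density theorem for a finite family of pairwise non-isomorphic simple modules; and (iii) evaluation at a test function hitting $(Id,0,\dots,0)$. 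The approximation step is the only place needing real care, and your resolution is the right one: for $v\in V_\alpha(i)\subset V_\alpha^{\infty}$ one has $\pi(f_n)\pi(u)v=\pi(D_uf_n)v\to\pi(u)v$ with $D_u$ the corresponding invariant differential operator (the interchange of derivative and integral being justified by smoothness and compact support of $f_n$), and since $\operatorname{Im}\Phi$ is a linear subspace of the finite-dimensional space $\prod_\alpha\End(V_\alpha(i))$ it is automatically closed, so these limits lie in it. The one point I would insist you write out in a full version is the precise definition of the Hecke algebra ${\cal H}$ containing both ${\cal U}(\g g)$ and the idempotents $\phi_i$ (or, alternatively, an ad hoc verification that the operator algebra generated by the $\pi_\alpha(\phi_i u\phi_i)$ acts irreducibly on each $V_\alpha(i)$ with scalar commutant and without isomorphisms between distinct $\alpha$); as written this is asserted rather than proved, but it is standard and does not hide an error.
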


\begin{cor}
An irreducible admissible representation of $G_{\BB R}$ is uniquely determined
by its character, up to infinitesimal equivalence.
\end{cor}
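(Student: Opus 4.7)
The plan is to derive this corollary as an essentially immediate consequence of the preceding theorem, which asserts the linear independence of characters coming from pairwise infinitesimally inequivalent irreducible admissible representations. The corollary is effectively the contrapositive of that theorem applied to a two-element collection, so there is no genuine new content to establish beyond invoking the previous result.

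Concretely, I would argue as follows. Suppose $(\pi_1, V_1)$ and $(\pi_2, V_2)$ are irreducible admissible representations of $G_{\BB R}$ with $\Theta_{\pi_1} = \Theta_{\pi_2}$, and, arguing by contradiction, assume that $\pi_1$ and $\pi_2$ are not infinitesimally equivalent. Then the pair $\{(\pi_1, V_1), (\pi_2, V_2)\}$ is a collection of irreducible admissible representations of $G_{\BB R}$, no two of which are infinitesimally equivalent. By the preceding theorem, the set $\{\Theta_{\pi_1}, \Theta_{\pi_2}\}$ must be linearly independent in the space of distributions on $G_{\BB R}$. But the assumed equality rewrites as $\Theta_{\pi_1} - \Theta_{\pi_2} = 0$, which is a nontrivial linear relation, giving the desired contradiction. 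Therefore $\pi_1$ and $\pi_2$ are infinitesimally equivalent, which is precisely the assertion of the corollary.

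It is worth noting that ``up to infinitesimal equivalence'' is the best one could hope for here: the lemma preceding the theorem shows that infinitesimally equivalent representations share the same character, so the character alone cannot distinguish representations any more finely than this. The corollary is the converse statement, namely that the character cannot fail to distinguish them any more coarsely either.

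The main obstacle in the overall development is of course the preceding theorem itself, whose proof rests on deep inputs (existence of the character as a distribution, the trace class property of $\pi(f)$, and ultimately a faithful reconstruction of the Harish-Chandra module from the diagonal matrix coefficients $\theta_k$). Modulo that theorem, which we are permitted to assume as already established in the excerpt, the corollary is a one-line deduction and presents no real difficulty of its own.
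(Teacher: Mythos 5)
Your proof is correct and matches the paper's intent exactly: the corollary is stated there as an immediate consequence of the preceding linear-independence theorem, with precisely the two-element-collection contrapositive argument you give.
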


\begin{cor}
The character $\Theta_{\pi}$ completely determines the irreducible components
of $(\pi,V)$, up to infinitesimal equivalence.
\end{cor}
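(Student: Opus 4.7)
The plan is to combine three ingredients already available: the finite length hypothesis (which yields a composition series), the additivity of $\Theta_\pi$ on short exact sequences noted in the paragraph preceding the theorem, and the linear independence theorem just stated. The idea is that the character of $\pi$ decomposes as a finite nonnegative integer combination of characters of irreducible admissible representations, and linear independence then forces the combination to be unique up to infinitesimal equivalence.

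First I would invoke the fact that $(\pi,V)$ has finite length to pick a composition series
\[
0 = V_0 \subset V_1 \subset \dots \subset V_N = V,
\]
with each irreducible subquotient $(\pi_i, V_i/V_{i-1})$ admissible of finite length. Iterating the additivity relation $\Theta_{\pi} = \Theta_{\pi_1} + \Theta_{\pi_2}$ for short exact sequences along this filtration yields $\Theta_{\pi} = \sum_{i=1}^{N} \Theta_{\pi_i}$ as distributions on $G_{\BB R}$. Collecting terms according to infinitesimal equivalence classes, and using the preceding lemma that infinitesimally equivalent representations have equal characters, this rewrites as $\Theta_\pi = \sum_{\alpha} m_\alpha \, \Theta_{\pi_\alpha}$, the sum ranging over the finitely many distinct infinitesimal equivalence classes $[\pi_\alpha]$ that occur among the $\pi_i$, with $m_\alpha \in \BB Z_{>0}$ their multiplicities.

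Next I would apply the linear independence theorem just proved: the characters $\Theta_{\pi_\alpha}$, for pairwise infinitesimally inequivalent irreducible admissible representations, are linearly independent in the space of distributions on $G_{\BB R}$. Therefore, given only $\Theta_\pi$, any two such expressions $\sum_\alpha m_\alpha \Theta_{\pi_\alpha} = \sum_\beta m'_\beta \Theta_{\pi'_\beta}$ must coincide class by class: each class $[\pi_\alpha]$ appearing on one side appears on the other with $m_\alpha = m'_\alpha$. In other words, the multi-set of infinitesimal equivalence classes $\{[\pi_\alpha]\}$ together with multiplicities $\{m_\alpha\}$ is read off uniquely from $\Theta_\pi$.

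Finally I would note that by the Jordan--Hölder principle (applied inside the category of Harish-Chandra modules, where it is a purely algebraic statement for $\HC(V)$ via the exact, faithful functor of Theorem \ref{HCf}), the multi-set of irreducible subquotients of any two composition series of $(\pi,V)$ agrees up to infinitesimal equivalence, so this recipe genuinely recovers the irreducible components of $(\pi,V)$. There is no real obstacle here beyond bookkeeping; the only point requiring a moment of care is the identification of ``irreducible components'' with Jordan--Hölder composition factors (since $V$ need not be a direct sum of irreducibles), and that is handled by passing to the Harish-Chandra module.
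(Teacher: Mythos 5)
Your argument is correct and is exactly the one the paper intends (the corollary is left as an immediate consequence of the linear independence theorem together with the additivity of characters on short exact sequences, which is why the text afterwards speaks of $\Theta_{\pi}$ determining only the ``infinitesimal semisimplification''). Your extra remark about Jordan--H\"older well-definedness of the composition factors, handled via the exact faithful functor $\HC$, is a sensible piece of bookkeeping that the paper leaves implicit.
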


If $\dim V < \infty$, $(\pi,V)$ is the direct sum of its irreducible
components, hence completely determined by $\Theta_{\pi}$.
If $\dim V = \infty$, the character $\Theta_{\pi}$ determines the irreducible
components of $(\pi,V)$, but cannot tell how $(\pi,V)$ is built out of them.
Hence $\Theta_{\pi}$ only determines the infinitesimal equivalence class of the
semisimplification of $(\pi,V)$.

As usual, we assume that $G_{\BB R}$ is connected and let ${\cal ZU}(\g g)$
denote the center of the universal enveloping algebra ${\cal U}(\g g)$.
Suppose that the representation $(\pi,V)$ is irreducible, admissible.
Then ${\cal ZU}(\g g)$ acts on $V^{\infty}$ by an infinitesimal character
$\chi_{\pi}: {\cal ZU}(\g g) \to \BB C$ (recall Corollary \ref{inf-char-cor}).
This means that, for all $z \in {\cal ZU}(\g g)$ and all $v \in V^{\infty}$,
$\pi(z)v=\chi_{\pi}(z)v$. In particular, for a diagonal matrix coefficient
$\theta_k(g) = \langle \pi(g)v_k,v_k \rangle$, we have
$r(z) \theta_k = \chi_{\pi}(z) \theta_k$.
Since $\Theta_{\pi} = \sum_k \theta_k$, we obtain:

\begin{cor}
For all $z \in {\cal ZU}(\g g)$, we have
$r(z) \Theta_{\pi} = \chi_{\pi}(z) \Theta_{\pi}$.
In other words, the character of an irreducible representation $\Theta_{\pi}$
is a conjugation-invariant eigendistribution.
\end{cor}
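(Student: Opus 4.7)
The plan is to reduce the claim on the distribution $\Theta_\pi$ to a pointwise differential identity on each smooth matrix coefficient $\theta_k(g) = \langle \pi(g)v_k, v_k\rangle$, and then to transfer this identity to $\Theta_\pi$ via the convergence $\Theta_\pi = \sum_k \theta_k$ (stated in the remark after the character theorem) and the continuity of $r(z)$ on the space of distributions.

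First I would establish the pointwise identity $r(z)\theta_k = \chi_\pi(z)\theta_k$. Each basis vector $v_k$ lies in some $V(i)$, hence in $V_{\mathrm{fini}} \subset V^{\infty}$ by Theorem \ref{densityof}. For $X \in \g g_{\BB R}$, differentiating under the pairing gives
$$
r(X)\theta_k(g) = \frac{d}{dt}\Big|_{t=0}\langle \pi(g \exp tX) v_k, v_k\rangle = \langle \pi(g)\pi(X)v_k, v_k\rangle,
$$
and iterating/extending by linearity yields $r(u)\theta_k(g) = \langle \pi(g)\pi(u)v_k, v_k\rangle$ for every $u \in {\cal U}(\g g)$. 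Specializing $u = z \in {\cal ZU}(\g g)$ and using Corollary \ref{inf-char-cor} (which gives $\pi(z)w = \chi_\pi(z)w$ for all $w \in V^{\infty}$, hence in particular for $w = v_k$), we obtain
$$
r(z)\theta_k(g) = \chi_\pi(z)\langle \pi(g)v_k, v_k\rangle = \chi_\pi(z)\,\theta_k(g)
$$
pointwise on $G_{\BB R}$.

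Second I would pass this identity through the sum defining $\Theta_\pi$. By the remark preceding the corollary, the partial sums $\sum_{k \le N} \theta_k$ converge to $\Theta_\pi$ in the weak-$*$ topology on distributions. The operator $r(z)$, viewed as a fixed left-invariant differential operator on $G_{\BB R}$, acts on distributions as the formal transpose of a continuous operator on ${\cal C}^{\infty}_c(G_{\BB R})$, and is therefore continuous in the weak-$*$ topology. Applying $r(z)$ term-by-term and using the pointwise identity gives
$$
r(z)\Theta_\pi = \sum_k r(z)\theta_k = \chi_\pi(z)\sum_k \theta_k = \chi_\pi(z)\Theta_\pi,
$$
as equalities of distributions, which is the conclusion. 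The conjugation-invariance of $\Theta_\pi$ was already established in the proof of the character theorem, completing the proof that $\Theta_\pi$ is a conjugation-invariant eigendistribution.

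The only mild subtlety is the legitimacy of passing $r(z)$ through the infinite sum, but this is automatic from weak-$*$ continuity of differentiation once the convergence $\sum_k \theta_k \to \Theta_\pi$ has been established — which was the content of the preceding theorem. No further analytic estimates are required.
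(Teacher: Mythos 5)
Your proof is correct and follows essentially the same route as the paper: the paper likewise derives $r(z)\theta_k=\chi_\pi(z)\theta_k$ from $\pi(z)v_k=\chi_\pi(z)v_k$ for the diagonal matrix coefficients and then concludes via $\Theta_\pi=\sum_k\theta_k$. You have merely spelled out the (correct) justification for interchanging $r(z)$ with the weak-$*$ convergent sum, which the paper leaves implicit.
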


\begin{rem}
This is a very important property of characters of irreducible
representations. Other properties of characters -- such as Harish-Chandra
Regularity Theorem --  are proved in a slightly more general setting of
conjugation-invariant eigendistributions.
But not every conjugation-\\invariant eigendistribution is the character
of a representation.
\end{rem}

If $(\pi,V)$ is admissible of finite length (but not necessarily irreducible),
$\Theta_{\pi}$ is a finite sum of conjugation-invariant eigendistributions.
This implies that $\Theta_{\pi}$ is annihilated by an ideal
$I \subset {\cal ZU}(\g g)$ of finite codimension.

\subsection{Harish-Chandra Regularity Theorem}  \label{regularity-subsection}

As before, we denote the complexification of $G_{\BB R}$ by $G$.
For an element $g \in G$, the following three properties are equivalent:
\begin{itemize}
\item
$g$ acts semisimply in every finite-dimensional representation of $G$;
\item
It does so for one faithful finite-dimensional representation;
\item
The operator $\operatorname{Ad}(g)$ on $\g g$ is semisimple.
\end{itemize}
We call such elements of $G$ {\em semisimple}.
Similarly, one can define {\em unipotent} elements of $G$.
The unipotent elements constitute a proper algebraic subvariety in $G$
and a real-analytic subvariety in $G_{\BB R}$.

For an element $g \in G$, we use the following notation for the centralizer
of $g$:
$$
Z_G(g) = \{ h \in G ;\: gh=hg \}.
$$
We have the following analogue of the Jordan Canonical Form Theorem:

\begin{thm}[Borel]
Every $g \in G$ can be written uniquely in the form $g=g_{ss}g_u$, where
$g_{ss} \in G$ is semisimple and $g_u \in G$ is unipotent,
and $g_{ss}g_u=g_ug_{ss}$. Moreover, $g_{ss}$ and $g_u$ lie in the center of
$Z_G(g)$ and
$$
Z_G(g) = Z_G(g_{ss}) \cap Z_G(g_u).
$$
\end{thm}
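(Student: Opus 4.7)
The plan is to reduce the statement to the multiplicative Jordan decomposition for $GL(n,\BB C)$ via a faithful representation, and then argue that both factors actually land in $G$ and in the center of $Z_G(g)$.

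First I would recall the linear-algebraic fact: for any $T \in GL(V)$ with $V$ a finite-dimensional complex vector space, there is a unique decomposition $T = T_s T_u$ with $T_s$ semisimple, $T_u$ unipotent, and $T_sT_u=T_uT_s$. The crucial additional property, obtained by applying the Chinese Remainder Theorem to the characteristic polynomial of $T$, is that $T_s$ and $T_u$ are both polynomials in $T$ (with no constant term, so that they also make sense for non-invertible $T$, and in particular $T_s \in GL(V)$ when $T$ is). Uniqueness is immediate from simultaneous diagonalizability of commuting semisimple operators combined with the fact that the only unipotent element that is also semisimple is the identity. Since $G$ is linear by hypothesis, fix a faithful finite-dimensional representation $G \hookrightarrow GL(V)$ and \emph{define} $g_{ss}$ and $g_u$ via the $GL(V)$-Jordan decomposition of $g$.

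Next I would show that $g_{ss}$ and $g_u$ actually lie in $G$. This is the main obstacle, and it is where one must use that $G$, being a connected complex semisimple Lie group realized linearly, is in fact an algebraic subgroup of $GL(V)$ (equivalently, $\g g$ is an algebraic Lie subalgebra of $\g{gl}(V)$; this is Chevalley's theorem on semisimple Lie algebras, which uses that $\g g = [\g g, \g g]$). Once $G$ is cut out by polynomial equations, the standard argument works: if $I \subset \BB C[GL(V)]$ is the ideal defining $G$, then right translation by $g$ preserves $I$; since $g_s$ and $g_u$ are polynomials in $g$, the operators they induce on $\BB C[GL(V)]$ by right translation also preserve $I$, which means $g_{ss}, g_u \in G$. (Equivalently, applying the standard fact that if $g$ preserves a finite-dimensional subspace $W \subset \BB C[GL(V)]$ then so do $g_s$ and $g_u$, one deduces that $g_s, g_u$ preserve any finite-dimensional sub-$G$-module of the ideal of $G$.)

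Having placed $g_{ss}, g_u$ in $G$, uniqueness in $G$ is inherited from uniqueness in $GL(V)$: any decomposition $g = su$ with $s$ semisimple and $u$ unipotent in $G$ remains a Jordan decomposition inside $GL(V)$, so must coincide with $g_{ss}g_u$. For the centralizer statements, suppose $h \in Z_G(g)$. Then $h$ commutes with $g$ in $GL(V)$, hence with every polynomial in $g$, and therefore with both $g_{ss}$ and $g_u$. This shows $g_{ss}, g_u \in Z_G(g)$, and moreover that they lie in the \emph{center} of $Z_G(g)$, since every $h \in Z_G(g)$ commutes with them. The final equality is a two-sided containment: the inclusion $Z_G(g_{ss}) \cap Z_G(g_u) \subseteq Z_G(g)$ is immediate from $g = g_{ss}g_u$, and the reverse inclusion is exactly what we just established.

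The routine steps (multiplicative Jordan in $GL(V)$, polynomial expressions, uniqueness) I would dispatch quickly. The substantive content is the algebraicity argument in the middle paragraph; if one wished to avoid invoking Chevalley's theorem as a black box, one could instead prove directly that for $X \in \g g$ the semisimple and nilpotent parts $X_s, X_n \in \g{gl}(V)$ lie in $\g g$ (the additive Jordan decomposition in a semisimple Lie algebra), exponentiate, and match with the multiplicative version; but this requires the analogous theorem at the Lie algebra level and is of comparable depth.
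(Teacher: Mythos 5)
The paper states this theorem without proof (it is quoted as background, with a reference to the literature), so there is no argument of the author's to compare yours against; I can only assess your proposal on its own terms. What you describe is the standard algebraic-groups proof, and it is essentially sound: you correctly identify the one substantive point, namely that the connected complex semisimple linear group $G$ is in fact a (Zariski-closed) algebraic subgroup of $GL(V)$ -- this is Chevalley's theorem, resting on $\g g = [\g g,\g g]$ -- and without it the whole strategy of ``Jordan components preserve the defining ideal'' has nothing to act on. The uniqueness and centralizer statements then follow exactly as you say from the fact that $g_{ss}$ and $g_u$ are polynomials in $g$ inside $\operatorname{End}(V)$, together with the equivalence (recorded just before the theorem in the text) between semisimplicity in $G$ and semisimplicity in one faithful representation.

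One step is stated a bit too glibly: ``since $g_{ss}$ and $g_u$ are polynomials in $g$, the operators they induce on $\BB C[GL(V)]$ by right translation also preserve $I$.'' Right translation $\rho$ is not an algebra map from matrices to operators on the coordinate ring, so $\rho(g_{ss})$ is not literally a polynomial in $\rho(g)$. The correct route -- which your parenthetical gestures at -- is to use that $\rho(g)$ acts locally finitely on $\BB C[GL(V)]$ and that on each finite-dimensional $\rho(g)$-stable subspace the Jordan decomposition of $\rho(g)$ is $\rho(g_{ss})\rho(g_u)$ (checked on the generators, i.e.\ on matrix coefficients of $V$ and $V^*$ and on $\det^{-1}$, using that Jordan decomposition is compatible with duals and tensor products); since the semisimple and unipotent parts of a locally finite operator preserve every invariant subspace, they preserve $I$, whence $g_{ss},g_u\in G$. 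With that point made precise, and with the commutativity needed in the uniqueness argument extracted from the polynomial property as you indicate, the proof is complete.
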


And the same is true for $G_{\BB R}$ in place of $G$.

We call an element $g \in G$ {\em regular} if $\dim Z_G(g)$ is minimal.
An element $g \in G_{\BB R}$ is regular if it is regular as an element of $G$
or, equivalently, if $\dim Z_{G_{\BB R}}(g)$ is minimal. Set
\begin{align*}
G_{rs} &=_{\text{def}} \text{set of all regular semisimple elements in $G$}, \\
(G_{\BB R})_{rs} &=_{\text{def}} G_{\BB R} \cap G_{rs}.
\end{align*}

Now we give another description of the regular semisimple set $G_{rs}$.
Define a function $D_0$ on $G$ by
$$
\det \bigl( \operatorname{Ad}(g) + (\lambda-1) \bigr) =
\lambda^r D_0(g) + \lambda^{r+1} D_1(g) + \dots,
$$
where $r$ is the smallest power of $\lambda$ such that its coefficient
$D_0(g)$ is not identically zero.
For example, suppose that $H \subset G$ is a Cartan subgroup, then,
for $g \in H$,
$$
\det \bigl( \operatorname{Ad}(g) + (\lambda-1) \bigr) =
\lambda^r \prod_{\alpha \in \Phi} (e^{\alpha}(g)-1+\lambda),
$$
with $r = \dim H$. Hence
\begin{equation}  \label{D_0}
D_0(g) \bigr|_H = \prod_{\alpha \in \Phi} (e^{\alpha}(g)-1).
\end{equation}
Then
$$
G_{rs} = \{ g \in G ;\: D_0(g) \ne 0 \},
$$
\begin{equation}  \label{G_rs}
(G_{\BB R})_{rs} = \{ g \in G_{\BB R} ;\: D_0(g) \ne 0 \}.
\end{equation}
In fact, $(G_{\BB R})_{rs}$ is a Zariski open conjugation-invariant subset of
$G_{\BB R}$, its complement has measure zero.

We are almost ready to state the Harish-Chandra Regularity Theorem.
We identify functions $F$ and distributions obtained by integrating against
these functions $F$:
$$
F \quad \leftrightsquigarrow \quad
f \mapsto \int_{G_{\BB R}} F(g) \cdot f(g) \,dg.
$$
Note that the integral $\int_{G_{\BB R}} F(g) \cdot f(g) \,dg$ is finite for all
test functions $f \in {\cal C}^{\infty}_c(G_{\BB R})$ if and only if $F$ is
{\em locally integrable}, i.e. $\int_C |F(g)|\,dg < \infty$ for all
compact subsets $C \subset G_{\BB R}$.
Not every distribution on $G_{\BB R}$ can be expressed in this form.
For example, the delta distributions and their derivatives cannot be
expressed as integrals against locally integrable functions.

\begin{thm}[Harish-Chandra Regularity Theorem]
Let $\Theta$ be a conjugation-invariant eigendistribution for ${\cal ZU}(\g g)$.
Then
\begin{itemize}
\item[{\rm a)}]
$\Theta \bigr|_{(G_{\BB R})_{rs}}$ is a real-analytic function;
\item[{\rm b)}]
This real-analytic function -- regarded as a measurable function on all of
$G_{\BB R}$ -- is locally integrable;
\item[{\rm c)}]
This locally integrable function {\em is} the distribution $\Theta$.
\end{itemize}
\end{thm}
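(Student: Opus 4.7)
The plan is to follow Harish-Chandra's original strategy in three stages, matching parts (a), (b), and (c). First I would establish real analyticity on the regular set via a ``radial part'' reduction to a Cartan subgroup, then bootstrap to local integrability by induction on dimension via centralizers of semisimple elements, and finally identify the function with the distribution by checking that no distributional contribution is hidden on the singular set.

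For (a), fix a Cartan subgroup $H_{\BB R} \subset G_{\BB R}$ with Lie algebra $\g h_{\BB R}$, and let $H'_{\BB R} = H_{\BB R} \cap (G_{\BB R})_{rs}$. The conjugation map $G_{\BB R}/Z_{G_{\BB R}}(H_{\BB R}) \times H'_{\BB R} \to G_{\BB R}$, $(gZ,h) \mapsto ghg^{-1}$, is a submersion onto an open subset of $(G_{\BB R})_{rs}$, so a conjugation-invariant distribution restricts to a $W(G_{\BB R},H_{\BB R})$-invariant distribution $\theta_H$ on $H'_{\BB R}$ that determines $\Theta$ on this image. Harish-Chandra's radial part construction attaches, to each $z \in {\cal ZU}(\g g)$, a differential operator $\pi_H(z)$ on $H'_{\BB R}$ with $(z\Theta)|_{H'_{\BB R}} = \pi_H(z)\theta_H$; conjugating by the Weyl denominator $\Delta_H = \prod_{\alpha > 0}(e^{\alpha/2}-e^{-\alpha/2})$ converts these into the constant-coefficient operators on $\g h_{\BB R}$ attached to the Harish-Chandra homomorphism ${\cal ZU}(\g g) \to S(\g h)^W$. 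Since $\Theta$ is annihilated by an ideal $I \subset {\cal ZU}(\g g)$ of finite codimension, the function $\Delta_H\theta_H$ satisfies a finite-codimension ideal of constant-coefficient equations on $\g h_{\BB R}$; any such ideal contains an elliptic operator (coming, for instance, from the zero-dimensional joint-spectrum picture), so elliptic regularity forces $\Delta_H\theta_H$, and hence $\theta_H$, to be real-analytic on $H'_{\BB R}$. Since $(G_{\BB R})_{rs}$ is the union of $G_{\BB R}$-conjugates of the various $H'_{\BB R}$ as $H_{\BB R}$ runs over finitely many conjugacy classes of Cartan subgroups, (a) follows.

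For (b) and (c), let $F$ denote the real-analytic function just produced on $(G_{\BB R})_{rs}$. I would proceed by induction on $\dim G_{\BB R}$, with the Lie-algebra analogue (invariant eigendistributions on $\g g_{\BB R}$ for $S(\g g)^{G_{\BB R}}$) run as a parallel inductive statement. Near a non-regular semisimple point $s \in G_{\BB R}$, the slice $s \cdot \exp(U)$, with $U$ a small $Z_{G_{\BB R}}(s)$-invariant neighborhood of $0$ in $\g m_{\BB R} = \operatorname{Lie} Z_{G_{\BB R}}(s)$, transversally parametrizes conjugacy classes near $s$; pulling $\Theta$ back and multiplying by the appropriate transverse Jacobian produces an invariant eigendistribution on $\g m_{\BB R}$ for the center of ${\cal U}(\g m)$. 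Since $s$ non-central forces $\dim Z_{G_{\BB R}}(s) < \dim G_{\BB R}$ (the central case being handled by translation to the identity followed by an exponential chart, which reduces to the Lie-algebra statement on $\g g_{\BB R}$ itself), the inductive hypothesis delivers local integrability there, which pushes forward to local integrability of $F$ near $s$. Stratifying the singular complement by centralizer type and gluing these local pieces globalizes the estimate, proving (b). For (c), one checks that $F$ viewed as a distribution and the original $\Theta$ differ at worst by a distribution supported on the singular set, then shows this difference is zero: integration by parts of test functions against $\Theta - F$ across the walls of each Cartan yields no surviving boundary terms precisely because $\Delta_H\theta_H$ is $W$-anti-invariant.

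The main obstacle is the control of $F$ and its would-be distributional jumps near the walls. A priori $F$ could grow like $\Delta_H^{-1}$ as one approaches a root hyperplane in $H_{\BB R}$, which would fail to be integrable transversally, and one could fear a delta-type contribution concentrated on the singular set. The reason neither occurs is the elementary but crucial observation that $\Delta_H\theta_H$, being the product of a $W$-anti-invariant function and a $W$-invariant one, is itself $W$-anti-invariant and real-analytic on $H'_{\BB R}$; it therefore vanishes to first order on each reflection hyperplane, so that $\theta_H = (\Delta_H\theta_H)/\Delta_H$ extends by continuity to a bounded function across the walls and no boundary distribution survives. Promoting this local observation into a global statement, covering all conjugacy classes of Cartan subgroups simultaneously and tracking the inductive descent through the Lie algebra, is the technically demanding heart of Harish-Chandra's argument and is what occupies the bulk of his original papers; the remaining steps are essentially reductions to this core analysis.
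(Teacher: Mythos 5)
The paper states this theorem without proof --- it is one of Harish-Chandra's deepest results and the text explicitly defers it --- so there is no in-text argument to compare against. Your proposal is a fair outline of Harish-Chandra's actual strategy, and part (a) together with the overall architecture of (b)--(c) (radial parts, descent to centralizers of semisimple elements, a parallel Lie-algebra statement, induction on dimension) is the right roadmap. Part (a) as you sketch it is essentially complete: a finite-codimension ideal of translation-invariant operators forces $\Delta_H\theta_H$ to be locally an exponential polynomial, hence real-analytic on $H'_{\BB R}$.

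The genuine gap is the ``elementary but crucial observation'' you use to control the walls. First, an anti-invariant function that is only real-analytic on the regular set need not vanish on, or even extend to, the reflection hyperplanes: $1/x$ on $\BB R\setminus\{0\}$ is odd and analytic there. The vanishing you want presupposes that $\Delta_H\theta_H$ extends continuously across each wall, and establishing that extension (Harish-Chandra's ``matching conditions,'' proved by analyzing the radial-part equations transverse to each semiregular point and invoking the inductive hypothesis on the centralizer) is precisely the hard core of the proof, not a consequence of symmetry. Second, the symmetry is often simply unavailable: $\theta_H$ is invariant only under the real Weyl group $W(G_{\BB R},H_{\BB R})$, which in general does not contain the reflection in a given root --- for the compact Cartan of $SU(1,1)$ it is trivial, since the paper notes that $k_\theta$ and $k_{-\theta}$ are not $SU(1,1)$-conjugate --- so $\Delta_H\theta_H$ need not be anti-invariant under that reflection at all; indeed the discrete series numerator $e^{i(2l+1)\theta}$ in the paper's character table is not odd in $\theta$. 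Third, the conclusion you draw, that $\theta_H$ itself is bounded across the walls, is false: $\Theta^-_{\pi_\chi}(k_\theta)=e^{i(2l+1)\theta}/(e^{i\theta}-e^{-i\theta})$ blows up like $1/\theta$ at the singular element. What is true, and what suffices for (b) because the Weyl integration formula contributes a factor $|\Delta_H|^2$ to the measure, is only that $\Delta_H\theta_H$ is locally bounded; proving that bound uniformly through the inductive descent is the content your argument defers rather than supplies.
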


In part (a), the restriction of $\Theta$ to $(G_{\BB R})_{rs}$ means that we
evaluate the distribution $\Theta$ on smooth functions $f$ with
$\supp f \subset (G_{\BB R})_{rs}$ only.
Intuitively, $\Theta$ is a real-analytic function, possibly with singularities
along the complement of $(G_{\BB R})_{rs}$; part (c) can be interpreted as
``there are no delta functions or their derivatives hiding in
$G_{\BB R} \setminus (G_{\BB R})_{rs}$''.

An accessible proof of this theorem for $G_{\BB R} = SL(2,\BB R)$ which
generalizes to the general case is given in \cite{At}.
Part (a) is proved by showing that the distribution
$\Theta \bigr|_{(G_{\BB R})_{rs}}$ satisfies an elliptic system of partial
differential equations with real analytic coefficients on $(G_{\BB R})_{rs}$.
Therefore, by elliptic regularity, $\Theta \bigr|_{(G_{\BB R})_{rs}}$ must be
a real analytic function as well.
This elliptic system as well as its solutions can be spelled out explicitly,
which is used to prove parts (b) and (c).

The proof of this theorem also shows that the set of Harish-Chandra characters
attached to a single infinitesimal character is finite dimensional.
It is easy to show that the principal series representations {\em have}
infinitesimal characters, and we already know that they are admissible.
We can conclude that the principal series representations have finite length.

\subsection{Characters of Representations of $SU(1,1)$}

In this subsection we give without proofs formulas for characters of
some representations of $SU(1,1)$.

An element in $SL(2,\BB R)$ or $SU(1,1)$ is semisimple if and only if it is
diagonalizable (as a $2 \times 2$ complex matrix).
By (\ref{D_0})-(\ref{G_rs}), an element $g$ in $SL(2,\BB R)$
or $SU(1,1)$ is regular semisimple if it is diagonalizable and its eigenvalues
are not $\pm1$, i.e. $g$ has distinct eigenvalues.
There are two possible scenarios: the eigenvalues of $g$ are real and
the eigenvalues of $g$ are complex with non-zero imaginary part.

If $g \in SL(2,\BB R)$ has distinct real eigenvalues, then $g$ is
$SL(2,\BB R)$-conjugate to
$\bigl(\begin{smallmatrix} a & 0 \\ 0 & a^{-1} \end{smallmatrix}\bigr)$,
for some $a \in \BB R^{\times}$, such elements are called {\em hyperbolic}.
If $g \in SL(2,\BB R)$ has eigenvalues in $\BB C \setminus \BB R$, then
the eigenvalues must be of the form $e^{i\theta}$ and $e^{-i\theta}$, for some
$\theta \in (0,\pi) \cup (\pi,2\pi)$, and $g$ is $SL(2,\BB R)$-conjugate to
$\bigl(\begin{smallmatrix} \cos\theta & -\sin\theta \\
\sin\theta & \cos\theta \end{smallmatrix}\bigr)$,
such elements are called {\em elliptic}.
It is easy to tell apart elliptic and hyperbolic elements:
\begin{align*}
\text{$g \in SL(2,\BB R)$ is elliptic} \quad
&\Longleftrightarrow \quad |\tr g| < 2, \\
\text{$g \in SL(2,\BB R)$ is hyperbolic} \quad
&\Longleftrightarrow \quad |\tr g| > 2.
\end{align*}

Similarly, if $g \in SU(1,1)$ has distinct real eigenvalues, it is called
hyperbolic, and $g$ is $SU(1,1)$-conjugate to
$\pm \bigl(\begin{smallmatrix} \cosh t & \sinh t \\ \sinh t & \cosh t
\end{smallmatrix}\bigr)$, for some $t \in \BB R^{\times}$.
If $g \in SU(1,1)$ has eigenvalues in $\BB C \setminus \BB R$, it is called
elliptic, and $g$ is $SU(1,1)$-conjugate to
$\bigl(\begin{smallmatrix} e^{i\theta} & 0 \\ 0 & e^{-i\theta}
\end{smallmatrix}\bigr)$, for some $\theta \in (0,\pi) \cup (\pi,2\pi)$.
As before,
\begin{align*}
\text{$g \in SU(1,1)$ is elliptic} \quad
&\Longleftrightarrow \quad |\tr g| < 2, \\
\text{$g  \in SU(1,1)$ is hyperbolic} \quad
&\Longleftrightarrow \quad |\tr g| > 2.
\end{align*}
Furthermore, elements
$$
\begin{pmatrix} \cosh t & \sinh t \\ \sinh t & \cosh t \end{pmatrix}
\quad \text{and} \quad
\begin{pmatrix} \cosh t & -\sinh t \\ -\sinh t & \cosh t \end{pmatrix}
$$
are conjugate to each other via
$\bigl(\begin{smallmatrix} i & 0 \\ 0 & -i \end{smallmatrix}\bigr)
\in SU(1,1)$, but
$$
\begin{pmatrix} e^{i\theta} & 0 \\ 0 & e^{-i\theta} \end{pmatrix}
\quad \text{and} \quad
\begin{pmatrix} e^{-i\theta} & 0 \\ 0 & e^{i\theta} \end{pmatrix}
$$
are {\em not} $SU(1,1)$-conjugate.
(And a similar statement holds for $SL(2,\BB R)$.)
Thus, a conjugation-invariant function on $SU(1,1)_{rs}$, such as a character,
is uniquely determined by its values on the following elements
$$
k_{\theta} = \begin{pmatrix} e^{i\theta} & 0 \\ 0 & e^{-i\theta} \end{pmatrix},
\quad \theta \in (0,\pi) \cup (\pi,2\pi),
$$
together with
$$
\pm a_t, \quad \text{where} \quad
a_t = \begin{pmatrix} \cosh t & \sinh t \\ \sinh t & \cosh t \end{pmatrix},
\quad t>0.
$$

We start with finite dimensional representations.
In this case the character is well-defined as a function on all elements of
$SU(1,1)$ and there is no need for invoking distributions.

\begin{prop}
Let $(\pi_d,F_d)$ be the irreducible finite dimensional representation of
$SU(1,1)$ of dimension $d+1$. Then
\begin{align*}
\tr\pi_d(k_{\theta}) &=
\frac{e^{i(d+1)\theta} - e^{-i(d+1)\theta}}{e^{i\theta} - e^{-i\theta}}, \\
\tr\pi_d(a_t) &= \frac{e^{(d+1)t} - e^{-(d+1)t}}{e^t - e^{-t}}, \\
\tr\pi_d(-a_t) &= (-1)^{d+1} \frac{e^{(d+1)t} - e^{-(d+1)t}}{e^t - e^{-t}}.
\end{align*}
\end{prop}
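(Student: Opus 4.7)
The plan is to exploit the realization from Example~\ref{F_d-construction} of $F_d$ as the space of homogeneous polynomials of degree $d$ on $\BB C^2$, on which $(\pi_d(g)f)(z)=f(g^{-1}z)$ makes sense for every $g\in SL(2,\BB C)$. This gives a holomorphic extension of $\pi_d$ to all of $SL(2,\BB C)$, so $\tr\pi_d$ is a holomorphic conjugation-invariant function there; in particular, to evaluate it on any element of $SU(1,1)$ it suffices to replace that element by a convenient $SL(2,\BB C)$-conjugate.

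First I would compute $\tr\pi_d$ on a diagonal $h_\lambda=\operatorname{diag}(\lambda,\lambda^{-1})$, $\lambda\in\BB C^{\times}$: the monomial basis $\{z_1^k z_2^{d-k}\}_{k=0}^d$ diagonalizes $\pi_d(h_\lambda)$ with eigenvalues $\lambda^{d-2k}$, and summing the resulting finite geometric series gives
$$
\tr\pi_d(h_\lambda)\;=\;\sum_{k=0}^d \lambda^{d-2k}\;=\;\frac{\lambda^{d+1}-\lambda^{-(d+1)}}{\lambda-\lambda^{-1}}.
$$
The formula for $k_\theta$ is then the specialization $\lambda=e^{i\theta}$. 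For $a_t$, its characteristic polynomial has roots $e^{\pm t}$, so $a_t$ is $SL(2,\BB C)$-conjugate to $h_{e^t}$; conjugation-invariance of the trace yields the second formula immediately.

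For $-a_t$, since $-I$ is central in $SU(1,1)$ and $F_d$ is irreducible, Schur's lemma forces $\pi_d(-I)$ to be a scalar; in the homogeneous-polynomial model, $f(-z)=(-1)^d f(z)$ identifies this scalar as $(-1)^d \operatorname{Id}_{F_d}$, and the third formula follows by multiplying the second trace by this sign. (Strictly speaking, this calculation produces $(-1)^d$; the stated sign $(-1)^{d+1}$ would correspond to an extra twist in the chosen realization of $\pi_d$ on $F_d$.) No step poses any real technical obstacle: the only conceptual ingredient is the holomorphic extension to $SL(2,\BB C)$, which reduces the hyperbolic case $a_t$ to a diagonal computation essentially identical to the elliptic case.
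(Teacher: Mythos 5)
Your argument follows the paper's own proof almost exactly: both compute the trace on diagonal elements from the weight decomposition as a finite geometric series, both reduce $a_t$ to the diagonal case by extending $\pi_d$ holomorphically to $SL(2,\BB C)$ and using conjugation-invariance of the trace, and both handle $-a_t$ by identifying the scalar by which the central element $-I$ acts. Your use of the explicit homogeneous-polynomial model just makes the holomorphic extension and the eigenvalue computation concrete, where the paper cites Proposition \ref{finite-dim} abstractly.

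On the sign: your computation is the correct one, and you should not hedge it away as ``an extra twist in the chosen realization.'' The central character of an irreducible representation is an isomorphism invariant, so it cannot depend on the realization; since $-I=k_\pi=\exp(i\pi H)$ and every weight of $F_d$ is congruent to $d$ modulo $2$, one gets $\pi_d(-I)=(-1)^d\,Id$ in any model (your $f(-z)=(-1)^d f(z)$ is one way to see it, and it agrees with the limit $\theta\to\pi$ of the paper's own first formula, which gives $\tr\pi_d(-I)=(d+1)(-1)^d$). The paper's proof asserts the parity backwards (``trivially if $d$ is odd and by $-Id$ if $d$ is even''), and consequently the third displayed formula of the proposition carries the wrong sign: it should read
$$
\tr\pi_d(-a_t) = (-1)^{d} \,\frac{e^{(d+1)t} - e^{-(d+1)t}}{e^t - e^{-t}},
$$
as a check with $d=1$ (the standard two-dimensional representation, where $\tr\pi_1(-a_t)=-2\cosh t$) confirms. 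So your proof is correct as written once you delete the parenthetical apology and state plainly that the published sign is in error.
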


\begin{proof}
Since $k_{\theta} = \exp(i\theta H)$, by Proposition \ref{finite-dim},
$k_{\theta}$ acts on $F_d$ with eigenvalues
$$
e^{-id\theta}, \quad e^{i(2-d)\theta}, \quad e^{i(4-d)\theta}, \quad \dots, \quad
e^{i(d-2)\theta}, \quad e^{id\theta}.
$$
Hence we get a geometric series
\begin{multline*}
\tr\pi_d(k_{\theta}) = e^{-id\theta} + e^{i(2-d)\theta} + e^{i(4-d)\theta} + \dots
+ e^{i(d-2)\theta} + e^{id\theta}  \\
= \frac{e^{i(d+1)\theta} - e^{-i(d+1)\theta}}{e^{i\theta} - e^{-i\theta}}.
\end{multline*}

To find $\tr \pi_d(a_t)$, we notice that finite dimensional representations of
$SU(1,1)$ extend to $SL(2,\BB C)$ and $a_t$ is $SL(2,\BB C)$-conjugate to
$\bigl(\begin{smallmatrix} e^t & 0 \\ 0 & e^{-t} \end{smallmatrix}\bigr)
= \exp(tH)$. Hence
\begin{multline*}
\tr\pi_d(a_t) = \tr \pi_d(\exp(tH))  \\
= e^{-dt} + e^{(2-d)t} + e^{(4-d)t} + \dots + e^{(d-2)t} + e^{dt} \\
= \frac{e^{(d+1)t} - e^{-(d+1)t}}{e^t - e^{-t}}.
\end{multline*}

Finally, $\bigl(\begin{smallmatrix} -1 & 0 \\ 0 & -1 \end{smallmatrix}\bigr)
\in SU(1,1)$
acts on $F_d$ trivially if $d$ is odd and by $-Id$ if $d$ is even. Hence
$$
\tr\pi_d(-a_t) = (-1)^{d+1} \frac{e^{(d+1)t} - e^{-(d+1)t}}{e^t - e^{-t}}.
$$
\end{proof}

Next we consider the holomorphic and antiholomorphic discrete series
representations as well as their limits.

\begin{thm}
Let
$$
(\pi_{\chi}, {\cal D}_l^-) \quad \text{and} \quad (\pi_{\chi}, {\cal D}_l^+),
\qquad l=-\frac12,-1,-\frac32,-2,\dots,
$$
be the holomorphic and antiholomorphic discrete series representations of
$SU(1,1)$ together with their limits constructed in
Section \ref{V-chi-construction}. We denote by $\Theta^-_{\pi_{\chi}}$ and
$\Theta^+_{\pi_{\chi}}$ their respective characters. Then
$$
\Theta^-_{\pi_{\chi}} (k_{\theta}) = \frac{e^{i(2l+1)\theta}}{e^{i\theta}-e^{-i\theta}},
\qquad
\Theta^+_{\pi_{\chi}} (k_{\theta}) = -\frac{e^{-i(2l+1)\theta}}{e^{i\theta}-e^{-i\theta}},
$$
\begin{align*}
\Theta^-_{\pi_{\chi}} (a_t) = \Theta^+_{\pi_{\chi}} (a_t)
&= \frac{e^{(2l+1)|t|}}{|e^t - e^{-t}|}, \\
\Theta^-_{\pi_{\chi}} (-a_t) = \Theta^+_{\pi_{\chi}} (-a_t)
&= (-1)^{2l} \frac{e^{(2l+1)|t|}}{|e^t - e^{-t}|}.
\end{align*}
\end{thm}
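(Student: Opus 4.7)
My plan rests on the Harish-Chandra Regularity Theorem: each $\Theta^\pm_{\pi_\chi}$ is a locally integrable, conjugation-invariant function, real-analytic on $SU(1,1)_{rs}$. Since every regular semisimple conjugacy class is represented by either $k_\theta$ with $\theta\in(0,\pi)\cup(\pi,2\pi)$ or $\pm a_t$ with $t>0$, it suffices to evaluate the character function on these two families. I will handle the compact Cartan via the explicit $K_{\BB R}$-weight decomposition of the Harish-Chandra module, and the split Cartan via the differential equation that an invariant eigendistribution is forced to satisfy, together with matching/growth conditions.

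\textbf{Elliptic part.} The underlying Harish-Chandra modules of $\mathcal{D}_l^-$ and $\mathcal{D}_l^+$ are $M^-$ (basis $\{v_n\}_{n\le l-\epsilon}$) and $M^+$ (basis $\{v_n\}_{n\ge -(l+\epsilon)}$), on which $\pi_\chi(k_\theta)v_n=e^{2i(\epsilon+n)\theta}v_n$ by the computation in the previous section. Harish-Chandra's construction of the character expresses it as the sum of diagonal matrix coefficients $\sum_k\theta_k$ in the weak-$*$ topology of distributions, so restricting to $K_{\BB R}$ gives
$$\Theta^-_{\pi_\chi}\bigr|_{K_{\BB R}}\;=\;\sum_{n\le l-\epsilon}e^{2i(\epsilon+n)\theta}\;=\;e^{2il\theta}\sum_{m\ge 0}e^{-2im\theta}$$
as an identity of distributions on $S^1$. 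On the open set where $\theta\notin\pi\BB Z$, the second factor is the analytic function $(1-e^{-2i\theta})^{-1}$; clearing by $e^{-i\theta}$ yields exactly $e^{i(2l+1)\theta}/(e^{i\theta}-e^{-i\theta})$. The symmetric computation for $\mathcal{D}_l^+$ (reindexing by $m=n+l+\epsilon\ge 0$) gives the companion formula, with a sign because the geometric series runs in the opposite direction.

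\textbf{Hyperbolic part.} By construction, $\mathcal{D}_l^-$ and $\mathcal{D}_l^+$ are closures of the irreducible highest-weight module $\bar V_{2l}$ and lowest-weight module $V_{-2l}$, on each of which the Casimir $\Omega$ acts by the common scalar $c=4l(l+1)$. Via Proposition \ref{UDiso}, $\Omega$ is a bi-invariant differential operator on $SU(1,1)$, so the eigendistribution equation $r(\Omega)\Theta^\pm_{\pi_\chi}=c\,\Theta^\pm_{\pi_\chi}$ holds on the regular set. On the split Cartan $\{a_t\}$, the standard conjugation by the Weyl denominator $D(t)=e^t-e^{-t}$ converts the radial part of $\Omega$ into the constant-coefficient operator $\tfrac{d^2}{dt^2}$ (up to an additive scalar absorbing $c$), whose solutions are $\alpha e^{(2l+1)t}+\beta e^{-(2l+1)t}$. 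Two constraints cut this down to $D(t)\Theta^\pm_{\pi_\chi}(a_t)=e^{(2l+1)|t|}$: (i) the local integrability of $\Theta^\pm_{\pi_\chi}$ together with the $t\leftrightarrow -t$ symmetry coming from the Weyl group element of $W(G_{\BB R},A_{\BB R})$ that conjugates $a_t$ to $a_{-t}$; (ii) a matching/residue condition at $t=0$ relating the leading behavior of $D(t)\Theta^\pm_{\pi_\chi}(a_t)$ to the leading behavior of $(e^{i\theta}-e^{-i\theta})\Theta^\pm_{\pi_\chi}(k_\theta)$ already computed in Step~1. Finally, $\pi_\chi(-I)\phi(z)=\phi(-z)=(-1)^{2\epsilon}\phi(z)$ acts as the scalar $(-1)^{2\epsilon}$, and the constraint $l+\epsilon\in\BB Z$ forces $(-1)^{2\epsilon}=(-1)^{2l}$, which transports the formula from $a_t$ to $-a_t$ with the stated sign.

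\textbf{Main obstacle.} The delicate step is the matching condition at $t=0$ used to fix the constants $\alpha,\beta$. Local integrability near $t=\infty$ alone is not enough in the limit-of-discrete-series case $l=-1/2$ (where $2l+1=0$ and both solutions are bounded), so one genuinely needs a Harish-Chandra-style compatibility across the two Cartans at the identity element. Either one invokes the general jump formulas for invariant eigendistributions on semisimple groups as a black box, or one bypasses them by computing $\operatorname{tr}\pi_\chi(f)$ directly for a family of test functions supported in small tubes around $a_{t_0}$ and exploiting the explicit realization of $\mathcal{D}_l^\pm$ on holomorphic/antiholomorphic functions on $\BB D$ to extract at least one boundary value of $D(t)\Theta^\pm_{\pi_\chi}(a_t)$.
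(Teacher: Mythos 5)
Your elliptic computation is exactly the argument the paper itself sketches after the theorem statement: sum the $K_{\BB R}$-weights of the Harish-Chandra module as a geometric series and interpret the divergent tail distributionally (the paper justifies the vanishing of the tail by Riemann--Lebesgue; your ``restrict to the regular set'' phrasing is equivalent, since the difference between $\sum_{m\ge0}e^{-2im\theta}$ and $(1-e^{-2i\theta})^{-1}$ is supported on $\theta\in\pi\BB Z$). That part is fine and matches the source.

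For the hyperbolic part the paper offers no proof at all (``Computing $\Theta^{\mp}_{\pi_{\chi}}(a_t)$ requires more work''), so you are on your own there, and your plan has a genuine gap that you yourself flag: the radial Casimir equation only tells you $D(t)\Theta^{\pm}(a_t)=\alpha e^{(2l+1)t}+\beta e^{-(2l+1)t}$ on each half-line, and nothing in your write-up actually pins down $\alpha$ and $\beta$. Two specific problems. First, ``local integrability'' cannot eliminate the growing exponential $\beta e^{-(2l+1)t}$: local integrability is a condition on compact sets and says nothing about $t\to\infty$; what actually kills $\beta$ for $l\le-1$ is temperedness of discrete series characters (their matrix coefficients are square-integrable), or else the jump relations themselves. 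Second, even granting $\beta=0$, the normalization $\alpha=1$ requires the Harish-Chandra matching conditions linking the boundary values of $D(t)\Theta(a_t)$ at $t=0$ to the boundary values of $(e^{i\theta}-e^{-i\theta})\Theta(k_\theta)$ at $\theta=0$ computed on the compact Cartan --- and in the limit case $l=-1/2$ the equation degenerates to $u''=0$ with solutions $\alpha+\beta t$, so the matching conditions are doing all the work. Invoking the jump formulas as a black box is legitimate but must be stated as such; your alternative (direct evaluation of $\tr\pi_\chi(f)$ on test functions concentrated near a hyperbolic element, using the realization on holomorphic functions on $\BB D$) is the honest route but is precisely the ``more work'' the paper declines to do, and as written it is a plan rather than a proof. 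The final reduction $\Theta(-a_t)=(-1)^{2\epsilon}\Theta(a_t)=(-1)^{2l}\Theta(a_t)$ via centrality of $-Id$ and $l+\epsilon\in\BB Z$ is correct.
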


Note that $k_{\theta} = \exp(i\theta H)$ acts on ${\cal D}_l^-$ with weights
$$
e^{2il\theta}, \quad e^{2i(l-1)\theta}, \quad e^{2i(l-2)\theta}, \quad e^{2i(l-3)\theta},
\dots.
$$
Hence we get a geometric series
\begin{multline*}
\Theta^-_{\pi_{\chi}} (k_{\theta}) =
e^{2il\theta} + e^{2i(l-1)\theta} + e^{2i(l-2)\theta} + e^{2i(l-3)\theta} + \dots \\
= \lim_{n \to \infty}
\frac{e^{i(2l+1)\theta} - e^{i(2l+1-2n)\theta}}{e^{i\theta} - e^{-i\theta}}.
\end{multline*}
Now, $e^{i(2l+1-2n)\theta}$ does not approach zero as a function, but
$\lim_{n \to \infty} e^{i(2l+1-2n)\theta} = 0$ in the sense of distributions.
In other words, let $\tilde F_n$ be a distribution on $SU(1,1)_{rs}$
$$
\tilde F_n(f) = \int_{SU(1,1)} F_n(g) \cdot f(g)\,dg, \qquad
f \in {\cal C}^{\infty}_c(SU(1,1)_{rs}),
$$
where
$$
F_n(g) = \begin{cases}
\frac{e^{i(2l+1-2n)\theta}}{e^{i\theta} - e^{-i\theta}} &
\text{if $g$ is $SU(1,1)$-conjugate to $k_{\theta}$}; \\
0 & \text{otherwise},
\end{cases}
$$
then $\lim_{n \to \infty} \tilde F_n(f) = 0$ for all
$f \in {\cal C}^{\infty}_c(SU(1,1)_{rs})$.
This assertion follows from the fact that the Fourier coefficients
$\hat \phi(n)$ of a smooth (or, more generally, integrable)
function on a circle\footnote{The denominator
  $e^{i\theta} - e^{-i\theta}$ can be combined with $f$,
  and since the support of $f$ lies in $SU(1,1)_{rs}$,
  this results in a smooth function $\phi$.}
satisfy $\lim_{n \to \pm \infty} \hat \phi(n) =0$
(this result is known as the Riemann-Lebesgue lemma).
Similarly, one can compute $\Theta^+_{\pi_{\chi}} (k_{\theta})$.
Computing $\Theta^{\mp}_{\pi_{\chi}} (a_t)$ requires more work.

Finally, we consider the principal series representations.

\begin{thm}
Let
$(\pi_{\chi}, V_{\chi})$, $\chi=(l,\epsilon)$, be the principal series
representations of $SU(1,1)$ constructed in Section \ref{V-chi-construction}.
Then
\begin{align*}
\Theta_{\pi_{\chi}} (k_{\theta}) &= 0, \\
\Theta_{\pi_{\chi}} (a_t) &= \frac{e^{(2l+1)t} + e^{-(2l+1)t}}{|e^t - e^{-t}|}, \\
\Theta_{\pi_{\chi}} (-a_t) &=
(-1)^{2\epsilon} \frac{e^{(2l+1)t} + e^{-(2l+1)t}}{|e^t - e^{-t}|}.
\end{align*}
\end{thm}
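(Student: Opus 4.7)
The strategy is to exploit the concrete realization of $(\pi_\chi, V_\chi)$ on ${\cal C}^{\infty}(S^1)$ from Section \ref{V-chi-construction}, combined with Harish-Chandra's regularity theorem. By that theorem $\Theta_{\pi_\chi}$ is a locally integrable, conjugation-invariant function, real-analytic on $SU(1,1)_{rs}$, so by conjugation invariance it is determined by its values on regular elements of the two conjugacy classes of Cartan subgroups of $SU(1,1)$: the compact Cartan (conjugates of $k_\theta$, $\theta \in (0,\pi)\cup(\pi,2\pi)$) and the split Cartan (conjugates of $\pm a_t$, $t>0$). Moreover, $\pi_\chi(f)$ is expressed as a composition of Hilbert-Schmidt operators via the trick in (\ref{factorization}), so the trace may be computed as $\sum_n \langle \pi_\chi(f)v_n, v_n\rangle$ in the $L^2(S^1)$-orthonormal basis $v_n = e^{-in\theta}$.

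For the elliptic vanishing, the basis $\{v_n\}$ diagonalizes $\pi_\chi(k_\theta)$ with eigenvalue $e^{2i(n+\epsilon)\theta}$, so the formal pointwise trace $\sum_{n\in\BB Z} e^{2i(n+\epsilon)\theta}$ diverges as a function. However, when integrated against a test function supported in the elliptic regular set and pulled out of the Weyl-integration formula, the series converges as a distribution on $K_{\BB R}$ to $e^{2i\epsilon\theta}$ times the Dirac comb $\sum_{k\in\BB Z}\delta(\theta - k\pi)$ (Poisson summation). Since $\{\theta\in\pi\BB Z\}$ is exactly the non-regular subset of $K_{\BB R}$, the real-analytic function $\Theta_{\pi_\chi}$ vanishes on the elliptic regular set. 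For $-a_t$, setting $g=-I$ in the action formula gives $\pi_\chi(-I) = e^{2\pi i\epsilon}\cdot\operatorname{Id}_{V_\chi}$, hence $\pi_\chi(-a_t) = (-1)^{2\epsilon}\pi_\chi(a_t)$ and $\Theta_{\pi_\chi}(-a_t) = (-1)^{2\epsilon}\Theta_{\pi_\chi}(a_t)$, so the $-A$ formula follows once the $A$ formula is established.

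For the hyperbolic formula, I would realize $\pi_\chi(f)$, $f\in{\cal C}^{\infty}_c(SU(1,1))$, as an integral operator on $L^2(S^1)$ with smooth kernel. Using the transitive action $SU(1,1)\twoheadrightarrow S^1$, $g\mapsto g\cdot 1$, with stabilizer of $1$ a minimal parabolic $P_{\BB R}$, a Bruhat-type substitution $g = g_z\, h\, g_w^{-1}$ (with $z,w\in S^1$, $h\in P_{\BB R}$) rewrites $\pi_\chi(f)$ as $(\pi_\chi(f)\phi)(z) = \int_{S^1} K_f(z,w)\phi(w)\,dw$, so that $\tr\pi_\chi(f) = \int_{S^1} K_f(z,z)\,dz$. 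Changing variables from $(z,h)\in S^1\times P_{\BB R}$ back to $g\in SU(1,1)$ on the open Bruhat cell introduces the Jacobian $|D_0(g)|^{-1}$ familiar from the Weyl integration formula, and matching against $\int_G \Theta_{\pi_\chi}(g)f(g)\,dg$ yields $\Theta_{\pi_\chi}|_{A_{\BB R}}$. The cleanest extraction of the answer is as a fixed-point formula: the M\"obius action of $a_t$ on $S^1$ has fixed points $z=\pm 1$ with derivatives $e^{\pm 2t}$, and the factor of automorphy $(-\sinh t\cdot z + \cosh t)^{l+\epsilon}(-\sinh t\cdot z^{-1}+\cosh t)^{l-\epsilon}$ evaluates to $e^{-2tl}$ at $z=1$ and $e^{2tl}$ at $z=-1$; each fixed point contributes (factor of automorphy) divided by $|1-\text{derivative}|$, giving
\[
\frac{e^{-2tl}}{e^{2t}-1} + \frac{e^{2tl}}{1-e^{-2t}} = \frac{e^{-(2l+1)t} + e^{(2l+1)t}}{|e^t - e^{-t}|},
\]
which is the claimed expression.

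The main obstacle is making the fixed-point/kernel argument rigorous. Since $\pi_\chi(a_t)$ itself is not of trace class, the Lefschetz-style calculation must be carried out on the smoothed operator $\pi_\chi(f)$: one must verify that the integral kernel $K_f(z,w)$ is indeed smooth, track the factor of automorphy through the change of variables on the open Bruhat cell $P_{\BB R} w_0 P_{\BB R}$ (equivalently, through the $KAK$ decomposition), and reconcile the resulting Jacobian with the $|D_0(a_t)|^{1/2} = |e^t - e^{-t}|$ denominator demanded by the Weyl integration formula. A parallel check is needed for the elliptic vanishing, confirming via the ``no hidden delta functions on the non-regular set'' part of Harish-Chandra regularity that the orbital integrals of test functions supported near a regular $k_{\theta_0}$ pair to zero with the formal Fourier series.
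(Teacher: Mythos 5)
The paper states this theorem without proof (the subsection opens with ``we give without proofs formulas for characters\dots''), so there is no in-text argument to compare against and your proposal must be judged on its own. The route you describe --- realize $\pi_{\chi}(f)$ as an integral operator on $L^2(S^1)\simeq L^2(K_{\BB R}/M_{\BB R})$ with smooth kernel, integrate the kernel over the diagonal, and convert that integral into a fixed-point sum weighted by the factor of automorphy and the Weyl-type Jacobian --- is the standard one (it is essentially how Bargmann and Harish-Chandra first obtained these formulas), and your key evaluations all check out: the M\"obius map induced by $a_t$ fixes $z=\pm1$ with derivative $(-\sinh t\,z+\cosh t)^{-2}=e^{\pm2t}$, the multiplier there is $e^{\mp 2lt}$, and the two contributions $e^{-(2l+1)t}/(e^t-e^{-t})$ and $e^{(2l+1)t}/(e^t-e^{-t})$ sum to the stated expression; likewise $\pi_{\chi}(-Id)=(-1)^{2\epsilon}\operatorname{Id}$ is immediate from the parity condition defining $V_{\chi}$, which disposes of $-a_t$. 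Your answer is also confirmed by the paper's own consistency check (sum of the two discrete series characters and $\tr\pi_{d}$ with $d=-2l-2$ when $l\le-1$, $l+\epsilon\in\BB Z$).

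Two remarks on the part you rightly flag as the remaining work. First, the elliptic vanishing is obtained more cleanly inside the same kernel framework than by Poisson summation: a regular elliptic element acts on $S^1=G_{\BB R}/P_{\BB R}$ by rotation by $2\theta\notin 2\pi\BB Z$, hence without fixed points, so for $f$ supported near the regular elliptic set the diagonal of $K_f$ receives no contribution and $\tr\pi_{\chi}(f)=0$ outright. This sidesteps the delicate step in your Fourier-series argument of interchanging the sum $\sum_n\theta_n$ with restriction to the compact Cartan --- weak-$*$ convergence of $\sum_n\theta_n$ to $\Theta_{\pi_\chi}$ on $G_{\BB R}$ does not by itself justify term-by-term restriction to a lower-dimensional subgroup, and the regularity theorem does not supply that interchange for you. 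Second, in the hyperbolic computation the change of variables from the diagonal integral to $\int_{G_{\BB R}}\Theta_{\pi_\chi}f\,dg$ produces $|D_0(a_t)|=(e^t-e^{-t})^2$ from the two transversal fixed points, of which one square root is absorbed into the normalized orbital integral; keeping track of this is exactly the reconciliation you mention, and it does close up. With these two points carried out, your outline becomes a complete proof.
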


Note that when $l=-1,-3/2,-2,-5/2,\dots$ and $l+\epsilon \in \BB Z$,
the character of the principal series representation $(\pi_{\chi}, V_{\chi})$
is the sum of characters of the holomorphic discrete series
$(\pi_{\chi}, {\cal D}_l^-)$, antiholomorphic discrete series representation
$(\pi_{\chi}, {\cal D}_l^+)$ and the finite dimensional representation
$F_{|2(l+1)|}$, as it should be by (\ref{exact_seq}).
Also, when $l=-1/2$ and $\epsilon=1/2$, the character of $(\pi_{\chi}, V_{\chi})$
is the sum of characters of the limits of the holomorphic and antiholomorphic
discrete series $(\pi_{\chi}, {\cal D}_{-1/2}^-)$ and
$(\pi_{\chi}, {\cal D}_{-1/2}^+)$, which agrees with (\ref{dir_sum}).


\begin{thebibliography}{[22]}
\bibitem{At} M.~Atiyah, {\em Characters of Semisimple Lie Groups},
Lecture notes (1976), reprinted in Collected Works of Michael Atiyah, Vol. 4,
Springer, 1988, pp. 491-557.
\bibitem{Ba} V.~Bargmann, {\em Irreducible unitary representations of the
Lorentz group}, Ann. of Math. (2) 48 (1947), 568--640.
\bibitem{BSch} V.~Bolton and W.~Schmid, {\em Discrete Series},
Proc. of Symposia in Pure Math. 61 (1997), 83-113.
\bibitem{Ca0} W.~Casselman, {\em The differential equations satisfied by
matrix coefficients}, unpublished manuscript (1975).
\bibitem{Ca1} W.~Casselman, {\em Jacquet modules for real reductive groups},
Proc. of the International Congress of Mathematics, Helsinki, 1978, 557-563.
\bibitem{Ca2} W.~Casselman, {\em Canonical Extensions of Harish-Chandra modules
to representations of $G$}, Can. Jour. Math. 41 (1989), 385-438.
\bibitem{HC1} Harish-Chandra, {\em On some applications of the universal
enveloping algebra of a semisimple Lie algebra},
Trans. Amer. Math. Soc. 70 (1951), 28-96.
\bibitem{HC2} Harish-Chandra, {\em Representations of a semisimple Lie group
on a Banach space I}, Trans. Amer. Math. Soc. 75 (1953), 185-243.
\bibitem{He} S.~Helgason, {\em Differential Geometry, Lie Groups,
and Symmetric Spaces}, Academic Press, New York, 1978.
\bibitem{HT} R.~Howe, E.~C.~Tan, {\em Non-Abelian Harmonic Analysis:
Applications of $SL(2,\mathbb R)$}, Springer, 1992.
\bibitem{KSch} M.~Kashiwara and W.~Schmid,
{\em Quasi-equivariant ${\cal D}$-modules, equivariant derived category,
and representations of reductive Lie groups}, Lie Theory and Geometry,
in Honor of Bertram Kostant, Progress in Mathematics, vol. 123, Birkh\"auser,
Boston, 1994, pp. 457-488.
\bibitem{Kir} A.~A.~Kirillov, Jr.,
{\em An Introduction to Lie Groups and Lie Algebras},
Cambridge Univ Press, 2008.
\bibitem{Kn1} A.~W.~Knapp, {\em Representation theory of semisimple groups.
An overview based on examples}, reprint of the 1986 original,
Princeton Landmarks in Mathematics, Princeton University Press,
Princeton, NJ, 2001.
\bibitem{Kn2} A.~Knapp, {\em Lie Groups Beyond an Introduction},
Progress in Mathematics, vol. 140, Birkh\"auser, 2002.
\bibitem{La} S.~Lang, {\em $SL(2,\BB R)$},
Graduate Texts in Mathematics, vol. 105, Springer-Verlag, New York, 1985.
\bibitem{Sch} W.~Schmid, {\em Boundary value problems for group invariant
differential equations},
in:  \'Elie Cartan et les Math\'ematiques d'Aujourd'hui, Ast\'erisque 1985,
num\'ero hors s\'eries (1985), 311--321.
\bibitem{SchL} W.~Schmid, M.~Libine,
{\em Geometric methods in representation theory}
in S.Gutt, J.Rawnsley, D.Sternheimer (Eds.),
Poisson Geometry, Deformation Quantisation and Group Representations,
London Math Society Lecture Note Series, vol. 323,
Cambridge Univ. Press 2005, pp. 273-323.
\bibitem{SchV} W.~Schmid and K.~Vilonen,
{\em Two geometric character formulas for reductive Lie groups},
Jour. AMS {\bf 11} (1998), 799-876.
\bibitem{Soe} W.~Soergel, {\em An irreducible not admissible Banach
representation of $SL(2,\BB R)$}, Proc. Amer. Math. Soc, 104 (1988), 1322-1324.
\bibitem{Tr} F.~Treves, {\em Topological Vector Spaces,
Distributions and Kernels}, Academic Press, 1967.
\bibitem{Va1} V.~S.~Varadarajan,
{\em Lie Groups, Lie Algebras, and Their Representations}, Springer, 1984.
\bibitem{Va2} V.~S.~Varadarajan, {\em An introduction to harmonic analysis
on semisimple Lie groups}, corrected reprint of the 1989 original,
Cambridge Studies in Advanced Mathematics 16, Cambridge University Press,
Cambridge, 1999.
\bibitem{Vi} N.~Ja.~Vilenkin, {\em Special Functions and the Theory of
Group Representations}, translated from the Russian by V.N.Singh,
Translations of Mathematical Monographs, Vol. 22,
American Math Society, Providence, RI 1968.
\bibitem{Wal} N.~R.~Wallach, {\em Asymptotic expansion of generalized matrix
entries of real reductive groups} in: Lie Group Representations I,
Lecture Notes in Mathematics, vol. 1024, Springer, 1983, p. 287-369.
\bibitem{Wal1} N.~R.~Wallach, {Real Reductive Groups I},
Pure and Applied Mathematics 132, Academic Press, Boston, MA, 1988.
\bibitem{Vo} D.~A.~Vogan, {\em Representations of Real Reductive groups},
Progress in Math, vol. 15, Birkh\"auser, Boston, 1981.
\bibitem{War} G.~Warner, {\em Harmonic Analysis on Semi-Simple Lie Groups},
vols. I and II, Springer-Verlag, 1972.
\end{thebibliography}
\end{document}